\newcommand{\R}{{\mathbb R}}       
\newcommand{\Z}{{\mathbb Z}}       
\newcommand{\DD}{{\mathcal D}}
\newcommand{\HH}{{\mathcal H}}
\newcommand{\WW}{{\mathcal W}}
\newcommand{\MM}{{\mathcal M}}
\newcommand{\RR}{{\mathcal R}}
\newcommand{\diam}{{\rm diam}}
\newcommand{\dist}{{\rm dist}}
\newcommand{\fiproof}{{\hspace*{\fill} $\square$ \vspace{2pt}}}
\newcommand{\rf}[1]{{(\ref{#1})}}
\newcommand{\supp}{\operatorname{supp}}
\newcommand{\vphi}{{\varphi}}
\newcommand{\ve}{{\varepsilon}}
\newcommand{\vv}{{\vspace{2mm}}}
\newcommand{\wt}[1]{{\widetilde{#1}}}
\newcommand{\wh}[1]{{\widehat{#1}}}
\newcommand{\vmo}{{\operatorname{VMO}}}
\newcommand{\bmo}{{\operatorname{BMO}}}
\newcommand{\sss}{{\rm Stop}}
\newcommand{\pv}{\operatorname{pv}}
\newcommand{\HD}{{\mathsf{HD}}}
\newcommand{\LD}{{\mathsf{LD}}}
\newcommand{\LLD}{{\mathcal{LD}}}
\newcommand{\HHD}{{\mathcal{HD}}}
\newcommand{\capp}{\operatorname{Cap}}
\def\Xint#1{\mathchoice
{\XXint\displaystyle\textstyle{#1}}%
{\XXint\textstyle\scriptstyle{#1}}%
{\XXint\scriptstyle\scriptscriptstyle{#1}}%
{\XXint\scriptscriptstyle\scriptscriptstyle{#1}}%
\!\int}
\def\XXint#1#2#3{{\setbox0=\hbox{$#1{#2#3}{\int}$ }
\vcenter{\hbox{$#2#3$ }}\kern-.58\wd0}}
\def\avint{\;\Xint-}
\definecolor{ffffff}{rgb}{1.0,1.0,1.0}
\definecolor{qqqqff}{rgb}{0.0,0.0,1.0}
\definecolor{ffqqqq}{rgb}{1.0,0.0,0.0}
\definecolor{zzzzqq}{rgb}{0.6,0.6,0.0}
\definecolor{marronet}{rgb}{0.6,0.2,0}
\definecolor{negre}{rgb}{0,0,0}
\definecolor{vermell}{rgb}{0.8,0.05,0.05}
\definecolor{blau}{rgb}{0.3,0.2,1.}
\definecolor{blauclar}{rgb}{0.,0.,1.}
\definecolor{grisfosc}{rgb}{0.25098039215686274,0.25098039215686274,0.25098039215686274}
\definecolor{verd}{rgb}{0.1,0.6,0.1}
\definecolor{taronja}{rgb}{0.9,0.6,0.05}
\definecolor{vermellclar}{rgb}{1.,0.,0.}
\definecolor{verdet}{rgb}{0,0.8,0.1}
\definecolor{blauverd}{rgb}{0,0.4,0.2}
\definecolor{grisclar}{rgb}{0.6274509803921569,0.6274509803921569,0.6274509803921569}
\newtheorem{theorem}{Theorem}[section]
\newtheorem{lemma}[theorem]{Lemma}
\newtheorem{coro}[theorem]{Corollary}
\newtheorem{claim}{Claim}
\newtheorem*{theorem*}{Theorem}
\newtheorem*{theorema}{Theorem A}
\theoremstyle{definition}
\theoremstyle{remark}
\newtheorem{rem}[theorem]{\bf Remark}
\numberwithin{equation}{section}
\newcommand{\brem}{\begin{rem}}
\newcommand{\erem}{\end{rem}}
\begin{document}

\title[The two-phase problem for harmonic measure]{The two-phase problem for harmonic measure in VMO}

\author{Mart\'{\i} Prats}

\author{Xavier Tolsa}

\address{Mart\'{\i} Prats
\\
 Departament de Matem\`atiques,
\\
Universitat Aut\`onoma de Barcelona
\\
08193 Bellaterra (Barcelona), Catalonia.
}

\email{mprats@mat.uab.cat} 

\address{Xavier Tolsa
\\
ICREA, Passeig Llu\'{\i}s Companys 23 08010 Barcelona, Catalonia\\
 Departament de Matem\`atiques, and BGSMath
\\
Universitat Aut\`onoma de Barcelona
\\
08193 Bellaterra (Barcelona), Catalonia.
}
\email{xtolsa@mat.uab.cat}

\thanks{Both authors were supported by 2017-SGR-0395 (Catalonia) and MTM-2016-77635-P (MINECO, Spain).
X.T.~was also supported by  MDM-2014-044 (MINECO, Spain).
}

\begin{abstract}
Let $\Omega^+\subset\R^{n+1}$ be an NTA domain and let $\Omega^-= \R^{n+1}\setminus \overline{\Omega^+}$ be an NTA domain as well. Denote by $\omega^+$ and $\omega^-$ their respective harmonic measures. Assume that $\Omega^+$ is a $\delta$-Reifenberg flat domain for some $\delta>0$ small enough.
In this paper we show that
 $\log\frac{d\omega^-}{d\omega^+}\in \vmo(\omega^+)$ if and only if 
 $\Omega^+$ is vanishing Reifenberg flat, {
 $\Omega^+$ and $\Omega^-$ have joint big pieces of chord-arc subdomains,}
 and the  inner unit normal of $\Omega^+$
 has vanishing oscillation with respect to the approximate normal. This result can be considered
 as a two-phase counterpart of a more well known related one-phase problem for harmonic measure solved by Kenig and Toro.
\end{abstract}

\maketitle

\section{Introduction}

In this paper we study a two-phase problem for harmonic measure in $\R^{n+1}$. The study of this type of problems
has been a subject of thorough investigation in the last years. Roughly speaking, given two domains
$\Omega^+,\Omega^- \subset \R^{n+1}$ whose boundaries have non-empty intersection, one wants to relate
 the analytic 
properties of the respective harmonic measures $\omega^+,\omega^-$ in $\partial\Omega^+\cap\partial\Omega^-$ with some geometric properties of $\partial\Omega^+\cap\partial\Omega^-$.
For example, the recent
 works \cite{AMT-cpam}, \cite{AMTV}, which solve a long standing conjecture of Bishop 
 \cite{Bishop-questions}, show that if $\omega^+$ and $\omega^-$ are mutually absolutely continuous in some
subset $E\subset \partial\Omega^+\cap\partial\Omega^-$, then $\omega^+|_E$ and $\omega^-|_E$ are $n$-rectifiable measures, that is, they are concentrated in an $n$-rectifiable subset of $E$ and they are absolutely continuous with respect to the Hausdorff $n$-dimensional measure $\HH^n$.
Recall that a set $F\subset\R^{d}$ is called $n$-rectifiable if there are Lipschitz maps
$f_i:\R^n\to\R^d$, $i=1,2,\ldots$, such that 
\begin{equation}\label{eq001}
\HH^n\Big(F\setminus\textstyle\bigcup_i f_i(\R^n)\Big) = 0.
\end{equation}
 Let us remark that inn a previous work, Kenig, Preiss, and Toro \cite{KPT09} had already shown that, under the 
additional assumption that $\Omega^+$ and $\Omega^-$ are non-tangentially accessible (NTA) domains with $\Omega^-=\R^{n+1}\setminus\overline{\Omega^+}$, the mutual absolute continuity of $\omega^+$ and
$\omega^-$ implies that both harmonic measures are concentrated in a set of Hausdorff dimension $n$, which is $n$-rectifiable
in the particular case that $\partial\Omega^+$ has locally finite Hausdorff $n$-dimensional measure.
See also \cite{AM-blow} for another more recent work which extends this and other results in different directions, in particular it applies to more general domains and also to the case of elliptic measure associated
to elliptic PDE's in divergence form associated with matrices with $\vmo$ type coefficients.
 
In other related problems of more quantitive nature one assumes stronger quantitative analytic conditions, and consequently one tries to obtain some rather precise quantitative geometric information. For instance, Kenig and Toro
in \cite{Kenig-Toro-crelle} showed that, given two NTA domains $\Omega^+$, $\Omega^-$, so that $\Omega^-=\R^{n+1}\setminus\overline{\Omega^+}$ and 
 $\Omega^+$ is $\delta$-Reifenberg flat for some $\delta>0$ small enough, if $\omega^+$
and $\omega^-$ are mutually absolutely continuous and $\log\frac{d\omega^-}{d\omega^+}\in \vmo(\omega^+)$, then $\partial\Omega^+$ is vanishing Reifenberg flat. See Section \ref{secprelim-nta} for the notions of NTA domain, Reifenberg flatness, vanishing Reifenberg flatness, and $\vmo$.

In another series of papers \cite{Kenig-Toro-duke}, \cite{Kenig-Toro-annals}, \cite{Kenig-Toro-AENS}, Kenig
and Toro also studied the so called one-phase problem for harmonic measure in chord-arc domains, which has a strong connection  with the main results we will prove in this paper. By
a chord-arc domain we mean an NTA domain $\Omega\subset\R^{n+1}$ whose surface measure $\HH^n|_{\partial\Omega}$ is $n$-AD-regular (see \rf{eqAD1}).
Some of the main results in that series of papers of Kenig and Toro can be summarized as follows.

\begin{theorema}[Kenig, Toro]
Let $\Omega\subset\R^{n+1}$ be a bounded chord-arc domain which is $\delta$-Reifenberg flat, with $\delta>0$ small enough.
Denote by $\omega$ the harmonic measure in $\Omega$ with pole $p\in\Omega$ and write $\sigma=\HH^n|_{\partial\Omega}$. Then the following conditions are equivalent:
\begin{itemize}
\item[(a)] $\log\dfrac{d\omega}{d\sigma} \in \vmo(\sigma).$

\item[(b)] The inner normal $N$ to $\partial\Omega$ exists $\sigma$-a.e. and it belongs to $\vmo(\sigma)$.

\item[(c)] $\Omega$ is vanishing Reifenberg flat and the inner normal $N$ to $\partial\Omega$ exists $\sigma$-a.e. and it belongs to $\vmo(\sigma)$.
\end{itemize}
\end{theorema}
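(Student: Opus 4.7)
The implication (c) $\Rightarrow$ (b) is immediate, since (b) is already part of (c). The remaining work is to prove (b) $\Rightarrow$ (c), (b) $\Rightarrow$ (a), and (a) $\Rightarrow$ (b); the last of these is the main technical step.

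For (b) $\Rightarrow$ (c), the plan is to exploit the fact that $N \in \vmo(\sigma)$ in a $\delta$-Reifenberg flat chord-arc domain forces geometric flatness to improve at small scales. On a surface ball $\Delta = B(x,r)\cap\partial\Omega$, let $N_\Delta$ denote the average of $N$; the VMO hypothesis gives $\frac{1}{\sigma(\Delta)}\int_\Delta |N - N_\Delta|\,d\sigma \to 0$ uniformly as $r\to 0$. Combined with the $n$-AD-regularity of $\sigma$ and the divergence theorem applied in a thin slab oriented by $N_\Delta$, this forces $\partial\Omega \cap B(x,r)$ to lie within Hausdorff distance $o(r)$ of the hyperplane through $x$ orthogonal to $N_\Delta$, with the initial $\delta$-Reifenberg flatness ruling out "holes" on the opposite side. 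This is precisely vanishing Reifenberg flatness.

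The core is the equivalence (a) $\Leftrightarrow$ (b), which I would address via a common blow-up / compactness scheme. Rescaling $\Omega$ at $x \in \partial\Omega$ by $r_j \to 0$ produces pseudotangent domains $\Omega_\infty$; by chord-arc and $\delta$-Reifenberg control, $\Omega_\infty$ is NTA, AD-regular, and Reifenberg flat, and the suitably normalized harmonic measures converge weakly to $\omega_{\Omega_\infty}$. For (a) $\Rightarrow$ (b): the $\vmo$ hypothesis on $\log k_\omega$ (where $k_\omega = d\omega/d\sigma$) forces the limit Poisson kernel on $\partial\Omega_\infty$ to be constant on every compact subset. A rigidity statement then identifies the only Reifenberg flat NTA domains with constant Poisson kernel as half-spaces, so every pseudotangent of $\Omega$ at $\sigma$-a.e. $x$ is a half-space, yielding existence of $N$ $\sigma$-a.e.; a quantitative version of the same argument, tracking the rate at which mean oscillations decay with scale, promotes this to $N \in \vmo(\sigma)$. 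For (b) $\Rightarrow$ (a): first apply the already-proven (b) $\Rightarrow$ (c), so blow-ups at each boundary point are half-spaces with a prescribed normal. On a half-space the Poisson kernel is an explicit constant, so the rescaled kernels converge to constants; a uniform variant yields small mean oscillation of $\log k_\omega$ on all sufficiently small surface balls, i.e.\ $\log k_\omega \in \vmo(\sigma)$.

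I expect the main obstacle to be (a) $\Rightarrow$ (b): extracting the geometric regularity of $\partial\Omega$ from purely analytic data on $\omega$. The delicate ingredients are (i) the rigidity theorem characterizing half-spaces as the only Reifenberg flat NTA pseudotangents with constant Poisson kernel, and (ii) the quantitative propagation of $\vmo$ control from $\log k_\omega$ down to $N$, which requires careful comparison between harmonic measure, surface measure, and Green function on perturbed domains where only the Reifenberg flat and chord-arc hypotheses are available. Once these are secured, the other implications follow as softer variants of the same compactness argument.
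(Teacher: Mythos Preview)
This statement is Theorem A in the paper, and the paper does \emph{not} prove it: it is quoted as a known result of Kenig and Toro, assembled from \cite{Kenig-Toro-duke}, \cite{Kenig-Toro-annals}, \cite{Kenig-Toro-AENS}, and used as background and as a tool (specifically in the implications (c) $\Rightarrow$ (a) and (d) $\Rightarrow$ (a) of Theorem~\ref{teo1}). So there is no ``paper's own proof'' to compare against.

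That said, your outline is a fair sketch of the actual Kenig--Toro arguments in the cited references. The blow-up/compactness scheme you describe is exactly their framework: for (a) $\Rightarrow$ (b) the key ingredient is indeed the rigidity statement that an unbounded Reifenberg flat chord-arc domain whose Poisson kernel (with pole at infinity) is constant must be a half-space, proved in \cite{Kenig-Toro-annals}; for (b) $\Rightarrow$ (c) the passage from $N\in\vmo(\sigma)$ to vanishing Reifenberg flatness is carried out in \cite{Kenig-Toro-AENS}; and (c) $\Rightarrow$ (a) is the content of \cite[Theorem 4.2]{Kenig-Toro-annals} together with \cite[Corollary 5.2]{Kenig-Toro-duke}, which the present paper invokes directly in the proof of Lemma~\ref{lemguai1}. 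Your identification of the rigidity theorem and the quantitative propagation of $\vmo$ control as the delicate points is accurate.
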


The preceding theorem is not valid without the $\delta$-Reifenberg flatness assumption on the domain. 
Indeed, as shown in \cite[Proposition 3.1]{Kenig-Toro-annals}, the harmonic measure with pole at infinity in the cone $\Omega=
\{(x_1,x_2,x_3,x_4)\in\R^4:x_1^2+x_2^2+x_3^2<x_4^2\}$ coincides with its surface measure, while it is
clear that the inner normal does not belong to $\vmo(\sigma)$, by the singularity at the origin. By modifying suitably this domain, one can obtain a bounded chord-arc domain with similar properties.
See also \cite{AMT-apde}, where it is shown that, in $\R^3$, if both $\Omega$ and its exterior domain are NTA,
then the Reifenberg flatness assumption in Theorem A is not necessary.

Concerning the two-phase problem, by analogy with Theorem A, one should expect that the assumption that $\Omega^+$ and $\Omega^-$
 are NTA and $\delta$-Reifenberg flat and the fact that $\log\frac{d\omega^-}{d\omega^+}\in \vmo(\omega^+)$
 imply that the inner unit normal $N$ of $\Omega^+$ belongs to $\vmo(\omega^+)$.
 
Recently Engelstein \cite{Engelstein} proved (among other results) that if one strengthens the $\vmo$ condition on $\omega^+$ by 
asking $\log\frac{d\omega^-}{d\omega^+}\in C^\alpha$ for some $\alpha>0$ (still under 
the $\delta$-Reifenberg flat assumption) then the inner unit normal $N$ of $\Omega^+$ belongs to $C^\alpha$ and $\Omega^+$ is a $C^{1+\alpha}$ domain. His work uses
Weiss type monotonicity formulas, among other tools, and such methods cannot be applied to the $\vmo$ case, as far as
we know. Remark that if one does not impose the Reifenberg flat condition on $\Omega^+$, then there may be singular points in the boundary
$\partial\Omega^+$, i.e.\ points whose blowups are not flat (still under the assumption $\log\frac{d\omega^-}{d\omega^+}\in C^\alpha$).
In the recent papers \cite{BET1} and \cite{BET2} Badger, Engelstein and Toro have obtained very remarkable results about the structure of the singular set. First, in \cite{BET1} they have proven the existence of some 
stratification theorem for the singular set (just assuming that $\log\frac{d\omega^-}{d\omega^+}\in \vmo(\omega^+)$), and 
then in \cite{BET2} under the stronger condition $\log\frac{d\omega^-}{d\omega^+}\in C^\alpha$ they have proven the uniqueness of the blowup at singular points. Again their methods rely strongly on the use of Weiss type monotonicity formulas which, apparently, are not useful in the $\vmo$ case. It is also
worth mentioning that the work \cite{AMT-quantcpam} contains a  quantitative version of the
 solution of the two-phase problem in \cite{AMT-cpam} and \cite{AMTV} described above.

In the current paper we obtain a {characterization} of the condition
$\log\frac{d\omega^-}{d\omega^+}\in\vmo(\omega^+)$ in terms of the oscillation of the unit normal of the boundary and other geometric conditions. Our precise result is the following.

\begin{theorem}\label{teo1} 
Let $\Omega^+\subset\R^{n+1}$ be a bounded NTA domain and let $\Omega^-= \R^{n+1}\setminus \overline{\Omega^+}$ be an NTA domain as well.
Denote by $\omega^+$ and $\omega^-$ the respective harmonic measures with poles $p^+\in\Omega^+$ and $p^-\in\Omega^-$. Suppose that $\Omega^+$ is a $\delta$-Reifenberg flat domain, with $\delta>0$ small enough.
Then the following conditions are equivalent:
\begin{itemize}
\item[(a)] $\omega^+$ and $\omega^-$ are mutually absolutely continous and $\log\dfrac{d\omega^-}{d\omega^+} \in \vmo(\omega^+).$

\item[(b)] $\Omega^+$ is vanishing Reifenberg flat, the inner normal $N$ to $\partial\Omega^+$ exists $\omega^+$-a.e. and it belongs to $\vmo(\omega^+)$, and
both
$\dfrac{d\omega^-}{d\omega^+}\in B_{3/2}(\omega^+)$ and $\dfrac{d\omega^+}{d\omega^-}\in B_{3/2}(\omega^-)$.

\item[(c)] $\Omega^+$ is vanishing Reifenberg flat, the inner normal $N$ to $\partial\Omega^+$ exists $\omega^+$-a.e. and it belongs to $\vmo(\omega^+)$, and
either 
$\dfrac{d\omega^-}{d\omega^+}\in B_{3/2}(\omega^+)$ or $\dfrac{d\omega^+}{d\omega^-}\in B_{3/2}(\omega^-)$.

\item[(d)] $\Omega^+$ is vanishing Reifenberg flat,  { $\Omega^+$ and $\Omega^-$ have joint big pieces of chord-arc subdomains}, and 
\begin{equation}\label{eqnb*}
\lim_{\rho\to 0} \sup_{r(B)\leq \rho} \avint_B |N - N_{B}|\,d\omega^+ = 0, 
\end{equation}
where $N_B$ is the normal to the $n$-plane $L$ pointing to $\Omega^+$ and minimizing 
$$
\max\Big\{\sup_{y\in \partial\Omega\cap B}\dist(y,L), \sup_{y\in L\cap B}\dist(y,\partial\Omega)\Big\}.$$
\end{itemize}
\end{theorem}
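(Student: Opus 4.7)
The plan is to close the cycle $(a)\Rightarrow(b)\Rightarrow(c)\Rightarrow(d)\Rightarrow(a)$, where $(b)\Rightarrow(c)$ is immediate from the definitions. For $(a)\Rightarrow(b)$, the vanishing Reifenberg flatness of $\Omega^+$ is already the theorem of Kenig and Toro in \cite{Kenig-Toro-crelle}. The two $B_{3/2}$ reverse H\"older conditions then follow by a John--Nirenberg argument applied to the $\vmo$ hypothesis: since $\omega^\pm$ are doubling on NTA domains, $\log(d\omega^-/d\omega^+)\in\vmo(\omega^+)$ together with the exponential mean-oscillation estimate forces $d\omega^-/d\omega^+\in B_p(\omega^+)$ for every finite $p$, and symmetrically for $d\omega^+/d\omega^-$. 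The hard part of $(a)\Rightarrow(b)$ is producing the inner normal $\omega^+$-a.e.\ and showing $N\in\vmo(\omega^+)$. My plan is a quantitative blow-up argument: by the Kenig--Preiss--Toro analysis in \cite{KPT09}, at $\omega^+$-almost every boundary point every tangent pair of $(\omega^+,\omega^-)$ is a flat pair along a common hyperplane, and this hyperplane determines the direction of $N$; the $\vmo$ hypothesis then upgrades pointwise convergence of blow-ups to a uniform modulus on small balls, so the mean oscillation of $N$ at scale $r$ tends to zero as $r\to 0$.

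For $(c)\Rightarrow(d)$, vanishing Reifenberg flatness combined with $N\in\vmo(\omega^+)$ allows one to straighten $\partial\Omega^+$ by Lipschitz graphs of arbitrarily small slope on a large portion of every boundary ball. The one-sided $B_{3/2}$ condition provided by $(c)$, combined with the $A_\infty$-duality between $\omega^+$ and $\omega^-$, transfers a lower $\omega^\pm$-mass bound on the straightened set into a lower $\HH^n$-bound, from which one constructs chord-arc subdomains $D^+\subset\Omega^+$ and $D^-\subset\Omega^-$ that share a big piece of $\partial\Omega^+$; this is precisely the joint big pieces condition. The oscillation statement \rf{eqnb*} is then immediate from $N\in\vmo(\omega^+)$ and the identification of $N_B$ as the normal of the Reifenberg approximating plane at $B$.

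For $(d)\Rightarrow(a)$, the strategy is to apply Theorem A on the chord-arc subdomains. On $D^\pm$, vanishing Reifenberg flatness and the $\vmo$ character of the normal inherit to the relevant portion of $\partial D^\pm$, so Theorem A yields $\log(d\omega_{D^\pm}/d\HH^n|_{\partial D^\pm})\in\vmo$ there. Dahlberg's change-of-pole and comparison principles on NTA domains identify, up to multiplicative factors that are continuous in the interior, the harmonic measures of $D^\pm$ with $\omega^\pm$ on the common boundary piece; subtracting the two $\vmo$ identities gives $\log(d\omega^-/d\omega^+)\in\vmo$ on the big piece. Iterating over scales via the joint big pieces hypothesis and combining with a standard covering argument then propagates this local estimate to the global $\vmo(\omega^+)$ conclusion.

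The main obstacle I anticipate is $(a)\Rightarrow(b)$, specifically the construction of the inner normal $\omega^+$-a.e.\ from the $\vmo$ hypothesis alone. This step is genuinely two-phase: there is no reference surface measure to test against as in the one-phase Theorem A, and the Weiss-type monotonicity formulas that drive the recent $C^\alpha$ results are unavailable in the $\vmo$ category. Everything must be extracted from the relative geometry of $\omega^+$ and $\omega^-$ by a careful quantitative blow-up analysis combined with the Kenig--Preiss--Toro structure theorem.
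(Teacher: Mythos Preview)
Your cycle differs from the paper's (the paper does $(a)\Rightarrow(b)$, $(b)\Rightarrow(c)$ trivially, $(c)\Rightarrow(a)$, and separately $(b)\Rightarrow(d)\Rightarrow(a)$), but that is not the issue. There are three genuine gaps.

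\textbf{The step $(c)\Rightarrow(d)$ is not immediate.} You assert that \rf{eqnb*} ``is then immediate from $N\in\vmo(\omega^+)$ and the identification of $N_B$ as the normal of the Reifenberg approximating plane.'' This is precisely what is \emph{not} known: $N\in\vmo(\omega^+)$ gives $\avint_B|N-m_{B,\omega^+}N|\,d\omega^+\to 0$, whereas \rf{eqnb*} requires replacing the $\omega^+$-average $m_{B,\omega^+}N$ by the geometric approximate normal $N_B$. The paper explicitly remarks (after the theorem statement) that it is open whether these two conditions are equivalent. Bridging the gap --- i.e.\ showing $|N_B-m_{B,\omega^+}N|\to 0$ --- is the paper's Key Lemma (Lemma~\ref{keylemma}), whose proof occupies an entire subsection and uses the $B_{3/2}(\omega^-)$ hypothesis in an essential way via the ACF monotonicity formula and a delicate integral identity for modified Riesz kernels $K_2^\pm(x,y)=K(x\pm H(y)-y)$ against $\omega^{p_B}$.

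\textbf{Your $(d)\Rightarrow(a)$ scheme does not route through Theorem~A as stated.} Theorem~A requires the chord-arc domain itself to be $\delta$-Reifenberg flat and its normal to be in $\vmo(\sigma)$ on its entire boundary. The chord-arc subdomains $D^\pm$ furnished by the joint-big-pieces condition need not be Reifenberg flat, and you have no control of $N_{\partial D^\pm}$ away from $\partial D^\pm\cap\partial\Omega^+$. The paper instead constructs specific approximating domains $\Omega_b^\pm,\Omega_s^\pm$ by attaching balls along a Whitney-type cover of the complement of a good set $G(B_0)$; these are shown to be $c\tau_0^{1/2}$-Reifenberg flat with $n$-AD-regular boundary, and their normals are proved to have small $\bmo(\sigma)$ norm (this is exactly where \rf{eqnb*}, or equivalently the Key Lemma, is consumed). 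One then applies the quantitative Kenig--Toro theorem to $\Omega_s^+$ and $\Omega_b^-$ and transfers back to $\omega^\pm$ via the maximum principle and a reverse-H\"older argument, not via Dahlberg comparison on a big piece.

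\textbf{Your $(a)\Rightarrow(b)$ plan is not a proof.} The sentence ``the $\vmo$ hypothesis then upgrades pointwise convergence of blow-ups to a uniform modulus'' hides the entire difficulty: KPT gives tangent flatness but no rate, and there is no a priori mechanism by which $\vmo$ of $\log h$ controls oscillation of $N$ through blow-ups. The paper's argument is completely different: it uses the jump formula $\RR^+\omega^+ - \RR^-\omega^+ = c_n\Theta^n(\cdot,\omega^+)N$ on the rectifiable tangent set furnished by \cite{AMT-cpam}, proves that $\Theta^n(\cdot,\omega^+)N$ has small oscillation on a good set $\wt G_{\Lambda Q}$ by exploiting $\RR^-\omega^+ = K(\cdot-p^+)$ and $\RR^+\omega^- = K(\cdot-p^-)$, and then --- crucially --- shows that the density $\Theta^n(\cdot,\omega^+)$ does not degenerate on a big piece by an \emph{iterated} application of the Girela-Sarri\'on--Tolsa rectifiability criterion (Theorem~\ref{teo-gt}). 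Without this non-degeneracy step one cannot divide by $\Theta^n(\cdot,\omega^+)$ to isolate $N$.
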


\vv
Some remarks are in order. First, 
recall that $B_{3/2}(\omega^+)$ is the class of functions satisfying a reverse $3/2$-H\"older inequality
with respect to $\omega^+$.
So $\frac{d\omega^-}{d\omega^+}\in B_{3/2}(\omega^+)$ means that for any ball $B$ centered { in} $\partial
\Omega^+$,
$$\avint_B \bigg(\dfrac{d\omega^-}{d\omega^+}\bigg)^{3/2}\,d\omega^+\leq C\,\bigg(\frac{\omega^-(B)}{\omega^+(B)}\bigg)^{3/2},$$
for some fixed constant $C>0$.
Second, 
the assumption $\log\frac{d\omega^-}{d\omega^+} \in \vmo(\omega^+)$ implies that $\omega^-\in A_\infty(\omega^+)$ (see Lemma \ref{lem3.1}), which in turn
is equivalent to the fact that { $\Omega^+$ and $\Omega^-$ have joint big pieces of chord-arc subdomains}, and also to the fact that 
both $\omega^+$ and $\omega^-$ have big pieces of uniformly rectifiable measures, as shown recently in \cite{AMT-quantcpam} (for this result to hold, the Reifenberg flatness condition on $\Omega^+$ is not required; one only needs $\Omega^+$ and $\Omega^-$ to be NTA domains).
{ See Section \ref{secprelim-nta} for the notion of joint big pieces of chord-arc subdomains,} and
\rf{equr00'} for the definition of big pieces of uniformly rectifiable measures.
In fact, the condition $\log\frac{d\omega^-}{d\omega^+} \in \vmo(\omega^+)$ together with the John-Nirenberg inequality implies that $\frac{d\omega^-}{d\omega^+}\in B_{p}(\omega^+)$ for any $p\in (1,\infty)$, and in particular for $p=3/2$. So our
contribution in the implication (a) $\Rightarrow$ (b) is that the unit normal $N$ belongs to
$\vmo(\omega^+)$ under the condition (a) (recall that the vanishing Reifenberg flatness follows from \cite{Kenig-Toro-crelle}). 

Notice that the condition (d) is essentially of geometric nature, providing a geometric characterization of when $\log\dfrac{d\omega^-}{d\omega^+} \in \vmo(\omega^+)$, which is a property of more analytic nature.
The vector $N_B$ should be considered as an approximate normal to $\partial\Omega$ in the ball $B$.
We do not know if the condition \rf{eqnb*} in (d) can be replaced by the (a priori) weaker condition $N\in
\vmo(\omega^+)$, or equivalently, if $N_B$ can be replaced by the mean value of $N$ with respect to
$\omega^+$ in $B$.

Finally we remark that again the Reifenberg flatness condition on the domain is necessary in the theorem. This can be easily seen by taking a suitable smooth truncation of the cone $\Omega^+ = \{x_1,x_2,x_3,x_4)\in\R^4:x_1^2+x_2^2 < x_3^2+x_4^2\}$, for which the harmonic measures $\omega^+$ and $\omega^-$ with pole at $\infty$ coincide.

It is worth comparing Theorem \ref{teo1} to Theorem A. 
Notice the analogies between the respective
equivalences (a) $\Leftrightarrow$ (c) in both theorems. The main difference is the presence of 
the $B_{3/2}(\omega^+)$ condition in (c) in our result. 
The condition (d) in Theorem \ref{teo1} also looks similar to (c) in Theorem A.  Indeed, in  Theorem A, $\Omega$ is assumed to be a chord-arc domain, and
the condition on the joint big pieces of chord-arc subdomains
 that appears in (d) in Theorem \ref{teo1} holds trivially in the particular case when $\Omega^\pm$ 
 are chord-arc domains. Further, remark that under the assumptions of Theorem A, when $\Omega$ is Reifenberg flat, the condition $N\in\vmo(\sigma)$ is equivalent to 
$$\lim_{\rho\to 0} \sup_{r(B)\leq \rho} \avint_B |N - N_{B}|\,d\sigma = 0, 
$$
which is the analogue of \rf{eqnb*}.

When $\Omega^+$ is a chord-arc domain we derive the following corollary.

\begin{coro}\label{coro2}
Let $\Omega^+$ and $\Omega^-$ and their harmonic measures satisfy the assumptions of Theorem \ref{teo1} and assume that $\log\dfrac{d\omega^-}{d\omega^+} \in \vmo(\omega^+)$.
 Suppose in addition that $\partial\Omega^+$ is $n$-AD regular. Then,
$$N\in\vmo(\HH^n|_{\partial\Omega^+}).$$
\end{coro}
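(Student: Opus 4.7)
The plan is to combine Theorem~\ref{teo1} with the classical fact that in a chord-arc domain the harmonic measure is an $A_\infty$ weight with respect to surface measure, and then transfer the vanishing mean oscillation of $N$ from $\omega^+$ to $\sigma:=\HH^n|_{\partial\Omega^+}$ by exploiting that $|N|=1$ is a bounded function.

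First, I would apply the implication (a) $\Rightarrow$ (b) of Theorem~\ref{teo1}: under the hypothesis $\log\tfrac{d\omega^-}{d\omega^+}\in\vmo(\omega^+)$, we obtain that the inner normal $N$ exists $\omega^+$-a.e., that $N\in\vmo(\omega^+)$, and that $\Omega^+$ is vanishing Reifenberg flat. Since by assumption $\partial\Omega^+$ is $n$-AD regular and $\Omega^+$ is NTA, $\Omega^+$ is a chord-arc domain (in the sense recalled in the introduction), so by the classical results of David--Jerison and Semmes one has $\omega^+\in A_\infty(\sigma)$. In particular, for each $\alpha>0$ there exists $\beta=\beta(\alpha)>0$ such that for every ball $B$ centered on $\partial\Omega^+$ and every Borel $E\subset B$, $\omega^+(E)\leq \beta\,\omega^+(B)$ implies $\sigma(E)\leq \alpha\,\sigma(B)$.

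Next, I would carry out the measure transfer. Given $\ve>0$, choose $\rho>0$ from the condition $N\in\vmo(\omega^+)$ so that $\avint_B|N-N_{B,\omega^+}|\,d\omega^+<\ve$ for every ball $B$ centered on $\partial\Omega^+$ with $r(B)<\rho$, where $N_{B,\omega^+}:=\avint_B N\,d\omega^+$. Chebyshev's inequality applied to the set $E_B:=\{x\in B: |N(x)-N_{B,\omega^+}|>\sqrt{\ve}\}$ yields $\omega^+(E_B)\leq \sqrt{\ve}\,\omega^+(B)$, and the $A_\infty$ property then gives $\sigma(E_B)\leq \alpha(\sqrt{\ve})\,\sigma(B)$ with $\alpha(t)\to 0$ as $t\to 0$. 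Since $|N-N_{B,\omega^+}|\leq 2$ on $E_B$ and $\leq \sqrt{\ve}$ on $B\setminus E_B$, splitting the integral yields
$$\avint_B|N-N_{B,\omega^+}|\,d\sigma \;\leq\; 2\alpha(\sqrt{\ve})+\sqrt{\ve},$$
which tends to $0$ as $\ve\to 0$. Combining this with the elementary bound $\avint_B|N-N_{B,\sigma}|\,d\sigma\leq 2\avint_B|N-N_{B,\omega^+}|\,d\sigma$ (obtained by writing $|N_{B,\sigma}-N_{B,\omega^+}|\leq \avint_B|N-N_{B,\omega^+}|\,d\sigma$) then gives $N\in\vmo(\sigma)$.

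There is no genuine new obstacle: all the analytic content sits in Theorem~\ref{teo1} itself, and the $A_\infty$ relation $\omega^+\in A_\infty(\sigma)$ in chord-arc domains is well known. The only potentially delicate point is verifying that the transfer argument is legitimate, which it is precisely because $N$ is bounded, so that its oscillation bound of size $\ve$ on $\omega^+$ is automatically upgraded to $L^p(\omega^+)$ control for every $p<\infty$ at the cost of a power of $\ve$, which is exactly what enables the $A_\infty$ comparison to convert the $\omega^+$-smallness of $E_B$ into its $\sigma$-smallness.
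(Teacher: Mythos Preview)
Your proof is correct and follows essentially the same approach as the paper's own argument: invoke (a) $\Rightarrow$ (b) of Theorem~\ref{teo1} to get $N\in\vmo(\omega^+)$, use David--Jerison/Semmes for the $A_\infty$ relation between $\omega^+$ and $\sigma$ on chord-arc domains, and then transfer the vanishing oscillation via Chebyshev on a level set and the boundedness $|N|=1$. The only cosmetic differences are that the paper works with $L^2$ averages and the threshold $\ve_3^{1/4}$ rather than $L^1$ averages and $\sqrt{\ve}$, but the mechanism is identical.
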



We think that the analogous theorem and corollary for unbounded NTA domains with poles at $\infty$ also hold and can be obtained by similar techniques. However, for the sake of brevity we have only written the detailed arguments in our paper assuming
$\Omega^+$ to be bounded. We also point out that almost the same proof of Theorem \ref{teo1} shows that, if instead of assuming $\log\frac{d\omega^-}{d\omega^+} \in \vmo(\omega^+)$, one assumes some local $\bmo(\omega^+)$ norm  
of $\log\frac{d\omega^-}{d\omega^+}$  to be small enough, then one gets smallness for some local $\bmo(\omega^+)$ norm of the inner unit normal of
$\partial\Omega^+$. See Theorem \ref{teofifi} for the precise statement. The same happens regarding the converse statements and 
Corollary \ref{coro2}.

The proof of Theorem \ref{teo1} relies on four basic ingredients: first, it is essential to use the information provided by the solution of the two-phase problem for general domains satisfying the so-called capacity density condition (CDC) in
the work \cite{AMT-cpam} (see Theorem \ref{t:AMT} below). This result implies that, under the assumptions of Theorem \ref{teo1}, $\omega^+$ and $\omega^-$
are rectifiable measures concentrated in the set of tangent points for $\partial\Omega^+$. 

The second main ingredient for the proof of Theorem \ref{teo1} is the availability of suitable jump identities for the Riesz transform. Let us remark that Bortz and Hofmann already realized in \cite{BH} that such identities are useful in connection with two-phase problems for harmonic measure. Their work deals with a somewhat different problem: assuming that
$\partial\Omega^+$ is uniformly $n$-rectifiable and that the measure theoretic boundary of $\Omega^+$ has full surface measure
 and that both
$\log\frac{d\omega^+}{d\HH^n|_{\partial\Omega^+}}\in\vmo(\HH^n|_{\partial\Omega^+})$ and $\log\frac{d\omega^-}{d\HH^n|_{\partial\Omega^+}}\in\vmo(\HH^n|_{\partial\Omega^+})$, in \cite{BH} it is shown that $N\in\vmo(\HH^n|_{\partial\Omega^+})$. The same result had been obtained earlier by Kenig and Toro in \cite{Kenig-Toro-crelle} under the somewhat stronger assumption that $\Omega^+$ and $\Omega^-$ are chord-arc domains by using blowup methods. 
See also \cite{MMV} for a related result of
Mitrea, Mitrea, and Verdera 
involving jump identities for the Riesz transforms and the $\vmo$ character of the boundary of a domain
with $n$-AD-regular boundary.

A priori, the assumptions in Theorem \ref{teo1} do not ensure that the surface measure $\HH^n|_{\partial\Omega^+}$
is locally finite (to this end, see a related example in \cite[Section 7]{AMT-quantcpam}, which shows that there exists a planar NTA domain
whose boundary has non-$\sigma$-finite length, and
whose harmonic measure is concentrated in a dense rectifiable subset of the boundary). This does not allow the application 
of the rather classical jump formulas for Riesz transforms for chord arc domains, such as the ones from \cite{HMT}. Instead, in our work we use the more general formulas obtained recently by the second author of this paper in \cite{Tolsa-jump}, which are valid for arbitrary $n$-rectifiable
sets. 

The third main tool to prove Theorem \ref{teo1} is the rectifiability criterion for general Radon measures in terms of Riesz transforms obtained by Girela-Sarri\'on and Tolsa in \cite{GT} by using techniques inspired by the solution of
the David-Semmes problem in \cite{NToV} and the previous related work \cite{ENV}. The application of such criterion is
essential to prevent the degeneracy of the $n$-dimensional density of harmonic measure in suitable big pieces of sets in some key estimates. Notice that the use of this criterion is also one of the essential tools in
the works \cite{AMT-cpam} and \cite{AMTV}. An interesting novelty in the present paper is that to obtain the required big piece where the density of harmonic measure does not degenerate we apply that criterion in an iterated way. See also \cite{AHM3TV}, \cite{MT}, and \cite{GMT} for other applications of the solution of the David-Semmes problem to questions in connection with harmonic measure.

The fourth main ingredient of the proof is the Kenig-Toro solution of the one-phase problem for harmonic measure in Theorem A. We will apply their result in the implications (c) $\Rightarrow$ (a) and  (d) $\Rightarrow$ (a) in Theorem
\ref{teo1}. To this end, we will construct some approximating chord-arc domain following an idea from \cite{AMT-singular}. The most delicate point consists in showing that the unit normal of the new approximating domain belongs to $\bmo(\sigma)$ with a very small constant, where $\sigma$ is the surface measure of this new domain. To prove that this holds we need to use the fact that the oscillation of the unit normal
is small with respect to the approximate normal $N_B$.
It is in this key point that the $B_{3/2}(\omega^\pm)$ condition in (c) is required.
Once this is done, we will transfer the estimates for the harmonic measure in the approximating domain obtained by Theorem A to our  
original domain $\Omega^+$ by means of the maximum principle.

The plan of the paper is the following. Section \ref{secprelim} contains some preliminary results on geometric measure theory, harmonic analysis and potential theory that are used in the subsequent
arguments of the paper. Section \ref{sec3} is devoted to the implication (a) $\Rightarrow$ (b) of Theorem \ref{teo1}. The implication (b) $\Rightarrow$ (c) is trivial, while
(c) $\Rightarrow$ (a) is proven in Section \ref{secctoa}. One of its main steps is the proof of Lemma \ref{keylemma}, which deals with the oscillation of the unit normal $N$ with respect to
the approximate normal $N_B$ in a ball $B$. In the final Section we show (b) $\Rightarrow$ (d) $\Rightarrow$ (a). The arguments in this final section are very similar to the ones for the implication (c) $\Rightarrow$ (a) (and also (b) $\Rightarrow$ (a)). 

\vv
Acknowledgement. We would like to thank the referee for the careful revision of the paper.

\vv


\section{Preliminaries}\label{secprelim}

We denote by $C$ or $c$ some constants that may depend on the dimension and perhaps other fixed parameters. Their value may change at different occurrences. On the contrary, constants with subscripts, like $C_0$, retain their values.
For $a,b\geq 0$, we write $a\lesssim b$ if there is $C>0$ such that $a\leq Cb$. We write $a\approx b$ to mean $a\lesssim b\lesssim a$.

\subsection{Measures, rectifiability, and tangents}
\label{secprelim1}
All measures in this paper are assumed to be Borel measures.
A measure $\mu$ in $\R^d$ is called {\em doubling} if there is some constant $C>0$ such that
$$\mu(B(x,2r))\leq C\,\mu(B(x,r))\quad \mbox{ for all $x\in\supp\mu$.}$$
The measure $\mu$ is called {\em $n$-AD regular} (or $n$-Ahlfors-David regular) if
\begin{equation}\label{eqAD1}
C^{-1}r^n\leq \mu(B(x,r))\leq C r^n \quad \mbox{ for all $x\in\supp\mu$ and $0<r\leq \diam(\supp\mu)$.}
\end{equation}
Obviously, $n$-AD regular measures are doubling.
A set $E\subset \R^d$ is called $n$-AD regular if $\HH^n|_E$ is $n$-AD regular. In case that $\mu$ satisfies the second inequality in \rf{eqAD1}, but not necessarily the first one, we say that $\mu$ has $n$-polynomial growth.

Recall the definition of $n$-rectifiable sets in \rf{eq001}. Analogously, one says that a measure $\mu$ is 
{\em $n$-rectifiable}
if there are Lipschitz maps
$f_i:\R^n\to\R^d$, $i=1,2,\ldots$, such that 
\begin{equation}\label{eq001'}
\mu\Bigl(\R^d\setminus\textstyle\bigcup_i f_i(\R^n)\Big) = 0,
\end{equation} 
and moreover $\mu$ is absolutely continuous with respect to $\HH^n$. An equivalent definition for rectifiability of sets and measures is obtained if we replace Lipschitz images of $\R^n$ by possibly rotated $n$-dimensional graphs of $C^1$ functions.

A measure $\mu$ in $\R^d$ is called {\em uniformly $n$-rectifiable} (UR) if it is $n$-AD-regular and
there exist constants $\theta, M >0$ such that for all $x \in \supp\mu$ and all $0<r\leq \diam(\supp\mu)$ 
there is a Lipschitz mapping $g$ from the ball $B_n(0,r)$ in $\R^{n}$ to $\R^d$ with $\text{Lip}(g) \leq M$ such that
$$
\mu(B(x,r)\cap g(B_{n}(0,r)))\geq \theta r^{n}.$$
A set $E$ is called uniformly  $n$-rectifiable if the measure $\HH^n|_E$ is uniformly $n$-rectifiable.
The notion of uniform $n$-rectifiability is a quantitative version of $n$-rectifiability introduced 
by David and Semmes (see \cite{DS1}). It is very easy to check that uniform $n$-rectifiability implies $n$-rectifiability.

We say that a measure $\nu$ in $\R^d$ has 
 big pieces of uniformly $n$-rectifiable measures if there exists some $\ve\in (0,1)$ such that,
for every ball $B$ centered  {$\supp(\nu)$} 
with radius at most $\diam(\supp(\nu))$, there exists a uniformly $n$-rectifiable set $E$, with UR constants possibly depending on $\ve$, and a subset $F\subset E$ such that
\begin{equation}\label{equr00'}
\nu(B\setminus F)\leq \ve\,\nu(B)
\end{equation}
and
\begin{equation}\label{equr01'}
\nu(D)\approx_\ve \HH^n(D)\,\frac{\nu(B)}{r(B)^n}\quad \mbox{ for all $D\subset F$.}
\end{equation}

For a point $x\in\R^{n+1}$, a unit vector $u$, 
and an aperture parameter $a\in(0,1)$ we consider the one sided cone with axis in the direction of $u$ defined by
$$X_a(x,u)=\bigl\{y\in\R^{n+1}:(y-x)\cdot u> a|y-x|\bigr\}.$$
We say that $E\subset\R^{n+1}$ has a {\em tangent} $n$-plane at $x\in E$ and that $x$ is a tangent point for $E$ if there exists a unit vector $u$ such that, for all
$a\in(0,1)$, there exists some $r>0$ such that
$$E\cap \bigl(X_a(x,u) \cup X_a(x,-u)\bigr) \cap B(x,r) =\varnothing.$$
The $n$-plane $L$ orthogonal to $u$ through $x$ is called a tangent $n$-plane at $x$.

We say that $E$ has an {\em approximate tangent} $n$-plane at $x\in E$ if there exists a unit vector $u$ such that, for all
$a\in(0,1)$,
$$\lim_{r\to0} \frac{\HH^n\bigl(E\cap \bigl(X_a(x,u) \cup X_a(x,-u)\bigr)\cap B(x,r)\bigr)}{r^n}=0.$$
The $n$-plane $L$ orthogonal to $u$ through $x$ is called approximate tangent $n$-plane.
Recall that if $\HH^n(E)<\infty$ (or $\HH^n|_E$ is locally finite) and $E$ is $n$-rectifiable, then there is a unique
approximate tangent $n$-plane at $\HH^n$-a.e.\ $x\in E$. 
\vv

\subsection{The jump identities for singular integrals}

Let $K:\R^{n+1}\setminus\{0\}\to \R$ (or $K:\R^{n+1}\setminus\{0\}\to \R^{n+1}$) be an odd  Calder\'on-Zygmund kernel 
satisfying
\begin{equation}\label{eq4}
K(x) = \frac{H(x)}{|x|^n},\qquad
|\nabla^j K(x)|\leq \frac C{|x|^{n+j}}\quad \mbox{ for $j=0,1,2$ and $x\neq 0$},
\end{equation}
where $H$ is homogeneous of degree $0$ and $C^2$ in the unit sphere.
Given a signed Radon measure $\nu$ in $\R^{n+1}$ we denote
$$T\nu(x) = \int K(x-y)\,d\nu(y),$$
whenever the integral makes sense,
and for $\ve>0$
$$T_\ve\nu(x) = \int_{|x-y|>\ve} K(x-y)\,d\nu(y)$$
and also
$$\pv T\nu(x) = \lim_{\ve\to 0} T_\ve \nu(x),$$
whenever the last limit exists. In fact, as shown by Mas \cite{Mas}, it turns out that,  for any $n$-rectifiable set $E\subset\R^{n+1}$ and any  finite signed measure $\nu$ in $\R^{n+1}$ (not necessarily supported on $E$),  
 $\pv T\nu(x)$ exists for $\HH^n$-a.e.\ $x\in E$, for a kernel $K$ satisfying the estimate on the right hand side of \rf{eq4}, not necessarily homogeneous.
 
As mentioned above, 
if $\HH^n(E)<\infty$ (or $\HH^n|_E$ is locally finite) and $E$ is $n$-rectifiable, there is a unique
approximate tangent $n$-plane at $\HH^n$-a.e.\ $x\in E$. We denote by $L_x$ the approximate tangent $n$-plane at $x$
and by $N_x$  a unit vector orthogonal to $L_x$. We also write
$$X_a^+(x) = X_a(x,N_x),\qquad X_a^-(x) = X_a(x,-N_x),\qquad X_a(x) = X_a^+(x) \cup X_a^-(x).$$
For $\HH^n$-a.e.\ $x\in E$ there are two possible choices for $N_x$, depending on the sense of the normal to $L_x$. In the next theorem the choice $x\mapsto N_x$ does not matter as soon as it is $\HH^n$-measurable (or Borel, say if $E$ is Borel). 

Fix $b\in(0,a)$, so that $\bar B(y,b|y-x|)\cap L_x=\varnothing$ for all $y\in X_a^+(x)\cup X_a^-(x)$.
We define the non-tangential limits
\begin{equation}\label{eqdtt}
T^+\nu(x) = \lim_{X_a^+(x)\ni y\to x} T_{b|x-y|}\nu(y),\qquad T^-\nu(x) = \lim_{X_a^-(x)\ni y\to x} T_{b|x-y|}\nu(y),
\end{equation}
whenever they exist. Note that we use the truncated operators $T_{b|x-y|}$ in these definitions, which may appear rather unusual. On the other hand, if $x$ is a tangent point for $E$
and $\supp\nu\subset E$, then we can replace $T_{b|x-y|}\nu(y)$ by $T\nu(y)$ in the above definitions.
The following result is proved in \cite{Tolsa-jump}.

\begin{theorem}\label{teo-jump}
Let $T$ be the operator associated with an odd Calder\'on-Zygmund kernel of homogeneity $-n$, $C^2$ away from the origin, and satisfying
\rf{eq4}. Let $E\subset \R^{n+1}$ be an $n$-rectifiable set and let $\nu$ be a finite signed Radon measure in $\R^{n+1}$. For fixed $a\in
(0,1)$ and $b\in(0,a)$ as above, the non-tangential limits 
$T^+\nu(x)$, $T^-\nu(x)$, and the principal value $\pv T\nu(x)$ exist for $\HH^n$-a.e.\ $x\in E$ and moreover the following identities hold
for $\HH^n$-a.e.\ $x\in E$ too:
\begin{equation}\label{eqga}
 \frac12 \bigl(T^+\nu(x) + T^-\nu(x)\bigr)= \pv T\nu(x),
 \end{equation}
 and
\begin{equation}\label{eqgb}
\frac12 \bigl(T^+\nu(x) - T^-\nu(x)\bigr) = C_K(N_x)\,\frac{d\nu}{d\HH^n|_E}(x),
\end{equation}
where $C_K(N_x)$ is defined by
\begin{equation}\label{eqckn}
C_K(N_x)= \int_{N_x^\bot} \frac{H(y+N_x)- H(y-N_x)}{2(|y|^2 + 1)^{n/2}}\,d\HH^n(y),
\end{equation}
where $N_x^\bot$ is the hyperplane orthogonal to $N_x$ through the origin.
\end{theorem}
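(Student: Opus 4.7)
The plan is to reduce the statement, by decomposition, localization, and tangent-plane approximation, to an explicit integral computation on an $n$-plane. Write $\nu = f\,d\HH^n|_E + \nu_s$, where $f = d\nu/d\HH^n|_E$ and $\nu_s$ is the singular part of $\nu$ with respect to $\HH^n|_E$ (which includes the portion of $\nu$ supported off $E$). Fix $x\in E$ such that, all holding $\HH^n$-a.e.\ on $E$: (i) $\pv T\nu(x)$ exists (Mas's theorem \cite{Mas} recalled above); (ii) $E$ admits an approximate tangent $n$-plane $L_x$ at $x$ with unit normal $N_x$; (iii) $x$ is a Lebesgue point of $f$ with respect to $\HH^n|_E$; and (iv) $|\nu_s|(B(x,r))/r^n\to 0$ as $r\to 0$. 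For any $R>0$, the piece $\nu_s+\nu|_{B(x,R)^c}$ contributes equal values to $T^+\nu(x)$, $T^-\nu(x)$ and $\pv T\nu(x)$ (the equality being controlled by (iv) and the kernel bounds \rf{eq4}), so it cancels in the jump identity \rf{eqgb}. Sending $R\to 0$ and invoking (ii)--(iii) to replace $f\,d\HH^n|_{E\cap B(x,R)}$ by $f(x)\,d\HH^n|_{L_x\cap B(x,R)}$ with an error vanishing in the limits \rf{eqdtt}, the problem reduces to verifying \rf{eqga} and \rf{eqgb} for the model hyperplane measure $\tilde\nu = f(x)\,d\HH^n|_{L_x}$.

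On the hyperplane $L_x$, take coordinates with $x=0$ and $N_x=e_{n+1}$. For $y=tN_x$ with $t>0$ and $b\in(0,1)$, the condition $|z-y|>b|y|$ is automatic for every $z\in L_x$, and the change of variable $w=z/t$ combined with the homogeneity $K(\lambda\,\cdot)=\lambda^{-n}K(\cdot)$ yields
\begin{equation*}
T_{b|y|}\tilde\nu(y) \;=\; f(x)\int_{L_x} K(N_x - w)\,d\HH^n(w),
\end{equation*}
a value independent of $t$. The analogous identity for $y=-tN_x$ produces $-f(x)\int_{L_x} K(N_x + w)\,d\HH^n(w)$ by the oddness of $K$, and the same limits are obtained along any non-tangential approach inside $X_a^\pm(0)$ after a routine computation based on \rf{eq4}. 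Taking half the sum and half the difference, symmetrizing the two hyperplane integrals via $w\mapsto -w$ and invoking the oddness of $H$ to recover the numerator $H(y+N_x)-H(y-N_x)$ of \rf{eqckn}, one obtains $\tfrac12\bigl(T^+\tilde\nu+T^-\tilde\nu\bigr)(0)=0=\pv T\tilde\nu(0)$ by plane symmetry, and $\tfrac12\bigl(T^+\tilde\nu-T^-\tilde\nu\bigr)(0)=C_K(N_x)\,f(x)$, which are \rf{eqga} and \rf{eqgb}.

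The hard part is not this hyperplane calculation but establishing the existence of the non-tangential limits $T^\pm\nu(x)$ for $\HH^n$-a.e.\ $x\in E$, since the result of \cite{Mas} cited above provides only the existence of the symmetric principal value $\pv T\nu(x)$. One needs to control the oscillation $T_{b|x-y|}\nu(y)-T_{b|x-y|}\nu(x)$ as $y\to x$ within $X_a^\pm(x)$ and to compare $T_{b|x-y|}\nu(x)$ with $\pv T\nu(x)$ as $|x-y|\to 0$. On an $n$-rectifiable set this is handled by parameterizing $E$ locally as a Lipschitz graph over $L_x$ (which is tangent to $L_x$ at $x$ by (ii)), whereupon the desired comparisons reduce, modulo errors controlled by \rf{eq4} and the approximate-tangent property, to Lebesgue-differentiation-type statements for $\nu$ on $L_x$ along the cone, supplemented by an $L^2$-maximal bound for the truncated singular integrals on the graph. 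These technicalities, rather than the final algebraic identity, are where the rectifiability hypothesis is most seriously used.
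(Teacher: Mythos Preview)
The paper does not prove Theorem~\ref{teo-jump}; it is quoted from \cite{Tolsa-jump} without proof and used as a tool. There is therefore no proof in the present paper to compare your attempt against.

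Your outline is the standard one and matches the strategy of \cite{Tolsa-jump}: decompose $\nu$, localize, flatten to the approximate tangent plane, and compute explicitly on the hyperplane. The hyperplane calculation you carry out is correct and yields precisely the constant $C_K(N_x)$ in \rf{eqckn}. You also correctly identify that the real work lies in proving the $\HH^n$-a.e.\ existence of the non-tangential limits $T^\pm\nu(x)$, and your final paragraph gestures at the right ingredients (reduction to Lipschitz graphs, maximal-function control of the truncated operators) without supplying the details. One point that deserves a word of justification is your condition (iv): the claim that $|\nu_s|(B(x,r))/r^n\to 0$ for $\HH^n$-a.e.\ $x\in E$ is not entirely automatic, since $\nu_s$ need not be supported on $E$; it follows by restricting to pieces $E_i\subset E$ of finite $\HH^n$-measure, applying Besicovitch differentiation to the mutually singular pair $(|\nu_s|,\HH^n|_{E_i})$ to get $|\nu_s|(B(x,r))/\HH^n(E_i\cap B(x,r))\to 0$ a.e.\ on $E_i$, and then combining with the rectifiable density theorem $\HH^n(E_i\cap B(x,r))/(c_n r^n)\to 1$. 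With that granted, your sketch is sound.
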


We 
 remark that for \rf{eqckn} to hold, we assume $\HH^n$ to be defined with a normalization factor so that
it coincides with the $n$-dimensional Lebesgue measure on any hyperplane. Further, we understand that
$\frac{d\nu}{d\HH^n|_E}$ is the density of the absolute continuous part of $\nu$ with respect to $\HH^n|_E$.
In the particular case when $K$ is the  $n$-dimensional Riesz kernel, i.e.,
$K(x) = \frac x{|x|^{n+1}}$, the integrand in \rf{eqckn} coincides with $N_x$ times some multiple of the Poisson kernel and
it easily follows that 
$$
C_K(N_x) = \frac{\omega_{n}}2\,N_x,
$$
where $\omega_{n}$ is the $n$-dimensional volume of the unit sphere in $\R^{n+1}$. For additional remarks, as well as for the proof of Theorem \ref{teo-jump}, see \cite{Tolsa-jump}.
\vv

\subsection{NTA and Reifenberg flat domains}
\label{secprelim-nta}

 Given $\Omega\subset \mathbb{R}^{n+1}$,
we say that $\Omega$ satisfies the {\it Harnack chain condition} if there are positive constants $c$ and $R$ such that
for every $\rho>0$, $\Lambda\geq 1$, and every pair of points
$x,y \in \Omega$ with $\dist(x,\partial\Omega),\,\dist(y,\partial\Omega) \geq\rho$ and $|x-y|<\Lambda\,\rho\leq R$, there is a chain of
open balls
$B_1,\dots,B_m \subset \Omega$, $m\leq C(\Lambda)$,
with $x\in B_1,\, y\in B_m,$ $B_k\cap B_{k+1}\neq \varnothing$
and $c^{-1}\diam (B_k) \leq \dist (B_k,\partial\Omega)\leq c\,\diam (B_k).$  The chain of balls is called
a {\it Harnack chain}. Note that if such a chain exists, then any positive harmonic function $u:\Omega\to\R$ satisfies 
\[u(x)\approx u(y),\]
with the implicit constant depending on $m$ and $n$.
For $C\geq 2$, $\Omega$ is a {\it $C$-corkscrew domain} if for all $\xi\in \partial\Omega$ and $r\in(0,R)$ there are two balls of radius $r/C$ contained in $B(\xi,r)\cap \Omega$ and $B(\xi,r)\backslash \Omega$ respectively. If $B(x,r/C)\subset B(\xi,r)\cap \Omega$, we call $x$ an {\it (interior) corkscrew point} for the ball $B(\xi,r)$. Finally, we say that $\Omega$ is {\it $C$-non-tangentially accessible (or $C$-NTA, or just NTA)} if it satisfies the Harnack chain condition and it is a $C$-corkscrew domain.   Also, $\Omega$ is
{\it two-sided $C$-NTA} if both $\Omega$ and $\Omega_{\rm {ext}}:=(\overline{\Omega})^{c}$ are $C$-NTA. 

NTA domains were introduced by Jerison and Kenig in \cite{Jerison-Kenig}. In that work, the behavior of harmonic measure in this type of domains was studied in detail. Among other results, the authors showed that harmonic measure is doubling in NTA domains, and its support
coincides with the whole boundary.

Given a set $E\subset\R^{n+1}$, $x\in \mathbb{R}^{n+1}$, $r>0$, and $P$ an $n$-plane, we set
\begin{equation}\label{eqDE}
D_{E}(x,r,P)=r^{-1}\max\left\{\sup_{y\in E\cap B(x,r)}\dist(y,P), \sup_{y\in P\cap B(x,r)}\dist(y,E)\right\}.
\end{equation}
We also define
\begin{equation}\label{eqDE2}
D_{E}(x,r)=\inf_{P}D_{E}(x,r,P)
\end{equation}
where the infimum is over all $n$-planes $P$. For a given ball $B=B(x,r)$, we will also write
$D_{E}(B)$ instead of $D_{E}(x,r)$.
Given $\delta, R>0$, set $E$ is {\em $(\delta,R)$-Reifenberg flat} (or just $\delta$-Reifenberg flat) if $D_{E}(x,r)<\delta$ for all $x\in E$ and $0<r\leq R$, and it is {\it vanishing Reifenberg flat} if 
\[
\lim_{r\rightarrow 0} \sup_{x\in E} D_{E}(x,r)=0.\]

Let $\Omega\subset \R^{n+1}$ be an open set, and let $0<\delta<1/2$. We say that $\Omega$
is a $(\delta,R)$-Reifenberg flat domain (or just $\delta$-Reifenberg flat) if it satisfies the following conditions:
\begin{itemize}
\item[(a)] $\partial\Omega$ is $(\delta,R)$-Reifenberg flat.

\item[(b)] For every $x\in\partial \Omega$ and $0<r\leq R$, denote by $P(x,r)$ an $n$-plane that minimizes $D_{E}(x,r)$. Then one of the connected components of 
$$B(x,r)\cap \bigl\{x\in\R^{n+1}:\dist(x,P(x,r))\geq 2\delta\,r\bigr\}$$
is contained in $\Omega$ and the other is contained in $\R^{n+1}\setminus\Omega$.
\end{itemize}
If, additionally,  $\partial\Omega$ is vanishing Reifenberg flat, then $\Omega$ is said to be vanishing Reifenberg flat, too.
It is well known that if $\Omega$ is a $\delta$-Reifenberg flat domain, with $\delta$ small enough,
then it is also an NTA domain (see \cite{Kenig-Toro-duke}).

{
Given two NTA domains $\Omega^+\subset \R^{n+1}$ and $\Omega^-=\R^{n+1}\setminus \overline{\Omega^+}$, 
we say that $\Omega^+$ and $\Omega^-$ have  joint big pieces of chord-arc subdomains if for any ball $B$ centered in $\partial\Omega^+$ with radius at most $\diam \partial\Omega^+$ there are two chord-arc domains
$\Omega_{B}^s\subset \Omega^s$, with $s=+,-$, such that $\HH^n(\partial\Omega_B^1\cap 
\partial\Omega_B^2\cap B)\gtrsim r(B)^n$. Recall that
a chord-arc domain is an NTA domain $\Omega\subset\R^{n+1}$ whose surface measure $\HH^n|_{\partial\Omega}$ is $n$-AD-regular.
}

 \vv


\subsection{The space $\vmo$}

Given a Radon measure $\mu$ in $\R^{n+1}$, $f\in L^1_{loc}(\mu)$, and $A\subset \R^{n+1}$, we write
$$m_{\mu,A}(f) = \avint_A f\,d\mu=
\frac1{\mu(A)}\int_A f\,d\mu.$$
Assume $\mu$ to be doubling.
We say that $f\in \vmo(\mu)$ if
\begin{equation}\label{defvmo}
\lim_{r\rightarrow 0}\sup_{x\in \supp \mu}\,\, \avint_{B(x,r)} \left| f- m_{\mu,B(x,r)}f\,d\mu\right|^{2}d\mu =0.
\end{equation}
It is well known  that the space VMO  coincides with the closure of the set of bounded uniformly continuous functions on $\supp \mu$ in the BMO norm. 


\vv

\subsection{Dyadic lattices and densities}
To prove Theorem \ref{teo1} we will use a dyadic decomposition of $\partial\Omega^+$ to obtain the VMO estimates by iteration arguments. In \cite{Christ} Michael Christ introduced a dyadic decomposition of the support of a doubling Radon measure in certain metric spaces which in particular applies to our case. We state below the precise result applied to our particular situation
when the metric space is the boundary of the domain $\Omega^+$ in Theorem \ref{teo1} and the doubling measure is $\omega^+$.

\begin{theorem}[{\cite[Theorem 11]{Christ}}] \label{teo-christ}
Let $\Omega^+$ and $\omega^+$ be as in Theorem \ref{teo1}.
There exist a family $\DD$ of relatively open subsets of $\partial\Omega^+$  and constants $0<r_0<1$, $0<a_1,\eta, C_1,C_2<\infty$ such that $\mathcal{D}=\bigcup_{k\in\Z} \mathcal{D}_k$ with $\mathcal{D}_k=\{Q^{i}\}_{i\in I_k}$, and the following holds:
\begin{enumerate}
\item[(a)] For every $k\in\Z$ we have $\displaystyle\omega^+\bigg(\partial\Omega^+\setminus \bigcup_{Q\in\mathcal{D}_k} Q\bigg) =0$.
\item[(b)] For every $k_0\leq k_1$ and $Q_j \in \mathcal{D}_{k_j}$ for $j\in \{0,1\}$, then either $Q_1\subset Q_0$ or $Q_1\cap Q_0=\varnothing$. 
\item[(c)] For each $Q_1\in \mathcal{D}_{k_1}$ and each $k_0<k_1$ there exists a unique cube $Q_0\in \mathcal{D}_{k_0}$ such that $Q_1\subset Q_0$.
\item[(d)] For $Q\in\mathcal{D}_k$ there are $z_Q\in Q$ and balls $B_Q=B(z_Q,a_1r_0^k)$ and $\wt B_Q= B(z_Q,C_1r_0^k)$ such that $B_Q\cap\partial \Omega^+ \subset Q\subset \wt B_Q$.
\item[(e)] For $Q\in\mathcal{D}_k$ we have $\omega^+(\{x\in Q: \dist(x, \partial \Omega^+\setminus Q)\leq tr_0^k\})\leq  C_2 t^\eta\,\omega^+(Q)$ for every $t>0$. 
\end{enumerate}
\end{theorem}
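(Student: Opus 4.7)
The plan is to obtain this as a direct specialization of Christ's construction of dyadic cubes on a space of homogeneous type. The ambient space here is $(\partial\Omega^+, |\cdot|, \omega^+)$, and the crucial input is that $\omega^+$ is a doubling measure on $\partial\Omega^+$; this follows from the NTA assumption on $\Omega^+$ via the classical result of Jerison and Kenig \cite{Jerison-Kenig}, together with the fact that $\supp\omega^+=\partial\Omega^+$ in NTA domains. Once doubling is in hand, the whole construction is a metric-measure-theoretic statement that does not see the ambient Euclidean structure beyond the induced distance.

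First, I would fix a small parameter $r_0\in(0,1)$ and, for each $k\in\Z$, select a maximal $r_0^k$-separated net $\{z_i^k\}_{i\in I_k}$ in $\partial\Omega^+$. Maximality immediately gives $\partial\Omega^+ \subset \bigcup_i B(z_i^k, r_0^k)$, while separation gives $B(z_i^k, r_0^k/2)\cap B(z_j^k,r_0^k/2)=\varnothing$ for $i\ne j$. A preliminary generation $k$ decomposition is then obtained by assigning each point of $\partial\Omega^+$ to the nearest center (a Voronoi-type cell). This already yields the inner and outer ball inclusions of (d) with $a_1 = 1/2$ and $C_1 = 1$ (up to constants).

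Next, to force nesting across scales (properties (b) and (c)), one needs a parent assignment: for each $k$, fix a map $\pi_k : I_{k+1}\to I_k$ with $|z_i^{k+1}-z_{\pi_k(i)}^k|\leq r_0^k$, possible because the coarse centers form an $r_0^k$-net. One then redefines the cube $Q_i^k$ by aggregating, over all descendants $j\in I_{k+1}$ with $\pi_k(j)=i$, the preliminary cubes at level $k+1$. Iterating this construction across all scales produces a coherent nested family. Properties (a)–(d) follow from the covering, separation, and parent-assignment properties, once $r_0$ and $a_1$ are chosen sufficiently small relative to the doubling constant of $\omega^+$.

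The main obstacle, as in Christ's original argument, is the small boundary property (e). The strategy is to bound, for each $Q\in\DD_k$, the $\omega^+$-measure of the strip
\[
Q_t := \{x\in Q : \dist(x,\partial\Omega^+\setminus Q)\leq t r_0^k\}.
\]
Partition $Q_t$ according to the finest generation $k+\ell$ at which a cube through $x$ still fits well inside $Q$; the number of "close-to-boundary" cubes at generation $k+\ell$ can be controlled by a geometric series, and each such cube has $\omega^+$-measure comparable, by doubling, to $\omega^+(B(z_Q,r_0^{k+\ell}))$. Summing in $\ell$ after setting $\ell\sim \log_{1/r_0}(1/t)$ yields a decay $t^\eta$ with $\eta>0$ depending only on the doubling constant of $\omega^+$ and on $r_0$. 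This is the step where a careful quantitative use of the doubling inequality is crucial, and where all the constants of the theorem are pinned down. Since every ingredient of this argument is purely metric-measure-theoretic, invoking Christ's theorem in this setting completes the proof.
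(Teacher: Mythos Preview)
The paper does not prove this statement at all: it is quoted verbatim as Christ's theorem \cite[Theorem~11]{Christ}, with the only justification being the observation (just before the statement) that $(\partial\Omega^+,|\cdot|,\omega^+)$ is a space of homogeneous type because harmonic measure on an NTA domain is doubling with $\supp\omega^+=\partial\Omega^+$. Your proposal correctly identifies this reduction and then goes further, outlining how Christ's construction actually runs; in that sense you are supplying strictly more than the paper does, and your sketch of the net/parent-assignment mechanism behind (a)--(d) is accurate.

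One caution on your treatment of (e): the heuristic you give --- counting ``close-to-boundary'' subcubes at generation $k+\ell$ and summing a geometric series --- glosses over the genuinely delicate point of Christ's argument. Doubling alone does not force cube boundaries to be $\omega^+$-null, let alone to satisfy the quantitative $t^\eta$ decay; one has to \emph{construct} the cubes so that this holds. Christ achieves this by first proving an auxiliary lemma (his Lemma~13) that, roughly, a fixed proportion of the measure of any ball sits well inside it relative to a neighboring center, and then iterating this to show that the set of points that stay near the boundary through many generations has geometrically decaying measure. Your phrasing ``the number of close-to-boundary cubes \ldots\ can be controlled by a geometric series'' is not the right bookkeeping: it is the \emph{measure} of the persistent near-boundary set that decays, and this requires the construction, not just a count. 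Since you are ultimately invoking Christ's theorem as a black box, this does not affect the correctness of your proposal, but if you intend the sketch to stand as an argument you should point more precisely to this step.
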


We say that $Q\in\mathcal{D}_k$ is a \emph{dyadic cube} of generation $k$, and write $\ell(Q):=2C_1r_0^k$. We call $\ell(Q)$  the side length of $Q$.

Whenever we want to control the $\bmo(\omega^+)$ norm of a function, it is not enough to study $\avint_Q |f-m_{\omega^+,Q}f|^2\,d\omega^+$ in dyadic cubes $Q$, since there may be balls that are not included in any of those cubes with a comparable diameter. In the Euclidean space it is enough to take dilations of the cubes. The drawback is that these dilated cubes do not have a lattice structure. To deal with this technical issue we will take unions of neighboring cubes, so that every such union decomposes in dyadic cubes at every scale. The definition in the Euclidean space of neighboring cubes is quite simple: two cubes are neighboring if their closures intersect. In Christ's decomposition we have to be a little more careful.

We say that two cubes $Q,S\in \DD_k$ are {\em neighbors}, writing $S\in\mathcal{N}(Q)$, if $\frac32 \wt B_Q\cap \frac 32 \wt B_S\neq \varnothing$. We write
 $\mathcal{ND}_k:=\left\{\bigcup_{S\in\mathcal{N}(Q)}S\right\}_{Q\in\mathcal{D}_k}$ and $\mathcal{ND}:=\bigcup_{k} \mathcal{ND}_k$. We say that $P\in\mathcal{ND}_k$ is an extended cube of generation $k$, and write $\ell(P):=2C_1r_0^k$.

Many of our arguments on cubes will need to be applied both to the dyadic and the extended cubes.
We write $\wh\DD_k=\DD_k\cup\mathcal{ND}_k$ and $\wh\DD=\DD\cup\mathcal{ND}$. We refer as {\em cubes} to both the dyadic and the extended cubes.

We need also to introduce ``dilations'' of cubes. Given $Q\in \wh\DD_k$ and $\Lambda >1$, we write
$$\Lambda Q = \{x\in \partial\Omega^+: \dist(x,Q) < (\Lambda-1)\ell(Q)\}.$$
and $\ell( \Lambda Q ):=2C_1(\Lambda r_0)^k$. Obviously, we also define $1Q\equiv Q$.

Let $\mu$ be a Radon measure in $\R^{n+1}$. 
Given a ball $B\subset\R^{n+1}$, we denote
\begin{equation}\label{sec2.6}
\Theta_\mu(B) = \frac{\mu(B)}{r(B)^n},\qquad P_\mu(B) = \sum_{j\geq0} 2^{-j}\,\Theta_\mu(2^jB).
\end{equation}
So $\Theta_\mu(B)$ is the $n$-dimensional density of $\mu$ on $B$ and $P_\mu(B)$ is some kind of smoothened version of this density. Analogously, given $Q\in\wh\DD$ and $\Lambda\geq1$, we denote
$$\Theta_{\mu}(\Lambda Q) = \frac{\mu(\Lambda Q)}{\ell(\Lambda Q)^n}\qquad P_\mu(\Lambda Q) = \sum_{j\geq0} 2^{-j}\,\Theta_\mu(2^j\Lambda Q).$$ 
\vv


\subsection{The two-phase problem for harmonic measure in domains satisfying the CDC} \label{sec2cdc}
 

For $n\geq2$, let $\Omega\subset\R^{n+1}$ be open.
We say that the {\em capacity density condition} (or CDC) holds if there exists constants $c(\Omega),R(\Omega)>0$ such that  
$$\capp(B(x,r)\cap \Omega^c)\geq c(\Omega)\,r^{n-1}\quad\mbox{ for all $x\in \partial\Omega$ and $0<r\leq R(\Omega)$,}
$$
where $\capp$ stands for the Newtonian capacity.
By the corkscrew condition, any NTA domain satisfies the CDC.

\vv

Next we record the precise result from \cite{AMT-cpam} regarding the solution of the two-phase problem for harmonic measure for domains satisfying the CDC: 

\begin{theorem}\label{t:AMT}
For $n\geq 2$, let $\Omega^+\subset \R^{n+1}$ be  open and let $\Omega^- = \bigl(\,\overline{\Omega^+}\,\bigr)^c$. Assume that $\Omega^+,\Omega^-$ are both connected, satisfy the CDC, and $\partial\Omega^+ = \partial\Omega^-$.
Let $\omega^\pm$ be the respective harmonic measures of $\Omega^\pm$.
Let $E\subset\partial\Omega^+$ be such that $\omega^+$ and $\omega^-$ are mutually absolutely continuous in $E$.
Then there exists an $n$-rectifiable subset $E'\subset E$ such that all points from $E'$ are tangent points for
$\partial\Omega^+$ and $E'$ has full harmonic measure in $E$ (that is, $\omega^+(E\setminus E') = \omega^-(E\setminus E')=0$), and moreover $\omega^+$, $\omega^-$, and $\HH^n$ are mutually absolutely continuous on $E'$.
\end{theorem}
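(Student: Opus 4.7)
The plan combines three ingredients: (i) standard potential-theoretic comparisons between harmonic measure and Green's functions in CDC domains; (ii) a distributional identity exhibiting the gradient of $u=G^+-G^-$ as an $n$-dimensional vector Riesz transform of $\omega^+-\omega^-$; and (iii) the rectifiability criterion for Radon measures of finite upper $n$-density whose Riesz transform admits principal values almost everywhere, namely the Nazarov--Tolsa--Volberg solution of the David--Semmes problem together with its pointwise refinement (cited in this paper as \cite{NToV} and \cite{ENV}).

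First I would use the CDC comparability of $\omega^\pm$ and $G^\pm$ at corkscrew points: $\omega^\pm(B(\xi,r))\approx r^{n-1} G^\pm(p^\pm, x_B^\pm)$ whenever $x_B^\pm$ is a corkscrew point for $B=B(\xi,r)$ and $\xi\in\partial\Omega^\pm$. Combining this with the assumed mutual absolute continuity on $E$ and an exhaustion argument, one passes to a Borel subset $E_0\subset E$ with $\omega^+(E\setminus E_0)=0$ on which both upper densities $\Theta^{*,n}(\omega^\pm,\cdot)$ are finite and the Radon--Nikodym derivative $h=d\omega^-/d\omega^+$ takes values in a fixed compact subinterval of $(0,\infty)$.

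Next, extend each $G^\pm$ by zero outside $\Omega^\pm$ and set $u=G^+-G^-$. Then $u$ is harmonic off $\partial\Omega^+\cup\{p^\pm\}$, and up to a dimensional constant its distributional Laplacian equals the signed measure $\omega^+-\omega^-$ supported on $\partial\Omega^+$, the two harmonic measures being encoded in the one-sided normal derivatives of $G^\pm$. Convolving with the gradient of the Newton kernel yields
\[
\nabla u(x) = c_n\,\RR(\omega^+-\omega^-)(x)\qquad \text{for every } x\notin\partial\Omega^+\cup\{p^\pm\},
\]
where $\RR$ is the vector $n$-Riesz transform. Boundary Harnack arguments in NTA-like domains bound $|\nabla u|$ non-tangentially by the local density of $\omega^\pm$, so $\RR(\omega^+-\omega^-)$ has finite non-tangential limits at $\omega^+$-a.e.\ point of $E_0$. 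Since $h$ is pinched in a compact subinterval of $(0,\infty)$ on $E_0$, a truncation and covering argument exploiting the finite upper density converts this into the existence of $\pv\RR\omega^+(x)$ for $\omega^+$-a.e.\ $x\in E_0$.

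The rectifiability criterion \cite{NToV,ENV} then implies that $\omega^+|_{E_0}$ is $n$-rectifiable and, in particular, absolutely continuous with respect to $\HH^n$. Through the pinching of $h$ the measures $\omega^+$, $\omega^-$, and $\HH^n$ become mutually absolutely continuous on $E_0$. Finally, $E'$ is obtained by further restriction to tangent points: a rectifiable measure has an approximate tangent $n$-plane almost everywhere, and the two-sided corkscrew condition supplied by the CDC on both $\Omega^+$ and $\Omega^-$ upgrades an approximate tangent to a genuine tangent point, because a spike of $\partial\Omega^+$ lingering in a cone would force $d\omega^\pm/d\HH^n$ to degenerate on one side and contradict the density pinching on $E_0$. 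The step I expect to be the main obstacle is the conversion of non-tangential control of $|\nabla u|$ into the pointwise existence of $\pv\RR\omega^+$: this is where delicate truncation estimates tailored to the polynomial growth of $\omega^+|_{E_0}$ and a careful handling of the cancellation between $\RR\omega^+$ and $\RR\omega^-$ are indispensable, and it is precisely this step that makes the mutual absolute continuity hypothesis bite.
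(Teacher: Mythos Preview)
This theorem is not proved in the present paper; it is quoted from \cite{AMT-cpam} as a preliminary ingredient. That said, your outline has genuine gaps when measured against the actual argument in \cite{AMT-cpam}. The most serious is the rectifiability criterion you invoke: neither \cite{NToV} nor \cite{ENV} asserts that a Radon measure of finite upper $n$-density for which $\pv\RR\mu$ exists $\mu$-a.e.\ is $n$-rectifiable. The result in \cite{NToV} concerns $n$-AD-regular measures with \emph{bounded} Riesz transform (concluding uniform rectifiability), and \cite{ENV} is about unboundedness of the $s$-Riesz transform for $1<s<2$. The tool actually used in \cite{AMT-cpam} is the quantitative criterion of Girela-Sarri\'on--Tolsa (Theorem~\ref{teo-gt} here), which requires not mere existence of principal values but \emph{small mean oscillation} of $\RR\mu$ on a large good subset of a ball, together with $P_\mu(B)\lesssim\Theta_\mu(B)$, $\beta$-flatness, and pointwise control of $\MM_n\mu+\RR_*\mu$. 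These hypotheses are manufactured in \cite{AMT-cpam} via a stopping-time decomposition and the ACF monotonicity formula, and the criterion is applied \emph{iteratively} to exhaust the set $E$ by big pieces of uniformly rectifiable sets.

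Two further issues: CDC domains need not have interior corkscrew points, so the two-sided comparability $\omega^\pm(B)\approx r^{n-1}G^\pm(p^\pm,x_B^\pm)$ you start from is not available in this generality; the density control in \cite{AMT-cpam} comes instead from the ACF monotonicity formula, which bounds the product $\Theta_{\omega^+}(B)\,\Theta_{\omega^-}(B)$ and is then combined with the pinching of $h$ on a good set. Finally, your upgrade from approximate tangent $n$-plane to genuine tangent point is too informal; in \cite{AMT-cpam} this is a separate blow-up argument that exploits the structure already obtained from the big-pieces step, not a soft density-degeneration claim.
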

\vv

Consider now the particular case of two NTA domains $\Omega^+,\Omega^-\subset\R^{n+1}$ satisfying the assumptions of Theorem \ref{teo1}.
By Theorem \ref{t:AMT} there exists a Borel set $F\subset\partial\Omega^+$ satisfying the following:
\begin{itemize}
\item[(a)] $F$ is $n$-rectifiable, has full harmonic measure (both for $\omega^+$ and $\omega^-$), and $\omega^+$, $\omega^-$, and $\HH^n$ are mutually absolutely continuous on $F$,

\item[(b)] all points from $F$ are tangent points for $\partial\Omega^+$, and 
\item[(c)] $F$ is is dense in $\partial\Omega^+$.
\end{itemize}
The first two statements are a direct consequence of Theorem \ref{t:AMT}, while the last one follows from the fact that 
$\supp\omega^+=\supp\omega^- = \partial\Omega^+$.

By the definition of tangent points, for every $x\in F$ and $a\in (0,1)$, there exists some $r=r(a,x)$ such 
that
$$X_a^+(x,N_x,r) \cup X_a^-(x,N_x,r)\subset \Omega^+\cup\Omega^-,$$
where $N_x$ is one of the two possible choices of the normal to the tangent $n$-plane of $\partial\Omega^+$ at $x$, and to shorten notation we wrote
$$X_a^\pm(x,N_x,r):= X_a^\pm(x,N_x)\cap B(x,r).$$
 Assume 
$a\ll1$ (to be chosen in a moment depending on the NTA constant of $\Omega^\pm$).
By connectedness, either $X_a^+(x,N_x,r)\subset \Omega^+$ or $X_a^+(x,N_x,r)\subset\Omega^-$, and the same happens for
$X_a^-(x,N_x,r)$. On the other hand, $X_a^+(x,N_x,r)$ and $X_a^-(x,N_x,r)$ cannot be both contained in
$\Omega^+$, because otherwise $\Omega^-\cap B(x,r)\subset \R^{n+1}\setminus X_a(x,N_x,r)$, which would violate the interior corkscrew
condition for $\Omega^-$ assuming $a$ small enough. Analogously, $X_a^+(x,N_x,r)$ and $X_a^-(x,N_x,r)$ are not both contained in $\Omega^-$. Thus, we may (and will) assume, by interchanging $N_x$ by $-N_x$ if necessary, that
\begin{equation}\label{eqnormal*}
X_a^+(x,N_x,r)\subset \Omega^+\quad \text{ and }\quad X_a^-(x,N_x,r)\subset \Omega^-.
\end{equation}
Further, it is immediate to check that once $N_x$ is chosen so that this happens, then the same property will hold for all 
cones with arbitrary aperture $a\in(0,1)$ and small enough radius.

\vv


\subsection{Riesz transform and jump identities}\label{secjump}

 Given a signed Radon measure $\nu$ in $\R^{d}$ we consider the $n$-dimensional Riesz
transform
$$\RR\nu(x) = \int \frac{x-y}{|x-y|^{n+1}}\,d\nu(y),$$
whenever the integral makes sense (for example, when $\nu$ has bounded support and $x\not\in \supp \nu$). For $\ve>0$, the $\ve$-truncated Riesz transform is given by 
$$\RR_\ve \nu(x) = \int_{|x-y|>\ve} \frac{x-y}{|x-y|^{n+1}}\,d\nu(y),$$
and we set
$$\RR_{*} \nu(x)= \sup_{\ve>0} |\RR_\ve \nu(x)|.$$
If $\mu$ is a fixed Radon measure and $f\in L^1_{loc}(\mu)$, we also write
$$\RR_\mu f = \RR(f\mu),\quad \RR_{\mu,\ve} f = \RR_\ve(f\mu),\quad \RR_{\mu,*} f= \RR_{*} (f\mu),$$
whenever these notions make sense. We say that $\RR_\mu$ is bounded in $L^2(\mu)$ if the
operators $\RR_{\mu,\ve}$ are bounded in $L^2(\mu)$ uniformly on $\ve>0$. Recall that if $\mu$ has $n$-polynomial growth and $\RR_\mu$ is bounded in $L^2(\mu)$, then $\RR$ is bounded from the
space of finite signed Radon measures $M(\R^d)$ to $L^{1,\infty}(\mu)$ (with its norm bounded above depending on $\sup_{\ve>0}\|\RR_{\mu,\ve}\|_{L^2(\mu)\to L^2(\mu)}$ and the $n$-polynomial growth of $\mu$).
This means that, for every
$\nu\in M(\R^d)$ and every $t>0$,
$$\mu\big(\{x\in\R^{d}:|\RR_\ve\nu(x)|>t\}\big)\leq C\frac{\|\nu\|}t,$$ 
with $C$ uniform on $\ve>0$. Recall also that if $E\subset \R^d$ is $n$-rectifiable, then
the principal values 
$$\pv \RR\nu(x) = \lim_{\ve\to0}\RR_\ve\nu(x)$$
exist for $\HH^n$-a.e.\ $x\in E$. See \cite[Chapters 2 and 8]{Tolsa-llibre} for the detailed proofs of the latter results, for example. Abusing notation, we will also write $\RR\nu(x)$ instead of $\pv \RR\nu(x)$.

Assume now that we are under the assumptions of Theorem \ref{teo1}. Let $F$ be the set of tangent points
for $\partial\Omega^+$ described just after Theorem \ref{t:AMT}. Recall that this set is $n$-rectifiable, has full measure for $\omega^+$ and $\omega^-$, and both $\omega^+$ and $\omega^-$ are mutually absolutely continuous with $\HH^n$ on $F$. Consider an arbitrary Borel subset $F'\subset F$ such that $\HH^n(F')
<\infty$. From the definition, it is clear that the tangent points for $\partial\Omega^+$ that belong to $F'$ are also tangent points for $F'$. By the discussion at the end of 
Section \ref{sec2cdc}, to each $x\in F'$ we can assign the inner normal vector
$N_x$ to $\partial\Omega^+$ so that \rf{eqnormal*} holds for $r>0$ small enough.
Then, according to Theorem \ref{teo-jump}, for $\HH^n$-a.e.\ $x\in F'$ (and thus for $\omega^+$-a.e. and $\omega^-$-a.e.\ $x\in F'$) and any signed Radon measure $\nu$, the non-tangential limits $\RR^+\nu(x)$ and $\RR^-\nu(x)$ defined in \rf{eqdtt} 
exist and satisfy
\begin{equation}\label{eqariesz}
 \frac12 \bigl(\RR^+\nu(x) + \RR^-\nu(x)\bigr)= \pv \RR\nu(x)\equiv \RR\nu(x),
 \end{equation}
 and
\begin{equation}\label{eqbriesz}
\RR^+\nu(x) - \RR^-\nu(x) = \omega_{n} \frac{d\nu}{d\HH^n|_{F'}}(x)\,N_x,
\end{equation}
 taking into account that $C_K(N_x) = \frac{\omega_{n}}2\,N_x$
for the Riesz kernel. Observe now that, since $F'$ is $n$-rectifiable and has finite
$\HH^n$ measure,
$$\lim_{r\to0} \frac{\HH^n(F'\cap B(x,r))}{c_n r^n} = 1\quad\mbox{ for $\HH^n$-a.e.\ $x\in F'$,}$$
where $c_n$ is the $n$-dimensional volume of the unit ball in $\R^n$.
As a consequence, for $\HH^n$-a.e.\ $x\in F'$,
$$\frac{d\nu}{d\HH^n|_{F'}}(x) = \lim_{r\to0} \frac{\nu(B(x,r))}{\HH^n(F'\cap B(x,r))} = 
\lim_{r\to0} \frac{\nu(B(x,r))}{c_n r^n} =: c_n^{-1}\,\Theta^n(x,\nu),$$
where the last identity consists of the definition of the $n$-dimensional density of $\nu$ at $x$.
So we can rewrite \rf{eqbriesz} as follows, for $\HH^n$-a.e.\ $x\in F'$:
\begin{equation}\label{eqbriesz'}
\RR^+\nu(x) - \RR^-\nu(x) = \frac{\omega_{n}}{c_n}\, \Theta^n(x,\nu)\,N_x.
\end{equation}
Finally, notice that since $F'$ is an arbitrary Borel subset of $F$ with finite $\HH^n$ measure, the identities \rf{eqariesz} and \rf{eqbriesz'} hold for $\HH^n$-a.e.\ $x\in F$, or equivalently for $\omega^+$-a.e.\ and  $\omega^-$-a.e.\ $x\in\partial\Omega^+$.

Observe also that for all $x\in\partial\Omega^+$ which are tangent points for $\partial\Omega^+$ (in particular for $x\in F$), if $\supp\nu\subset
\partial\Omega^+$, then, for all $y\in X_a(x)$ close enough to $x$, and $b\in(0,a)$, we have
$\RR_{b|x-y|}\nu(y)= \RR\nu(y)$. Thus,
\begin{equation}\label{eqbrieszzz}
\RR^+\nu(x) = \lim_{X_a^+(x)\ni y\to x} \RR\nu(y),\qquad \RR^-\nu(x) = \lim_{X_a^-(x)\ni y\to x} \RR\nu(y).
\end{equation}
\vv


\subsection{Riesz transform and rectifiability}

For a signed measure $\nu$ in $\R^{n+1}$, we consider the maximal operator
\begin{equation}\label{eqMN}
\mathcal M_n\nu(x)= \sup_{r>0}\frac{|\nu|(B(x,r))}{r^n}.
\end{equation}
Given an $n$-plane $L\subset \R^{n+1}$, a (positive) measure $\mu$, and a ball $B\subset\R^{n+1}$, we denote
$$\beta_{\mu,1}^L(B) = \frac1{r(B)^n}\int_B\frac{\dist(x,L)}{r(B)}\,d\mu(x).$$
This coefficient measures how close is $\mu$ to the $n$-plane $L$ in the ball $B$.

The following theorem is a consequence of the main result in \cite{GT}. For the precise statement below and the
arguments that show how to deduce this from \cite{GT}, see \cite[Theorem 3.3]{AMT-cpam}.

\begin{theorem} \label{teo-gt}
Let $\mu$ be a Radon measure in $\R^{n+1}$ and $B\subset \R^{n+1}$ a ball with $\mu(B)>0$ so that the following conditions
hold:
\begin{itemize}
\item[(a)] For some constant $C_0>0$, $P_\mu(B) \leq C_0\,\Theta_\mu(B)$.

\item[(b)] There is some $n$-plane $L$ passing through the center of $B$ such that, for some constant $0<\delta_0\ll 1$, $\beta_{\mu,1}^L(B)\leq \delta_0\,\Theta_\mu(B)$.

\item[(c)] For some constant $C_1>0$, there is $G_B\subset B$
such that
$$\MM_n(\chi_{2B}\mu)(x) + \RR_*(\chi_{2B}\,\mu)(x)\leq 
C_1\,\Theta_\mu(B)\quad \mbox{ for all $x\in G_B$}$$
and
$$\mu(B\setminus G_B)\leq \delta_0 \,\mu(B).$$

\item[(d)] For some constant $0<\tau_0\ll1$,
$$\int_{G_B} |\RR\mu - m_{\mu,G_B}(\RR\mu)|^2\,d\mu \leq \tau_0 \,\Theta_\mu(B)^2\mu(B).$$
\end{itemize}

Then there exists some constant $\theta>0$ such that if $\delta_0,\tau_0$ are small enough (with $\theta,\delta_0,\tau_0$ depending on $C_0$ and $C_1$),
then there is a uniformly $n$-rectifiable set $\Gamma\subset\R^{n+1}$ such that
$$\mu(G_B\cap \Gamma)\geq \theta\,\mu(B).$$
The uniform rectifiability constants of $\Gamma$ depend on all the constants above.
\end{theorem}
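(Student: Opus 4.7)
The plan is to deduce Theorem \ref{teo-gt} essentially as a direct consequence of the main rectifiability criterion of Girela-Sarri\'on and Tolsa in \cite{GT}, by verifying that the hypotheses (a)--(d) above are exactly of the type handled there. After rescaling we may normalize $\Theta_\mu(B) = 1$. The first step is to replace $\mu$ by its truncation $\chi_{2B}\,\mu$ throughout the argument: condition (c) already contains the bound on $\RR_*(\chi_{2B}\mu)$ and $\MM_n(\chi_{2B}\mu)$ on $G_B$, and the hypothesis (a) (which is genuinely $P_\mu(B)\leq C_0\Theta_\mu(B)$) guarantees that the far-field contribution $\RR\mu - \RR(\chi_{2B}\mu)$ is a $C^\infty$, essentially constant vector field on $B$ with norm comparable to $P_\mu(B)\lesssim 1$, so (d) persists for $\RR(\chi_{2B}\mu)$ up to harmless additive constants absorbed into $m_{\mu,G_B}(\RR\mu)$.

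Next I would exploit (b) to build a flat comparison measure. Since $\beta_{\mu,1}^L(B)\leq \delta_0$, most of the mass of $\mu|_B$ lies in a thin $\delta_0$-strip around the $n$-plane $L$; set $\nu = c\,\HH^n|_{L\cap 2B}$ with $c$ chosen so that $\Theta_\nu(B) = \Theta_\mu(B) = 1$. On any point of $G_B$ the Riesz transform $\RR\nu$ is uniformly close to a single constant vector (zero if one works with symmetric truncations), since $L$ is an $n$-plane through the center of $B$. Subtracting this constant from both sides, condition (d) is upgraded to
\[
\int_{G_B}\bigl|\RR(\chi_{2B}\mu) - \RR\nu\bigr|^2\,d\mu \;\lesssim\; (\tau_0 + \delta_0)\,\mu(B),
\]
i.e.\ the Riesz transform of $\mu - \nu$ is small in $L^2(\mu|_{G_B})$.

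The core mechanism from \cite{GT} now takes over: the pointwise bound $\RR_*(\chi_{2B}\mu)\leq C_1$ on $G_B$ together with the above $L^2$-smallness and the polynomial growth of $\mu$ on $B$ allow one to run a variational/$Tb$-type argument producing a subset $G'\subset G_B$ with $\mu(G')\geq \theta'\mu(B)$ on which the maximal singular integral $\RR_{\mu|_{G'},*}$ is bounded on $L^2(\mu|_{G'})$. By the David--Semmes characterization of uniform rectifiability through $L^2$-boundedness of the Riesz transform (applied to a suitable $n$-AD-regular extension of $\mu|_{G'}$, which exists thanks to the density lower bound derived from $\Theta_\mu(B)=1$ and the non-BAUP/flatness input from (b)), one obtains a UR set $\Gamma$ containing a $\mu$-substantial portion of $G'$. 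Choosing $\delta_0,\tau_0$ small enough in terms of $C_0,C_1$ so as to make all of the accumulated errors (tail of $\mu$, thickness of the strip, stopping-time losses) smaller than a fixed fraction of the density lower bound $c$ yields the claimed $\mu(G_B\cap \Gamma)\geq \theta\mu(B)$.

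The principal obstacle is bookkeeping: tracking how the smallness parameters $\delta_0$ and $\tau_0$ must be chosen in terms of $C_0$ and $C_1$ so that the stopping-time construction in \cite{GT} terminates on a large subset of $G_B$ rather than exhausting it. Since the statement is quoted almost verbatim from \cite[Theorem 3.3]{AMT-cpam}, the cleanest route is to identify our hypotheses (a)--(d) with the hypotheses there---most delicately matching our $\beta_1^L$-flatness against whatever $\beta$-coefficient or BAUP-like condition is used in \cite{GT}---and then invoke the conclusion directly, without redoing the internal corona decomposition.
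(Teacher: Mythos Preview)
Your proposal is essentially correct and aligns with the paper's treatment: the paper does not prove this theorem at all, but simply records it as a consequence of the main result in \cite{GT}, referring the reader to \cite[Theorem~3.3]{AMT-cpam} for the precise statement and the reduction. Your final paragraph lands exactly there, so the approaches coincide; the preceding sketch of the internal mechanism of \cite{GT} is additional commentary rather than something the paper supplies or requires.
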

\vv

\subsection{A $T1$ theorem for Riesz transforms involving suppressed kernels}

The following theorem follows easily from the $Tb$ theorem with suppressed kernels of Nazarov, Treil, 
and Volberg \cite{NTV-prep}.

\begin{theorem}\label{teot1}
Let $\mu$ be a Radon measure in { $\R^{n+1}$}. Let $G\subset \supp\mu$.
Suppose that the $n$-dimensional Riesz transform satisfies
$$
\mathcal M_n\mu(x) + \RR_*\mu(x)\leq C_0
\quad\mbox{ for all $x\in G$.}
$$
Then $\RR_{\mu|_G}$ is bounded in $L^2(\mu|_G)$ with $\|\RR_{\mu|_G}\|_{L^2(\mu|_G)\to L^2(\mu|_G)}\leq
c\,C_0$,
with $c$ depending only on $n$.
\end{theorem}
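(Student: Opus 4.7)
The plan is to deduce the theorem from the non-homogeneous $T1$-theorem with suppressed Calder\'on--Zygmund kernels of Nazarov--Treil--Volberg \cite{NTV-prep}, applied to the original measure $\mu$ together with the $1$-Lipschitz suppression function $\Phi(x):=\dist(x,G)$. With the standard suppressed Riesz kernel
$$K_\Phi(x,y)=\frac{x-y}{(|x-y|^2+\Phi(x)\,\Phi(y))^{(n+1)/2}},$$
one has $K_\Phi(x,y)=K(x,y)$ whenever $x\in G$ or $y\in G$, because $\Phi$ vanishes on $G$. In particular, for any $f\in L^2(\mu|_G)$ extended by zero, the integrand defining $\RR_\Phi(f\mu)$ involves only $y\in G$, so $\RR_\Phi(f\mu)\equiv\RR(f\mu|_G)$ on $\R^{n+1}$. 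Consequently, the $L^2(\mu)$-boundedness of $\RR_\Phi$ immediately yields
$$\|\RR_{\mu|_G}f\|_{L^2(\mu|_G)}\le \|\RR_\Phi(f\mu)\|_{L^2(\mu)}\leq \|\RR_\Phi\|_{L^2(\mu)\to L^2(\mu)}\,\|f\|_{L^2(\mu|_G)}.$$

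To apply the NTV theorem I need two inputs: $\Phi$-polynomial growth of $\mu$, and the pointwise bound $|\RR_{\Phi,*}\mu(x)|\lesssim C_0$ for every $x\in\supp\mu$. The first is immediate: if $r\ge\Phi(x)$ then $B(x,r)\cap G\neq\varnothing$, and any $x_0\in G\cap B(x,r)$ satisfies $B(x,r)\subset B(x_0,2r)$, so $\mu(B(x,r))\le \mu(B(x_0,2r))\le 2^n C_0\,r^n$ from the hypothesis $\mathcal M_n\mu(x_0)\le C_0$.

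For the pointwise bound I split into the cases $x\in G$ and $x\notin G$. If $x\in G$ then $\Phi(x)=0$, so $K_\Phi(x,y)=K(x,y)$ for all $y$ and $\RR_{\Phi,*}\mu(x)=\RR_*\mu(x)\le C_0$ by hypothesis. If $x\notin G$, I pick $x_0\in G$ realizing $|x-x_0|=\Phi(x)$ and compare $\RR_{\Phi,\ve}\mu(x)$ with the honest truncation $\RR_{3\Phi(x)}\mu(x_0)$, which is already $\le \RR_*\mu(x_0)\le C_0$ since $x_0\in G$. The difference splits into three contributions: (i) the integral on the ``near'' region $\{|x-y|\le 2\Phi(x)\}$, controlled by the size estimate $|K_\Phi(x,y)|\lesssim \Phi(x)^{-n}$ together with $\mu(B(x,2\Phi(x)))\lesssim C_0\,\Phi(x)^n$ from $\Phi$-polynomial growth; (ii) the integral on the symmetric difference $\{2\Phi(x)<|x-y|\le 4\Phi(x)\}$, handled analogously; and (iii) the \emph{swap error} $\int_{|x-y|>4\Phi(x)}|K_\Phi(x,y)-K(x_0,y)|\,d\mu(y)$. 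For (iii) one uses that $|K_\Phi(x,y)-K(x,y)|\lesssim \Phi(x)\,\Phi(y)/|x-y|^{n+2}\lesssim \Phi(x)/|x-y|^{n+1}$ (since $\Phi(y)\le |x-y|+\Phi(x)\lesssim |x-y|$ there) and $|K(x,y)-K(x_0,y)|\lesssim |x-x_0|/|x-y|^{n+1}=\Phi(x)/|x-y|^{n+1}$; a dyadic annular decomposition together with $\Phi$-polynomial growth then sums this to a geometric series bounded by $O(C_0)$.

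With both inputs established with constant $\lesssim C_0$, \cite{NTV-prep} yields $\|\RR_\Phi\|_{L^2(\mu)\to L^2(\mu)}\lesssim C_0$, which by the reduction in the first paragraph completes the proof. The principal technical obstacle is the pointwise bound off $G$: one has no global polynomial growth of $\mu$ to work with, only the $\Phi$-relative version, and the swap argument must be set up carefully so that the interplay between the $\ve$-truncation and the automatic $\Phi$-truncation built into $K_\Phi$ (with both regimes $\ve<\Phi(x)$ and $\ve\ge\Phi(x)$ appearing) is cleanly separated before applying the dyadic summation.
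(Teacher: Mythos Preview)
Your argument is correct and uses the same endgame as the paper (the Nazarov--Treil--Volberg suppressed-kernel $T1$ theorem), but your choice of suppression function is simpler. The paper first builds auxiliary open sets $H_1,H_2\subset G^c$ from the level sets of $\mathcal M_n\mu$ and $\RR_*\mu$, sets $\Phi=\dist(\cdot,(H_1\cup H_2)^c)$, and then invokes the black-box Lemma~5.5 from \cite{Tolsa-llibre} to obtain $\RR_{\Phi,*}\mu\lesssim C_0$ everywhere. You bypass the construction of $H_1,H_2$ by taking $\Phi=\dist(\cdot,G)$ directly; the comparison you carry out between $\RR_{\Phi,\ve}\mu(x)$ and $\RR_{3\Phi(x)}\mu(x_0)$ (with $x_0\in G$ nearest to $x$) is essentially the same computation the paper performs when proving $H_2\setminus H_1\subset G^c$, just deployed at a different stage. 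Your route is more economical; the paper's has the advantage of quoting a ready-made lemma. One small point to add: since the theorem does not assume $G$ closed, you should note that $\mathcal M_n\mu$ and $\RR_*\mu$ are lower semicontinuous, so one may replace $G$ by $\overline G$ at the outset and then the nearest point $x_0$ indeed exists.
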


The precise arguments to reduce this result to the aforementioned theorem of Nazarov, Treil, and
Volberg are quite similar to the ones from the proof of Theorem 3.3 from \cite{AMT-cpam}.
 However, for the
convenience of the reader we show the details. { Further, we remark that the theorem also holds assuming that the ambient space is $\R^d$, with $d\geq n$, instead of $\R^{n+1}$.}

\begin{proof} 
For $p_1,p_2>0$ to be fixed below, consider the sets
$$E^1_{p_1} = \{ x \in \R^{n+1}: \mathcal M_n \mu(x)> p_1C_0\}$$ and
$$E^2_{p_2} = \{ x \in \R^{n+1}: \RR_* \mu(x)>p_2 C_0\}.$$

 For $x\in E^1_{p_1}$, we denote 
$$\rho_1(x) = \sup\bigl\{r>0 : \mu(B(x,r))> p_1C_0\, r^n\}
$$
and for $x\in E^2_{p_2}$,
$$\rho_2(x) =\sup\bigl\{r >0 : |\RR_r\mu(x)|
> p_2C_0\bigr\}.$$
Define
$$H_i = \bigcup_{x\in E^i_{p_i}} B(x,\rho_i(x)),\,\, i=1,2.$$
Note that $H_1$ and $H_2$ are open sets. 

Next we will show that, for $p_1$ and $p_2$ big enough, $H_1 \cup H_2\subset G^c$. 
Notice first that if $y \in H_1$, then there is $x \in  E^1_{p_1}$ so that $y \in B(x,\rho_1(x))$, and so 
$$\mu(B(y, 2\rho_1(x))) \ge \mu(B(x, \rho_1(x))) \ge p_1\,C_0 \rho_1(x)^n =p_1\,C_0  2^{-n}[2\rho_1(x)]^n.$$
We conclude that
$H_1  \subset G^c$, if we choose $p_1$ so that $p_1 > 2^n $. 

We turn our attention to $H_2$. If  $y \in H_2 \setminus H_1$, then there exists $x \in E^2_{p_2}$ so that $y \in B(x, \rho_2(x) )$. We will prove that 
\begin{equation}\label{e:diff.Riesz}
|\RR_{\rho_2(x)} \mu(x) -\RR_{\rho_2(x)} \mu(y) | \leq C_n p_1 C_0,
\end{equation} 
where $C_n >0$ is some absolute constant depending only on the dimension. Indeed, we have that  
\begin{align*}
|\RR_{\rho_2(x)} &\mu(x) -\RR_{\rho_2(x)} \mu(y) | \\
& \leq |\RR_{\rho_2(x)} ( \chi_{ B(y, 2 \rho_2(x) )}\mu )(x) | + |\RR_{\rho_2(x)} ( \chi_{B(y, 2 \rho_2(x))}\mu) (y)| \\
&\quad +|\RR_{\rho_2(x)}( \chi_{\R^{n+1} \setminus B(y, 2 \rho_2(x))} \mu)(x) -\RR_{\rho_2(x)} (\chi_{\R^{n+1} \setminus B(y, 2 \rho_2(x) )}\mu)(y)| \\
& =:I_1+I_2+I_3.
\end{align*}
Notice now that 
$$ I_1+I_2 \leq C \frac{\mu(B(y, 2 \rho_2(x)))}{\rho_2(x)^n} \leq   C2^n p_1 C_0,$$
where the second inequality follows form the fact that $y \not \in H_1$. It just remains to handle $I_3$. To this end, we write
\begin{align*}
I_3 &= |\RR( \chi_{\R^{n+1} \setminus B(y, 2 \rho_2(x))} \mu)(x) -\RR(\chi_{\R^{n+1} \setminus B(y, 2 \rho_2(x) )}\mu)(y)|\\
&\leq \wt C \int_{\R^{n+1} \setminus B(y, 2 \rho_2(x))} \frac{|x-y|}{|z-y|^{n+1}}\,d \mu (z) \\
&\leq \wt C \sum_{j \geq 1} \frac{\rho_2(x)}{(2^j \rho_2(x))^{n+1}} \,\mu(B(y, 2^{j+1}\rho_2(x)))\leq \wt C\, 2^n p_1 C_0,
\end{align*}
where in the last inequality we used that $y \not \in H_1$. This concludes the proof of \eqref{e:diff.Riesz}. Therefore, 
since $|\RR_{\rho_2(x)} \mu(x)| > p_2  C_0$, we have that $H_2 \setminus H_1 \subset 
G^c$, if we choose $p_2$ so that $p_2 - C_n p_1> 1$.

Let $H=H_1 \cup H_2$ and consider the $1$-Lipschitz function
$$\Phi(x) = \dist(x,H^c) \geq \max(\rho_1(x), \rho_2(x)),$$
and the associated ``suppressed kernel''
$$K_\Phi(x,y) = \frac{x-y}{\bigl(|x-y|^2 + \Phi(x)\,\Phi(y)\bigr)^{(n+1)/2}}.$$
We consider the operator $\RR_{\Phi,\mu}$ defined by
$$\RR_{\Phi,\mu}f(x) = \int K_\Phi(x,y)\,f(y)\,d\mu(y),$$
and its $\ve$-truncated version (for $\ve>0$)
$$\RR_{\Phi,\ve,\mu}f(x) = \int_{|x-y|>\ve} K_\Phi(x,y)\,f(y)\,d\mu(y).$$
We also set
$$\RR_{\Phi,*,\mu}f(x) = \sup_{\ve>0} \RR_{\Phi,\ve,\mu}f(x).$$
We say that $\RR_{\Phi,\mu}$ is bounded in $L^2(\mu)$ if the operators $\RR_{\Phi,\ve,\mu}$
are bounded in $L^2(\mu)$ uniformly on $\ve>0$.

We now prove that
\begin{equation}\label{eq:suppRieszbound}
\RR_{\Phi,*,\mu} 1(x) \leq C(p_1,p_2)\,C_0,
\end{equation}
 for all $x\in\R^{n+1}$. To do so, we need the following lemma whose proof can be found in \cite[Lemma 5.5]{Tolsa-llibre}.
 
\begin{lemma}\label{lem:Tolsa-suppress}
 Let $x \in \R^{n+1}$ and $r_2 \geq 0$ so that $\mu(B(x,r)) \leq A_1 r^n$ for $r \geq r_2$ and $|\RR_\ve \mu (x)| \leq A_2$ for $\ve \geq r_2$. If $\Phi(x) \geq r_2$, then  $|\RR_{\Phi,\ve, \mu} 1(x)| \leq C\,A_1+  A_2$ for all $\ve>0$ and some constant $C$ depending only on $n$. 
 \end{lemma}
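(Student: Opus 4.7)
The plan is to exploit the simple but crucial observation that $K_\Phi(x,y)$ behaves essentially like the Riesz kernel $K(x-y)=(x-y)/|x-y|^{n+1}$ when $|x-y|\gtrsim \Phi(x)$, while it is very small (thanks to the $|x-y|$ in the numerator) when $|x-y|\ll \Phi(x)$. Accordingly, I would set $\ve'=\max(\ve,\Phi(x))$ and split
$$
\RR_{\Phi,\ve,\mu}1(x)=\RR_{\Phi,\ve',\mu}1(x)+\int_{\ve<|x-y|\leq \ve'}K_\Phi(x,y)\,d\mu(y),
$$
where the second summand is zero unless $\ve<\Phi(x)$. The goal is to bound each piece by $CA_1+A_2$.

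For the first piece, since $\ve'\geq \Phi(x)\geq r_2$, the hypothesis on $\RR_\ve\mu(x)$ gives $|\RR_{\ve'}\mu(x)|\leq A_2$, so it suffices to compare $\RR_{\Phi,\ve',\mu}1(x)$ with $\RR_{\ve'}\mu(x)$. From the elementary inequality $\bigl|s^{-(n+1)/2}-t^{-(n+1)/2}\bigr|\leq C|s-t|/\min(s,t)^{(n+3)/2}$ applied to $s=|x-y|^2+\Phi(x)\Phi(y)$ and $t=|x-y|^2$, together with the $1$-Lipschitz property $\Phi(y)\leq \Phi(x)+|x-y|\leq 2|x-y|$ valid for $|x-y|\geq \Phi(x)$, one obtains the pointwise estimate
$$
|K_\Phi(x,y)-K(x-y)|\leq C\,\frac{\Phi(x)}{|x-y|^{n+1}}\qquad\text{for }|x-y|\geq \Phi(x).
$$
A dyadic decomposition combined with $\mu(B(x,r))\leq A_1 r^n$ for $r\geq \ve'\geq r_2$ yields $\int_{|x-y|>\ve'}|x-y|^{-(n+1)}d\mu(y)\leq CA_1/\ve'$, and hence $|\RR_{\Phi,\ve',\mu}1(x)-\RR_{\ve'}\mu(x)|\leq CA_1\Phi(x)/\ve'\leq CA_1$.

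For the second piece, assume $\ve<\Phi(x)$. Here I would use two kernel estimates:
$$
|K_\Phi(x,y)|\leq |x-y|^{-n},\qquad |K_\Phi(x,y)|\leq C\,\frac{|x-y|}{\Phi(x)^{n+1}}\ \text{ when }|x-y|\leq \Phi(x)/2,
$$
the second one coming from $\Phi(y)\geq \Phi(x)-|x-y|\geq \Phi(x)/2$. The integral is then split into the shells $\{\Phi(x)/2<|x-y|\leq \Phi(x)\}$, $\{\max(\ve,r_2)<|x-y|\leq \Phi(x)/2\}$, and (if $\ve<r_2$) $\{\ve<|x-y|\leq r_2\}$. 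On the outer shell one uses $|K_\Phi|\leq |x-y|^{-n}\leq (2/\Phi(x))^n$ and $\mu(B(x,\Phi(x)))\leq A_1\Phi(x)^n$; on the middle shell one uses the second kernel bound together with $\mu(B(x,\Phi(x)/2))\leq CA_1\Phi(x)^n$; and on the innermost shell one uses the second kernel bound plus the crucial fact $\mu(B(x,r_2))\leq A_1r_2^n$ to get
$$
\int_{\ve<|x-y|\leq r_2}|K_\Phi(x,y)|\,d\mu(y)\leq \frac{Cr_2}{\Phi(x)^{n+1}}\,A_1r_2^n=CA_1\Bigl(\frac{r_2}{\Phi(x)}\Bigr)^{n+1}\leq CA_1,
$$
since $r_2\leq \Phi(x)$. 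Summing yields a bound of $CA_1$ for the second piece.

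The main obstacle—and the reason the hypothesis is phrased in terms of $r_2$—lies in the innermost shell of the second piece: there is no growth information on $\mu$ for balls strictly smaller than $r_2$, so one must avoid any dyadic subdivision of $B(x,r_2)$ and instead exploit the smallness of $|K_\Phi|$ uniformly on that whole ball, which is only possible because $|x-y|\leq r_2\leq \Phi(x)$ forces $\Phi(y)\gtrsim \Phi(x)$ and hence a very favorable denominator. Once this case is handled, the rest is routine kernel bookkeeping.
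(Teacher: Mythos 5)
Your argument is correct, but note first that the paper does not actually prove this lemma: it is quoted verbatim from \cite[Lemma 5.5]{Tolsa-llibre}, and the standard proof there follows the same outline as yours — truncate at $\ve'=\max(\ve,\Phi(x))$, compare $\RR_{\Phi,\ve',\mu}1(x)$ with $\RR_{\ve'}\mu(x)$ via the kernel-difference estimate (exactly your first piece), and then control the remaining annulus $\{\ve<|x-y|\le\Phi(x)\}$ using the suppression. The only genuine difference is that your treatment of that annulus is more elaborate than necessary: combining your two kernel bounds one has $|K_\Phi(x,y)|\le C\,\Phi(x)^{-n}$ for every $y$ with $|x-y|\le\Phi(x)$, so the whole annulus is handled in one stroke by $C\,\Phi(x)^{-n}\mu(B(x,\Phi(x)))\le C A_1$, the growth hypothesis being applicable at the single scale $\Phi(x)\ge r_2$; no separate treatment of $B(x,r_2)$, and hence no ``innermost shell'' issue, ever arises, so the obstacle you highlight at the end is not really there. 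Your three-shell version does work, but watch one small slip: on the shell $\{\ve<|x-y|\le r_2\}$ you invoke $|K_\Phi(x,y)|\le C|x-y|/\Phi(x)^{n+1}$, which you justified only for $|x-y|\le\Phi(x)/2$; when $\Phi(x)/2<r_2\le\Phi(x)$ part of that shell escapes the justification. This is harmless — the bound persists for all $|x-y|\le\Phi(x)$ with constant $2^{n+1}$, since there $|K_\Phi|\le|x-y|^{-n}\le 2^n\Phi(x)^{-n}\le 2^{n+1}|x-y|/\Phi(x)^{n+1}$, or alternatively that sliver is already covered by your outer-shell estimate — but it should be said.
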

 
By Lemma \ref{lem:Tolsa-suppress} for $A_1=p_1 C_0$, $A_2= p_2C_0$ and $r_2=\max\{\rho_1(x), \rho_2(x)\}$, we obtain \eqref{eq:suppRieszbound}. We further apply the $Tb$ theorem for suppressed operators by Nazarov, Treil, and Volberg \cite{NTV-prep} (see also Corollary 5.33 in \cite{Tolsa-llibre}) and it follows then that
$\RR_{\Phi,\mu}:L^2(\mu) \to L^2(\mu)$ is bounded  with norm 
$$\|\RR_{\Phi,\mu}\|_{L^2(\mu)\to L^2(\mu)}\lesssim  C_0.$$
Since $\Phi$ vanishes on $G\subset H^c$, we have that $\RR_{\mu|_G}:L^2(\mu|_{G}) \to L^2(\mu|_{G})$ is bounded and $$\|\RR_{\mu|_G}\|_{L^2(\mu|_G)\to L^2(\mu|_G)}\lesssim C_0.$$ 
\end{proof}
\vv


\section{Proof of (a) $\Rightarrow$ (b) in Theorem \ref{teo1}}\label{sec3}

Throughout this section we assume that we are under the assumptions of Theorem \ref{teo1} (a), unless stated otherwise, and we denote
$$h= \frac{d\omega^-}{d\omega^+}.$$
We allow the constants denoted by $c$ or $C$ and other implicit constants in the relation $\lesssim$
to depend on the NTA constants of $\Omega^\pm$, and also on the ratio {$\dist(p^\pm,\partial \Omega^+)/\diam(\partial \Omega^+)$} (recall that $p^\pm$ is the pole for the harmonic measure $\omega^\pm$). Without loss of generality, we can think that $p^\pm$ is deep inside $\Omega^\pm$, {so that
$\dist(p^\pm,\partial\Omega^+)\gg R$, where $R$ is the constant appearing in the $(\delta,R)$-Reifenberg flatness. }

 At the end of the current section we will show how to deduce
Corollary \ref{coro2} from the implication (a) $\Rightarrow$ (b) in Theorem \ref{teo1}.

\subsection{The function $h$ as a Muckenhoupt weight}

\begin{lemma}\label{lem3.1}
Let $\Omega^+\subset\R^{n+1}$ be an NTA domain and let $\Omega^-= \R^{n+1}\setminus \overline{\Omega^+}$.
Denote by $\omega^+$ and $\omega^-$ the respective harmonic measures with poles $p^+\in\Omega^+$ and $p^-\in\Omega^-$.
Suppose also that $\omega^+$ and $\omega^-$ are mutually absolutely continuous and suppose that $h=\frac{d\omega^-}{d\omega^+}$ satisfies
$$\log h  \in \vmo(\omega^+).$$
{Then, for $\ell_0>0$ small enough, every $Q_0\in\wh\DD$ with $\ell(Q_0)\leq \ell_0$ satisfies that  $\chi_{Q_0}h\in A_2(\chi_{Q_0}\omega^+)$}.
\end{lemma}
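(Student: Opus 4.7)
The key point is that $\log h \in \vmo(\omega^+)$ forces the local $\bmo$ seminorm of $\log h$ over balls of small radius to be arbitrarily small, and this smallness propagates to the $A_2$ condition via the John--Nirenberg inequality. Since $\omega^+$ is a doubling measure on $\partial\Omega^+$, all the standard Calder\'on--Zygmund machinery (JN, $A_p$ theory) works in the homogeneous space $(\partial\Omega^+,\omega^+)$.

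First, I will fix a small threshold $\eta>0$ (to be chosen depending only on the doubling constant of $\omega^+$) and use the $\vmo$ hypothesis to pick $\ell_0>0$ such that
\[
\avint_{B}\bigl|\log h - m_{\omega^+,B}(\log h)\bigr|\,d\omega^+ \le \eta
\]
for every ball $B$ centered on $\partial\Omega^+$ with $r(B)\le C\ell_0$, where $C$ is a large constant chosen so that the enlargement of any cube of side length at most $\ell_0$ is covered by such a ball. Consequently, for any $Q_0\in\wh\DD$ with $\ell(Q_0)\le\ell_0$ and any dyadic subcube $Q\subset Q_0$ (which, by Theorem \ref{teo-christ}(d), is contained in a ball of radius comparable to $\ell(Q)\le\ell_0$), the localized BMO oscillation of $\log h$ on $Q$ is bounded by $C\eta$. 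Thus $\log h$, as an element of $\bmo(\chi_{Q_0}\omega^+)$ with respect to the induced doubling structure on $Q_0$, has norm $\le C\eta$.

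Next, I will invoke the John--Nirenberg inequality for doubling spaces: if $\|f\|_{\bmo(\nu)}\le \eta_0$ with $\eta_0$ small enough, then for every ball $B$,
\[
\avint_B e^{|f - m_{\nu,B}(f)|}\,d\nu \le C_0,
\]
with $C_0$ independent of $B$. Applying this to $f=\log h$ and $f=-\log h$ on $Q_0$, I obtain, for every subball or subcube $Q\subset Q_0$,
\[
\avint_Q h\,d\omega^+ \,\le\, C_0\,e^{m_{\omega^+,Q}(\log h)},\qquad \avint_Q h^{-1}\,d\omega^+ \,\le\, C_0\,e^{-m_{\omega^+,Q}(\log h)}.
\]
Multiplying these two estimates the unknown mean $m_{\omega^+,Q}(\log h)$ cancels, yielding
\[
\Bigl(\avint_Q h\,d\omega^+\Bigr)\Bigl(\avint_Q h^{-1}\,d\omega^+\Bigr)\le C_0^2,
\]
which is exactly the $A_2(\chi_{Q_0}\omega^+)$ condition. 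Balls that are not contained in $Q_0$ but intersect it only contribute through $Q_0$ itself, so the same estimate applies after truncation.

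The only potentially delicate point is ensuring that the $\vmo$ hypothesis (stated in terms of balls $B(x,r)$) translates cleanly to an oscillation estimate on the dyadic cubes of Christ (which are comparable to, but not equal to, balls), and that this oscillation is preserved when passing to subcubes. This is standard given property (d) of Theorem \ref{teo-christ} together with the doubling of $\omega^+$: each cube $Q$ sits between $B_Q$ and $\wt B_Q$, both of radius $\approx\ell(Q)$, so averages over $Q$ and over these balls are comparable, and the oscillation bound transfers with a loss of a dimensional constant that can be absorbed by shrinking $\ell_0$.
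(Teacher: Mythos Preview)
Your proposal is correct and follows essentially the same approach as the paper: both use the $\vmo$ hypothesis to make the local $\bmo$ norm of $\log h$ small at small scales, then apply John--Nirenberg to bound $\avint_Q e^{|\log h - m_{\omega^+,Q}(\log h)|}\,d\omega^+$, which simultaneously controls $\avint_Q h\,d\omega^+$ and $\avint_Q h^{-1}\,d\omega^+$ in terms of $e^{\pm m_{\omega^+,Q}(\log h)}$, and the means cancel upon multiplication. The paper additionally records along the way that $a_Q:=e^{m_{\omega^+,Q}(\log h)}\approx \omega^-(Q)/\omega^+(Q)$ (via Jensen and the upper bound just obtained), which is not needed for the $A_2$ conclusion itself but is used repeatedly in the sequel.
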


Let us remark that $\omega^+|_{Q_0}$ is a doubling measure. This follows easily from the fact that $\omega^+$ is doubling and the properties of the lattice $\DD$.

\begin{proof}
Given $f\in L^1_{\rm loc}(\omega^+)$ and $Q\in\wh\DD$,  we write
$$\|{f}\|_{*,Q} := \sup_{P\in \wh\DD: P\subset Q} \avint_P |f(x)-m_{\omega^+,P}f|\, d\omega^+ (x).$$
From the John-Nirenberg inequality we know that
$$\avint_Q \exp\bigg(\dfrac{|f-m_{\omega^+,Q} f|}{C \|{f}\|_{*,{Q}}}\bigg) \, d\omega^+\leq 2$$
for $C$ big enough (see \cite[Theorem 0.4]{Buckley} for instance).

Being $\log h\in \vmo$ is equivalent to
\begin{equation}\label{eqVMOforh}
\avint_P |\log h- m_{\omega^+,P}(\log h)| \, d\omega^+\leq \ve(\ell(P))
\end{equation}
for every $P\in \wh\DD$ with $\ve(\ell)\xrightarrow{\ell\to 0} 0$. In particular, $\|{\log h}\|_{*,Q} \leq \ve(\ell(Q))$ for every $Q\in\wh\DD$.

We want to see that 
$$\left(\avint_{B\cap Q_0} h\,d\omega^+\right)\left( \avint_{B\cap Q_0}  h^{-1} \,d\omega^+\right)\leq C$$
for every ball $B$ centered in $Q_0$, which is equivalent to showing the same inequality with both
integrals over all possible $Q\in\wh\DD$ contained in $Q_0$.

Let us write $a_{Q} := e^{\avint_{Q}\log h\, d\omega^+}$. Applying the John-Nirenberg inequality to $f=\log h$ and $Q\in\wh \DD$ contained in $Q_0$, with {$\ell(Q_0)$} small enough, we get
$$\avint_{Q} e^{|\log (h/a_{Q})|} \, d\omega^+\leq \avint_{Q} e^{\frac{|\log h- \log a_{Q}|}{C \|{\log h}\|_{*,{Q}}}}\, d\omega^+\leq 2, $$
that is
\begin{equation}\label{eqJohnNirembergOmega}
\int_{{Q}\cap \{h\geq a_{Q}\}} \frac{h}{a_{Q}} \, d\omega^+ + \int_{{Q}\cap \{h<a_{Q}\}} \frac{a_{Q}}h \, d\omega^+  \leq 2\omega^+({Q}).
\end{equation}
In particular, we obtain
\begin{align*}
 \omega^-({Q}) 
 	& =\int_{Q}  h \,d\omega^+   \leq   \int_{Q\cap \{h\geq a_{Q}\}} h \, d\omega^+ + \int_{{Q}\cap \{h<a_{Q}\}} h \, d\omega^+ 
	 \leq  3a_{Q} \,\omega^+({Q}).
\end{align*}

On the other hand, Jensen's inequality gives
$$a_{Q}=e^{\avint_{Q}\log h\, d\omega^+}\leq \avint_{Q}  h \,d\omega^+ =\frac{\omega^-({Q})}{\omega^+({ Q})},$$
so
\begin{equation}\label{eqAQisLikeDensity0}
a_{Q}\approx\frac{\omega^-({Q})}{\omega^+({Q})}.
\end{equation}

Estimate \rf{eqJohnNirembergOmega} also gives 
$$\int_{Q}  \frac1h\,d\omega^+   = \int_{{Q}\cap \{h< a_{Q}\}} \frac{1}{h} \, d\omega^+ + \int_{{Q}\cap \{h\geq a_{Q}\}} \frac1h \, d\omega^+  \leq  3\frac{ \omega^+({Q})}{a_{Q}}\approx \frac{\omega^+({ Q})^2}{\omega^-({Q})}.$$
Therefore,
$$\left(\avint_{Q} h\,d\omega^+\right)\left( \avint_{Q}  h^{-1} \,d\omega^+\right)\lesssim \frac{\omega^-({Q})}{\omega^+({ Q})} \frac{\omega^+({Q})}{\omega^-({Q})}= 1,$$
as wished.
\end{proof}

\begin{rem}
The same calculations above hold for any dilation $\Lambda Q$ of $Q\in\wh D$ (with $\Lambda>1$) such that $\Lambda Q\subset Q_0$.
In particular, the coefficient
$$a_{\Lambda Q}=e^{\avint_{\Lambda Q}\log h\, d\omega^+}$$
satisfies
\begin{equation}\label{eqAQisLikeDensity}
a_{\Lambda Q}\approx\frac{\omega^-({\Lambda Q})}{\omega^+({\Lambda Q})},
\end{equation}
as in \rf{eqAQisLikeDensity0}, with constants independent of $\Lambda$.
\end{rem}
\vv


\subsection{The good sets $G_{\Lambda Q}$ and $\wt G_{\Lambda Q}$}
For every $Q\in \wh\DD$, $\Lambda\geq1$, and $\delta_1\in (0,1/2)$, let us define the good set
$$G_{\Lambda Q}=\left\{x\in {\Lambda Q}: \left| \frac{h(x)}{a_{\Lambda Q}}-1\right|\leq {\delta_1} \right\}.$$
We remark that below we will fix $\Lambda$ big enough, and then $\delta_1$ small enough depending on $\Lambda$ and other parameters. 

As $\ell({\Lambda Q})\to 0$ it turns out that $\frac{\omega^+(G_{\Lambda Q})}{\omega^+(\Lambda Q)}\to 1$.
Indeed, by Chebyshev's inequality, for any ${\delta_1}\leq 1/2$ we get
\begin{align*}
\omega^+({\Lambda Q}\setminus G_{\Lambda Q})
	& =\omega^+ \left\{x\in {\Lambda Q}: \left| \frac{h(x)}{a_{\Lambda Q}}-1\right|> {\delta_1} \right\}\\
		&\leq \omega^+ \left\{x\in {\Lambda Q}: \left| \log \frac{h(x)}{a_{\Lambda Q}} \right|> {\delta_1}/2 \right\} \leq \frac 2{{\delta_1}} \int_{\Lambda Q} \left| \log \frac{h(x)}{a_{\Lambda Q}} \right|\, d\omega^+.
\end{align*}
and using  Jensen's inequality and \rf{eqVMOforh}  we get that
\begin{align}\label{eqSmallComplement}
\frac{\omega^+({\Lambda Q}\setminus G_{\Lambda Q})}{\omega^+({\Lambda Q})}
	& \leq \frac 2{\delta_1}\avint_{\Lambda Q} \left| \log \frac{h(x)}{a_{\Lambda Q}} \right|\, d\omega^+
		\leq \frac{2}{\delta_1}\ve(C\ell({\Lambda Q})).
\end{align}
\vv

\begin{lemma}\label{lemReverseHolderH}
There exists $\ell_1(\delta_1)>0$ small enough such that if
$\ell({\Lambda Q})\leq\ell_1(\delta_1)$,  then
\begin{equation}\label{eql1*}
\int_{\Lambda Q} |h-a_{\Lambda Q}|\, d\omega^+\lesssim \delta_1 \omega^-({\Lambda Q}).
\end{equation}
\end{lemma}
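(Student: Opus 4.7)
The plan is to split the integral over $\Lambda Q$ into the contribution from $G_{\Lambda Q}$ and from its complement, and control each piece separately using \eqref{eqSmallComplement}, \eqref{eqAQisLikeDensity}, and Lemma \ref{lem3.1}.

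On $G_{\Lambda Q}$ the bound is immediate from the definition of the good set: since $|h-a_{\Lambda Q}|\le \delta_1 a_{\Lambda Q}$ pointwise there,
\[
\int_{G_{\Lambda Q}} |h-a_{\Lambda Q}|\,d\omega^+ \le \delta_1\,a_{\Lambda Q}\,\omega^+(\Lambda Q) \lesssim \delta_1\,\omega^-(\Lambda Q)
\]
by \eqref{eqAQisLikeDensity}. On the bad set I write $|h-a_{\Lambda Q}|\le h+a_{\Lambda Q}$ and handle each summand. The $a_{\Lambda Q}$ piece is easy: using \eqref{eqSmallComplement} together with \eqref{eqAQisLikeDensity},
\[
a_{\Lambda Q}\,\omega^+(\Lambda Q\setminus G_{\Lambda Q}) \le \frac{2\varepsilon(C\ell(\Lambda Q))}{\delta_1}\,a_{\Lambda Q}\,\omega^+(\Lambda Q) \lesssim \frac{\varepsilon(C\ell(\Lambda Q))}{\delta_1}\,\omega^-(\Lambda Q),
\]
and this is at most $\delta_1\,\omega^-(\Lambda Q)$ provided $\ell_1(\delta_1)$ is chosen so that $\varepsilon(C\ell_1)\le \delta_1^2$.

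The main point is the remaining piece $\int_{\Lambda Q\setminus G_{\Lambda Q}} h\,d\omega^+ = \omega^-(\Lambda Q\setminus G_{\Lambda Q})$, for which I need a quantitative absolute continuity statement: small $\omega^+$-measure forces small $\omega^-$-measure. For this I invoke Lemma \ref{lem3.1}, which (taking $\ell_1$ small enough that $\Lambda Q\subset Q_0$) gives $h\in A_2(\chi_{Q_0}\omega^+)$, hence the self-improving reverse H\"older inequality
\[
\Bigl(\avint_{\Lambda Q} h^{1+\eta}\,d\omega^+\Bigr)^{1/(1+\eta)} \lesssim \avint_{\Lambda Q} h\,d\omega^+ = \frac{\omega^-(\Lambda Q)}{\omega^+(\Lambda Q)}
\]
for some $\eta>0$. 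A single application of H\"older's inequality then yields
\[
\omega^-(\Lambda Q\setminus G_{\Lambda Q}) \lesssim \omega^-(\Lambda Q)\,\left(\frac{\omega^+(\Lambda Q\setminus G_{\Lambda Q})}{\omega^+(\Lambda Q)}\right)^{\eta/(1+\eta)},
\]
and combining with \eqref{eqSmallComplement} this is bounded by $C\,(\varepsilon(C\ell(\Lambda Q))/\delta_1)^{\eta/(1+\eta)}\,\omega^-(\Lambda Q)$, which is $\le\delta_1\,\omega^-(\Lambda Q)$ once $\ell(\Lambda Q)$ is small enough.

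The only real obstacle is bookkeeping: $\ell_1(\delta_1)$ must be chosen small enough that both error factors $\varepsilon(C\ell_1)/\delta_1$ and $(\varepsilon(C\ell_1)/\delta_1)^{\eta/(1+\eta)}$ are dominated by $\delta_1$. Since $\varepsilon(\ell)\to 0$ as $\ell\to 0$ and $\eta>0$ is a fixed constant coming from the reverse H\"older exponent associated to the $A_2$ characteristic of $h$, such a choice is always available. Summing the three contributions gives \eqref{eql1*}.
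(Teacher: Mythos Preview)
Your proof is correct and follows essentially the same approach as the paper: split into $G_{\Lambda Q}$ and its complement, use the definition of the good set on the former, and on the latter bound $|h-a_{\Lambda Q}|\le h+a_{\Lambda Q}$ with the $h$-piece controlled via the reverse H\"older inequality coming from Lemma~\ref{lem3.1} together with \eqref{eqSmallComplement}. The only cosmetic difference is that you phrase the last step as the $A_\infty$-type estimate $\omega^-(\Lambda Q\setminus G_{\Lambda Q})\lesssim \omega^-(\Lambda Q)\bigl(\omega^+(\Lambda Q\setminus G_{\Lambda Q})/\omega^+(\Lambda Q)\bigr)^{\eta/(1+\eta)}$, while the paper writes out the underlying H\"older computation directly with exponent $p$; these are equivalent.
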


\begin{proof}
Using the definition of $G_{\Lambda Q}$, \rf{eqAQisLikeDensity} and  \rf{eqSmallComplement} we get
\begin{align*}
\int_{\Lambda Q} |h-a_{\Lambda Q}|\, d\omega^+
	& =\int_{G_{\Lambda Q}} |h-a_{\Lambda Q}|\, d\omega^++\int_{{\Lambda Q}\setminus G_{\Lambda Q}} |h-a_{\Lambda Q}|\, d\omega^+\\
	& \leq {\delta_1} a_{\Lambda Q}\, \omega^+({\Lambda Q}) + a_{\Lambda Q} \,\omega^+({\Lambda Q}\setminus G_{\Lambda Q}) + \int_{{\Lambda Q}\setminus G_{\Lambda Q}} h\, d\omega^+\\
	&\lesssim \left( {\delta_1} +  \frac2{\delta_1} \ve(C\ell({\Lambda Q}))\right) \omega^-({\Lambda Q})   + \int_{{\Lambda Q}\setminus G_{\Lambda Q}} h\, d\omega^+.	
\end{align*}
To control the last term, we recall that $h|_{Q_0}$ is a local $A_2$ weight for any $Q_0\in\wh \DD$ with small 
enough side length, and so $h|_{\Lambda Q}$ satisfies a reverse H\"older inequality with exponent $p$ (depending on the $A_2$ constant) 
if $\ell(\Lambda Q)$ is small enough too. Using also \rf{eqSmallComplement} we obtain
\begin{align*}
\int_{{\Lambda Q}\setminus G_{\Lambda Q}} h\, d\omega^+	
	& \leq \omega^+( {\Lambda Q}\setminus G_{\Lambda Q})^{\frac1{p'}} \left(\avint_{{\Lambda Q}} h^p\, d\omega^+	\right)^\frac1p \omega^+({\Lambda Q})^\frac1p\\
	& \lesssim \left(\frac{2}{\delta_1}\ve(\ell({C\Lambda Q}))\right)^{\frac1{p'}} \omega^+({\Lambda Q})  \avint_{\Lambda Q} h\, d\omega^+
		 \lesssim \delta_1 \omega^-({\Lambda Q}),
\end{align*}
assuming $\ell({\Lambda Q})$ small enough for the last inequality.
\end{proof}
\vv

\begin{rem}\label{rem3.4}
Applying the preceding lemma to $2\Lambda Q$, we infer that
$$\int_{\Lambda Q} |h-a_{2\Lambda Q}|\, d\omega^+\leq \int_{2\Lambda Q} |h-a_{2\Lambda Q}|\, d\omega^+\lesssim \delta_1 \omega^-({2\Lambda Q}) \approx \delta_1 \omega^-({\Lambda Q}).$$
Together with \rf{eql1*}, this implies that
\begin{equation}\label{eql1**}
|a_{\Lambda Q}- a_{2\Lambda Q}|\leq C\delta_1a_{\Lambda Q}.
\end{equation}
\end{rem}
\vv

Next we consider the set
\begin{equation}\label{eqglambdaq}
\widetilde{G}_{\Lambda Q}=\left\{x\in {\Lambda Q}: M_{\omega^+}(h\chi_{2{\Lambda Q}})(x)\leq 2 a_{\Lambda Q}\mbox{ and }  M_{\omega^+}(\chi_{2{\Lambda Q}\setminus G_{2{\Lambda Q}}})(x)\leq \frac12 \right\}.
\end{equation}
Here we denoted by $M_{\omega^+}$ the centered maximal Hardy-Littlewood operator
$$M_{\omega^+}f(x) = \sup_{r>0} \frac1{\omega^+(B(x,r))}\int_{B(x,r)}|f|\,d\omega^+.$$

\begin{lemma}\label{lemgq}
Let $0<{\delta_1}<1/2$. If $\ell({\Lambda Q})\leq \ell_1({\delta_1})$, then
\begin{equation}\label{eq*843}
\omega^+(\widetilde{G}_{\Lambda Q})\geq (1-  \ve_0)\,\omega^+({\Lambda Q}),
\end{equation}
where $\ve_0= C\delta_1$.
\end{lemma}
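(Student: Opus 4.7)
The plan is to decompose the complement $\Lambda Q\setminus \widetilde G_{\Lambda Q}$ into the two pieces corresponding to the two conditions defining $\widetilde G_{\Lambda Q}$, and bound each via weak-type $(1,1)$ inequalities for $M_{\omega^+}$ (which hold because $\omega^+$ is doubling, by Jerison-Kenig) combined with \eqref{eqSmallComplement} applied to $2\Lambda Q$. Write
$$\Lambda Q\setminus \widetilde G_{\Lambda Q}\subset A\cup B,$$
where $A=\{x\in \Lambda Q: M_{\omega^+}(h\chi_{2\Lambda Q})(x)>2a_{\Lambda Q}\}$ and $B=\{x\in\Lambda Q: M_{\omega^+}(\chi_{2\Lambda Q\setminus G_{2\Lambda Q}})(x)>1/2\}$.

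The set $B$ is the easy one. The weak-type $(1,1)$ bound together with \eqref{eqSmallComplement} applied to $2\Lambda Q$ and the doubling property of $\omega^+$ give
$$\omega^+(B)\lesssim \omega^+(2\Lambda Q\setminus G_{2\Lambda Q})\lesssim \frac{\varepsilon(C\ell(\Lambda Q))}{\delta_1}\,\omega^+(\Lambda Q),$$
which is bounded by $C\delta_1\,\omega^+(\Lambda Q)$ provided $\ell_1(\delta_1)$ is chosen small enough that $\varepsilon(C\ell_1(\delta_1))\leq \delta_1^{2}$.

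For the set $A$, the idea is to exploit that on $G_{2\Lambda Q}$ the function $h$ is essentially constant. On that set $h\leq (1+\delta_1)a_{2\Lambda Q}$, and combining with \eqref{eql1**} from Remark \ref{rem3.4} yields $h\leq (1+C\delta_1)a_{\Lambda Q}\leq \tfrac32 a_{\Lambda Q}$ for $\delta_1$ small. Hence pointwise $M_{\omega^+}(h\chi_{G_{2\Lambda Q}})\leq \tfrac32 a_{\Lambda Q}$, and therefore
$$A\subset \Bigl\{x\in\Lambda Q: M_{\omega^+}(h\chi_{2\Lambda Q\setminus G_{2\Lambda Q}})(x)>\tfrac12 a_{\Lambda Q}\Bigr\}.$$
Applying weak-type $(1,1)$ and the reverse Hölder inequality of exponent $p>1$ for $h$ (which is available by Lemma \ref{lem3.1} once $\ell(\Lambda Q)$ is small enough, via the $A_2$ property), followed by Hölder,
$$\omega^+(A)\lesssim \frac{1}{a_{\Lambda Q}}\int_{2\Lambda Q\setminus G_{2\Lambda Q}} h\,d\omega^+\leq \frac{1}{a_{\Lambda Q}}\,\omega^+(2\Lambda Q\setminus G_{2\Lambda Q})^{1/p'}\Bigl(\int_{2\Lambda Q} h^{p}\,d\omega^+\Bigr)^{1/p}.$$
The reverse Hölder bound gives $\bigl(\int_{2\Lambda Q} h^p d\omega^+\bigr)^{1/p}\lesssim a_{2\Lambda Q}\,\omega^+(2\Lambda Q)^{1/p}\approx a_{\Lambda Q}\omega^+(\Lambda Q)^{1/p}$, so \eqref{eqSmallComplement} yields
$$\omega^+(A)\lesssim \omega^+(\Lambda Q)\left(\frac{\omega^+(2\Lambda Q\setminus G_{2\Lambda Q})}{\omega^+(2\Lambda Q)}\right)^{1/p'}\lesssim \omega^+(\Lambda Q)\left(\frac{\varepsilon(C\ell(\Lambda Q))}{\delta_1}\right)^{1/p'}.$$

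The main (only) subtlety is the order in which the smallness parameters are chosen: first one fixes $\delta_1$; then one must shrink $\ell_1(\delta_1)$ enough so that both $\varepsilon(C\ell_1(\delta_1))/\delta_1\leq C\delta_1$ and $(\varepsilon(C\ell_1(\delta_1))/\delta_1)^{1/p'}\leq C\delta_1$ hold, i.e. $\varepsilon(C\ell_1(\delta_1))\leq c\,\delta_1^{1+p'}$. Since the reverse Hölder exponent $p$ depends on the $A_2$ constant of $h|_{Q_0}$, which is a fixed constant once $\ell_0$ is fixed, this choice is legitimate. Combining the bounds on $A$ and $B$ yields $\omega^+(\Lambda Q\setminus \widetilde G_{\Lambda Q})\leq C\delta_1\,\omega^+(\Lambda Q)$, which is exactly \eqref{eq*843}.
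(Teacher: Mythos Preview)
Your proof is correct and follows the same overall structure as the paper: decompose $\Lambda Q\setminus \widetilde G_{\Lambda Q}$ into the two pieces $A$ and $B$, and control each via the weak $(1,1)$ inequality for $M_{\omega^+}$ together with \eqref{eqSmallComplement}. The only difference is in the treatment of $A$: the paper observes directly that $\{M_{\omega^+}(h\chi_{2\Lambda Q})>2a_{\Lambda Q}\}\subset\{M_{\omega^+}((h-a_{\Lambda Q})\chi_{2\Lambda Q})>a_{\Lambda Q}\}$ and then invokes Lemma~\ref{lemReverseHolderH} (together with \eqref{eql1**}) to bound $\|(h-a_{\Lambda Q})\chi_{2\Lambda Q}\|_{L^1(\omega^+)}\lesssim \delta_1\,\omega^-(2\Lambda Q)$, which immediately gives $\omega^+(A)\lesssim \delta_1\,\omega^+(\Lambda Q)$. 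Your splitting $h\chi_{2\Lambda Q}=h\chi_{G_{2\Lambda Q}}+h\chi_{2\Lambda Q\setminus G_{2\Lambda Q}}$ followed by H\"older and reverse H\"older is in effect re-deriving the tail estimate that already appears inside the proof of Lemma~\ref{lemReverseHolderH}; it works, but requires the slightly stronger smallness condition $\ve(C\ell_1)\lesssim \delta_1^{\,1+p'}$, whereas the paper only needs $\ve(C\ell_1)\lesssim \delta_1^{2}$.
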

Notice that $\widetilde{G}_{\Lambda Q}$ depends on $\delta_1$ because of the dependence of ${G}_{2\Lambda Q}$ on $\delta_1$. Further, abusing notation we allow the constant $\ell_1(\delta_1)$
to be smaller than the analogous constant in the previous appearances.

\begin{proof} 
We will estimate $\omega^+(\Lambda Q\setminus \wt G_{\Lambda Q})$. First, note that by the
weak $(1,1)$ inequality for the maximal operator, Lemma \ref{lemReverseHolderH}, and 
\rf{eql1**}, we get 
\begin{align*}
\omega^+\big(\left\{ x: M_{\omega^+}(h\chi_{2{\Lambda Q}})(x)> 2 a_{\Lambda Q}\right\}\big)
	& \leq \omega^+\big(\left\{ M_{\omega^+}((h-a_{\Lambda Q})\chi_{2{\Lambda Q}})(x)>  a_{\Lambda Q}\right\}\big)\\
	& \lesssim\frac{\| (h-a_{\Lambda Q} )\chi_{2{\Lambda Q}}\|_{L^1( \omega^+)}}{a_{\Lambda Q}} \\
	& \lesssim\frac{\| (h-a_{2\Lambda Q} )\chi_{2{\Lambda Q}}\|_{L^1( \omega^+)} + |a_{\Lambda Q} 
	- a_{2\Lambda Q}|\,\omega^+(2\Lambda Q)}{a_{\Lambda Q}} \\
		& \lesssim \delta_1 \frac{\omega^-(2{\Lambda Q})}{a_{{\Lambda Q}}}\approx \delta_1\,\omega^+(\Lambda Q).
\end{align*}
On the other hand, again by the weak $(1,1)$ boundedness of $M_{\omega^+}$ and by \rf{eqSmallComplement} we obtain
\begin{align*}
\omega^+\big(\big\{ x: M_{\omega^+}(\chi_{2{\Lambda Q}\setminus G_{2{\Lambda Q}}})(x)> \tfrac12 \big\}\big)
	& \lesssim \| \chi_{2{\Lambda Q}\setminus G_{2{\Lambda Q}}} \|_{L^1( \omega^+)}
		={ \omega^+(2{\Lambda Q}\setminus G_{2{\Lambda Q}})}\\
	& \lesssim \frac{\ve(C\ell(2{\Lambda Q}))\,\omega^+(2{\Lambda Q})  }{\delta_1}.
\end{align*}
Combining the previous estimates we get, for ${\delta_1}$ small enough,
\begin{align*}
\omega^+(\widetilde{G}_{\Lambda Q})
	& \geq \omega^+({\Lambda Q}) - \omega^+\big(\left\{ x: M_{\omega^+}(h\chi_{2{\Lambda Q}})(x) > 2 a_{\Lambda Q}\right\}\big)- \omega^+ \big(\big\{ x: M_{\omega^+}(\chi_{2{\Lambda Q}\setminus G_{2{\Lambda Q}}})(x)> \tfrac12 \big\}\big) \\ 
	& \geq\left(1-c\delta_1- \frac{C \ve(C\ell({\Lambda Q}))}{{\delta_1}}\right)  \omega^+({\Lambda Q}).
\end{align*}
Choosing for instance $\ell_1({\delta_1})$ so that $C\ve(C\ell_1)\leq ({{\delta_1}})^2$, then for every ${\Lambda Q}$ with $\ell({\Lambda Q})\leq \ell_1$ we derive
\begin{align*}
\omega^+(\widetilde{G}_{\Lambda Q})
	& \geq \left(1-C\delta_1\right)  \omega^+({\Lambda Q}).
\end{align*}
\end{proof}
\vv


\subsection{The Riesz transform of $\omega^+$}

In this section we will estimate the oscillation of the Riesz transforms $\RR^+ \omega^+$ and $\RR \omega^+$ on $\wt G(\Lambda Q)$. First we need to prove a few auxiliary results.

\begin{lemma}\label{lem44}
Let  ${\delta_1}\in (0,1/2)$. For every $x\in \widetilde{G}_{\Lambda Q}$  with $\ell({\Lambda Q})\leq \ell_1(\delta_1)$, and for $B=B(x,r)$ with $0<r<\ell({\Lambda Q})$ we have 
\begin{itemize}
\item[(a)] $\displaystyle \frac{\omega^-({\Lambda Q})}{\omega^+({\Lambda Q})} \approx \frac{\omega^-(B)}{\omega^+(B)} $.
\item[(b)] $\displaystyle \frac{\omega^s(B)}{r^n}\leq C\frac{\omega^s({\Lambda Q})}{\ell({\Lambda Q})^n}$ for $s\in\{+,-\}$.
\end{itemize}
\end{lemma}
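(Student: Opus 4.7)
The plan is to exploit the two defining conditions of $\wt G_{\Lambda Q}$: the maximal function bound on $h\chi_{2\Lambda Q}$ will give an upper bound on $\omega^-(B)/\omega^+(B)$, while the bound on $\chi_{2\Lambda Q\setminus G_{2\Lambda Q}}$ will force a definite $\omega^+$-fraction of $B$ to lie in $G_{2\Lambda Q}$, where $h$ is pinched near $a_{2\Lambda Q}$, and hence near $a_{\Lambda Q}$ by Remark \ref{rem3.4}. The preliminary observation that makes both conditions applicable is that for $x\in \Lambda Q$ and $r\leq \ell(\Lambda Q)$, the ball $B=B(x,r)$ satisfies $B\cap \partial\Omega^+\subset 2\Lambda Q$, so averages over $B$ are controlled by $M_{\omega^+}$ at $x$.

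For the upper bound in (a), I would simply write
$$
\frac{\omega^-(B)}{\omega^+(B)}=\avint_B h\,d\omega^+\leq M_{\omega^+}(h\chi_{2\Lambda Q})(x)\leq 2\, a_{\Lambda Q},
$$
and then apply \rf{eqAQisLikeDensity} to replace $a_{\Lambda Q}$ by $\omega^-(\Lambda Q)/\omega^+(\Lambda Q)$ up to constants. For the lower bound, the second condition yields $\omega^+(B\cap G_{2\Lambda Q})\geq \tfrac12 \omega^+(B)$, and since $h\geq (1-\delta_1)\, a_{2\Lambda Q}$ on $G_{2\Lambda Q}$,
$$
\omega^-(B)\geq \int_{B\cap G_{2\Lambda Q}} h\,d\omega^+\geq \tfrac{1-\delta_1}{2}\, a_{2\Lambda Q}\,\omega^+(B)\gtrsim a_{\Lambda Q}\,\omega^+(B),
$$
by Remark \ref{rem3.4} in the last step.

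For (b) it is enough to handle $s=+$: given (a), one has $\omega^-(B)\approx (\omega^-(\Lambda Q)/\omega^+(\Lambda Q))\,\omega^+(B)$, so any bound $\omega^+(B)\lesssim r^n\,\omega^+(\Lambda Q)/\ell(\Lambda Q)^n$ transfers automatically to $\omega^-$. For $s=+$ I would invoke the approximate $n$-AD-regularity of harmonic measure in $\delta$-Reifenberg flat NTA domains when $\delta$ is small enough: the density $\omega^+(B(x,r))/r^n$ varies by at most uniformly bounded factors as the center moves on $\partial\Omega^+$ and the radius ranges up to the Reifenberg flat scale, by Kenig--Toro's asymptotically optimal doubling \cite{Kenig-Toro-duke}. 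Since the extended cube $\Lambda Q$ is sandwiched between two balls of radius $\approx\ell(\Lambda Q)$ centered on $\partial\Omega^+$, this yields the desired bound.

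The main obstacle is (b): part (a) is essentially a direct unpacking of the definition of $\wt G_{\Lambda Q}$ together with \rf{eqAQisLikeDensity}, but (b) genuinely requires scale-independent upper $n$-regularity of $\omega^+$. Such a bound fails for generic NTA domains, and it is precisely the $\delta$-Reifenberg flatness hypothesis (with $\delta$ small enough) that supplies it.
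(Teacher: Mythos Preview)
Your argument for (a) is correct and matches the paper's proof essentially line for line.

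Your approach to (b), however, has a genuine gap. Kenig--Toro's asymptotically optimal doubling does \emph{not} give that $\omega^+(B(x,r))/r^n$ is bounded uniformly over all scales $0<r<\ell(\Lambda Q)$. Iterating the doubling estimate $\omega^+(B(x,2s))\leq (1+\beta)2^n\omega^+(B(x,s))$ across $k\approx\log_2(\ell(\Lambda Q)/r)$ generations produces a factor $(1+\beta)^k$, which diverges as $r\to0$. Equivalently, Lemma~\ref{lemfacil} only yields $\Theta_{\omega^+}(B(x,r))\lesssim (\ell(\Lambda Q)/r)^{n\gamma_0}\,\Theta_{\omega^+}(\Lambda Q)$ with some $\gamma_0>0$; since the Reifenberg constant $\delta$ is fixed in advance, $\gamma_0$ cannot be driven to zero. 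In fact there exist vanishing Reifenberg flat NTA domains whose harmonic measure is \emph{not} $n$-AD regular (cf.\ the discussion around \cite[Section~7]{AMT-quantcpam}), so the uniform scale-independent bound you invoke simply does not follow from Reifenberg flatness alone.

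The paper's proof of (b) uses a genuinely two-phase ingredient that your proposal is missing: the Alt--Caffarelli--Friedman monotonicity formula. Via \cite[Theorem~3.13]{KPT09} one has $\gamma(x,r)^{1/2}\approx \Theta_{\omega^+}(B(x,r))\,\Theta_{\omega^-}(B(x,r))$, and ACF says $\gamma(x,r)$ is nondecreasing in $r$. Hence the \emph{product} $\Theta_{\omega^+}(B)\,\Theta_{\omega^-}(B)$ is controlled by $\Theta_{\omega^+}(\Lambda Q)\,\Theta_{\omega^-}(\Lambda Q)$. Combining this with (a), which pins down the \emph{ratio} $\omega^-(B)/\omega^+(B)\approx \omega^-(\Lambda Q)/\omega^+(\Lambda Q)$, one decouples the two factors and obtains (b) for each sign separately. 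The conceptual point is that one-phase information cannot rule out blowup of $\Theta_{\omega^+}$ at small scales; what rules it out here is that any such blowup would force $\Theta_{\omega^-}$ to decay correspondingly (by ACF), and that is incompatible with (a).
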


\begin{proof}
Let $x\in \widetilde{G}_{\Lambda Q}$.
By the definition of $\widetilde{G}_{\Lambda Q}$ and \rf{eqAQisLikeDensity}, we get
$$\frac{\omega^-(B)}{\omega^+(B)}=\avint_B h\,d\omega^+
\leq M_{\omega^+}(h\chi_{2{\Lambda Q}})(x)\leq 2 a_{\Lambda Q}\approx  \frac{\omega^-({\Lambda Q})}{\omega^+({\Lambda Q})}.$$
Also by the definition of $G_{2{\Lambda Q}}$ and \rf{eql1**},
$$\frac{\omega^-(B)}{\omega^+(B)}\geq \frac{1}{\omega^+(B)}\int_{B\cap G_{2{\Lambda Q}}} h\, d\omega^+\geq a_{2\Lambda Q} \frac{\omega^+ (B\cap G_{2{\Lambda Q}})}{2\omega^+(B)}  $$
Using the fact that for $x\in \widetilde{G}_{\Lambda Q}$ we have $M_{\omega^+}(\chi_{2{\Lambda Q}\setminus G_{2{\Lambda Q}}})(x)\leq \frac12$, we get that 
$$\frac{\omega^-(B)}{\omega^+(B)} \geq \frac{a_{2\Lambda Q}}2 \bigg(1-\frac12\bigg) \approx \frac{\omega^-({\Lambda Q})}{\omega^+({\Lambda Q})},$$
proving {(a)}.

On the other hand, let $g^{\pm}$ be the Green function of $\Omega^{\pm}$ with pole at $p^\pm$ and
consider the functional
$$\gamma(x,r):=\prod_{s\in\{+,-\}} \frac1{r^2} \left(\int_{B(x,r)} \frac{|\nabla g^s(y)|{^2}}{|x-y|^{n-1}}\, dm(y)\right) $$
By the well known Alt-Caffarelli-Friedman monotonicity formula \cite[Lemma 5.1]{ACF84}, this functional is non-decreasing in $r$. Therefore, $\gamma(x,r)\leq \gamma(x,\ell({\Lambda Q}))$. Moreover, by \cite[Theorem 3.13]{KPT09} we have that
\begin{equation}\label{eqGammaComparableHarmonic}
\gamma(x,r)^\frac12 \approx \prod_{s\in\{+,-\}} \frac{\omega^s(B(x,r))}{r^n} = \frac{\omega^-(B(x,r))}{\omega^+(B(x,r))}\frac{\omega^+(B(x,r))^2}{r^{2n}}.
\end{equation}
Combining this estimate with the monotonicity of $\gamma$ we get
$$
 \frac{\omega^-(B(x,r))}{\omega^+(B(x,r))}\frac{\omega^+(B(x,r))^2}{r^{2n}}
 	\lesssim{ \frac{\omega^-({\Lambda Q})}{\omega^+({\Lambda Q})} }\frac{\omega^+({\Lambda Q})^2}{\ell({\Lambda Q})^{2n}}.
$$
Together with (a), this gives (b) for $s=+$. The case $s=-$ follows analogously.
\end{proof}
\vv

{
\begin{lemma}\label{lemfacil}
For some $\delta>0$ and {$R>0$}, let $\Omega\subset\R^{n+1}$ be a bounded $(\delta,R)$-Reifenberg flat domain with bounded boundary and $\omega$ its associated harmonic measure with pole in $p\in\Omega$ such that {$R\leq\dist(p,\partial\Omega)\leq \diam(\Omega)$.}
Given $\gamma_0>0$  {and $A>1$ if $M=M(\gamma_0)$ is big enough and if $\delta =\delta(\gamma_0,M)$ is small enough, every $x\in\partial\Omega$ satisfies} 
\begin{equation}\label{eq0384}
C^{-1}A^{n(1-\gamma_0)}\omega(B(x,r))\leq  \omega(B(x,Ar)) \le CA^{n(1+\gamma_0)}\omega(B(x,r))\quad\mbox{ for all $r\leq (AM)^{-1} R$.}
\end{equation}
Also, if $\delta$ is small enough, then there exists some constant $\wt M$ depending on $\diam(\partial\Omega)/R$
such that
$$P_\omega(B(x,r))\leq C\,\Theta_\omega(B(x,r))\quad \quad\mbox{ for all $0<r\leq \wt M^{-1} R$},$$
where $C$ is some absolute constant.
\end{lemma}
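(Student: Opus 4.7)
The plan is to derive \rf{eq0384} from the quasi-symmetric doubling of harmonic measure in Reifenberg flat domains, and then to deduce the $P_\omega \leq C\Theta_\omega$ bound by splitting the defining sum at the Reifenberg scale.

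For \rf{eq0384}, the underlying fact is that in a $(\delta,R)$-Reifenberg flat domain with $\delta$ small, the harmonic measure is almost $n$-dimensional in a quantitative sense. More precisely, there is an estimate of the form
\[
C^{-1}\left(\frac{r_1}{r_2}\right)^{n+\eta(\delta)} \leq \frac{\omega(B(x,r_1))}{\omega(B(x,r_2))} \leq C\left(\frac{r_1}{r_2}\right)^{n-\eta(\delta)},
\]
valid for $x\in\partial\Omega$ and $0<r_1\leq r_2 \leq M^{-1} R$, with $\eta(\delta)\to 0$ as $\delta\to 0$ and absolute constants $C$, $M$. This is essentially due to Kenig--Toro and is proved by iterating across dyadic scales the comparison principle between $\omega$ and the harmonic measure of the half-space bounded by the $n$-plane approximating $\partial\Omega\cap B(x,s)$, in which the Poisson kernel scales exactly as $s^{-n}$. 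The Reifenberg flatness condition bounds the error per scale, and this error accumulates into the exponent gain $\eta(\delta)$. Choosing $\eta(\delta)\leq n\gamma_0$ and applying the estimate with $r_1=r$, $r_2=Ar$ yields \rf{eq0384} on the range $Ar\leq M^{-1}R$; the constant $M$ is enlarged if needed to absorb the universal $C$ on the exponent side.

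For the second assertion, split
\[
P_\omega(B(x,r)) = \sum_{j\geq 0}2^{-j}\Theta_\omega(2^j B) = S_1+S_2,
\]
where $S_1$ is the sum over $j$ with $2^j r\leq M^{-1}R$ and $S_2$ is the tail. For $S_1$, apply \rf{eq0384} with $A=2^j$ to get $\Theta_\omega(2^j B)\leq C\,2^{jn\gamma_0}\Theta_\omega(B)$, so
\[
S_1 \leq C\Theta_\omega(B)\sum_{j\geq 0}2^{-j(1-n\gamma_0)} \leq C'\Theta_\omega(B),
\]
the geometric series converging because we may freely assume $\gamma_0<1/n$. For $S_2$, use the trivial bound $\omega(2^jB)\leq 1$:
\[
S_2 \leq \sum_{j:\,2^jr>M^{-1}R}2^{-j}(2^jr)^{-n} \leq C\,\frac{r}{R^{n+1}}.
\]
On the other hand, applying the lower half of \rf{eq0384} with $r$ playing the role of the inner radius and $M^{-1}R$ the outer, and noting that $\omega(B(x,M^{-1}R))\geq c$ by the corkscrew condition (with $c$ depending on $\diam(\partial\Omega)/R$ and the pole normalization), we obtain
\[
\Theta_\omega(B(x,r)) \geq c\,(r/R)^{n\gamma_0}R^{-n}.
\]
Then $S_2\leq C\Theta_\omega(B)$ reduces to $(r/R)^{1-n\gamma_0}\leq C$, which holds for $r\leq \tilde M^{-1}R$ upon choosing $\tilde M$ sufficiently large to absorb the constants $M$, $\diam(\partial\Omega)/R$ and the universal $C$. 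Combining $S_1$ and $S_2$ yields the claim.

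The main obstacle is establishing \rf{eq0384}: the sharp scaling of harmonic measure in Reifenberg flat domains requires a careful iteration of the comparison with half-spaces and bookkeeping of how the Reifenberg parameter $\delta$ translates into the loss $\eta(\delta)$ in the exponent, though this type of estimate is by now standard in the Kenig--Toro theory. Once \rf{eq0384} is in hand, the bound on $P_\omega$ reduces to a routine geometric-series computation, where the delicate point is merely to balance the two regimes in such a way that the tail $S_2$ is controlled by the lower bound on $\Theta_\omega(B)$.
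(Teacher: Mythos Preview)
Your proposal is correct and follows essentially the same approach as the paper: both derive \rf{eq0384} by citing the Kenig--Toro quasi-symmetric doubling estimate (the paper quotes \cite[Theorem~4.1]{Kenig-Toro-duke} in the single-scale form $(1-\beta)2^n\omega(B(x,s))\le\omega(B(x,2s))\le(1+\beta)2^n\omega(B(x,s))$ and iterates, which is equivalent to your multi-scale formulation), and both bound $P_\omega$ by splitting the sum at the Reifenberg scale, controlling the near part by the geometric series $\sum 2^{-j(1-n\gamma_0)}$ and the tail via the trivial bound $\omega\le1$ together with the NTA comparison between $\omega(\partial\Omega)$ and $\omega$ at scale $\approx R$. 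The only cosmetic difference is that the paper estimates the tail using the \emph{upper} half of \rf{eq0384} to bound $\Theta_\omega(2^{n_0}B)$ in terms of $\Theta_\omega(B)$, while you use the \emph{lower} half to bound $\Theta_\omega(B)$ from below; these are dual versions of the same computation.
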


\begin{proof}
The first assertion is an immediate consequence of \cite[Theorem 4.1]{Kenig-Toro-duke}. 
Indeed, this theorem asserts that, given any $\beta>0$, there exists $M'>1$ big enough and $\delta>0$ small enough such that for any $(\delta,R)$-Reifenberg flat domain $\Omega\subset\R^{n+1}$, its associated
harmonic measure $\omega=\omega^p$ satisfies
$$(1-\beta)2^{n}\omega(B(x,s))\leq \omega(B(x,2s))\leq (1+\beta)2^n\omega(B(x,s))\quad\mbox{ for all $x\in\partial \Omega$, $0<s\leq R/M'$},$$
assuming also $|p-x|> M's$.
Then, letting $s=2^k r$ for some $k\ge1$, we deduce that
$$(1-\beta)2^{n}\omega(B(x,2^{k-1}r))\leq \omega(B(x,2^{k} r))\leq (1+\beta)2^n\omega(B(x,2^{k-1}r))\quad\mbox{ for $x\in\partial \Omega$, $0<2^kr\leq R/M'$}.$$
As a consequence, by iteration,
$$(1-\beta)^k2^{nk}\omega(B(x,r))\leq \omega(B(x,2^{k} r))\leq (1+\beta)^k2^{nk}\omega(B(x,r))\quad\mbox{ for  $x\in\partial \Omega$, $0<2^kr\leq R/M'$}.$$
Hence, in the particular case $A=2^k$, we infer that
$$ (1-\beta)^{k}A^n \omega(B(x,r))\leq \omega(B(x,A r))\leq (1+\beta)^k A^n\omega(B(x,r))\quad\mbox{ for  $x\in\partial \Omega$, $0<2^kr\leq R/M'$},$$ which clearly implies \rf{eq0384} {with $C=1$ by choosing $\beta=1-2^{-n\gamma_0}$}. The case when $A\neq2^k$ for any $k$ follows also from the previous estimate.

To prove the second statement of the lemma, assuming $\delta$ small enough, we take $M=M_0$ big enough such that \rf{eq0384} holds with $A=2$ and $\gamma_0=1/(2n)$ (so the smallness of $\delta$ and $M_0$ depend just on $n$).
Next let $n_0$ be the largest integer such that $2^{n_0}r\leq R/M_0$. Then, writing $B=B(x,r)$, we have that
\begin{align}\label{eqpois45}
P_{\omega}(B) =\sum_{j\geq 0} 2^{-j} \Theta_\omega(2^jB) & \leq \sum_{0\leq j< n_0} 2^{-j}\frac{C 2^{j n(1+\gamma_0)}\omega(B)}{2^{jn} r^n} + \sum_{j\geq n_0} 2^{-j}\Theta_\omega(2^jB)\\
& \leq C
 \sum_{j\geq 0} 2^{j(\frac12-1)} \Theta_\omega(B) + 2^{-n_0}\sup_{j\geq n_0} \Theta_\omega(2^jB)\nonumber\\
& \leq C  \Theta_\omega(B) + 2^{-n_0}\sup_{j\geq n_0} \Theta_\omega(2^jB).\nonumber
\end{align}
To estimate the last term, for each $j \geq n_0$ we write
$$\Theta_\omega(2^jB) \leq \frac{\omega(2^jB)}{r(2^{n_0}B)^n}\leq \frac{1}{r(2^{n_0}B)^n}{\approx}
\frac{\omega(\partial\Omega)}{r(2^{n_0}B)^n}.
$$
Observe now that, { since $\Omega$ is an NTA domain whose NTA character depends on $\diam(\partial\Omega)/R$ (assuming $\delta$ small enough),} 
$$\omega(\partial\Omega)\leq C({\diam(\partial\Omega)/R,}\,\diam(\partial \Omega)/2^{n_0}{r})\,\omega(2^{n_0}B)
\leq C'(\diam(\partial \Omega)/R)\,\omega(2^{n_0}B).$$
Hence,
\begin{align*}
2^{-n_0}\sup_{j> n_0} \,\Theta_\omega(2^jB) & \leq 2^{-n_0}\,C'(\diam(\partial \Omega)/R)\,\frac{\omega(2^{n_0}B)}{r(2^{n_0}B)^n} \\
&\lesssim 2^{n_0(n\gamma_0-1)} \,C'(\diam(\partial \Omega)/R)\,\Theta_\omega(B)  = 2^{-n_0/2} \,C'(\diam(\partial \Omega)/R)\,\Theta_\omega(B). 
\end{align*}
So, for $n_0$ big enough depending on $\diam(\partial \Omega)/R$, the right hand side above is at most 
$ 2\Theta_\omega(B)$, which together with \rf{eqpois45} gives
$$P_{\omega}(B) \lesssim \Theta_\omega(B)$$
for $n_0$ big enough depending on $\diam(\partial \Omega)/R$. 
\end{proof}
}

\vv

\begin{rem}
For definiteness, we will assume that the Reifenberg flat constant $\delta$ of $\Omega^\pm$ is small enough so that
we can take $n \gamma_0\leq1/2$ in the preceding lemma.
\end{rem}
\vv

\begin{rem}\label{rem17}
If $x\in\Omega^-$, then $K(x-\cdot) = \frac{x-\cdot}{|x-\cdot|^{n+1}}$ is harmonic in $\Omega^+$ and continuous 
in its closure. Hence, by the definition of harmonic measure,
$$\RR\omega^+(x) = K(x-p^+)\quad\mbox{ if $x\in\Omega^-$}.$$
Analogously, 
$$\RR\omega^-(x) = K(x-p^-)\quad\mbox{ if $x\in\Omega^+$}.$$
Hence, for any tangent point $x\in\partial\Omega^+$,
\begin{equation}\label{eqriesz79}
\RR^-\omega^+(x) = K(x-p^+)\quad\mbox{ and }\quad\RR^+\omega^-(x) = K(x-p^-).
\end{equation}
\end{rem}
\vv

\begin{lemma}\label{lem37}
Let $\Lambda\geq1$ and assume $\ell(\Lambda Q)\leq\ell_1(\delta_1)$. Then we have
\begin{equation}\label{eqpoin1}
\mathcal{M}_n(\chi_{2\Lambda Q} \omega^+)(x) + \RR_*(\chi_{2\Lambda Q}\,\omega^+)(x)\lesssim \Theta_{\omega^+}(\Lambda Q)
\quad\mbox{ for all $x\in  \wt G_{\Lambda Q}$.}
\end{equation}
The analogous estimate also holds replacing $\omega^+$ by $\omega^-$.
\end{lemma}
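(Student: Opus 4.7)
The maximal function bound is a direct consequence of Lemma \ref{lem44}(b) together with doubling of $\omega^+$. For a ball $B=B(x,r)$ with $r\leq \ell(\Lambda Q)$, part (b) gives $\omega^+(B)/r^n\lesssim \Theta_{\omega^+}(\Lambda Q)$; for $r>\ell(\Lambda Q)$, the trivial containment $B(x,r)\cap 2\Lambda Q\subset 2\Lambda Q$ and doubling of $\omega^+$ yield $\omega^+(B(x,r)\cap 2\Lambda Q)/r^n\leq \omega^+(2\Lambda Q)/\ell(\Lambda Q)^n\approx \Theta_{\omega^+}(\Lambda Q)$. Taking the supremum in $r$ produces the stated bound.

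For the Riesz transform, the range $\varepsilon\gtrsim \ell(\Lambda Q)$ is handled by the naive estimate $|\RR_\varepsilon(\chi_{2\Lambda Q}\omega^+)(x)|\leq \omega^+(2\Lambda Q)/\varepsilon^n\lesssim \Theta_{\omega^+}(\Lambda Q)$. The real work lies in the range $\varepsilon<c_0\ell(\Lambda Q)$ with $c_0$ small, which I plan to handle via the standard \emph{approach point method}. By the $(\delta,R)$-Reifenberg flatness of $\partial\Omega^+$ and the two-sided corkscrew condition from Section \ref{secprelim-nta}, one can select $y_\varepsilon\in \Omega^-$ with $|y_\varepsilon-x|=\varepsilon/2$ and $\dist(y_\varepsilon,\partial\Omega^+)\gtrsim \varepsilon$. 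The Calder\'on--Zygmund smoothness $|K(x-y)-K(y_\varepsilon-y)|\lesssim \varepsilon/|x-y|^{n+1}$ for $|x-y|>\varepsilon$, together with the local bound $|K(y_\varepsilon-y)|\lesssim \varepsilon^{-n}$ when $|x-y|\leq \varepsilon$, yields
$$\bigl|\RR_\varepsilon(\chi_{2\Lambda Q}\omega^+)(x)-\RR(\chi_{2\Lambda Q}\omega^+)(y_\varepsilon)\bigr|\lesssim P_{\omega^+}(B(x,\varepsilon))+\Theta_{\omega^+}(B(x,\varepsilon)),$$
and both summands are $\lesssim \Theta_{\omega^+}(\Lambda Q)$ by Lemmas \ref{lem44}(b) and \ref{lemfacil}.

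It then remains to control $\RR(\chi_{2\Lambda Q}\omega^+)(y_\varepsilon)$. Splitting $\omega^+=\chi_{2\Lambda Q}\omega^++\chi_{(2\Lambda Q)^c}\omega^+$ and invoking Remark \ref{rem17}, which gives $\RR\omega^+(y_\varepsilon)=K(y_\varepsilon-p^+)$ since $y_\varepsilon\in\Omega^-$, one writes
$$\RR(\chi_{2\Lambda Q}\omega^+)(y_\varepsilon)=K(y_\varepsilon-p^+)-\int_{(2\Lambda Q)^c} K(y_\varepsilon-y)\,d\omega^+(y).$$
For $y\notin 2\Lambda Q$ one has $|y_\varepsilon-y|\gtrsim \ell(\Lambda Q)$, so the tail integral is bounded by $P_{\omega^+}(B(x,c\ell(\Lambda Q)))\lesssim \Theta_{\omega^+}(\Lambda Q)$ by Lemma \ref{lemfacil}. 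The remaining $|K(y_\varepsilon-p^+)|\lesssim \dist(p^+,\partial\Omega^+)^{-n}$ is a domain constant which is absorbed into the right-hand side via the normalisation permitted at the start of Section \ref{sec3}. The analogous estimate for $\omega^-$ follows by the symmetric argument, using an approach point $y_\varepsilon\in \Omega^+$ and $\RR\omega^-(y_\varepsilon)=K(y_\varepsilon-p^-)$.

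The delicate point in the plan is the absorption of $K(y_\varepsilon-p^+)$ into $\Theta_{\omega^+}(\Lambda Q)$: this rests on a uniform Bourgain-type lower bound $\Theta_{\omega^+}(\Lambda Q)\gtrsim \dist(p^+,\partial\Omega^+)^{-n}$ valid in $\delta$-Reifenberg flat NTA domains with $\delta$ small enough, which one derives from the doubling in Lemma \ref{lemfacil} (with $\gamma_0$ as small as we wish) applied iteratively from the scale of $\ell(\Lambda Q)$ up to a scale comparable to $\dist(p^+,\partial\Omega^+)$.
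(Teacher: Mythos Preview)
Your overall strategy (approach points in $\Omega^-$, Remark~\ref{rem17}, Calder\'on--Zygmund comparison) coincides with the paper's, but the endgame has a genuine gap. The bound you assert for the tail, namely $\bigl|\int_{(2\Lambda Q)^c}K(y_\ve-y)\,d\omega^+(y)\bigr|\lesssim P_{\omega^+}(B(x,c\ell(\Lambda Q)))$, is not justified: taking absolute values inside gives $\sum_{j\geq 0}\Theta_{\omega^+}(2^j\Lambda Q)$, \emph{without} the $2^{-j}$ weights that define $P_{\omega^+}$, and there is no cancellation argument available here. More importantly, the claimed lower bound $\Theta_{\omega^+}(\Lambda Q)\gtrsim \dist(p^+,\partial\Omega^+)^{-n}$ fails. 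Iterating Lemma~\ref{lemfacil} from scale $\ell(\Lambda Q)$ to scale $\diam(\partial\Omega^+)$ only yields
\[
\Theta_{\omega^+}(\Lambda Q)\gtrsim \Bigl(\frac{\ell(\Lambda Q)}{\diam(\partial\Omega^+)}\Bigr)^{n\gamma_0}\frac{1}{\diam(\partial\Omega^+)^n},
\]
and since the Reifenberg constant $\delta$ is fixed once the domain is given, so is $\gamma_0$ (cf.\ the Remark after Lemma~\ref{lemfacil}, where $n\gamma_0\leq 1/2$). Thus $\Theta_{\omega^+}(\Lambda Q)$ may degenerate as $\ell(\Lambda Q)\to 0$, while $|K(y_\ve-p^+)|\approx\diam(\partial\Omega^+)^{-n}$ stays fixed; you cannot absorb the latter into the former.

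The paper avoids both issues by introducing a \emph{second} approach point $x'_{\Lambda\ell(Q)}\in\Omega^-$ at scale $\ell(\Lambda Q)$. One writes $\RR_\ve(\chi_{2\Lambda Q}\omega^+)(x)=\RR_\ve\omega^+(x)-\RR_\ve(\chi_{(2\Lambda Q)^c}\omega^+)(x)$ and compares the first term to $\RR\omega^+(x'_\ve)=K(x'_\ve-p^+)$ and the second to $\RR\omega^+(x'_{\Lambda\ell(Q)})=K(x'_{\Lambda\ell(Q)}-p^+)$, each up to a Calder\'on--Zygmund error controlled by $P_{\omega^+}\lesssim\Theta_{\omega^+}(\Lambda Q)$. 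What remains is the \emph{difference} $|K(x'_\ve-p^+)-K(x'_{\Lambda\ell(Q)}-p^+)|\lesssim \ell(\Lambda Q)/\diam(\partial\Omega^+)^{n+1}$, and this extra factor $\ell(\Lambda Q)/\diam(\partial\Omega^+)$ is exactly what makes the iteration of Lemma~\ref{lemfacil} close: one obtains $\ell(\Lambda Q)/\diam^{n+1}\lesssim (\ell(\Lambda Q)/\diam)^{1/2}\Theta_{\omega^+}(\Lambda Q)$. In short, the missing idea is to subtract a second kernel value at the coarse scale so that the pole term cancels to first order rather than having to be bounded outright.
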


\begin{proof}
By Lemma \ref{lem44} we have
$$\sup_{0<r\leq \Lambda \ell(Q)} \frac{\omega^+(B(x,r))}{r^n} \lesssim \Theta_{\omega^+}(\Lambda Q)\quad\mbox{ for all $x\in \wt G_{\Lambda Q}$.}$$
So we only need to estimate $\RR_*(\chi_{2\Lambda Q}\,\omega^+)(x)$.
To this end, given $0<\ve\leq \Lambda \ell(Q)$, consider a point $x'_\ve\in B(x,\ve)\setminus \Omega^+$ such that $\dist(x'_\ve,\partial\Omega^+)\approx\ve$ (this point exists because of the exterior corkscrew condition of $\Omega^+$). 
Consider also the analogous point $x'_{\Lambda\ell(Q)}$.
Then we have, by Remark \ref{rem17},
$$\RR\omega^+(x'_\ve) = K(x'_\ve - p^+) \quad \mbox{ and }\quad \RR\omega^+(x'_{\Lambda\ell(Q)}) = K(x'_{\Lambda\ell(Q)} - p^+).
$$
By standard Calder\'on-Zygmund estimates, we have
$$|\RR_\ve\omega^+(x) - \RR\omega^+(x'_\ve)|\lesssim P_{\omega^+}(B(x,\ve)) $$
and by Lemma \ref{lemfacil} we get
$$ 
|\RR_\ve(\chi_{(2\Lambda Q)^c}\omega^+)(x'_{\Lambda\ell(Q)}) - \RR\omega^+(x'_{\Lambda\ell(Q)})|\lesssim P_{\omega^+}(\Lambda Q)\lesssim \Theta_{\omega^+}(\Lambda Q).$$
It is also easy to check that 
$$P_{\omega^+}(B(x,\ve)) \lesssim \mathcal{M}_n(\chi_{2\Lambda Q}\omega^+)(x) + P_{\omega^+}(\Lambda Q) \lesssim \Theta_{\omega^+}(\Lambda Q).$$
So we deduce that
\begin{align*}
|\RR_\ve(\chi_{2\Lambda Q}\omega^+)(x)| & = |\RR_\ve\omega^+(x)-\RR_\ve(\chi_{(2\Lambda Q)^c}\omega^+)(x)|\\
& {\lesssim} |K(x'_\ve - p^+) - K(x'_{\Lambda\ell(Q)} - p^+)| +  \Theta_{\omega^+}(\Lambda Q) \\&\lesssim \frac{\ell(\Lambda Q)}{\dist(p^+,\partial\Omega^+)^{n+1}}
+ \Theta_{\omega^+}(\Lambda Q).
\end{align*}
Observe now that, by the first statement in Lemma \ref{lemfacil} (with $ n\gamma_0=1/2$),
\begin{align*}
\frac{\ell(\Lambda Q)}{\dist(p^+,\partial\Omega^+)^{n+1}} &
\approx \frac{\ell(\Lambda Q)\,}{\diam(\partial\Omega^+)^{n+1}} \,\omega^+(\partial\Omega^+)\\
& \lesssim \frac{\ell(\Lambda Q)\,}{\diam(\partial\Omega^+)^{n+1}} \,\frac{\diam(\partial\Omega^+)^{n+1/2}}{\ell(\Lambda Q)^{n+1/2}}\,\omega^+(\Lambda Q)=
\frac{\ell(\Lambda Q)^{1/2}}{\diam(\partial\Omega^+)^{1/2}}\,\Theta_{\omega^+}(\Lambda Q).
\end{align*}
From the preceding estimates we infer that
$$|\RR_\ve(\chi_{2\Lambda Q}\omega^+)(x)|\lesssim \Theta_{\omega^+}(\Lambda Q)\quad\mbox{ for $x\in \wt G_{\Lambda Q}$ and $0<\ve\leq \Lambda\ell(Q)$.}$$
Using also that $\RR_\ve(\chi_{2\Lambda Q}\omega^+)(x)=0$ for all $x\in \Lambda Q$ when $\ve>\diam(2\Lambda Q)$, we
deduce that $\RR_*(\chi_{2\Lambda Q}\,\omega^+)(x)\lesssim \Theta_{\omega^+}(\Lambda Q)$
for all $x\in  \wt G_{\Lambda Q}$, and the proof of the lemma is concluded.
\end{proof}

\vv

\begin{lemma}\label{lemROmegaRestricted}
Let $\Lambda\geq1$ and and assume $\ell(\Lambda Q)\leq\ell_1(\delta_1)$. 
The operator $\RR_{\omega^+}$ is bounded in $L^2(\omega^+|_{\wt G_{\Lambda Q}})$ with 
$$\|\RR_{\omega^+}\|_{L^2(\omega^+|_{\wt G_{\Lambda Q}})\to L^2(\omega^+|_{\wt G_{\Lambda Q}})}\lesssim\Theta_{\omega^+}(\Lambda Q).$$
The analogous statement also holds replacing $\omega^+$ by $\omega^-$.
\end{lemma}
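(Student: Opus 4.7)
The plan is straightforward: this should be an essentially immediate consequence of the $T1$ theorem for Riesz transforms with suppressed kernels (Theorem \ref{teot1}) combined with the non-tangential bound on $\RR_*$ and the density estimate established in Lemma \ref{lem37}.

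First, I would set $\mu := \omega^+\rest 2\Lambda Q$, so that $\chi_{2\Lambda Q}\omega^+ = \mu$ as measures, and therefore $\mathcal M_n\mu(x) = \mathcal M_n(\chi_{2\Lambda Q}\omega^+)(x)$ and $\RR_*\mu(x) = \RR_*(\chi_{2\Lambda Q}\omega^+)(x)$ for every $x\in\R^{n+1}$. By Lemma \ref{lem37} these quantities are both dominated by $C\,\Theta_{\omega^+}(\Lambda Q)$ for every $x\in \wt G_{\Lambda Q}$. Noting that $\wt G_{\Lambda Q}\subset \Lambda Q\subset 2\Lambda Q$ and that $\wt G_{\Lambda Q}\subset\partial\Omega^+ = \supp\omega^+$, we have $\wt G_{\Lambda Q}\subset \supp\mu$, so the hypotheses of Theorem \ref{teot1} are satisfied with $G = \wt G_{\Lambda Q}$ and $C_0 = c\,\Theta_{\omega^+}(\Lambda Q)$.

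Applying Theorem \ref{teot1} directly gives that $\RR_{\mu|_{\wt G_{\Lambda Q}}}$ is bounded in $L^2(\mu|_{\wt G_{\Lambda Q}})$ with norm $\lesssim \Theta_{\omega^+}(\Lambda Q)$. Since $\wt G_{\Lambda Q}\subset 2\Lambda Q$, the restriction $\mu|_{\wt G_{\Lambda Q}}$ coincides with $\omega^+|_{\wt G_{\Lambda Q}}$, and the desired bound for $\RR_{\omega^+}$ on $L^2(\omega^+|_{\wt G_{\Lambda Q}})$ follows.

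The case of $\omega^-$ is identical: Lemma \ref{lem37} is stated with the analogous estimate for $\omega^-$, and the same application of Theorem \ref{teot1} to $\omega^-\rest 2\Lambda Q$ yields the claim. There is no real obstacle here — the only thing to verify is the trivial observation that the truncated estimates of Lemma \ref{lem37} can be reinterpreted as pointwise estimates on the restricted measure, which is exactly the shape required by the $T1$ hypothesis.
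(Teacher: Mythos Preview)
Your proof is correct and is essentially identical to the paper's own argument: the paper also applies Theorem \ref{teot1} to $\mu=\omega^+|_{2\Lambda Q}$ with $G=\wt G_{\Lambda Q}$, using Lemma \ref{lem37} to verify the hypothesis, and states that the lemma is an immediate consequence of these two results.
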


\begin{proof}
This is an immediate consequence of Lemma \ref{lem37} and Theorem \ref{teot1} applied to $\mu=
\omega^+|_{2\Lambda Q}$ and $G= \wt G_{\Lambda Q}$.
\end{proof}

\vv
\begin{lemma}\label{lem-normal}
Given $\ve'>0$,
assume that $\Lambda$ is big enough (depending on $\ve'$), $\delta_1$ small enough (depending {also} on $\Lambda$), and suppose that $\ell(Q)\leq\ell_2(\delta_1,\Lambda,\ve')$. 
 Then
$$\int_{Q\cap \wt G_{\Lambda Q}} \big|\Theta^n(x,\omega^+) N(x) - m_{\omega^+,Q\cap\wt G_{\Lambda Q}}(\Theta^n(\cdot,\omega^+) N(\cdot))\big|^2\,d\omega^+(x) \leq \ve'\,\Theta_{\omega^+}(Q)^2\,\omega^+(Q).$$
\end{lemma}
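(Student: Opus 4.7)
The proof proceeds via the jump identities for the Riesz transform (Section \ref{secjump}) combined with the $\vmo$ assumption on $\log h$. Write $a:=a_{\Lambda Q}$, $\mu_0:=(1-h/a)\omega^+ = \omega^+ - \omega^-/a$, and $F(x):=K(x-p^-)/a-K(x-p^+)$, where $K$ denotes the $n$-dimensional Riesz kernel. From Section \ref{secjump} and Remark \ref{rem17}, we have, $\omega^+$-a.e.,
\begin{equation*}
\Theta^n(x,\omega^+)N(x) = \tfrac{2c_n}{\omega_n}\bigl(\RR\omega^+(x)-K(x-p^+)\bigr),\quad h(x)\Theta^n(x,\omega^+)N(x) = \tfrac{2c_n}{\omega_n}\bigl(K(x-p^-)-\RR\omega^-(x)\bigr).
\end{equation*}
Dividing the second by $a$ and summing gives the key identity
\begin{equation}\label{eqKeyIdentityPlan}
\Bigl(1+\tfrac{h(x)}{a}\Bigr)\Theta^n(x,\omega^+)N(x) = \tfrac{2c_n}{\omega_n}\bigl(\RR\mu_0(x)+F(x)\bigr).
\end{equation}

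On the set $A:=\wt G_{\Lambda Q}\cap G_{\Lambda Q}$ we have $|h/a-1|\leq\delta_1$, so solving \rf{eqKeyIdentityPlan} yields $\Theta^n(x,\omega^+)N(x) = \tfrac{c_n}{\omega_n}(\RR\mu_0(x)+F(x)) + O(\delta_1)\,\Theta^n(x,\omega^+)$ on $A$. The error terms are controlled as follows: the smooth correction $F$ has oscillation on $Q$ bounded by $C\ell(Q)/\dist(p^\pm,\partial\Omega^+)^{n+1}$, yielding a negligible $L^2$ contribution for $\ell(Q)$ small (as in the proof of Lemma \ref{lem37}); using the pointwise bound $\Theta^n(x,\omega^+)\lesssim\Theta_{\omega^+}(\Lambda Q)\lesssim\Lambda^{1/2}\Theta_{\omega^+}(Q)$ from Lemmas \ref{lem44}(b) and \ref{lemfacil}, the multiplicative $O(\delta_1)$ error integrates to $\lesssim\delta_1^2\Lambda\,\Theta_{\omega^+}(Q)^2\omega^+(Q)$ on $Q\cap A$; and the complement $(Q\cap\wt G_{\Lambda Q})\setminus A\subset\Lambda Q\setminus G_{\Lambda Q}$ has $\omega^+$-measure $\lesssim\delta_1\omega^+(\Lambda Q)$ after shrinking $\ell(Q)$ (by \rf{eqSmallComplement}), contributing $\lesssim\delta_1\Lambda^{n+3/2}\Theta_{\omega^+}(Q)^2\omega^+(Q)$.

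It remains to bound the $L^2(\omega^+|_{Q\cap A})$-oscillation of $\RR\mu_0$. Splitting $\mu_0=\chi_{(2\Lambda Q)^c}\mu_0+\chi_{\wt G_{\Lambda Q}}\mu_0+\chi_{(2\Lambda Q)\setminus\wt G_{\Lambda Q}}\mu_0$, standard Calder\'on--Zygmund estimates with $|\mu_0|\leq\omega^++\omega^-/a$ and Lemma \ref{lemfacil} give the far part a pointwise oscillation $\lesssim\Lambda^{-1/2}\Theta_{\omega^+}(Q)$ on $Q$. For the first near piece I invoke Lemma \ref{lemROmegaRestricted} together with the interpolation bound
$$\int_{\wt G_{\Lambda Q}}|1-h/a|^2\,d\omega^+\lesssim\delta_1^{1/2}\omega^+(\Lambda Q),$$
obtained by interpolating the $L^1$ estimate of Lemma \ref{lemReverseHolderH} with the $L^p$ control on $h$ valid for any $p<\infty$ (from Lemma \ref{lem3.1} and John--Nirenberg). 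This yields a contribution $\lesssim\delta_1^{1/2}\Lambda^{n+3/2}\Theta_{\omega^+}(Q)^2\omega^+(Q)$.

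The main obstacle is the third piece $\nu:=\chi_{(2\Lambda Q)\setminus\wt G_{\Lambda Q}}\mu_0$, whose support lies outside the set on which we have $L^2$-boundedness of the Riesz transform. Two ingredients suffice: $\|\nu\|\lesssim\delta_1\omega^+(\Lambda Q)$ by Lemma \ref{lemReverseHolderH}; and the $L^2$-boundedness of $\RR_{\omega^+|_{\wt G_{\Lambda Q}}}$ combined with the $n$-polynomial growth of $\omega^+|_{\wt G_{\Lambda Q}}$ from Lemma \ref{lem44}(b) gives the weak-$(1,1)$ bound
$$\omega^+\bigl(\{x\in\wt G_{\Lambda Q}:|\RR\nu(x)|>t\}\bigr)\lesssim\Theta_{\omega^+}(\Lambda Q)\,\|\nu\|/t.$$
Combining this with the uniform pointwise bound $|\RR\nu(x)|\leq\RR_*|\nu|(x)\lesssim\Theta_{\omega^+}(\Lambda Q)$ on $\wt G_{\Lambda Q}$ (from Lemma \ref{lem37} applied to $\omega^\pm$), Cavalieri's principle produces
$$\int_{\wt G_{\Lambda Q}}|\RR\nu|^2\,d\omega^+\lesssim\delta_1\Lambda^{n+3/2}\Theta_{\omega^+}(Q)^2\omega^+(Q).$$
Choosing first $\Lambda$ large, then $\delta_1=\delta_1(\Lambda,\ve')$ small, and finally $\ell_2=\ell_2(\delta_1,\Lambda,\ve')$ small completes the proof.
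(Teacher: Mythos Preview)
Your approach is essentially the same as the paper's, but with two differences that create a small technical gap.

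First, the paper works with the one-sided limit $\RR^+$ rather than the principal value $\RR$. Using \rf{eqriesz79} they obtain directly
\[
c_n'\,\Theta^n(x,\omega^+)N(x) \;=\; \RR^+\bigl((1-h/a)\omega^+\bigr)(x) + a^{-1}K(x-p^-) - K(x-p^+),
\]
which avoids your multiplicative factor $(1+h/a)$ and hence the need to intersect with $G_{\Lambda Q}$. Your identity \rf{eqKeyIdentityPlan} is of course equivalent (it differs from the paper's by the jump of $\RR\mu_0$), but the paper's version is cleaner.

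Second, the paper splits $\RR^+\mu_0$ only into a near part $\chi_{\Lambda Q}\mu_0$ and a far part $\chi_{(\Lambda Q)^c}\mu_0$, handling the former entirely by the weak-$(1,1)$ + $L^\infty$ interpolation you use for your third piece $\nu$. Your further decomposition of the near part into $\chi_{\wt G_{\Lambda Q}}\mu_0$ and $\nu=\chi_{(2\Lambda Q)\setminus\wt G_{\Lambda Q}}\mu_0$ is where the gap lies: the asserted pointwise bound $|\RR\nu(x)|\leq\RR_*|\nu|(x)\lesssim\Theta_{\omega^+}(\Lambda Q)$ on $\wt G_{\Lambda Q}$ is not justified. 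One has $|\RR\nu(x)|\leq\RR_*\nu(x)$, not $\RR_*|\nu|(x)$; and Lemma \ref{lem37} controls $\RR_*(\chi_{2\Lambda Q}\omega^\pm)$, not $\RR_*\nu$ (the restriction to the complement of $\wt G_{\Lambda Q}$ spoils the decomposition $\mu_0=\omega^+-\omega^-/a$, since $\RR_*$ is not monotone). The fix is simply to drop the extra split: for the full near piece $\chi_{2\Lambda Q}\mu_0$ one does have
\[
|\RR(\chi_{2\Lambda Q}\mu_0)(x)|\leq \RR_*(\chi_{2\Lambda Q}\omega^+)(x)+a^{-1}\RR_*(\chi_{2\Lambda Q}\omega^-)(x)\lesssim\Theta_{\omega^+}(\Lambda Q)
\]
on $\wt G_{\Lambda Q}$ directly from Lemma \ref{lem37}, and your Cavalieri argument then goes through verbatim with $\|\chi_{2\Lambda Q}\mu_0\|\lesssim\delta_1\,\omega^+(\Lambda Q)$ from Lemma \ref{lemReverseHolderH} and Remark \ref{rem3.4}. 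This makes your separate $L^2$ treatment of $\chi_{\wt G_{\Lambda Q}}\mu_0$ unnecessary.
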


\begin{proof}
Recall that, by \rf{eqbriesz'}, for $\omega^+$-a.e.\ $x$,
\begin{equation}\label{eqbriesz99}
\RR^+\omega^+(x) - \RR^-\omega^+(x) = c_n' \Theta^n(x,\omega^+)N(x),
\end{equation}
for some absolute constant $c_n'>0$. Notice that the second identity in \rf{eqriesz79}
implies that
\begin{equation}\label{eqrr98}
\RR^+\!\omega^+(x) = \RR^+\!\omega^+(x) - a_{\Lambda Q}^{-1}\big(\RR^+\!\omega^-(x) - K(x-p^-)\big) = 
\RR^+\big((1-a_{\Lambda Q}^{-1}h)\omega^+\big)(x) + a_{\Lambda Q}^{-1}K(x-p^-).
\end{equation}
Therefore,
\begin{align*}
c_n' \Theta^n(x,\omega^+)N(x) &=\RR^+\omega^+(x) - \RR^-\omega^+(x) \\
& = \RR^+\big((1-a_{\Lambda Q}^{-1}h)\omega^+\big)(x) + a_{\Lambda Q}^{-1}K(x-p^-) - K(x-p^+) =: f.
\end{align*}

Denote
$$m_Q = m_{\omega^+,Q\cap\wt G_{\Lambda Q}}\big(\RR^+\big(\chi_{\R^{n+1}\setminus \Lambda Q}(1-a_{\Lambda Q}^{-1}h)\omega^+\big)\big) + a_{\Lambda Q}^{-1}K(z_Q-p^-) - K(z_Q-p^+).$$
To prove the lemma, it suffices to show that
\begin{equation}\label{eqeve''}
\int_{Q\cap\wt G_{\Lambda Q}} \big|f - m_Q\big|^2\,d\omega^+ \leq \ve'\,\Theta_{\omega^+}(Q)^2\,\omega^+(Q).
\end{equation}
To this end, we split
\begin{align}\label{eqi1-4}
&\int_{Q\cap\wt G_{\Lambda Q}} \big|f - m_Q\big|^2\,d\omega^+ \\ & \quad\lesssim
\int_{Q\cap\wt G_{\Lambda Q}} \big|\RR^+\big(\chi_{\Lambda Q}(1-a_{\Lambda Q}^{-1}h)\omega^+\big)\big|^2\,d\omega^+
\nonumber\\
&\qquad+
\int_{Q\cap\wt G_{\Lambda Q}} \!\big| \RR^+\big(\chi_{\R^{n+1}\setminus \Lambda Q}(1-a_{\Lambda Q}^{-1}h)\omega^+\big) - 
m_{\omega^+,Q\cap\wt G_{\Lambda Q}}\big(\RR^+\big(\chi_{\R^{n+1}\setminus \Lambda Q}(1-a_{\Lambda Q}^{-1}h)\omega^+\big)\big) \big|^2\,d\omega^+\nonumber\\
& \qquad + a_{\Lambda Q}^{-2} 
\int_{Q} \big|K(x-p^-) - K(z_Q-p^-) \big|^2\,d\omega^+(x)\nonumber\\
& \qquad+  
\int_{Q} \big|K(x-p^+) - K(z_Q-p^+) \big|^2\,d\omega^+(x) =: I_1 + I_2+ I_3+ I_4.\nonumber
\end{align}

To estimate the term $I_1$ notice first that, for all $x\in Q\cap\wt G_{\Lambda Q}$, using the jump formulas \rf{eqariesz}, \rf{eqbriesz99}, and Lemma \ref{lem37},
\begin{align*}
\big|\RR^+\big(\chi_{\Lambda Q}&(1-a_{\Lambda Q}^{-1}h)\omega^+\big)(x)\big|  \leq 
\big|\RR^+\big(\chi_{\Lambda Q}\omega^+\big)(x)\big| + a_{\Lambda Q}^{-1}
\big|\RR^+\big(\chi_{\Lambda Q}\omega^-\big)(x)\big|\\
&\leq \big|\RR\big(\chi_{\Lambda Q}\omega^+\big)(x)\big| + \frac{c_n'}2 \Theta^n(x,\omega^+)
+ a_{\Lambda Q}^{-1}
\big|\RR\big(\chi_{\Lambda Q}\omega^-\big)(x)\big| + a_{\Lambda Q}^{-1}\frac{c_n'}2 \Theta^n(x,\omega^-)\\
& \leq \RR_*\big(\chi_{2\Lambda Q}\omega^+\big)(x) + \frac{c_n'}2\,\mathcal M_n(\chi_{2Q}\omega^+)(x) + 
a_{\Lambda Q}^{-1}\RR_*\big(\chi_{2\Lambda Q}\omega^-\big)(x) + 
a_{\Lambda Q}^{-1}\frac{c_n'}2\,\mathcal M_n(\chi_{2Q}\omega^-)(x)\\
& \lesssim  \Theta_{\omega^+}(\Lambda Q) +  a_{\Lambda Q}^{-1}\Theta_{\omega^-}(\Lambda Q)\approx C(\Lambda) \Theta_{\omega^+}(Q).
\end{align*}

Recall also that $\RR_{\omega^+}$ is bounded in $L^2(\omega^+|_{\wt G_{\Lambda Q}})$ with norm at most $C\Theta_{\omega^+}(\Lambda Q)$ (see Lemma \ref{lemROmegaRestricted}), and thus also bounded from {the space of finite signed Radon measures} $M(\R^{n+1})$ to $L^{1,\infty}(\omega^+|_{\wt G_{\Lambda Q}})$ with norm
at most $C'\Theta_{\omega^+}(\Lambda Q)$ (using also that $\mathcal M_n\left(\chi_{\Lambda Q}\omega^+\right)(x)\lesssim
\Theta_{\omega^+}(\Lambda Q)$ in $\wt G_{\Lambda Q}$, see \cite[Theorem 2.16 and Remark 2.17]{Tolsa-llibre}).
Then, given $t>0$, if we let
$$E_t:= \big\{x\in  Q\cap \wt G_{\Lambda Q}: \big|\RR^+\big(\chi_{\Lambda Q}(1-a_{\Lambda Q}^{-1}h)\omega^+\big)\big|>t\,\Theta_{\omega^+}(Q)\big\},$$
using also Lemma \ref{lemReverseHolderH}, we get
\begin{align*}
\omega^+(E_t)& \lesssim\frac{\Theta_{\omega^+}(\Lambda Q)}{t\,\Theta_{\omega^+}(Q)}
\,\|\chi_{\Lambda Q}(1-a_{\Lambda Q}^{-1}h)\|_{L^1(\omega^+)} \leq C(\Lambda)\frac{a_{\Lambda Q}^{-1}}t\,\|\chi_{\Lambda Q}(a_{\Lambda Q}-h)\|_{L^1(\omega^+)}\\
& \leq C(\Lambda)\frac{a_{\Lambda Q}^{-1}
}t\, \delta_1\omega^-(\Lambda Q)
\approx 
\frac{C(\Lambda) \delta_1}t\,\omega^+(Q).
\end{align*}
Then, 
\begin{align*}
I_1  &\leq \int_{Q\cap \wt G_{\Lambda Q}\setminus E_t} \big|\RR^+\big(\chi_{\Lambda Q}(1-a_{\Lambda Q}^{-1}h)\omega^+\big)\big|^2\,d\omega^+ +
\int_{E_t} \big|\RR^+\big(\chi_{\Lambda Q}(1-a_{\Lambda Q}^{-1}h)\omega^+\big)\big|^2\,d\omega^+
\\
& \leq t^2\,\Theta_{\omega^+}(Q)^2\omega^+(Q) +  C(\Lambda)\Theta_{\omega^+}(Q)^2\,\frac{\delta_1}t\,\omega^+(Q).
\end{align*}
Hence, choosing $t=(\delta_1)^{1/2}$, we obtain
$$I_1\lesssim (\delta_1 + C(\Lambda)(\delta_1)^{1/2})\Theta_{\omega^+}(Q)^2\omega^+(Q) \lesssim 
C(\Lambda)(\delta_1)^{1/2}\Theta_{\omega^+}(Q)^2\omega^+(Q).$$

Next we will estimate the term $I_2$ in \rf{eqi1-4}.
To this end, notice that for all $x,x'\in Q$ we have
\begin{align}\label{eqxx'}
\big| \RR^+\big(\chi_{\R^{n+1}\setminus \Lambda Q}(1-a_{\Lambda Q}^{-1}h)\omega^+\big)(x) &\mbox{}-
 \RR^+\big(\chi_{\R^{n+1}\setminus \Lambda Q}(1-a_{\Lambda Q}^{-1}h)\omega^+\big)(x')\big|\\
 &\lesssim 
\int_{\R^{n+1}\setminus \Lambda Q}\frac{|x-x'|}{|x-y|^{n+1}}\,d(\omega^+ + a_{\Lambda Q}^{-1}\omega^-)(y).\nonumber
\end{align}
By Lemma \ref{lemfacil},
\begin{align*}
\int_{\R^{n+1}\setminus \Lambda Q}\frac{\ell(Q)}{|x-y|^{n+1}}\,d\omega^+ & \lesssim \Lambda^{-1}
P_{\omega^+}(\Lambda Q) \lesssim \Lambda^{-1}\Theta_{\omega^+}(\Lambda Q) \lesssim \Lambda^{-1/2}\,\Theta_{\omega^+}(Q) ,
\end{align*}
and analogously replacing $\omega^+$ by $\omega^-$.
Hence,
$$\int_{\R^{n+1}\setminus \Lambda Q}\frac{|x-x'|}{|x-y|^{n+1}}\,d(\omega^+ + a_{\Lambda Q}^{-1}\omega^-)
\lesssim \Lambda^{-1/2}\Theta_{\omega^+}(Q) + a_{\Lambda Q}^{-1}\Lambda^{-1}\Theta_{\omega^-}(\Lambda Q)
\lesssim \Lambda^{-1/2}\Theta_{\omega^+}(Q).$$
Plugging this estimate into \rf{eqxx'} and averaging over all $x'\in Q\cap\wt G_{\Lambda Q}$, we infer that
the integrand in the term $I_2$ is at most $C\Lambda^{-1}\Theta_{\omega^+}(Q)^2$, and thus
$$I_2\lesssim \Lambda^{-1}\Theta_{\omega^+}(Q)^2\omega^+(Q).$$

Now we turn our attention to $I_4$.
Observe that, by Lemma \ref{lemfacil}, for all $x\in Q$,
\begin{align}\label{eqdkj341}
\big|K(x-p^+) - K(z_Q-p^+) \big| & \lesssim \frac{\ell(Q)}{\dist(p^+,\partial\Omega^+)^{n+1}}
\approx \frac{\ell(Q)}{\diam(\partial\Omega^+)^{1/2}}\,\frac{\omega^+(\partial\Omega^+)}{\diam(\partial\Omega^+)^{n+1/2}}\\
& \lesssim \frac{\ell(Q)}{\diam(\partial\Omega^+)^{1/2}}\,\frac{\omega^+(Q)}{\ell(Q)^{n+1/2}} =
\frac{\ell(Q)^{1/2}}{\diam(\partial\Omega^+)^{1/2}}\,\Theta_{\omega^+}(Q).\nonumber
\end{align}
Hence,
$$I_4\lesssim \frac{\ell(Q)}{\diam(\partial\Omega^+)}\,\Theta_{\omega^+}(Q)^2\,\omega^+(Q).$$
For $I_3$ the arguments are similar: as in \rf{eqdkj341}, we have
$$\big|K(x-p^-) - K(z_Q-p^-) \big|\lesssim \frac{\ell(Q)^{1/2}}{\diam(\partial\Omega^+)^{1/2}}\,\Theta_{\omega^-}(Q).$$
Therefore,
$$I_3\lesssim a_{\Lambda Q}^{-2}\frac{\ell(Q)}{\diam(\partial\Omega^+)}\,\Theta_{\omega^-}(Q)^2\,\omega^+(Q)
\lesssim C(\Lambda) \frac{\ell(Q)}{\diam(\partial\Omega^+)}\,\Theta_{\omega^+}(Q)^2\,\omega^+(Q).$$

Gathering the estimates obtained for $I_1,\ldots,I_4$, the estimate \rf{eqeve''} follows, with
$$\ve'\approx C(\Lambda)(\delta_1)^{1/2} + \Lambda^{-1} + \frac{C(\Lambda)\ell(Q)}{\diam(\partial\Omega^+)},$$
which is as small as wished if $\Lambda$ is taken big enough and then $\ell(Q)$ and $\delta_1$
small enough.
\end{proof}

\begin{lemma}\label{lem-pvriesz}
Given $\ve''>0$,
assume that $\Lambda$ is big enough (depending on $\ve''$), $\delta_1$ small enough (depending {also} on $\Lambda$), and suppose that $\ell(Q)\leq\ell_2(\delta_1,\Lambda,\ve'')$. 
Then
$$\int_{Q\cap\wt G_{\Lambda Q}} \big|\RR\omega^+ - m_{\omega^+,Q\cap\wt G_{\Lambda Q}}(\RR\omega^+)\big|^2\,d\omega^+ \leq \ve''\,\Theta_{\omega^+}(Q)^2\,\omega^+(Q).$$
\end{lemma}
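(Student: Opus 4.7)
The plan is to reduce the statement to Lemma \ref{lem-normal} via the averaged jump identity. By \rf{eqariesz}, for $\omega^+$-a.e.\ $x\in F$,
\[
\RR\omega^+(x) = \tfrac12\bigl(\RR^+\omega^+(x) + \RR^-\omega^+(x)\bigr).
\]
Using the identity $\RR^-\omega^+(x)=K(x-p^+)$ at tangent points (Remark \ref{rem17}) and the decomposition \rf{eqrr98} for $\RR^+\omega^+$, this rewrites as
\[
2\RR\omega^+(x) = \RR^+\bigl((1-a_{\Lambda Q}^{-1}h)\omega^+\bigr)(x) + a_{\Lambda Q}^{-1}K(x-p^-) + K(x-p^+),
\]
which differs from the function $f$ of the proof of Lemma \ref{lem-normal} only in the sign of the $K(x-p^+)$ term.

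Accordingly, I would split $(1-a_{\Lambda Q}^{-1}h)\omega^+$ as $\chi_{\Lambda Q}(1-a_{\Lambda Q}^{-1}h)\omega^+ + \chi_{\R^{n+1}\setminus\Lambda Q}(1-a_{\Lambda Q}^{-1}h)\omega^+$ and subtract the constant
\[
\wt m_Q := \tfrac12\, m_{\omega^+,Q\cap\wt G_{\Lambda Q}}\Bigl(\RR^+\bigl(\chi_{\R^{n+1}\setminus\Lambda Q}(1-a_{\Lambda Q}^{-1}h)\omega^+\bigr)\Bigr) + \tfrac12\,a_{\Lambda Q}^{-1}K(z_Q-p^-) + \tfrac12\,K(z_Q-p^+).
\]
Expanding $\int_{Q\cap\wt G_{\Lambda Q}}|\RR\omega^+ - \wt m_Q|^2\,d\omega^+$ then produces four pieces $\wt I_1,\dots,\wt I_4$ strictly analogous to $I_1,\dots,I_4$ of the previous proof, and each obeys the same bound: $\wt I_1$ via the $M(\R^{n+1})\to L^{1,\infty}(\omega^+|_{\wt G_{\Lambda Q}})$ boundedness of $\RR_{\omega^+}$ (Lemma \ref{lemROmegaRestricted}) combined with Lemma \ref{lemReverseHolderH} contributes $\lesssim C(\Lambda)(\delta_1)^{1/2}\Theta_{\omega^+}(Q)^2\omega^+(Q)$; $\wt I_2$ via Calder\'on--Zygmund decay off $\Lambda Q$ together with Lemma \ref{lemfacil} contributes $\lesssim \Lambda^{-1}\Theta_{\omega^+}(Q)^2\omega^+(Q)$; and $\wt I_3,\wt I_4$ exploit the smoothness of $K(\,\cdot\,-p^\pm)$ on $Q$ (since $p^\pm$ sits deep inside $\Omega^\pm$), contributing $\lesssim \tfrac{\ell(Q)}{\diam(\partial\Omega^+)}\Theta_{\omega^+}(Q)^2\omega^+(Q)$ by the computation \rf{eqdkj341}. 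Replacing $\wt m_Q$ by the best-fit constant $m_{\omega^+,Q\cap\wt G_{\Lambda Q}}(\RR\omega^+)$ only decreases the integral, so choosing $\Lambda$ large, then $\delta_1$ small, and finally $\ell(Q)$ small yields any prescribed $\ve''>0$.

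I do not anticipate a genuinely new obstacle here: all the hard analysis was already carried out in Lemma \ref{lem-normal}, and the proof transports essentially verbatim once one observes that the two jump formulas \rf{eqariesz}--\rf{eqbriesz'} express $\RR\omega^+$ and a scalar multiple of $\Theta^n(\cdot,\omega^+)N(\cdot)$ as, respectively, the half-sum and half-difference of $\RR^+\omega^+$ and $\RR^-\omega^+$. The only bookkeeping point to watch is that reassembling the constants with the correct signs leaves the two terms involving $K(\,\cdot\,-p^\pm)$ surviving with small but nonzero coefficients, rather than cancelling; once the displayed identity for $2\RR\omega^+$ is in hand this is automatic.
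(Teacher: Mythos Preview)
Your argument is correct, but it takes a longer route than the paper. The paper observes directly from \rf{eqariesz}, \rf{eqbriesz99}, and the first identity in \rf{eqriesz79} that
\[
\RR\omega^+(x) = c_n'\,\Theta^n(x,\omega^+)\,N(x) + K(x-p^+),
\]
so the oscillation of $\RR\omega^+$ on $Q\cap\wt G_{\Lambda Q}$ is bounded by the oscillation of $\Theta^n(\cdot,\omega^+)N(\cdot)$, already controlled by Lemma~\ref{lem-normal}, plus the oscillation of the smooth function $K(\cdot-p^+)$, handled by \rf{eqdkj341}. This gives $\ve''\approx \ve' + \ell(Q)/\diam(\partial\Omega^+)$ in two lines, with Lemma~\ref{lem-normal} used as a black box. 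Your version instead re-expands $\RR\omega^+$ via \rf{eqrr98} and reruns the full $I_1$--$I_4$ decomposition, which duplicates the work already done in Lemma~\ref{lem-normal}. Both reach the same conclusion; the paper's approach is just more economical because it recognizes that the half-sum and half-difference of $\RR^\pm\omega^+$ differ only by the explicit smooth term $K(\cdot-p^+)$, so controlling one immediately controls the other.
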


We remark that, strictly speaking, the constant $\ell_2$ above may differ from the analogous one in Lemma \ref{lem-normal}.

\begin{proof}
From \rf{eqariesz}, \rf{eqbriesz99}, and the first identity in \rf{eqriesz79} we infer that, for
$\omega^+$ a.e.\ $x\in\partial\Omega^+$,
$$\RR\omega^+(x) - c_n' \Theta^n(x,\omega^+)N(x) = \RR^-\omega^+(x) = K(x-p^+).$$
Therefore, for $\omega^+$ a.e.\ $x,x'\in\partial\Omega^+$,
$$\RR\omega^+(x) - \RR\omega^+(x') = c_n' \Theta^n(x,\omega^+)N(x) - c_n' \Theta^n(x',\omega^+)N(x') + K(x-p^+) -
K(x'-p^+).$$
Averaging for $x'\in Q\cap\wt G_{\Lambda Q}$, we deduce that
\begin{align*}
\RR\omega^+(x) - m_{\omega^+,Q\cap\wt G_{\Lambda Q}}\bigl(\RR\omega^+)  &= c_n' 
\big(\Theta^n(x,\omega^+)N(x) - m_{\omega^+,Q\cap\wt G_{\Lambda Q}}\big(\Theta^n(\cdot,\omega^+)N\big)\big) \\
&\quad+ K(x-p^+) - m_{\omega^+,Q\cap\wt G_{\Lambda Q}}\big(
K(\cdot -p^+)\big).
\end{align*}
From the estimate \rf{eqdkj341}, it follows easily that
$$\big|K(x-p^+) - m_{\omega^+,Q\cap\wt G_{\Lambda Q}}\big(
K(\cdot -p^+)\big)\big| \lesssim
\frac{\ell(Q)^{1/2}}{\diam(\partial\Omega^+)^{1/2}}\,\Theta_{\omega^+}(Q).
$$
Thus,
\begin{align*}
\big|\RR\omega^+(x) - m_{\omega^+,Q\cap\wt G_{\Lambda Q}}\bigl(\RR\omega^+)\big| & \lesssim
\big|\Theta^n(x,\omega^+)N(x) - m_{\omega^+,Q\cap\wt G_{\Lambda Q}}\big(\Theta^n(\cdot,\omega^+)N(\cdot)\big)\big|\\
&\quad+\frac{\ell(Q)^{1/2}}{\diam(\partial\Omega^+)^{1/2}}\,\Theta_{\omega^+}(Q).
\end{align*}
Then, by Lemma \ref{lem-normal},
$$\int_{Q\cap \wt G_{\Lambda Q}} 
\big|\RR\omega^+ - m_{\omega^+,Q\cap\wt G_{\Lambda Q}}\bigl(\RR\omega^+)\big|^2\,d\omega^+
\lesssim \ve'\,\Theta_{\omega^+}(Q)^2\,\omega^+(Q) + \frac{\ell(Q)}{\diam(\partial\Omega^+)}\,\Theta_{\omega^+}(Q)^2\,\omega^+(Q),$$
which proves the lemma, with $\ve''= C\big(\ve' +\frac{\ell(Q)}{\diam(\partial\Omega^+)}\big)$.
\end{proof}

\vv

\subsection{Non-degeneracy of the density of $\omega^+$ in a big piece of $Q\cap\wt G_{\Lambda Q}$}

Consider $Q\in\wh\DD_k$, for some $k\in\Z$, and $G_{\Lambda Q}$, $\wt G_{\Lambda Q}$ as above. 
Observe that if $Q\in \mathcal{ND}_k$ and $Q'$ is any of the cubes from $\DD_k$ that forms $Q$, then
$\Theta_{\omega^+}(Q')\approx\Theta_{\omega^+}(Q)$, by the doubling property of $\omega^+$.

Given $0<\tau\ll1$, we denote by $\LLD_\tau$ the family of maximal cubes $P\in\bigcup_{j>k}\DD_j$ such that $P\subset Q$ and 
$\Theta_{\omega^+}(P) \leq \tau \Theta_{\omega^+}(Q)$. The notation $\LLD$ stands for ``low density".
We also denote $$\LD_\tau = \bigcup_{P\in\LLD_\tau} P.$$

Notice that if $P\in\LLD_\tau$, then
$$\Theta_{\omega^+}(P)\approx \tau\,\Theta_{\omega^+}(Q).$$
Indeed, by definition $\Theta_{\omega^+}(P)\leq \tau\,\Theta_{\omega^+}(Q)$, and by the maximality of $P$, the father $P'\in\DD$ of $P$ satisfies $\Theta_{\omega^+}(P')>\tau \,\Theta_{\omega^+}(Q)$ and, since $\omega^+$ is doubling, 
$\Theta_{\omega^+}(P)\approx \Theta_{\omega^+}(P')$.

\begin{lemma}\label{lemLD}
For all $\ve_1>0$, there exists some constant $\tau=\tau(\ve_1)\in(0,1/10)$ such that, for $\ell(Q)$ small enough,
$$\omega^+(\LD_{\tau})\leq \ve_1\,\omega^+(Q).$$
\end{lemma}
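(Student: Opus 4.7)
The plan is to combine the Alt--Caffarelli--Friedman monotonicity formula via \eqref{eqGammaComparableHarmonic} with the near-constancy of $h=d\omega^-/d\omega^+$ on $Q$ (a consequence of the $\vmo$ hypothesis, cf.\ Lemma \ref{lemReverseHolderH}) to show that, on a large $\omega^+$-subset of $Q$, the density $\Theta_{\omega^+}(B(x,r))$ stays uniformly comparable to $\Theta_{\omega^+}(Q)$ at every scale $r\in(0,\ell(Q)]$, which will rule out the presence of low-density cubes through those points.

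By Theorem \ref{t:AMT}, $\omega^+$-a.e.\ point $x\in Q$ is a tangent point of $\partial\Omega^+$ and, via \eqref{eqGammaComparableHarmonic} together with the ACF monotonicity, the function $r\mapsto \gamma(x,r)$ is non-decreasing and has a positive finite limit $\gamma_0(x)$ as $r\to 0$. Applying Egorov's theorem to the multiplicative convergence $\gamma(x,r)/\gamma_0(x)\to 1$ on $Q\cap F$ yields a subset $F_0\subset Q\cap F$ and a scale $r_1>0$ (both depending on $\ve_1$) such that $\omega^+(Q\setminus F_0)\le \ve_1\,\omega^+(Q)$ and $\gamma(x,r)\approx \gamma_0(x)$ for every $x\in F_0$ and $r\in(0,r_1]$. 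Assuming $\ell(Q)\le r_1$ and using the monotonicity, one obtains $\gamma(x,r)\approx \gamma(x,\ell(Q))$ for every $x\in F_0$ and $r\in(0,\ell(Q)]$, which by \eqref{eqGammaComparableHarmonic} and the doubling of $\omega^\pm$ translates into
\[
\Theta_{\omega^+}(B(x,r))\,\Theta_{\omega^-}(B(x,r))\approx \Theta_{\omega^+}(Q)\,\Theta_{\omega^-}(Q).
\]

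The next ingredient is the $\vmo$ hypothesis on $\log h$: via Lemma \ref{lemReverseHolderH}, Remark \ref{rem3.4}, and standard VMO/Jensen considerations, for $\ell(Q)$ small enough one has
\[
\omega^-(B)/\omega^+(B)=\avint_B h\,d\omega^+\approx a_Q\approx \Theta_{\omega^-}(Q)/\Theta_{\omega^+}(Q)
\]
for every ball $B=B(x,r)$ centered in $Q$ with $r\le \ell(Q)$. Substituting $\Theta_{\omega^-}(B(x,r))\approx a_Q\,\Theta_{\omega^+}(B(x,r))$ into the product estimate above and cancelling common factors gives the key single-sided bound
\[
\Theta_{\omega^+}(B(x,r))\approx \Theta_{\omega^+}(Q)\qquad\text{for every $x\in F_0$ and $0<r\le \ell(Q)$.}
\]

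Finally, if some $P\in\LLD_\tau$ meets $F_0$ at a point $x$, then the Christ-cube inclusion $P\subset B(x,\ell(P))$ together with the doubling of $\omega^+$ forces $\Theta_{\omega^+}(B(x,\ell(P)))\approx \Theta_{\omega^+}(P)\le \tau\,\Theta_{\omega^+}(Q)$, which, combined with the lower bound just established, yields $\tau\gtrsim 1$. Choosing $\tau$ smaller than the universal constant produced by the chain of implicit $\approx$'s (in particular $\tau<1/10$) produces a contradiction, so $F_0\cap\LD_\tau=\varnothing$ and $\omega^+(\LD_\tau)\le \omega^+(Q\setminus F_0)\le \ve_1\,\omega^+(Q)$. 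The hard part is the second step: upgrading the additive VMO control from Lemma \ref{lemReverseHolderH}, which is stated on the extended cube $\Lambda Q$, to the uniform ball-wise comparison $\omega^-(B)/\omega^+(B)\approx a_Q$ for every small ball centered in $Q$, as this is what allows the product-type estimate coming from ACF to be split into a one-sided bound on the $\omega^+$-density.
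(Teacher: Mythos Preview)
There are two genuine gaps in your argument, and one of them is fatal.

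\textbf{The Egorov step is circular.} You apply Egorov on $Q\cap F$ to obtain $F_0$ and a scale $r_1$, and then assume $\ell(Q)\le r_1$. But $r_1$ comes from Egorov and therefore depends on the measure space $(Q,\omega^+)$ and on the particular functions $x\mapsto\gamma(x,r)$ restricted to $Q$; there is no reason for $r_1$ to dominate $\ell(Q)$, and no uniform choice of $r_1$ valid for all small cubes $Q$. If instead you apply Egorov globally on $F$ to get a fixed $F_0$ and $r_1$, then $\omega^+(\partial\Omega^+\setminus F_0)$ is small, but the complement need not be small \emph{relative to each cube $Q$}: for any fixed global exceptional set you can find arbitrarily small cubes $Q$ with $\omega^+(Q\setminus F_0)\ge\tfrac12\omega^+(Q)$. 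Since the lemma requires a single $\tau$ and a single threshold $\ell_0$ that work for \emph{every} cube $Q$ with $\ell(Q)\le\ell_0$, this non-uniformity kills the argument.

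\textbf{The uniform ratio claim is false as stated.} You assert that $\omega^-(B)/\omega^+(B)\approx a_Q$ for every ball $B$ centered in $Q$ with $r(B)\le\ell(Q)$. This does not follow from $\log h\in\vmo(\omega^+)$. By \eqref{eqAQisLikeDensity} it is equivalent to $a_B\approx a_Q$, i.e.\ $|m_B(\log h)-m_Q(\log h)|\lesssim 1$. Telescoping over a dyadic chain from $B$ to $Q$ gives an error $\sum_j \ve(2^{-j}\ell(Q))$, and $\vmo$ only guarantees $\ve(r)\to0$, not summability over dyadic scales (e.g.\ $\ve(r)=1/|\log r|$). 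What \emph{is} true is Lemma~\ref{lem44}(a): the ratio is controlled on the good set $\wt G_{\Lambda Q}$, because there the maximal function of $h$ is bounded. So this step can be repaired by intersecting $F_0$ with $\wt G_{\Lambda Q}$, but that does not rescue the Egorov step.

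The paper's proof avoids both issues by a completely different, iterative mechanism. Writing $\tau=\lambda^M$, one shows that for every cube $P\in\LLD_{\lambda^k}$ one has $\omega^+(P\cap\LD_{\lambda^{k+1}})\le\eta\,\omega^+(P)$ with a fixed $\eta<1$, and then iterates $M$ times. The single-step estimate is obtained by applying the quantitative rectifiability criterion Theorem~\ref{teo-gt} to the ball $\tfrac12B_P$: the hypotheses are verified via Lemmas~\ref{lemfacil}, \ref{lem37}, \ref{lem-pvriesz} (with the good set $\wt G_{AP}$ playing the role of $G_B$), and the output is a uniformly $n$-rectifiable set $\Gamma$ carrying a fixed proportion $\theta$ of $\omega^+(P)$. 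The $n$-AD regularity of $\Gamma$ then forces the cubes in $\LLD_{\lambda^{k+1}}$ that meet $\Gamma$ to have total $\omega^+$-mass $\lesssim\lambda\,\omega^+(P)$, which for $\lambda\ll\theta$ gives the decay. This produces a $\tau$ depending only on $\ve_1$ and structural constants, with the smallness of $\ell(Q)$ needed only to trigger Lemmas~\ref{lemgq}, \ref{lem37}, \ref{lem-pvriesz}—all of which have uniform thresholds.
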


\begin{proof}
We choose $\tau$ of the form $\tau=\lambda^M$, for some $0<\lambda\ll1$ and some integer $M\gg1$ to be fixed
below. {Let us emphasize that $\lambda$ will depend only on $n$ and other fixed parameters, while $M$ will depend on $\tau$ and thus on $\ve_1$.}
For { $k\geq 1$}, we denote
$$\LLD^k = \LLD_{\lambda^k}, \qquad\LD^k =\bigcup_{P\in \LLD^k}P.$$
We also set $\LLD^0=\{Q\}$ and $\LD^0= Q$.
Observe that any cube from $\LLD^k$ is contained in some cube from $\LLD^{k-1}$, and so 
 $\LD^k\subset \LD^{k-1}$.
 The lemma is an easy consequence of the following:
 \begin{claim}\label{claim}
Suppose that $\lambda$ is small enough {depending only on $n$}. Then there exists some $\eta\in (0,1)$ such that for every  {$k\geq 0$} and every $P\in \LLD^k$,
$$\omega^+(P\cap \LD^{k+1})\leq \eta\,\omega^+(P).$$
\end{claim}
From this claim it follows that
$$\omega^+(\LD^{k+1})\leq \eta\,\omega^+(\LD^k),$$
and thus
$$\omega^+(\LD^M)\leq \eta^M\,\omega^+(\LD^0) = \eta^M\omega^+(Q),$$
which proves the lemma if $M$ is big enough.

\vv
To prove the claim above we intend to apply Theorem \ref{teo-gt} to the ball $B=\frac12B_P$ (recall that $B_P$ is the ball associated with $P$ introduced in Theorem \ref{teo-christ}) and
the measure $\omega^+$. First we will check that the assumptions in Theorem \ref{teo-gt} hold.
The second statement in Lemma \ref{lemfacil} ensures that $P_{\omega^+}(B)\leq C\,\Theta_{\omega^+}(B)$ {if $\ell(P)$ is small enough},
and thus the assumption (a) in Theorem \ref{teo-gt} is satisfied. On the other hand, the condition (b) is an immediate consequence of the $\delta$-Reifenberg flatness of $\Omega^+$, assuming $\delta$ small enough.

To check the condition (c) in Theorem \ref{teo-gt}, for some $A>1$ to be fixed below we take $G_B = \wt G_{A P} \cap B$, with $\wt G_{A P}$ defined in \rf{eqglambdaq}
 (with $Q,\Lambda$ replacing $P,A$). 
Observe that Lemma \ref{lemgq} implies that 
$$\omega^+(B\setminus G_B) \leq \omega^+(P\setminus \wt G_{A P}) \lesssim \ve_0\,\omega^+(P) \approx \ve_0\,\omega^+(B).$$
Further, we have
$$|\RR_\ve (\chi_{2 B}\omega^+)(x)|\lesssim |\RR_\ve (\chi_{2A P}\omega^+)(x)| + |\RR_{r(B)}(\chi_{2A P}\omega^+)(x)|
+ \Theta_{\omega^+}(B) \quad\mbox{ for all $x\in B$,}$$
and then it easily follows that
$$\RR_* (\chi_{2 B}\omega^+)(x)\lesssim \RR_* (\chi_{2A P}\omega^+)(x) + \Theta_{\omega^+}(B) \quad\mbox{ for all $x\in B$.}$$
Thus, by Lemma \ref{lem37},  
with $Q=P$, we obtain
$$\mathcal{M}_n(\chi_{2B} \omega^+)(x) + \RR_*(\chi_{2B}\,\omega^+)(x)\lesssim   \Theta_{\omega^+}(B) + \Theta_{\omega^+}(AP)\quad\mbox{ for all $x\in G_B$.}$$
By Lemma \ref{lemfacil}, we have 
$$C^{-1}A^{-\gamma_0 n}\Theta_{\omega^+}(P)\leq \Theta_{\omega^+}(AP)\leq CA^{\gamma_0 n} \Theta_{\omega^+}(P)$$
with $\gamma_0$ as small as wanted if the Reifenberg flat constant is small enough. In particular, if the Reifenberg constant is small enough (depending only on $A$), then $A^{\gamma_0n}=2$ and thus $\Theta_{\omega^+}(AP)\approx \Theta_{\omega^+}(P)\approx \Theta_{\omega^+}(B)$ and so {(c) holds with constant $C_1=2C$ for $C$ as in \rf{eq0384}}.

The last assumption (d) is a direct consequence of Lemma \ref{lem-pvriesz} applied to $P$. Indeed,
\begin{align*}
\int_{G_B} |\RR\omega^+(x) - m_{\omega^+,G_B}(\RR\omega^+)|^2\,d\omega^+(x) 
& \leq \int_{G_B} |\RR\omega^+(x) - m_{\omega^+,P\cap \wt G_{AP}}(\RR\omega^+)|^2\,d\omega^+(x)\\
&\leq \ve''\,\Theta_{\omega^+}(P)^2\,\omega^+(P)\approx \ve''\,\Theta_{\omega^+}(B)^2\,\omega^+(B),
\end{align*}
with $\ve''$ as small as wished (assuming $\ell(Q)$ and $\delta_1$ small enough and $A$ big enough).

\vv
The application of Theorem \ref{teo-gt} ensures the existence of 
a uniformly $n$-rectifiable set $\Gamma\subset\R^{n+1}$ such that
$$\omega^+(G_B\cap \Gamma)\geq \theta\,\omega^+(B),$$
for some fixed $\theta>0$, with the UR constants of $\Gamma$ uniformly bounded. Claim \ref{claim} is
an easy corollary of this fact. Indeed, let $I$ denote the subfamily of cubes from $\LLD^{k+1}$ which 
intersect $G_B\cap \Gamma$ (and thus 
are contained in $P$). Consider a subfamily $J\subset I$ such that
\begin{itemize}
\item the balls $2\wt B_R$, $R\in J$, are pairwise disjoint, and
\item $\bigcup_{R'\in I} {R'}\subset \bigcup_{R\in J} 6\wt B_{R}.$ 
\end{itemize}
Then, using the fact that $\Theta_{\omega^+}(6\wt B_R)\approx\Theta_{\omega^+}(R)
\lesssim \lambda \Theta_{\omega^+}(P)$ for $R\in J$, we get
\begin{equation}\label{eqfif1}
\omega^+(G_B\cap \Gamma\cap \LD^{k+1}) \leq \sum_{R\in J} \omega^+(6\wt B_R) \lesssim
\lambda\,\Theta_{\omega^+}(P)
\sum_{R\in J} \ell(R)^n.
\end{equation}
By the $n$-AD regularity of $\Gamma$ and the fact that $\wt B_R\cap\Gamma \neq\varnothing$ for $R\in J$, we derive
$$\ell(R)^n\approx \HH^n(\Gamma\cap 2\wt B_R),$$
and thus, using the fact that the balls $2\wt B_R$ are disjoint and contained in some fixed multiple of $B_P$,  and the $n$-AD regularity of $\Gamma$ again, we get
$$\sum_{R\in J} \ell(R)^n \approx \sum_{R\in J}\HH^n(\Gamma\cap 2\wt B_R) \lesssim \HH^n(\Gamma\cap C B_P) \approx \ell(P)^n.$$
Plugging this estimate into \rf{eqfif1} and choosing $\lambda\ll\theta$, we obtain
$$\omega^+(G_B\cap \Gamma\cap \LD^{k+1})\leq C \lambda\,\omega^+(P) \leq C' \lambda\,\omega^+(B)\leq \frac\theta2\,\omega^+(B) \leq \frac12 \,\omega^+(G_B\cap \Gamma).$$
Thus,
$$\omega^+(G_B\cap \Gamma\setminus \LD^{k+1})\geq \frac12 \omega^+(G_B\cap \Gamma)\geq \frac\theta2\,\omega^+(B)\approx \theta\,\omega^+(P).$$
In particular, this shows that $\omega^+(P\setminus \LD^{k+1})\gtrsim \theta\,\omega^+(P)$ and proves the claim, and the lemma.
\end{proof}

\vv

Notice that in the argument above the Reifenberg flatness constant asked for $\Omega^+$ does not depend on
$\tau$. Indeed, in the application of Theorem \ref{teo-gt} during the proof of the claim we fixed $C_0$ and $C_1$ fitting the constants appearing in Lemma \ref{lemfacil}, which are universal. Thus, to check that  the assumption $(d)$ of the same theorem is satisfied, we chose $\varepsilon''=\tau_0(C_0,C_1)$ in the notation of that theorem. Then $A=\Lambda(\varepsilon'')$ was fixed according to Lemma \ref{lem-pvriesz}, in terms of these universal constants and the NTA parameters of the domain. This made $\gamma_0=\frac{1}{n\log_2(A)}$ needed in \rf{eq0384} a constant just depending on the NTA parameters and, therefore, the Reifenberg constant $\delta$ also depends on the NTA parameters so that Lemma \ref{lemfacil} and condition $(b)$ of Theorem \ref{teo-gt} can be applied. The parameter $\delta_1$ used to define the good set needs to satisfy $\delta_1\leq\delta_1(\varepsilon'',A)$ when applying Lemma \ref{lem-pvriesz} with $\Lambda=A$ but it also needs to satisfy Lemma \ref{lemgq} with $\delta_1\leq C\delta_0(C_0,C_1)$ so that part $(c)$ in Theorem \ref{teo-gt} could be checked. Finally, the side length of the cube needed to be very small so that Lemmas \ref{lemgq}, \ref{lemfacil}, \ref{lem37} and \ref{lem-pvriesz} can be applied, that is, $\ell(Q)\leq \min\{A^{-1}\ell_1(\delta_1), \wt M^{-1} R,\ell_2(\delta_1,A,\varepsilon'')\}$ (depending on the NTA parameters, on $R/\diam(\partial\Omega^+)$, and on the VMO character of $N_{\Omega^+}$). Note that also $\theta$ and the uniform rectifiability constants are universal, and  $\eta$ depends on the NTA parameters. To end, the NTA parameters can be thought to be uniformly bounded if $\delta$ is small enough by \cite[Theorem 3.1]{Kenig-Toro-duke}. 

This is an important point in our proof, because
to show that 
$$\lim_{\ell(Q)\to0}
\avint_{Q} \big|N - m_{\omega^+,Q}(N)\big|^2\,d\omega^+\to 0,$$
we need to take $\tau\to0$.

\vv


\subsection{End of the proof of (a) $\Rightarrow$ (b) in Theorem \ref{teo1}}

Given two non-zero vectors $u,v\in\R^{n+1}$, we have
\begin{equation}\label{equv0}
\left|\frac u{|u|} - \frac v{|v|}\right| = \frac{\big| |v|(u-v) + v(|v|-|u|)\big|}{|u|\,|v|} 
\leq \frac{|u-v|}{|u|} + 
\frac{\big||v|-|u|\big|}{|u|} \leq 2 \frac{|u-v|}{|u|}.
\end{equation}
Obviously, the same estimate is valid in the case $v=0$, replacing $\frac v{|v|}$ by $0$.
Applying this inequality with $u=\Theta^n(x,\omega^+) N(x)$, $v=m_{\omega^+,Q\cap\wt G_{\Lambda Q}}(\Theta^n(\cdot,\omega^+) N(\cdot))$ for $x\in Q\cap\wt G_{\Lambda Q}\setminus \LD_\tau$, we infer that
\begin{multline*}
\int_{Q\cap \wt G_{\Lambda Q}\setminus \LD_\tau} \big|N(x) - C_Q\big|^2\,d\omega^+(x)\\
\leq 
\int_{Q\cap \wt G_{\Lambda Q}\setminus \LD_\tau} \frac4{\Theta^n(x,\omega^+)^2}\big|\Theta^n(x,\omega^+) N(x) - m_{\omega^+,Q\cap\wt G_{\Lambda Q}}(\Theta^n(\cdot,\omega^+) N(\cdot))\big|^2\,d\omega^+(x),
\end{multline*}
where $C_Q = \frac{m_{\omega^+,Q\cap \wt G_{\Lambda Q}}(\Theta^n(\cdot,\omega^+) N(\cdot))}{|m_{\omega^+,Q\cap \wt G_{\Lambda Q}}(\Theta^n(\cdot,\omega^+) N(\cdot))|}$ if $m_{\omega^+,Q\cap \wt G_{\Lambda Q}}(\Theta^n(\cdot,\omega^+) N(\cdot))\neq 0$ and $C_Q=0$ otherwise. Using the fact that $\Theta^n(x,\omega^+)\geq \tau\,\Theta_{\omega^+}(Q)$ in 
$Q\cap \wt G_{\Lambda Q}\setminus \LD_\tau$ and Lemma \ref{lem-normal}, we obtain
\begin{multline*}
\int_{Q\cap \wt G_{\Lambda Q}\setminus \LD_\tau} \big|N(x) - C_Q\big|^2\,d\omega^+(x) \\
\leq \frac4{\tau^2 \,\Theta_{\omega^+}(Q)^2}
\int_{Q\cap \wt G_{\Lambda Q}} \big|\Theta^n(x,\omega^+) N(x) - m_{\omega^+,Q\cap\wt G_{\Lambda Q}}(\Theta^n(\cdot,\omega^+) N(\cdot))\big|^2\,d\omega^+(x) \leq \frac{4\ve'}{\tau^2}\,\omega^+(Q).
\end{multline*}
Therefore, taking also into account Lemmas \ref{lemgq} and  \ref{lemLD},
\begin{align*}
\int_{Q} \big|N - C_Q\big|^2\,d\omega^+ &  \leq 4\omega^+(Q\setminus (\wt G_{\Lambda Q}\setminus \LD_\tau)) + \int_{Q\cap \wt G_{\Lambda Q}\setminus \LD_\tau}\!\! \big|N - C_Q\big|^2\,d\omega^+\\
& \leq 4\omega^+(Q\setminus \wt G_{\Lambda Q}) + 4 \omega^+(\LD_\tau) + \int_{Q\cap \wt G_{\Lambda Q}\setminus \LD_\tau} \big|N - C_Q\big|^2\,d\omega^+\\
&  \lesssim 4\ve_0\omega^+(Q) + 4 \ve_1\omega^+(Q) + \frac{4\ve'}{\tau(\ve_1)^2}\,\omega^+(Q),
\end{align*}
where $\ve_0= C\delta_1$.
Thus, given any $\ve_2>0$, choosing appropriately the parameters $\ve_0$, $\ve_1$, and $\ve'$, and taking $\ell(Q)$ small enough, we infer that
$$\int_{Q} \big|N - m_{\omega^+,Q}(N)\big|^2\,d\omega^+ \leq  \int_{Q} \big|N - C_Q\big|^2\,d\omega^+ \leq\ve_2\,\omega^+(Q).$$

Given any ball $B$ centered in $\partial\Omega^+$ with small enough radius, there exists some $Q\in\wh \DD$ such 
that 
$$B\cap\partial\Omega^+\subset Q\quad \mbox{ and } \quad\ell(Q)\approx r(B).$$
It follows then that
$$\lim_{r\to 0} \sup_{B:r(B)\leq r} \avint_{B} \big|N - m_{\omega^+,B}(N)\big|^2\,d\omega^+ 
\lesssim \lim_{\ell\to 0} \sup_{Q\in\wh D:\ell(Q)\leq \ell} \avint_{Q} \big|N - m_{\omega^+,Q}(N)\big|^2\,d\omega^+ =0,$$
where all the balls $B$ in the first supremum are assumed to be centered in $\partial\Omega^+$.
So we have  $N\in\vmo(\omega^+)$.
\fiproof

\vv

\subsection{Proof of Corollary \ref{coro2}}\label{seccoro2}

Let $\Omega^+$, $\Omega^-$ be as in Corollary \ref{coro2}. We have to show that $N\in\vmo(\HH^n|_{\partial\Omega^+})$. Since we are assuming that $\Omega^+$ is a chord-arc domain, it follows that
$\omega^+$ is an $A_\infty$ weight with respect to the surface measure $\sigma\equiv\HH^n|_{\partial\Omega^+}$, by
results due independently to David and Jerison \cite{DJ} and to Semmes \cite{Semmes}. 

Consider an arbitrary ball $B$ centered in $\partial\Omega^+$ with $r(B)\leq\diam(\Omega^+)$.
By Theorem \ref{teo1}, we know that
$$\int_{B} \big|N - m_{\omega^+,B}(N)\big|^2\,d\omega^+\leq \ve_3(r(B))\,\omega^+(B),$$
with $\ve_3(r)\to0$ as $r\to0$.
Let
$$E= \big\{x\in B\cap\partial\Omega^+: \big|N(x) - m_{\omega^+,B}(N)\big| >\ve_3(r(B))^{1/4}\big\}.$$
By Chebyshev we deduce that 
$$\omega^+(E)\leq  \ve_3(r(B))^{1/2}\,\omega^+(B).$$
Hence, by the $A_\infty$ property of $\omega^+$,
given an arbitrary $\ve_4>0$, if $r(B)$ is small enough (and thus $\ve_3(r(B))^{1/2}$ small enough), 
then $\sigma(E)\leq \ve_4\,\sigma(B)$. Therefore, 
\begin{align*}
\int_{B} \big|N - m_{\sigma,B}(N)\big|^2\,d\sigma &\leq
\int_{B} \big|N - m_{\omega^+,B}(N)\big|^2\,d\sigma\\
& \leq 4 \sigma(E) + \int_{B\setminus E} \big|N - m_{\omega^+,B}(N)\big|^2\,d\sigma\\
& \leq 4\ve_4\,\sigma(B) + \ve_3(r(B))^{1/2}\,\sigma(B),
\end{align*}
which shows that
$$\lim_{r\to 0} \sup_{B:r(B)\leq r} \avint_{B} \big|N - m_{\sigma,B}(N)\big|^2\,d\sigma 
 =0$$
(with the balls $B$ in the supremum centered in $\partial\Omega^+$),
or equivalently, that $N\in \vmo(\sigma)$.
\fiproof

\vv

\subsection{A final result}

Essentially the same arguments used to prove (a) $\Rightarrow$ (b) in Theorem \ref{teo1} and Corollary \ref{coro2} give the following.

\begin{theorem}\label{teofifi} 
Let $\Omega^+\subset\R^{n+1}$ be a bounded NTA domain and let $\Omega^-= \R^{n+1}\setminus \overline{\Omega^+}$ be an NTA domain as well.
Denote by $\omega^+$ and $\omega^-$ the respective harmonic measures with poles $p^+\in\Omega^+$ and $p^-\in\Omega^-$.
Suppose that $\Omega^+$ is a $\delta$-Reifenberg flat domain, with $\delta>0$ small enough, and 
that $\omega^+$ and $\omega^-$ are mutually absolutely continuous. Let $r_0\in (0,\diam(\Omega^+))$.
For every $\ve>0$ there exists $\eta>0$ depending on $\ve$ and $r_0$ such that if
$$\avint_B \Big|\log\frac{d\omega^-}{d\omega^+}- m_{\omega^+,B}\Big(\log\frac{d\omega^-}{d\omega^+}\Big)
 \Big|\,d\omega^+ \leq \eta$$
for all balls $B$ centered in $\partial\Omega^+$ with radius at most $r_0$,
then the inner normal $N(x)$ exists at $\omega^+$-almost every $x\in \partial\Omega^+$ and
$$\avint_B |N-m_{\omega^+,B}N|\,d\omega^+\leq \ve$$
for any ball $B$ centered in $\partial\Omega^+$ with radius small enough. 

If, additionally, $\partial\Omega^+$ is $n$-AD regular, then also
$$\avint_B |N-m_{\HH^n|_{\partial\Omega^+},B}N|\,d\HH^n|_{\partial\Omega^+} \leq\ve$$
for any ball $B$ centered in $\partial\Omega^+$ with radius small enough. 

\end{theorem}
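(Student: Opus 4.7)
\textbf{Proof proposal for Theorem \ref{teofifi}.}

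The plan is to revisit the entire argument of (a) $\Rightarrow$ (b) in Theorem \ref{teo1} and make it quantitative. The key observation is that the VMO modulus $\ve(\ell(P))$ appearing throughout Section \ref{sec3} plays a dual role: it must be small in order to extract smallness of $\int_Q |N - m_{\omega^+,Q}N|^2 d\omega^+$, but at the same time smallness of this modulus is the only thing needed from the VMO hypothesis. Replacing $\ve(\ell(P))$ by the constant $\eta$ wherever $\ell(P) \lesssim r_0$ and following the chain of estimates will therefore yield a quantitative version.

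First I would verify that, for $\eta$ small enough, the John--Nirenberg argument of Lemma \ref{lem3.1} gives that $h = d\omega^-/d\omega^+$ is an $A_2$ weight (in fact, a reverse H\"older weight with exponent $p>1$) on each cube of side length at most $r_0$, with constants bounded uniformly (indeed arbitrarily close to $1$ as $\eta\to0$). The good sets $G_{\Lambda Q}$ and $\wt G_{\Lambda Q}$ are defined exactly as in Section \ref{sec3}; estimate \rf{eqSmallComplement} becomes
\[
\omega^+(\Lambda Q \setminus G_{\Lambda Q}) \leq \frac{2\eta}{\delta_1}\,\omega^+(\Lambda Q)
\]
for $\ell(\Lambda Q)\leq r_0$, and the analogues of Lemmas \ref{lemReverseHolderH} and \ref{lemgq} yield
\[
\int_{\Lambda Q} |h - a_{\Lambda Q}|\,d\omega^+ \lesssim \bigl(\delta_1 + (\eta/\delta_1)^{1/p'}\bigr)\,\omega^-(\Lambda Q), \qquad \omega^+(\wt G_{\Lambda Q}) \geq \Bigl(1 - C\delta_1 - \tfrac{C\eta}{\delta_1^{2}}\Bigr)\omega^+(\Lambda Q).
\]
All subsequent estimates (the density bounds of Lemma \ref{lem44}, the Riesz transform bounds of Lemmas \ref{lem37}, \ref{lemROmegaRestricted}, and the oscillation estimates of Lemmas \ref{lem-normal} and \ref{lem-pvriesz}) depend on the VMO hypothesis only through quantities of the form above, and so transfer over directly. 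The rectifiability-based Lemma \ref{lemLD} is also unaffected: its proof uses only the oscillation estimate of Lemma \ref{lem-pvriesz} and the structural estimates of Section \ref{secprelim}, both of which are preserved.

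The parameters must now be chosen in a careful order to make the final estimate
\[
\int_Q |N - m_{\omega^+,Q}N|^2\,d\omega^+ \lesssim \Bigl(\delta_1 + \ve_1 + \tfrac{\ve'}{\tau(\ve_1)^2}\Bigr)\,\omega^+(Q)
\]
smaller than $\ve^2$: given $\ve$, I would first fix $\Lambda$ large enough, then $\delta_1, \ve_1$ small enough (which in turn fixes $\tau=\tau(\ve_1)$ from Lemma \ref{lemLD}), then demand $\ve'$ (equivalently $\ve''$ in Lemma \ref{lem-pvriesz}) small enough for the rectifiability criterion Theorem \ref{teo-gt} to apply inside Lemma \ref{lemLD}, and finally pick $\eta$ small enough (depending on all previous parameters, hence on $\ve$ and $r_0$) so that every step above goes through. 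The restriction on the radius of $B$ in the conclusion arises because applying Theorem \ref{teo-gt} within Lemma \ref{lemLD} requires $\ell(Q)$ below a threshold depending on $\Lambda$, $\delta_1$, $\tau$ (i.e., on $\ve$) and also on $r_0$ through the Reifenberg flatness scale; a final application of Cauchy--Schwarz and the doubling of $\omega^+$ converts the cube estimate into the claimed ball estimate. The main technical obstacle is simply bookkeeping: ensuring that the dependence of $\eta$ on $\ve$ is well-defined, and that no parameter choice creates circular dependencies.

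For the AD-regular case, I would proceed exactly as in Section \ref{seccoro2}. The chord-arc assumption yields $\omega^+\in A_\infty(\sigma)$ by David--Jerison \cite{DJ} and Semmes \cite{Semmes}, with $A_\infty$ constants depending only on the chord-arc character of $\Omega^+$. Given $\ve>0$, apply the first part of the theorem with parameter $\ve^{2}$ (say) in place of $\ve$ to obtain
\[
\omega^+\bigl(\{x\in B : |N(x)-m_{\omega^+,B}N| > \ve\}\bigr) \leq \ve\,\omega^+(B),
\]
via Chebyshev. The $A_\infty$ property then translates this into $\sigma\bigl(\{\cdots\}\bigr) \leq \wt\ve\,\sigma(B)$ with $\wt\ve\to 0$ as $\ve\to0$, and a triangle inequality exactly as in Corollary \ref{coro2} yields $\avint_B |N - m_{\sigma,B}N|\,d\sigma \lesssim \ve + \wt\ve$, which is as small as we want by choosing $\eta$ correspondingly.
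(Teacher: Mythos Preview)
Your proposal is correct and matches the paper's own approach exactly: the paper does not give a separate proof of Theorem~\ref{teofifi} but simply states that ``essentially the same arguments used to prove (a) $\Rightarrow$ (b) in Theorem~\ref{teo1} and Corollary~\ref{coro2}'' yield the result. Your outline of how to make those arguments quantitative---replacing the VMO modulus $\ve(\ell(P))$ by the uniform bound $\eta$, tracking the parameter dependencies through Lemmas~\ref{lem3.1}--\ref{lemLD}, and then invoking the $A_\infty$ argument of Section~\ref{seccoro2} for the AD-regular case---is precisely what the paper has in mind, and in fact supplies more detail than the paper itself does.
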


\vv



\section{Proof of $\text{(c)}\Rightarrow \text{(a)}$ in Theorem \ref{teo1}} \label{secctoa}

We assume that we are under the conditions of Theorem \ref{teo1} (c). So we suppose that
$\Omega^+$ is vanishing Reifenberg flat, the inner normal $N$ belongs to $\vmo(\omega^+)$, and either $\dfrac{d\omega^+}{d\omega^-}\in B_{3/2}(\omega^-)$ or
$\dfrac{d\omega^-}{d\omega^+}\in B_{3/2}(\omega^+)$.
Our objective
is to show that, given $\tau>0$, for any ball $B_0$ centered { in} $\partial\Omega$,
\begin{equation}\label{eqrev1}
\avint_{B_0} \left|\log\frac{d\omega^-}{d\omega^+} - m_{B_0,\omega^+}\Big(\log\frac{d\omega^-}{d\omega^+}\Big)\right|\,d\omega^+ \leq \tau
\end{equation}
if $r(B_0)$ is small enough. 
First we will prove this assuming that $\dfrac{d\omega^+}{d\omega^-}\in B_{3/2}(\omega^-)$, and in the last Section \ref{seclast} we will explain the arguments in the case $\dfrac{d\omega^-}{d\omega^+}\in B_{3/2}(\omega^+)$.

\subsection{Stopping cubes}\label{secstop}
To prove the estimate \rf{eqrev1} we intend to construct some approximation domains for $\Omega^+$ and $\Omega^-$ and apply to them Theorem A from Kenig and Toro. To this end, in this section we need to introduce 
some stopping cubes.  

According to \cite[Theorem 1.3]{AMT-quantcpam},
both $\omega^+$ and $\omega^-$ have very big pieces of uniformly $n$-rectifiable measures. This means
that, for every $\ve\in (0,1)$ and
for every ball $B$ centered { in} $\partial\Omega$ 
with radius at most $\diam(\partial\Omega)$, there exists uniformly $n$-rectifiable sets $E^+,E^-$, with UR constants possibly depending on $\ve$, and subsets $F^\pm\subset E^\pm$ such that
\begin{equation}\label{equr00}
\omega^\pm(B\setminus F^\pm)\leq \ve\,\omega^\pm(B)
\end{equation}
and
\begin{equation}\label{equr01}
\omega^\pm(D)\approx_\ve \HH^n(D)\,\Theta_{\omega^\pm}^n(B)\quad \mbox{ for all $D\subset F^\pm$.}
\end{equation}

We consider the lattice $\DD$ of Christ cubes from $\partial\Omega$.
Given 
 a ball $B_0$ centered { in} $\partial\Omega$ with $r(B_0)\leq \diam(\Omega^+)$ and two parameters $\delta\in(0,1)$ and $A\gg1$ to be chosen below, we
consider some stopping cubes defined as follows: we say that $Q\in \sss(B_0)$ if $Q\in\DD$ is a maximal cube contained in $2B_0$ such that one of the following options holds:
\begin{itemize}
\item $\Theta_{\omega^+}(Q)> A\Theta_{\omega^+}(B_0)$. We write $Q\in\HHD^+(B_0)$.
\item $\Theta_{\omega^+}(Q)\leq \delta\Theta_{\omega^+}(B_0)$.  We write $Q\in\LLD^+(B_0)$.
\end{itemize}
We denote
$$\HD^+(B_0) =\bigcup_{Q\in\HHD^+(B_0)}Q,\qquad \LD^+(B_0) =\bigcup_{Q\in\LLD^+(B_0)}Q$$
and
$$G(B_0)= \tfrac32 B_0 \setminus \bigcup_{Q\in\sss(B_0)} Q.$$

\vv
\begin{lemma}\label{lem4.1}
For any $\ve'>0$, if $A$ is big enough and $\delta$ small enough, then
$$\omega^+(\HD^+(B_0) \cup \LD^+(B_0))\leq \ve'\,\omega^+(B_0).$$
\end{lemma}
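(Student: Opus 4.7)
The plan is to exploit that $\omega^+$ has very big pieces of uniformly $n$-rectifiable measures and to combine this with the Lebesgue differentiation theorem for the doubling measure $\omega^+$ and the polynomial-growth estimates from Lemma~\ref{lemfacil}.

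First, I would apply \cite[Theorem 1.3]{AMT-quantcpam} to the ball $2B_0$ with a small parameter $\ve_1>0$ (to be chosen at the end) to obtain a uniformly $n$-rectifiable (hence $n$-AD regular) set $E^+$ and a subset $F^+\subset E^+\cap 2B_0$ with $\omega^+(2B_0\setminus F^+)\leq \ve_1\,\omega^+(2B_0)$ and $\omega^+|_{F^+}\approx_{\ve_1}\Theta_{\omega^+}(B_0)\,\HH^n|_{F^+}$. Combining this comparability with the density of $F^+$ in the $n$-AD regular set $E^+$ at $\HH^n$-a.e.\ point of $F^+$, the Lebesgue differentiation theorem applied to $\omega^+$ yields
$$\omega^+(B(x,r))\approx \Theta_{\omega^+}(B_0)\,r^n$$
for $\omega^+$-a.e.\ $x\in F^+$ and all small enough $r$ (depending on $x$). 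By choosing $k_0$ large in terms of $\ve_1$, the subset $F'_{k_0}\subset F^+$ on which this comparability holds uniformly at every scale $r\leq 2^{-k_0}r(B_0)$ will satisfy $\omega^+(F^+\setminus F'_{k_0})\leq \ve_1\,\omega^+(B_0)$.

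Next, using the cube-ball comparison $\omega^+(Q)\approx \omega^+(B(x,\ell(Q)))$ for $x\in Q$ (which follows from the doubling of $\omega^+$), the goal is to choose $\delta$ and $A$ so that no stopping cube contains a point of $F'_{k_0}$. Cubes $Q\subset 2B_0$ with $\ell(Q)\geq 2^{-k_0}r(B_0)$ are ruled out from $\sss(B_0)$ by a direct application of Lemma~\ref{lemfacil}: in this range one has $\Theta_{\omega^+}(Q)\in[C^{-1}2^{-nk_0\gamma_0},\,C\,2^{nk_0\gamma_0}]\Theta_{\omega^+}(B_0)$, so choosing $\delta<C^{-1}2^{-nk_0\gamma_0}$ and $A>C\,2^{nk_0\gamma_0}$ excludes all of them from $\sss(B_0)$. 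For cubes $Q$ with $\ell(Q)\leq 2^{-k_0}r(B_0)$ containing some $x\in F'_{k_0}$, the cube-ball comparison together with the definition of $F'_{k_0}$ gives $\Theta_{\omega^+}(Q)\approx \Theta_{\omega^+}(B_0)$ with comparability constants that depend only on $\ve_1$, so by shrinking $\delta$ and enlarging $A$ further (still in terms of $\ve_1$) such cubes are also excluded from $\sss(B_0)$.

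Combining the two scale ranges, every stopping cube will lie in $2B_0\setminus F'_{k_0}$, so
$$\omega^+(\HD^+(B_0)\cup \LD^+(B_0))\leq \omega^+(2B_0\setminus F^+)+\omega^+(F^+\setminus F'_{k_0})\lesssim \ve_1\,\omega^+(B_0),$$
which is at most $\ve'\omega^+(B_0)$ once $\ve_1$ is chosen small enough. The main subtlety will be the bookkeeping of parameters: $\ve_1$ must be fixed first (in terms of $\ve'$), then $k_0$ in terms of $\ve_1$, and finally $\delta$ and $A$ in terms of $k_0$; once this order is respected, the rest of the argument is essentially standard.
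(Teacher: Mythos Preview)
Your strategy shares the starting point of the paper's proof (the big pieces $F^+\subset E^+$ from \cite{AMT-quantcpam}), but the execution via Lebesgue differentiation introduces a genuine uniformity problem. The lemma is used later (e.g.\ in \rf{eqas723} and \rf{eqali741}) with the stopping construction run for every ball $B_0=\Lambda B$ as $B$ varies, so $A$ and $\delta$ must be chosen independently of $B_0$; the paper's remark after the proof makes this explicit. In your argument, however, the integer $k_0$ governing the scale threshold is obtained from a pointwise density statement: you need, for $\omega^+$-a.e.\ $x\in F^+$, the two-sided bound $\omega^+(B(x,r))\approx \Theta_{\omega^+}(B_0)\,r^n$ to hold for all $r\leq 2^{-k_0}r(B_0)$. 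The upper bound here is indeed quantitative (via the weak $(1,1)$ inequality for $M_{\omega^+}$ applied to $\chi_{2B_0\setminus F^+}$), but the lower bound requires $\HH^n(B(x,r)\cap F^+)\gtrsim r^n$, i.e.\ that $x$ is an $\HH^n|_{E^+}$-density point of $F^+$. Since nothing tells you that $\HH^n(E^+\cap 2B_0\setminus F^+)$ is small (the hypothesis \rf{equr00} is in $\omega^+$-measure, and the comparability \rf{equr01} holds only on $F^+$), the rate at which this density is attained is not controlled, so $k_0$ depends on the particular pair $(E^+,F^+)$ produced for $B_0$. Your parameter chain $\ve_1\mapsto k_0\mapsto(A,\delta)$ therefore yields $A,\delta$ depending on $B_0$, which is not acceptable.

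The paper avoids this by not passing to any smaller set $F'_{k_0}$. For $\HHD^+$ it argues directly that any $Q\in\HHD^+(B_0)$ with $\omega^+(Q\cap F^+)\geq\tfrac12\omega^+(Q)$ would satisfy $\Theta_{\omega^+}(Q)\leq C(\ve)\,\Theta_{\omega^+}(B_0)$ (from \rf{equr01} and $\HH^n(Q\cap F^+)\leq\HH^n(Q\cap E^+)\lesssim\ell(Q)^n$), a contradiction for $A>C(\ve)$; hence $\omega^+(\HD^+(B_0))\leq 2\,\omega^+(2B_0\setminus F^+)$. For $\LLD^+$ it uses a $5r$-covering by balls $2\wt B_Q$ with $Q\cap F^+\neq\varnothing$, and then the $n$-AD regularity of $E^+$ (not of $F^+$) to bound $\sum\ell(Q)^n\lesssim\HH^n(4B_0\cap E^+)\lesssim r(B_0)^n$, giving $\omega^+(\LD^+(B_0))\lesssim(\ve+C(\ve)\delta)\,\omega^+(B_0)$. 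Both constants depend only on $\ve$ (and the UR constants of $E^+$, which depend only on $\ve$), so $A$ and $\delta$ are uniform in $B_0$.
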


Let us remark that $A$ and $\delta$ depend on $\ve'$ and the $A_\infty$ relation between $\omega^+$
and $\omega^-$.

\begin{proof}
First we estimate $\omega^+(\HD^+(B_0))$. Let $F^+$, $E^+$ and $\ve$ be as in \rf{equr00}, with $B$ replaced by $2B_0$ and $\ve$ to be chosen below.
We claim that if $Q\in\HHD^+(B_0)$ and $A$ is big enough, then 
$$\omega^+(Q\cap F^+) < \frac12\,\omega^+(Q).$$
In fact, observe that if $\omega^+(Q\cap F^+) \geq \frac12\,\omega^+(Q)$, then
$$\omega^+(Q)\leq 2\omega^+(Q\cap F^+)\leq C(\ve)\Theta_{\omega^+}(B_0)\,\HH^n(Q\cap F^+)\leq C(\ve)\Theta_{\omega^+}(B_0)\,\ell(Q)^n,$$
and thus 
$$\Theta_{\omega^+}(Q)\leq C(\ve)\Theta_{\omega^+}(B_0).$$
So $Q\not \in\HHD^+(B_0)$ if $A$ is chosen big enough (depending on $\ve$) and the claim follows.
Then we deduce
$$\sum_{Q\in \HHD^+(B_0)}\omega^+(Q)\leq 2\sum_{Q\in \HHD^+(B_0)}\omega^+(Q\setminus F^+)\leq 2\omega^+(2B_0\setminus F^+)\leq C\ve\omega^+(B_0).$$

Next we estimate $\omega^+(\LD^+(B_0))$. We write
$$\sum_{Q\in \LLD^+(B_0)}\omega^+(Q) = \sum_{Q\in \LLD^+(B_0)}\omega^+(Q\setminus F^+)+ \sum_{Q\in \LLD^+(B_0)}\omega^+(Q\cap F^+).$$
The first sum on the right hand side is at most
$$\omega^+(2B_0\setminus F^+)\leq c\ve\omega^+(B_0).$$
To deal with the sum $\sum_{Q\in \LLD^+(B_0)}\omega^+(Q\cap F^+)$, denote by $J$ the family of cubes $Q\in\LLD^+(B_0)$ such that $Q\cap F^+\neq\varnothing$, and consider a subfamily $J_0\subset J$ such that the balls
$2\wt B_Q$ (defined in Theorem \ref{teo-christ}), with $Q\in J_0$, are pairwise disjoint, while
$$\bigcup_{Q\in J} Q\subset \bigcup_{Q\in J_0} 10\wt B_Q.$$
Then, using the doubling property of $\omega^+$, we obtain
$$\sum_{Q\in \LD^+(B_0)}\omega^+(Q\cap F^+) \leq \sum_{Q\in J}\omega^+(Q)\leq  \sum_{Q\in J_0}\omega^+(10\wt B_Q)
\leq C\sum_{Q\in J_0}\omega^+(Q)\leq C\delta\,\Theta_{\omega^+}(B_0)\sum_{Q\in J_0}\ell(Q)^n.$$
Now we take into account that, for $Q\in J_0$, $\wt B_Q\cap E^+\neq\varnothing$, and by the $n$-AD regularity of $E^+$,
$\ell(Q)^n \lesssim_\ve \HH^n(2\wt B_Q\cap E^+)$. Thus, using that the balls $2B_Q$ are disjoint,
\begin{align*}
C\delta\,\Theta_{\omega^+}(B_0)\sum_{Q\in J_0}\ell(Q)^n & \leq C(\ve)\delta\,\Theta_{\omega^+}(B_0)\sum_{Q\in J_0}\HH^n(2\wt B_Q\cap E^+)\\
& \leq C(\ve)\delta\,\Theta_{\omega^+}(B_0)\HH^n(4B_0 \cap E^+)\\&\leq 
 C(\ve)\delta\,\Theta_{\omega^+}(B_0)\,r(B_0)^n =  C(\ve)\delta\,\omega^+(B_0).
 \end{align*}
Therefore,
$$\sum_{Q\in \LLD^+(B_0)}\omega^+(Q) \leq (c\ve + C(\ve)\delta)\omega^+(B_0).$$

Altogether, we have
$$\omega^+(\HD^+(B_0) \cup \LD^+(B_0))\leq (c\ve + C(\ve)\delta)\omega^+(B_0)\lesssim\ve \omega^+(B_0),$$
assuming $\delta=\delta(\ve)$ small enough (and also $A=A(\ve)$ big enough).
\end{proof}
\vv

\subsection{Construction of the approximating domains}\label{sec4.2}

Next we will follow an idea from \cite{AMT-singular}. To this end, first we have to introduce some
Whitney type cubes with restricted size.

Given an open set $V\subsetneq \R^{n+1}$ and $K\geq4$, we denote by $\WW_{K,r_0}(V)$ the set of maximal dyadic cubes $Q\subset V$ such that $\diam KQ\leq r_{0}$ and $K Q\cap V^{c}=\varnothing$. These cubes have disjoint interiors and can be easily shown to satisfy the following properties:
\begin{enumerate}
\item[(a)] $\min\{r_{0},\dist(Q,V^{c})\}/K\lesssim \ell(Q)\lesssim \min\{r_{0},\dist(Q,V^{c})\}/K$, where $\ell(Q)$ denotes the side length of the cube.
\item[(b)]  If $Q,R\in \WW_{K,r_0}(V)$ and $\frac K4 Q\cap \frac K4 R\neq\varnothing$, then $\ell(Q)\approx_{K,n}\ell(R)$.
\item[(c)] $\sum_{Q\in \WW_{K,r_0}(V)}\chi_{\frac K4 Q}\lesssim_{K,n}\chi_{V}$.
\end{enumerate}

Consider the open set $V=\R^{n+1}\setminus \overline{G(B_0)}$ and, for some constant 
$0<\tau_0<1/100$, the associated Whitney cubes $\WW_{\tau_0^{-2},r(B_0)}(V)$. Denote by $\WW_0 $ the family of cubes $Q\in\WW_{\tau_0^{-2},r(B_0)}(V)$ such that
$Q\cap\partial\Omega^+\neq\varnothing$. Notice that
$$\ell(Q)\lesssim \tau_0^2\,\dist(Q,G(B_0))\qquad\mbox{for all $Q\in \WW_0 $}$$
and
$$\partial \Omega^+\setminus \overline{G(B_0)} \subset \bigcup_{S\in \WW_0 } S.$$
For each $S\in \WW_0 $, fix some point $z_S\in S\cap\partial \Omega$ and set
\begin{equation}\label{eqbs*}
B_S = B(z_S,\tau_0\,\min\{r(B_0),\dist(S,G(B_0))\}).
\end{equation}
Notice  that 
$$\ell(S)\approx \tau_0\,r(B_S)\approx \tau_0^2
\min\{r(B_0),\dist(S,G(B_0))\}.
$$
Then we consider the domains
$$\Omega^+_b = \Omega^+ \cup \bigcup_{S\in\WW_0} B_S,\qquad
\Omega^+_s =  \Omega^+ \setminus\overline{ \bigcup_{S\in\WW_0} B_S},$$
$$\Omega^-_b = \Omega^- \cup \bigcup_{S\in\WW_0} B_S,\qquad
\Omega^-_s =  \Omega^- \setminus\overline{ \bigcup_{S\in\WW_0} B_S}.$$
The subindex $b$ stands for ``big'' and $s$ for ``small". { Notice that the domains $\Omega_b^\pm$ are obtained by increasing $\Omega^\pm$, while $\Omega_s^\pm$ are obtained by 
reducing $\Omega^\pm$. Further, by {\cite[Lemma 2.2]{AMT-singular}},
$$\overline{G(B_0)}\subset \partial \Omega^+ \cap \partial \Omega^\pm_s \cap \partial \Omega^\pm_b.$$

Let us remark that the use of approximating interior or exterior domains is not new in potential theory. They can be constructed by different methods. The method we use here has the advantage of being quite straightforward and producing Reifenberg flat domains, as the next lemma shows.}

\begin{lemma}\label{lemreif1}
Let $\rho_0=r(B_0)$, and 
let $\tau_0>0$ be small enough. There exists $\delta_{0}=\delta_{0}(\tau_0)>0$ such that if $\Omega\subset \R^{n+1}$ is $(\tau_1,\rho_0)$-Reifenberg flat for some $\tau_1\in(0,\delta_{0})$, then
$\Omega^\pm_b$ and $\Omega^\pm_s$ are $(c\tau_0^{1/2},\rho_0/2)$-Reifenberg flat. Further,
for each $S\in\WW_0$, $10 B_S\cap \partial\Omega^\pm_b$ and $10 B_S\cap\partial\Omega^\pm_s$ are $c\tau_0^{1/2}$-Lipschitz graphs with 
respect {to} the best approximating $n$-plane for $\partial\Omega^+$ in $B_S$.
\end{lemma}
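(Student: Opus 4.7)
The plan is to control the boundaries $\partial\Omega^\pm_b$, $\partial\Omega^\pm_s$ by analyzing two regimes: near $\overline{G(B_0)}$, where the boundary is not modified, and inside Whitney balls $B_S$, where the boundary becomes an approximately parallel surface to $\partial\Omega^+$. The key quantitative input is that the Whitney radii $r(B_{S'})$ of nearby cubes are almost constant: if $B_{S'}\cap 20 B_S\neq\varnothing$, then $\dist(S',G(B_0))$ and $\dist(S,G(B_0))$ agree up to relative error $O(\tau_0)$, so
\[
r(B_{S'}) = \bigl(1 + O(\tau_0)\bigr)\,r(B_S),
\]
while each $r(B_{S'})\approx \tau_0\,\dist(S',G(B_0))$. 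Throughout I take $\tau_1\leq\delta_0(\tau_0)$ so small that $\partial\Omega^+$ looks essentially flat at every scale relevant to a single ball $20 B_S$.

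To prove the Lipschitz graph claim, I fix $S\in\WW_0$ and let $L$ be the best approximating $n$-plane for $\partial\Omega^+$ in $B_S$, with unit normal $N$ pointing into $\Omega^-$. The Reifenberg condition gives $\dist_H(\partial\Omega^+\cap 20B_S,L\cap 20B_S)\lesssim \tau_1 r(B_S)$. For each $y$ in a neighborhood of $\partial\Omega^+\cap 10 B_S$, I introduce the height function
\[
\phi(y) = \sup\bigl\{ t\geq 0 : y+tN\in \textstyle\bigcup_{S'\in\WW_0} B_{S'}\bigr\}.
\]
Using that only $B_{S'}$ with $r(B_{S'})=(1+O(\tau_0))r(B_S)$ contribute and that $\partial\Omega^+$ is $\tau_1$-flat, a direct geometric comparison shows $\phi(y)=r(B_S)+O(\tau_0)\,r(B_S)$, and that two nearby points $y,y'\in\partial\Omega^+\cap 10B_S$ satisfy $|\phi(y)-\phi(y')|\lesssim \tau_0 |y-y'| + \tau_0\,r(B_S)$; the second summand is controlled by the fact that the relevant $B_{S'}$ are spaced at distance $\gtrsim \ell(S)\approx\tau_0 r(B_S)$, yielding a Lipschitz constant $O(\tau_0)$ on length scales $\gtrsim \tau_0 r(B_S)$. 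Composing with the Reifenberg graph of $\partial\Omega^+$ over $L$ (with slope $\lesssim\tau_1$) gives that $\partial\Omega_b^+\cap 10 B_S$ is a graph over $L$ with Lipschitz constant $\lesssim \tau_0 +\tau_1\leq c\tau_0^{1/2}$. The argument for $\partial\Omega_s^+$, $\partial\Omega_b^-$, $\partial\Omega_s^-$ is the same after reversing the role of the two sides of $L$.

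For Reifenberg flatness of $\Omega^\pm_{b}$ and $\Omega^\pm_s$ at arbitrary $x\in\partial\Omega^\pm_b$ and scale $r\leq \rho_0/2$, I split into three cases. If $x\in\overline{G(B_0)}$ and $B(x,r)$ meets no ball $B_S$, then locally the boundary is unchanged and the original estimate $\tau_1\leq c\tau_0^{1/2}$ suffices. If $x$ lies in some $B_S$ and $r\lesssim r(B_S)$, the previous Lipschitz graph bound immediately gives $D_{\partial\Omega_b^\pm}(x,r)\leq c\tau_0^{1/2}$ with respect to the approximating plane. In the remaining case the ball $B(x,r)$ is large compared to every $r(B_{S'})$ it meets (because such $B_{S'}$ satisfy $r(B_{S'})\lesssim \tau_0 r$ by the Whitney relation $r(B_{S'})\approx\tau_0\,\dist(S',G(B_0))\lesssim \tau_0 r$); hence the best approximating plane for $\partial\Omega^+$ in $B(x,2r)$ is also good for $\partial\Omega_b^\pm$ up to an error of order $\tau_0 r$, which is absorbed in $c\tau_0^{1/2}r$. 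The two-sided connected-component condition in the definition of Reifenberg flat domain then follows from the corresponding condition for $\Omega^+$ together with the fact that the modifications $B_S$ are contained in a $\tau_0 r$-neighborhood of $\partial\Omega^+$.

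The main obstacle is the Lipschitz estimate for $\phi$: I need the variation of the offset function across $\partial\Omega^+\cap 10 B_S$ to be strictly smaller than the displacement, and the comparison of the relevant balls $B_{S'}$ uses both the Whitney separation $\approx \tau_0 r(B_S)$ and the flatness $\tau_1$ of $\partial\Omega^+$. Choosing $\tau_1$ small in terms of $\tau_0$ ensures the flatness of $\partial\Omega^+$ does not disturb the offset construction, and balancing these quantities is exactly what produces the exponent $\tau_0^{1/2}$ in the statement, with room to spare.
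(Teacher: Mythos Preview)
The paper does not prove this lemma; it simply cites Lemmas~2.2 and~2.3 of \cite{AMT-singular}. Your outline follows the natural route of that reference---show that in each $10B_S$ the modified boundary is the upper (resp.\ lower) envelope of nearly equal spheres over a nearly flat base, and then propagate---but two steps are not correct as written.

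First, the Lipschitz bound. From $|\phi(y)-\phi(y')|\lesssim\tau_0|y-y'|+\tau_0\,r(B_S)$ you deduce ``Lipschitz constant $O(\tau_0)$ on scales $\gtrsim\tau_0 r(B_S)$''; taking $|y-y'|\approx\tau_0 r(B_S)$ shows this does not follow. More to the point, the envelope over $L$ genuinely has Lipschitz constant $\sim\tau_0^{1/2}$, not $\tau_0$: while the centers $z_{S'}$ are $O(\tau_0 r(B_S))$-dense, the radii $r(B_{S'})$ also vary by $O(\tau_0)r(B_S)$, so a ball of slightly larger radius centered at horizontal distance up to $c\,\tau_0^{1/2}r(B_S)$ can still realize the maximum, and there its spherical cap has slope $\sim\tau_0^{1/2}$. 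Thus the exponent $\tfrac12$ is forced by the geometry, not obtained ``with room to spare''. Second, in your case~(3) for Reifenberg flatness the claim $r(B_{S'})\lesssim\tau_0 r$ fails in the range $r(B_S)\lesssim r\lesssim r(B_S)/\tau_0$ when $x\in\overline{B_S}$ (there $\dist(S',G(B_0))\approx r(B_S)/\tau_0\not\lesssim r$), and then the approximating plane for $\partial\Omega^+$ is not within $c\tau_0^{1/2}r$ of $\partial\Omega^\pm_b$. The fix is to patch the local $c\tau_0^{1/2}$-Lipschitz graphs on the overlapping balls $10B_{S'}$ (their base planes agree up to angle $O(\tau_1)$), yielding a $c\tau_0^{1/2}$-Lipschitz graph over a plane parallel to, but shifted by $\approx r(B_S)$ from, the one for $\partial\Omega^+$.
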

This result is an immediate consequence of Lemmas 2.2 and 2.3 from \cite{AMT-singular}.
\vv

\subsection{Some properties of the approximating domains}\label{sec4.3}

\begin{lemma}\label{lemcreix}
Suppose that $\Omega^+$ is $(\tau_1,r_0)$-Reifenberg flat for some $0<\tau_1\leq\tau_0$,
and that $\Lambda_0 r(B_0)\leq r_0$ for some $\Lambda_0\geq1$.
Denote $\sigma =\HH^n|_{\partial\Omega^+_b}$. If $\tau_1$ is small enough and $\Lambda_0$ big enough (both depending on $\tau_0$), then
\begin{equation}\label{eqsigma49}
\sigma(B(x,r))\leq C(A,\delta) r^n
\quad \mbox{ for all $x\in  \partial\Omega^+_b$, $0<r\leq r(B_0)$.}
\end{equation}
An analogous estimate holds for $\HH^n|_{\partial\Omega^+_s}$.
\end{lemma}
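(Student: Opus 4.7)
My plan is to derive the polynomial growth bound as a consequence of the geometric information furnished by Lemma~\ref{lemreif1}. Under the hypotheses (for $\tau_1$ small enough and $\Lambda_0$ large enough in terms of $\tau_0$), that lemma provides two ingredients: (i) $\partial\Omega^+_b$ is $(c\tau_0^{1/2},\rho_0/2)$-Reifenberg flat, where $\rho_0:=r(B_0)$; and (ii) for every $S\in\WW_0$, the intersection $10B_S\cap\partial\Omega^+_b$ coincides with the graph of a $c\tau_0^{1/2}$-Lipschitz function over the best approximating $n$-plane for $\partial\Omega^+$ in $B_S$. From (ii), projecting orthogonally onto the approximating $n$-plane immediately yields
$$\HH^n(10B_S\cap \partial\Omega^+_b)\le \sqrt{1+c^2\tau_0}\,\omega_n\,\bigl(10\,r(B_S)\bigr)^n \lesssim r(B_S)^n\qquad\text{for every }S\in\WW_0.$$

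To conclude the bound $\sigma(B(x,r))\lesssim r^n$ for $x\in \partial\Omega^+_b$ and $0<r\le\rho_0/2$, I would argue by cases. If $B(x,r)\subset 10B_S$ for some $S\in\WW_0$, the Lipschitz graph estimate above already gives $\sigma(B(x,r))\le C r^n$ with a dimensional constant. Otherwise, I would exploit (i): letting $P$ be an $n$-plane minimizing $D_{\partial\Omega^+_b}(x,r,\cdot)$, every point of $\partial\Omega^+_b\cap B(x,r)$ lies within $c\tau_0^{1/2}r$ of $P$, so a maximal family of disjoint balls $\{B(y_j,r/M)\}_{j\in J}$ with $y_j\in \partial\Omega^+_b\cap B(x,r)$ has its centers near-coplanar, hence their orthogonal projections to $P$ are disjoint balls of radius $\approx r/M$ contained in $P\cap B(x,(1+c\tau_0^{1/2})r)$, forcing $\#J\le C_n M^n$. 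Therefore $\partial\Omega^+_b\cap B(x,r)\subset \bigcup_{j\in J}B(y_j,2r/M)$, and iterating this covering estimate at scales $r,\, r/M,\, r/M^2,\ldots$ produces a recursion whose mass is conserved (each generation contributes the same factor $r^n$) and which stops once each sub-ball either falls into the Reifenberg-flat piece of $\partial\Omega^+$ near $\overline{G(B_0)}$ or is contained in a single $10B_S$; in both terminal situations the Lipschitz graph bound or the standard Reifenberg polynomial growth applies. Summing the geometric tail gives $\sigma(B(x,r))\le C(\tau_0,n)\,r^n$. For $\rho_0/2<r\le\rho_0$ I would cover $B(x,r)$ by a bounded number of balls of radius $\rho_0/2$ centered on $\partial\Omega^+_b$ and apply the previous estimate to each.

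The analogous estimate for $\HH^n|_{\partial\Omega^+_s}$ is obtained by running the identical argument on $\Omega^+_s$ in place of $\Omega^+_b$, since Lemma~\ref{lemreif1} asserts both Reifenberg flatness and Lipschitz graph structure for the ``$s$'' domain as well.

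The main obstacle is to make the iteration close cleanly. One has to verify that the near-coplanarity produced by (i) persists at every sub-scale (which in turn requires $\tau_0$ small so that the approximating planes at consecutive scales lie at small angles and compose correctly), and that the iteration terminates before the sub-balls become smaller than the effective Whitney scale at their centers — a point $y\in\partial\Omega^+_b$ near a Whitney cube has a natural stopping scale $\sim \tau_0\dist(y,G(B_0))$ below which it lies in a single $10B_S$, so the number of iterations is bounded in terms of $\log(r/\tau_0\dist(x,G(B_0)))$. The (possibly conservative) dependence $C=C(A,\delta)$ in the statement absorbs the dependence of $\tau_0$ on these stopping parameters should $\tau_0$ later need to be chosen as a function of $A,\delta$ in the larger argument.
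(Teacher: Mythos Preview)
Your argument has a genuine gap: Reifenberg flatness alone does \emph{not} imply polynomial growth of $\HH^n$. There are $(\delta,R)$-Reifenberg flat sets (even vanishing Reifenberg flat ones) whose $\HH^n$ measure is infinite on every ball; this is essentially the same phenomenon as von Koch--type snowflakes of dimension exactly $n$ but infinite $\HH^n$-measure. Consequently there is no ``standard Reifenberg polynomial growth'' to appeal to at your terminal step, and your iteration cannot close. Concretely, your Vitali step covers $\partial\Omega^+_b\cap B(x,r)$ by at most $C M^n$ balls of radius $2r/M$, so the $n$-mass $\sum r_j^n$ gets multiplied by roughly $C\,2^n>1$ at every generation; the recursion diverges rather than being ``conserved''.

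What the paper actually uses---and what your sketch omits entirely---is the harmonic measure $\omega^+$ together with the stopping conditions defining $G(B_0)$. On $\overline{G(B_0)}$ one has $\omega^+(B(y,s))\approx_{A,\delta}\Theta_{\omega^+}(B_0)\,s^n$, which immediately gives $\sigma(B\cap\overline{G(B_0)})\lesssim_{A,\delta}r^n$ via the comparison $\sigma\approx\Theta_{\omega^+}(B_0)^{-1}\omega^+$ there. Off $\overline{G(B_0)}$, the paper covers by the Whitney balls $B_S$, uses the Lipschitz graph structure on $5\bar B_S$ to get $\sigma(5\bar B_S)\lesssim r(B_S)^n$, and then---crucially---converts $r(B_S)^n\approx_{A,\delta}\Theta_{\omega^+}(B_0)^{-1}\omega^+(B_S)$ and sums the \emph{disjoint} $\omega^+(B_S)$ inside $3B$ to recover $\lesssim_{A,\delta}r^n$. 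This is precisely where the constant $C(A,\delta)$ originates: it is not an artifact of later choices of $\tau_0$, but comes directly from the density bounds in the stopping family $\HHD^+(B_0)\cup\LLD^+(B_0)$. Without invoking $\omega^+$ in this way, no purely geometric argument based on Lemma~\ref{lemreif1} can yield the bound.
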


{Remark that the preceding lemma implies that $\Omega^+_b$ and $\Omega^+_s$ are chord-arc domains and 
thus the surface measures $\HH^n|_{\partial\Omega^+_b}$ and $\HH^n|_{\partial\Omega^+_s}$ are $n$-rectifiable.}

\begin{proof}

Consider a ball $B=B(x,r)$, with $x\in \partial\Omega^+_b$, $0<r\leq r(B_0)$.
Suppose first that $B$ is centered in $10B_0$.
By the stopping conditions, it follows that 
{
$$\omega^+(B(y,s))\approx_{A,\delta} \Theta_{\omega^+}(B_0)\,s^n\quad \mbox{ for all $y\in \overline{G(B_0)}$, $0<s\leq r(B_0)$.}$$
This can be easily deduced from the fact that any such ball $B(y,s)$ contains a cube $Q\in\DD$ such that
$\ell(Q)\approx s$ and $Q\not\subset\HD^+(B_0)\cup \LD^+(B_0)$.
}
Then we infer that
\begin{equation}\label{equa*492}
\sigma(B(x,r)\cap \overline{G(B_0)})\approx_{A,\delta} \Theta_{\omega^+}(B_0)^{-1}\omega^+(B(x,r)\cap \overline{G(B_0)})\lesssim_{A,\delta}r^n.
\end{equation}

Denote by $I_1$ the family of cubes $S\in\WW_0$ such that $B_S\cap B\neq\varnothing$ and $r(B_S)\leq r(B)$ (with $B_S$ defined in \rf{eqbs*})
and by $I_2$ the family of cubes $S\in\WW_0$ such that $B_S\cap B\neq\varnothing$ and $r(B_S)> r(B)$.
Consider subfamilies $\wt I_1\subset I_1$ and $\wt I_2\subset I_2$ such that, for $i=1,2$,
\begin{itemize}
\item the balls $\bar B_S$, $S\in \wt I_i$, are pairwise disjoint, and
\item  $B\cap \bigcup_{S\in I_i} \bar B_S \subset \bigcup_{S\in \wt I_i}5\bar B_S$,
\end{itemize}
Then we have
$$\sigma(B) \leq \sigma(B\cap \overline{G(B_0)}) + \sum_{S\in\wt I_1} \sigma(5\bar B_S\cap B) + \sum_{S\in\wt I_2} \sigma(5\bar B_S\cap B).$$
Observe that if $I_2\neq\varnothing$, then we can assume that $\wt I_2$ is made up of a single ball and that $\wt I_1$ is empty (since a ball $5B_S$, with $S\in \wt I_2$, suffices to cover $B$).

We will use now that, by \cite{AMT-singular}, $5\bar B_S\cap \partial\Omega^+_b$ is a Lipschitz graph (with slope at most $C\tau_0^{1/2}$ with respect to some suitable axis). Concerning the last sum on the right
hand side, since there is at most one ball $B_S$, $S\in \wt I_2$, 
$$\sum_{S\in\wt I_2} \sigma(5\bar B_S\cap B) 
\lesssim r(B)^n.$$
For the sum over $\wt I_1$, we write
$$\sum_{S\in\wt I_1} \sigma(5 \bar B_S\cap B) \lesssim \sum_{S\in\wt I_1} r(B_S)^n.$$ 
Now we use the fact that 
$$\Theta_{\omega^+}(\tau_0^{-2}S)\approx_{A,\delta} \Theta_{\omega^+}(B_0),$$
by the stopping conditions and the doubling property of $\omega^+$.
Also,
if $\tau_1$ is small enough and $\Lambda_0$ is big enough (both depending on $\tau_0$), by \cite[Theorem 4.1]{Kenig-Toro-duke} or Lemma \ref{lemfacil} we have 
$$\Theta_{\omega^+}(\tau_0^{-2}S)\approx \Theta_{\omega^+}(\tau_0^{-1}S)\approx\Theta_{\omega^+}(B_S),$$
and therefore
$$r(B_S)^n = \Theta_{\omega^+}(B_S)^{-1}\,\omega^+(B_S) \approx_{A,\delta} \Theta_{\omega^+}(B_0)^{-1}\,\omega^+(B_S).
$$
Thus, using that the balls $B_S$, $S\in\wt I_1$, are disjoint and contained in $3B$,
$$\sum_{S\in\wt I_1} r(B_S)^n\approx_{A,\delta} \Theta_{\omega^+}(B_0)^{-1}\,\sum_{S\in\wt I_1}\omega^+(B_S) \lesssim_{A,\delta} \Theta_{\omega^+}(B_0)^{-1}\,\omega^+(3B).$$
From the fact that $\wt I_1$ is non-empty, it follows easily that $\Theta_{\omega^+}(B) \approx_{A,\delta} \Theta_{\omega^+}(B_0)$ (applying \cite[Theorem 4.1]{Kenig-Toro-duke} or Lemma \ref{lemfacil} again), and so
$$\Theta_{\omega^+}(B_0)^{-1}\,\omega^+(3B) \approx \Theta_{\omega^+}(B_0)^{-1}\,\omega^+(B)\approx_{A,\delta} r(B)^n,$$
which implies that
$$\sum_{S\in\wt I_1} \sigma(5\bar B_S\cap B) \lesssim_{A,\delta} r(B)^n.$$
Together with the previous estimates, this shows that \rf{eqsigma49} holds for any ball $B$
centered in $10B_0\cap\partial\Omega_b^+$ with radius at most $r(B_0)$.

Suppose now that $B$ is centered in $\partial\Omega_b^+\setminus 10 B_0$. 
Consider the families $\wt I_1$ and $\wt I_2$ defined as above, and denote $\wt I=\wt I_1\cup \wt I_2$. 
By construction, in this case all the balls $B_S$, $S\in\wt I$, have radius equal to $\tau_0 r(B_0)$ and are contained in a ball $B'$ concentric with $B$ with radius $r(B')=3r(B_0)$.
If $\tau_1$ is assumed small enough, then
the best approximating planes $P_S$ for all these balls will be very close in $B'$, and then $B\cap \partial\Omega_b^+$
will be a $C'\tau_0^{1/2}$-Lipschitz graph (since it is a $C\tau_0^{1/2}$-Lipschitz graph with respect
to $P_S$ in each ball $10B_S$). So  \rf{eqsigma49} also holds in this case.
\end{proof}

\vv

Given a measure $\mu$, for every $\ell>0$ and every function $f\in L^1_{loc}(\mu)$, we denote
$$\|f\|_{*,\ell,\mu} = \sup_{\begin{subarray}{l} x\in\partial\Omega^+\\0<r\leq \ell\end{subarray}} \avint_{B(x,r)}
|f - m_{\mu,B(x,r)} f|\,d\mu.$$

Our next objective is to prove the following.

\begin{lemma}\label{lemnorm}
Suppose that $\Omega^+$ is $(\tau_1,r_0)$-Reifenberg flat for some $0<\tau_1\leq\tau_0$,
and that $\Lambda_0 r(B_0)\leq r_0$ for some $\Lambda_0\geq1$. Suppose also that $\omega^+$ and $\omega^-$ are mutually absolutely continuous, that 
$\dfrac{d\omega^+}{d\omega^-}\in B_{3/2}(\omega^-)$ and that the inner normal $N$ belongs to $\vmo(\omega^+)$.
Denote $\sigma =\HH^n|_{\partial\Omega^+_b}$. If $\tau_1$ is small enough (depending on $\tau_0$ and $\Lambda_0$), then
the oscillation of the inner unit normal to $\partial\Omega^+_b$ 
in any ball $B$ centered { in} $\partial\Omega^+_b$ with radius $0<r(B)\leq r( B_0)$
satisfies
$$ \avint_{B\cap \partial\Omega^+_b} |N_{\Omega^+_b} -m_{B,\sigma} N_{\Omega^+_b}|\,d\sigma\lesssim C(A,\delta)\|N_{\Omega^+}\|_{*,r(10\Lambda_0 B_0),\omega^+} + \tau_0^{1/2} + \ve_1,$$
with $\ve_1(\tau_0)$ as small as wished if $\tau_0$ is small enough.
Analogous estimates hold for $\Omega^-_b$ and $\Omega^\pm_s$.
\end{lemma}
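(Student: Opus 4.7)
My plan is to compare $N_{\Omega_b^+}$ on $B\cap\partial\Omega_b^+$ with a single reference vector built from $\omega^+$, and then pass to the true $\sigma$-mean via the triangle inequality. Given $B=B(x_0,r)$ centered on $\partial\Omega_b^+$ with $r\le r(B_0)$, I pick $\xi\in\partial\Omega^+$ with $|\xi-x_0|\lesssim r$ and set $B^\sharp:=B(\xi,Cr)$ with $C=C(\tau_0)$ chosen large enough that $B^\sharp$ contains $B$ together with every Whitney ball $10B_S$ meeting $B$; choosing $\Lambda_0\ge C/10$ gives $r(B^\sharp)\le 10\Lambda_0 r(B_0)$. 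With $N^\sharp:=m_{\omega^+,B^\sharp}(N_{\Omega^+})$, the bound
$$
\avint_{B\cap\partial\Omega_b^+}|N_{\Omega_b^+}-m_{B,\sigma}N_{\Omega_b^+}|\,d\sigma \le 2\avint_{B\cap\partial\Omega_b^+}|N_{\Omega_b^+}-N^\sharp|\,d\sigma
$$
together with the normalization $\sigma(B\cap\partial\Omega_b^+)\approx r^n$ from Lemma \ref{lemcreix} reduces the problem to controlling the right-hand average.

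Next I split $B\cap\partial\Omega_b^+$ into the good piece $B\cap\overline{G(B_0)}$ and its complement. On the good piece $N_{\Omega_b^+}=N_{\Omega^+}$, and the stopping construction together with the very big pieces of uniformly rectifiable subsets of $\omega^+$ from \cite[Theorem 1.3]{AMT-quantcpam} makes $\omega^+|_{G(B_0)}$ comparable to $\Theta_{\omega^+}(B_0)\,\sigma|_{G(B_0)}$ with constants depending only on $A,\delta$. Coupling this with $\omega^+(B^\sharp)\approx\Theta_{\omega^+}(B_0)\,r^n$, which follows from Lemma \ref{lemfacil} together with the stopping conditions and the doubling property of $\omega^+$, the good piece contributes at most
$$
\frac{C(A,\delta)}{\Theta_{\omega^+}(B_0)\,\sigma(B\cap\partial\Omega_b^+)}\int_{B^\sharp}|N_{\Omega^+}-N^\sharp|\,d\omega^+ \le C(A,\delta)\,\|N_{\Omega^+}\|_{*,10\Lambda_0 r(B_0),\omega^+}.
$$

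On each Whitney ball $10B_S$ meeting $B$, Lemma \ref{lemreif1} gives that $\partial\Omega_b^+\cap 10B_S$ is a $c\tau_0^{1/2}$-Lipschitz graph over the best $n$-plane $P_S$ for $\partial\Omega^+$ in $B_S$, so $|N_{\Omega_b^+}-N_{P_S}|\le c\tau_0^{1/2}$ on this graph and the total Lipschitz-graph error, once averaged, is of size $\tau_0^{1/2}$. What remains is to compare $N_{P_S}$ with $N^\sharp$ for each such $S$, and here is the main obstacle: relating a purely \emph{geometric} normal $N_{P_S}$ to the \emph{analytic} mean $N^\sharp$.

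I would carry out this comparison in two steps. First, $|m_{\omega^+,B_S}(N_{\Omega^+})-N_{P_S}|\le\ve_1(\tau_0)$, with $\ve_1(\tau_0)\to 0$ as $\tau_0\to 0$: by the VMO hypothesis the normal $N_{\Omega^+}$ is close to its $B_S$-mean on a set of near-full $\omega^+$-measure in $B_S$; on this set tangent planes exist by Theorem \ref{t:AMT} and are consistently oriented toward $\Omega^+$ by \rf{eqnormal*}, and the $\tau_1$-Reifenberg flatness of $\partial\Omega^+$ (with $\tau_1\le\tau_0$) forces the unique such plane fitting these constraints to be $P_S$ up to an angle error $\ve_1(\tau_0)$. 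Second, a standard VMO telescoping between the doubling $\omega^+$-scales $B_S$ and $B^\sharp$ gives $|m_{\omega^+,B_S}(N_{\Omega^+})-N^\sharp|\le C(A,\delta)\,\|N_{\Omega^+}\|_{*,10\Lambda_0 r(B_0),\omega^+}$. Adding the good contribution, the $\tau_0^{1/2}$ Lipschitz-graph error, and the two-step comparison yields the claimed estimate. The analogous bounds for $\Omega_b^-$, $\Omega_s^+$ and $\Omega_s^-$ follow by symmetric arguments since each modified domain inherits the same local Lipschitz-graph structure from Lemma \ref{lemreif1}.
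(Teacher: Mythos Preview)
Your overall architecture matches the paper's: split $B\cap\partial\Omega_b^+$ into the good piece $\overline{G(B_0)}$ and the Whitney balls, compare everything to a single reference vector built from an $\omega^+$-mean of $N_{\Omega^+}$, and use the $c\tau_0^{1/2}$-Lipschitz graph structure on each $10B_S$. The treatment of the good piece and the telescoping between $\omega^+$-means is essentially what the paper does.

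The genuine gap is your ``first step'': the claim that
\[
\big|\,m_{\omega^+,B_S}(N_{\Omega^+})-N_{P_S}\,\big|\le\ve_1(\tau_0),\qquad \ve_1(\tau_0)\to 0.
\]
This is exactly Lemma \ref{keylemma} in the paper, and its proof is the entire Section \ref{sec4.4}. Your one-sentence justification (``VMO gives near-constant normals on a set of large $\omega^+$-measure; Reifenberg flatness then forces this common direction to be $N_{P_S}$'') does not work. Reifenberg flatness only says that the \emph{points} of $\partial\Omega^+\cap B_S$ lie $\tau_1$-close to $P_S$ in Hausdorff distance; it says nothing about the \emph{tangent directions} at those points. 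Conversely, knowing that $N_{\Omega^+}(x)$ is close to a fixed unit vector $v$ on a set $E\subset B_S$ of near-full $\omega^+$-measure gives no geometric control on the best-fit plane unless you already know how $\omega^+|_E$ compares to $\HH^n|_E$ --- and a priori $\omega^+$ could concentrate on a set of zero (or wildly distributed) $\HH^n$-measure. The paper closes this gap via a delicate argument using the Riesz transform jump identities \rf{eqariesz}--\rf{eqbriesz'}, the ACF monotonicity formula (to control $\Theta_{\omega^+}(B(x,h(x)))\cdot\Theta_{\omega^-}(B(x,h(x)))$), and, crucially, the hypothesis $\frac{d\omega^+}{d\omega^-}\in B_{3/2}(\omega^-)$ (entering in \rf{eqali842}--\rf{eqali843}). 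Indeed the paper remarks explicitly that this is the \emph{only} place in the whole (c)$\Rightarrow$(a) argument where the $B_{3/2}$ assumption is used; your sketch never invokes it, which is a sign that the step cannot be as soft as you propose.
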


To prove this lemma we will need the following key result.

\begin{lemma}\label{keylemma}
Let $\Omega^+\subset\R^{n+1}$ be a bounded NTA domain and let $\Omega^-= \R^{n+1}\setminus \overline{\Omega^+}$ be an NTA domain as well.
Suppose that $\Omega^+\subset\R^{n+1}$ is
$(\tau_0,r_0)$-Reifenberg flat for some 
$\tau_0>0$ and $r_0>0$.
Suppose also that $\omega^+$ and $\omega^-$ are mutually absolutely continuous, that 
$\dfrac{d\omega^+}{d\omega^-}\in B_{3/2}(\omega^-)$ and that the inner normal $N$ belongs to $\vmo(\omega^+)$.
Let $B$ be a ball centered { in}
$\partial\Omega^+$ with $\Lambda_0r( B)\leq r_0/4$.
 Let $L_B$ be a best approximating $n$-plane for $\partial\Omega^+\cap B$ and $N_B$ the unit normal to $L_B$ pointing to $\Omega^+$. For any $\ve_1>0$, $$\big|N_B - m_{B,\omega^+}N_{\Omega^+}\big| \leq\ve_1=\ve_1(\tau_0,r(B)),$$
with $\ve_1$ as small as wished if $\tau_0$ is small enough and $r(B)$ small enough,
\end{lemma}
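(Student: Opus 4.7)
The plan is to exploit the jump identity for the Riesz transform in combination with the $B_{3/2}$ hypothesis and Reifenberg flatness. First I would select a good subset $G_B \subset B \cap \partial\Omega^+$ with $\omega^+(B \setminus G_B) \le \varepsilon_0\,\omega^+(B)$ on which every $x$ is a tangent point of $\partial\Omega^+$ (available by Theorem \ref{t:AMT}), the pointwise oscillation $|N(x) - m_{B,\omega^+}N|$ is small (via Chebyshev applied to the $\vmo$ hypothesis, using $r(B)$ small), and the density $\Theta^n(x,\omega^+)$ is comparable to $\Theta_{\omega^+}(B)$. The last property comes from the Alt--Caffarelli--Friedman monotonicity formula (as exploited in Lemma \ref{lem44}) combined with the reverse H\"older $B_{3/2}$ hypothesis: AC--F caps the product $\Theta^n(\cdot,\omega^+)\Theta^n(\cdot,\omega^-)$, while $B_{3/2}$ forces the ratio $\Theta^n(\cdot,\omega^+)/\Theta^n(\cdot,\omega^-)$ to be essentially its average on a big piece of $B$, pinning down each density individually.

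On $G_B$, the jump formula of Theorem \ref{teo-jump} together with Remark \ref{rem17} gives
\[
c_n'\,\Theta^n(x,\omega^+)\,N(x)\;=\;\RR^+\omega^+(x)-K(x-p^+).
\]
The key step is to identify the right-hand side with $\tfrac{\omega_n}{2}\,\Theta_{\omega^+}(B)\,N_B$ up to an error of order $\varepsilon_1(\tau_0,r(B))\,\Theta_{\omega^+}(B)$. I would do this by comparing $\omega^+|_{2B}$ with the ``flat'' reference measure $\mu_{L_B}:=\Theta_{\omega^+}(B)\,\HH^n|_{L_B\cap 2B}$: Reifenberg flatness places $\supp(\omega^+|_{2B})$ inside a $\tau_0 r(B)$-neighborhood of $L_B$, while the near-constancy of $\Theta^n(\cdot,\omega^+)$ on a big piece matches the density. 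A direct symmetry computation gives $\RR^+\mu_{L_B}(x)=\tfrac{\omega_n}{2}\Theta_{\omega^+}(B)\,N_B$ for $x\in L_B\cap B$, up to small boundary-truncation effects, and Calder\'on--Zygmund type estimates transfer the weak closeness of $\omega^+|_{2B}$ and $\mu_{L_B}$ into $|\RR^+(\omega^+|_{2B})(x)-\RR^+\mu_{L_B}(x)|\lesssim \varepsilon_1\,\Theta_{\omega^+}(B)$ on $G_B$. The tail $\RR(\omega^+|_{(2B)^c})$ at $x\in G_B$ is analyzed using Remark \ref{rem17} evaluated at an exterior corkscrew point together with the polynomial-growth/Poisson-type estimate in Lemma \ref{lemfacil}; it coincides with $K(x-p^+)$ up to an error of order $\bigl(r(B)/\diam(\partial\Omega^+)\bigr)\,\Theta_{\omega^+}(B)$.

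Combining the previous two paragraphs with the comparability $\Theta^n(x,\omega^+)\approx c\,\Theta_{\omega^+}(B)$ on $G_B$ yields $N(x)=N_B+O(\varepsilon_1)$ on $G_B$. Averaging over $G_B$ with respect to $\omega^+$, adjusting for the small exceptional set $B\setminus G_B$ (whose contribution is bounded by $2\varepsilon_0$ since $|N|\le 1$), and using that both $N(x)$ and $N_B$ are unit vectors, I would conclude
\[
\bigl|m_{B,\omega^+}N-N_B\bigr|\;\leq\;\varepsilon_1(\tau_0,r(B)),
\]
with $\varepsilon_1\to 0$ as $\tau_0\to 0$ and $r(B)\to 0$. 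The main obstacle is the weak approximation $\omega^+|_{2B}\approx \mu_{L_B}$ in paragraph 2: Reifenberg flatness alone controls the support but not the density of $\omega^+$, and it is only the combination with $B_{3/2}$ and AC--F that pins down the density to be essentially a single constant on a big piece, which in turn is what makes the Riesz transform approximation work. Without $B_{3/2}$, oscillations in the density could spoil the identification of $\RR^+\omega^+$ with the flat Riesz transform $\RR^+\mu_{L_B}$.
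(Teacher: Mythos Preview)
Your proposal has a genuine gap in the second paragraph. You claim that ACF together with $B_{3/2}$ ``pins down each density individually'' so that $\Theta^n(\cdot,\omega^+)$ is ``essentially a single constant'' on a big piece of $B$, and then that this near-constancy lets you compare $\RR^+(\omega^+|_{2B})$ with $\RR^+\mu_{L_B}$ up to error $\ve_1\Theta_{\omega^+}(B)$ with $\ve_1\to 0$. But the $B_{3/2}$ hypothesis is a \emph{fixed} reverse H\"older condition; its constant does not improve as $r(B)\to 0$ or $\tau_0\to 0$. Together with ACF monotonicity it yields only that $\Theta^n(x,\omega^+)\approx_{C} \Theta_{\omega^+}(B)$ on $G_B$ with a fixed comparability constant $C$, not that $|\Theta^n(x,\omega^+)-\Theta_{\omega^+}(B)|\le \ve_1\Theta_{\omega^+}(B)$. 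With merely comparable (not nearly equal) densities, the measure $\omega^+|_{2B}$ can differ from $\mu_{L_B}$ by a bounded multiplicative weight, and the tangential component of $\RR^+(\omega^+|_{2B})(x)$ can then be of order $\Theta_{\omega^+}(B)$ rather than $o(\Theta_{\omega^+}(B))$. So the identification $\RR^+\omega^+(x)\approx \tfrac{\omega_n}{2}\Theta_{\omega^+}(B)N_B$ on $G_B$ is not justified, and the argument does not close. You flag this as ``the main obstacle'' in your last paragraph, but the resolution you propose is precisely the step that fails.

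The paper avoids this obstacle by a rather different route. Instead of comparing $\omega^+$ to a flat reference measure at boundary points, it works with the harmonic measure $\omega^{p_B}$ with pole at an interior corkscrew point $p_B\in\Omega^+\cap B$, introduces smoothed kernels $K_1^\pm,K_2^\pm$ shifted by a variable amount $H(x)=h(x)N_0(x)$, and exploits the harmonicity of $K_2^+(\cdot,y)$ in $\Omega^+$ to obtain the exact integral identity $\int \RR_2^+\omega^{p_B}\,d\omega^{p_B}=\RR_2^+\omega^{p_B}(p_B)$ (together with a companion identity for $\RR_1^-$). The right-hand side is evaluated at the \emph{interior} point $p_B$, where a blow-up/compactness argument (convergence of rescalings of $\Omega^+$ to a half-space, hence weak convergence of $\omega^{p_B}$ to the half-space harmonic measure) identifies it as $c_n N_B/r(B)^n$ up to small error. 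The $B_{3/2}$ hypothesis enters only in a technical way, to control an $L^p$ integral of $\Theta_\omega(B(x,h(x)))$ over a small bad set via H\"older and Gehring; it is never used to force near-constancy of the density. After passing to the limit $\ve_0\to 0$ in the smoothed kernels, the jump formula converts the left-hand side into $\int_{G(\Lambda B)}\Theta^n(\cdot,\omega^{p_B})N_{\Omega^+}\,d\omega^{p_B}$, and unwrapping the change of pole yields the conclusion.
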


We defer the proof of this result to the next subsection, and we show first how this can be
used to prove Lemma \ref{lemnorm}.

\begin{proof}[Proof of Lemma \ref{lemnorm}]
First consider a ball $B$ centered { in} $10 B_0\cap \partial\Omega^+_b$ with radius $0<r(B)\leq r(B_0)$.
Take the same families of cubes $I_1,I_2,\wt I_1,\wt I_2\subset \WW_0$ as in the proof of Lemma
\ref{lemcreix}. For any unit constant vector $C_B$ to be chosen below, 
we split
\begin{align}\label{eqspl63}
\int_{B\cap \partial\Omega^+_b} |N_{\Omega^+_b} -C_B|\,d\sigma & \leq 
\int_{B\cap \overline{G(B_0)}} |N_{\Omega^+_b} -C_B|\,d\sigma \\
& \quad + \sum_{S\in\wt I_1} \int_{5\bar B_S\cap B} |N_{\Omega^+_b} - C_B|\,d\sigma +  \sum_{S\in\wt I_2} \int_{5\bar B_S\cap B} |N_{\Omega^+_b} -C_B|\,d\sigma.\nonumber
\end{align}

Suppose first that $I_2\neq\varnothing$ and let $\wt I_2=\{S_0\}$ (recall that $\wt I_2$ has at most one cube and that $\wt I_1=\varnothing$ in this case). 
Denote by $N_{S_0}$ the inner unit normal to a best 
approximating hyperplane $L_{S_0}$ for $5\bar B_{S_0}\cap \partial\Omega^+_b$. By \cite{AMT-singular},
since $\partial\Omega^+_b$ is a Lipschitz graph with slope at most $C\tau_0^{1/2}$ over $L_{S_0}$, it follows
that
$$|N_{\Omega^+_b}(x)-N_{S_0}|\lesssim \tau_0^{1/2}\quad \mbox{ for $\sigma$-a.e.\ $x\in 5\bar B_{S_0}$.}$$
Therefore, choosing $C_B=N_{S_0}$,
$$
\int_{B\cap \partial\Omega^+_b} |N_{\Omega^+_b} -C_B|\,d\sigma\leq \int_{5\bar B_S\cap B} |N_{\Omega^+_b} -C_B|\,d\sigma \lesssim \tau_0^{1/2} \sigma(B).$$

In the case where $I_2=\varnothing$ it is immediate to check that $2B\cap \partial\Omega^+\neq\varnothing$.
Then we choose $C_B = \avint_{4B} N_{\Omega^+}\,d\omega^+$. 
Observe that, in $\overline{G(B_0)}$,
 $N_{\Omega^+_b}= N_{\Omega^+}$ {$\sigma$-a.e.}\ and $\sigma\approx_{A,\delta} \Theta_{\omega^+}
(B_0)^{-1}\omega^+$  (i.e., they are comparable measures) by \rf{equa*492}. Using also that $\Theta_{\omega^+} {(4B)}\approx_{A,\delta}\Theta_{\omega^+}(B_0)$ if $\overline{G(B_0)}\cap B\neq\varnothing$, we get
\begin{align}\label{eqgr*1}
\int_{B\cap \overline{G(B_0)}} |N_{\Omega^+_b} -C_B|\,d\sigma & \approx_{A,\delta} \Theta_{\omega^+}(B_0)^{-1}
\int_{B\cap \overline{G(B_0)}} |N_{\Omega^+} - m_{4B,\omega^+}N_{\Omega^+}|\,d\omega^+ \\
& \lesssim_{A,\delta} \|N_{\Omega^+}\|_{*,r(10B_0),\omega^+} 
\Theta_{\omega^+}(B_0)^{-1}\,  {\omega^+(4B)} \nonumber\\ &\approx_{A,\delta}\|N_{\Omega^+}\|_{*,r(10B_0),\omega^+}\sigma(B).
\nonumber
\end{align}

Consider now the sum over $S\in\wt I_1$ in \rf{eqspl63}. For each $S\in\wt I_1$, denote again  by $N_S$ the inner unit normal to the best 
approximating hyperplane $L_S$ for $5\bar B_S\cap \partial\Omega^+$. Then we write
\begin{align*}
\int_{B\cap 5\bar B_S} |N_{\Omega^+_b} -C_B|\,d\sigma &\leq 
\int_{5\bar B_S} |N_{\Omega^+_b} - N_S|\,d\sigma + \int_{5\bar B_S}\! |N_S - m_{B_S,\omega^+}N_{\Omega^+}|\,d\sigma\\
&\quad + \int_{5\bar B_S} |m_{B_S,\omega^+}N_{\Omega^+} - m_{4B,\omega^+}N_{\Omega^+}|\,d\sigma
.
\end{align*}
By Lemmas 2.2 and 2.3 from \cite{AMT-singular}, $|N_{\Omega^+_b} - N_S|\lesssim\tau_0^{1/2}$ on $5\bar B_S$ and, by Lemma \ref{keylemma}, we know that 
\begin{equation}\label{eqclau*92}
|N_S - m_{B_S,\omega^+}N_{\Omega^+}|\lesssim \ve_1.
\end{equation}
 So we get
\begin{align*}
\int_{B\cap 5\bar B_S} |N_{\Omega^+_b} -C_B|\,d\sigma & \lesssim (\tau_0^{1/2}+\ve_1) \sigma(5\bar B_S) + 
|m_{B_S,\omega^+}N_{\Omega^+} - m_{4B,\omega^+}N_{\Omega^+}|\,\sigma(5\bar B_S)\\
& \approx(\tau_0^{1/2}+\ve_1) \sigma(B_S) + \Theta_{\omega^+}(B_S)^{-1}\,
|m_{B_S,\omega^+}N_{\Omega^+} - m_{4B,\omega^+}N_{\Omega^+}|\,\omega^+(B_S).
\end{align*}
Here we have used that on $5\bar B_S$ the measure $\sigma$ is the surface measure on a Lipschitz graph and thus is
doubling on $5\bar B_S$ (with constants independent of $A$ and $\delta$). Next we take into account that 
\begin{align*}
|m_{B_S,\omega^+}N_{\Omega^+} - m_{4B,\omega^+}N_{\Omega^+}|\,\omega^+(B_S) &\leq \int_{B_S} \!|N_{\Omega^+}-m_{B_S,\omega^+}N_{\Omega^+}|\,d\omega^+ +  \int_{B_S} \!|N_{\Omega^+} - m_{4B,\omega^+}N_{\Omega^+}|\,d\omega^+\\
&\leq \|N_{\Omega^+}\|_{*,r(10 B_0),\omega^+} \omega^+(B_S) +  \int_{B_S} |N_{\Omega^+} - m_{4B,\omega^+}N_{\Omega^+}|\,d\omega^+
\end{align*}
and that $\Theta_{\omega^+}(B_S)\approx_{A,\delta}\Theta_{\omega^+}(B_0)$.
Then, summing over $S\in\wt I_1$ and using the disjointness of the balls $B_S$, we derive
\begin{align*}
\sum_{S\in\wt I_1} \int_{5\bar B_S\cap B} |N_{\Omega^+_b} - C_B|\,d\sigma & \lesssim (\tau_0^{1/2}+\ve_1)\sum_{S\in\wt I_1}\sigma(B_S) +  \frac{C(A,\delta)}{\Theta_{\omega^+}(B_0)}\|N_{\Omega^+}\|_{*,r(10 B_0),\omega^+} \sum_{S\in\wt I_1}\omega^+(B_S) \\
& \quad
+ \frac{C(A,\delta)}{\Theta_{\omega^+}(B_0)} \sum_{S\in\wt I_1} \int_{B_S} |N_{\Omega^+} - m_{4B,\omega^+}N_{\Omega^+}|\,d\omega^+\\
& \lesssim \big(\tau_0^{1/2}+ \ve_1 +  C(A,\delta)\,\|N_{\Omega^+}\|_{*,r(10 B_0),\omega^+} \big) \sigma(B) \\& \quad +
\frac{C(A,\delta)}{\Theta_{\omega^+}(B_0)}\int_{7B} |N_{\Omega^+} - m_{4B,\omega^+}N_{\Omega^+}|\,d\omega^+\\
&\lesssim \Big(\tau_0^{1/2} +\ve_1+  C(A,\delta)\,\|N_{\Omega^+}\|_{*,r(10B_0),\omega^+} \Big) \sigma(B).
\end{align*}
Together with \rf{eqgr*1}, this yields
$$\int_{B} |N_{\Omega^+_b} - C_B|\,d\sigma  \lesssim  \bigg(\tau_0^{1/2} + \ve_1 + C(A,\delta)\, \|N_{\Omega^+}\|_{*,r(10B_0),\omega^+} \bigg) \sigma(B),$$
and proves the lemma for balls $B$ centered in $10B_0\cap\partial\Omega_b^+$.

In the case where $B$ is centered in $\partial\Omega_b^+\setminus 10 B_0$,
we know that $B\cap \partial\Omega_b^+$
will be a $C'\tau_0^{1/2}$-Lipschitz graph (by the same arguments as in the end of the proof of Lemma 
\ref{lemcreix}), and thus the lemma also holds.
\end{proof}

\vv

\subsection{The proof of Lemma \ref{keylemma}} \label{sec4.4}

Let $B$ be a ball satisfying the assumptions of Lemma \ref{keylemma}, and consider some big constant
$\Lambda$ such that $1<\Lambda\ll\Lambda_0$.  Consider the set $G(\Lambda B)$
defined in Section \ref{secstop}, with $B_0$ interchanged with $\Lambda B$. Denote
$$d_B(x) = \min\big(r(B),\,\dist(x,G(\Lambda B))\big).$$
For each 
$\tau_h\in (\tau_0,1/10)$
 and $\ve_0\in(0,\tau_h/10)$ to be chosen later
we denote 
\begin{equation}\label{eqdefhh1}
h(x) = \max(\ve_0r(B),\tau_h\,d_B(x)).
\end{equation}
Notice that $h$ is Lipschitz with constant $\tau_h$. We remark that, to prove Lemma \ref{keylemma}, below we will need to choose the parameter $\tau_h$ small enough, and later we will take $\tau_0$ depending on $\tau_h$, so that in particular $\tau_0<\tau_h$.

Remark that if $x\in 2\Lambda B$ and $r\in[h(x),r(\Lambda B)]$, then
\begin{equation}\label{eqhd32*}
\Theta_\omega(B(x,r))\approx_{A,\delta}\Theta_\omega(\Lambda B)\approx \Theta_\omega(B),
\end{equation}
assuming $\Lambda_0/\Lambda$ big enough and $\tau_0$ small enough, depending on $\tau_h$. This follows easily from the stopping conditions in Section \ref{secstop} and Lemma \ref{lemfacil}.

Given $x\in\partial\Omega^+$, we consider a best approximating $n$-plane $L_0(x)$ for $\partial\Omega^+\cap B(x,\tau_h^{-1/2}h(x))$.
We denote by $N_0(x)$ the inner unit normal to $L_0(x)$, and we consider the function
$$H(x) := h(x)\,N_0(x).$$
Observe that $x\pm H(x)\in\Omega^\pm$ if $x\in\partial\Omega^+$
Next we consider the auxiliary kernels
$$K_1^\pm(x,y) = K(x\pm H(x)-y)\quad \mbox{ for $x\in\partial\Omega^+$, $y\in\R^{n+1}$,}
$$
and 
$$K_2^\pm(x,y) = K(x\pm H(y)-y) \quad \mbox{ for  $x\in\R^{n+1}$, $y\in\partial\Omega^+$},$$
where $K(\cdot)$ is the signed Riesz kernel of homogeneity $-n$. We denote by $\RR^\pm_1,\RR^\pm_2$ the respective associated integral operators. That is, for any finite measure $\nu$,
we write
$$\RR^\pm_1\nu(x) = \int K_1^\pm(x,y)\,d\nu(y), \qquad\RR^\pm_2\nu(x) = \int K_2^\pm(x,y)\,d\nu(y),$$
whenever the integrals make sense.

{ Notice that, for each $y\in\partial\Omega^+$, $K_2^+(\cdot,y)$ is harmonic in $\Omega^+$ and continuous in
$\overline{\Omega^+}$ (the last condition holds because $\ve_0>0$); and for each $x\in\partial\Omega^+$, $K_1^-(x,\cdot)$ is also harmonic in $\Omega^+$ and continuous in $\overline{\Omega^+}$. }
As a consequence, for each $z\in \Omega^+$, 
$\RR_2^+\omega^{+,z}$ is a function which is harmonic in $\Omega^+$ and continuous in $\overline{\Omega^+}$,
and thus
\begin{equation}\label{eqrriesz1}
\int \RR_2^+\omega^{+,z}\,d\omega^{+,z} = \RR_2^+\omega^{+,z}(z).
\end{equation}
Regarding the kernel $K_1^-$, we have $\RR_1^-\omega^{+,z}(x) = K_1^-(x,z)$ and so
\begin{align}\label{eqrriesz2}
\int \RR_1^-\omega^{+,z}(x)\,d\omega^{+,z}(x) & = \int K_1^-(x,z)\,d\omega^{+,z}(x) = \int K(x-H(x)-z)\,d\omega^{+,z}(x)
\\ & =
- \int K(z+H(x)-x)\,d\omega^{+,z}(x) = -\RR_2^+\omega^{+,z}(z).\nonumber
\end{align}
We will apply the identities \rf{eqrriesz1} and \rf{eqrriesz2} to a corkscrew point $z\in\Omega^+\cap B$. More precisely, we will choose $z$ to be the point $p_B\in \Omega^+$ such that both
$p_B$ and the center $x_B$ of the ball $B$ have the same orthogonal projection on a best approximating plane $L_B$ for $\partial \Omega^+$, and so that, moreover, $\dist(p_B,x_B)=r(B)/4$.  

First we need to introduce some notation and prove some auxiliary lemmas.
For any Radon measure $\nu$, we write
$$P_r\nu(x) = \int \frac {r}{r^{n+1} + |x-y|^{n+1}}\,d|\nu|(y).$$
Notice that 
$P_r\nu(x)\approx P_\nu(B(x,r))$, where $P_\nu(B(x,r))$ is defined in \eqref{sec2.6}.

In this section, from now on, to simplify notation, sometimes we will write $\omega$ instead of $\omega^+$ and
$\omega^z$ instead of $\omega^{+,z}$.

\vv

\begin{lemma}\label{lemcreix22}
Under the assumptions of Lemma \ref{keylemma},
for every $x\in\partial\Omega^+\cap2\Lambda  B$
\begin{equation}\label{eqcreix100}
P_r\omega^{p_B}(x)\lesssim \Theta_\omega (B(x,r))\,\frac{\omega^{p_B}(B(x,r(B)))}{\omega(B)} + \frac1{r(B)^n} \quad\mbox{ for all $r >0$,}
\end{equation}
assuming $\Lambda_0/\Lambda$ big enough and $\tau_0$ small enough.
\end{lemma}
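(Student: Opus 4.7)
The plan is to decompose $P_r\omega^{p_B}(x)=\sum_{j\geq 0}2^{-j}\Theta_{\omega^{p_B}}(B(x,2^j r))$ (after the standard conversion from the integral form of $P_r$) and split at the scale $r(B)$, treating large and small scales separately.

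For the large-scale part (either the case $r>r(B)$, or the terms with $2^j r>r(B)$ when $r\le r(B)$), I would use the trivial bound $\omega^{p_B}(B(x,2^j r))\le 1$ and sum the resulting geometric series; because the $n$-th power of the radius appears in the denominator, this contribution is at most $C/r(B)^n$, which is exactly the error term on the right-hand side of \eqref{eqcreix100}.

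For the small-scale part, where $2^j r\le r(B)$, the goal is the pointwise estimate
\[\Theta_{\omega^{p_B}}(B(x,s))\lesssim \Theta_\omega(B(x,s))\,\frac{\omega^{p_B}(B(x,r(B)))}{\omega(B)}\quad\text{for every }s\le r(B).\]
To prove this I would invoke the Jerison--Kenig change-of-pole lemma for NTA domains applied at the surface ball $B(x,r(B)/C)$, for a fixed large absolute constant $C$. The required geometric input is that both $p_B$ and $p^+$ lie at distance $\gtrsim r(B)$ from that surface ball: $p_B$ is a corkscrew for $B$ and $x\in\partial\Omega^+$, so Reifenberg flatness gives $|x-p_B|\gtrsim r(B)$; $p^+$ is far by the standing assumption $\dist(p^+,\partial\Omega^+)\gg R$. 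The change of pole then yields
\[\frac{\omega^{p_B}(B(x,s))}{\omega(B(x,s))}\approx \frac{\omega^{p_B}(B(x,r(B)/C))}{\omega(B(x,r(B)/C))}\qquad\text{for }s\le r(B)/(2C),\]
and Lemma~\ref{lemfacil} upgrades the right-hand side from scale $r(B)/C$ to $r(B)$ at the cost of an absolute constant (since $\gamma_0$ is small). The final step is to replace $\omega(B(x,r(B)))$ by $\omega(B)$: since $x\in 2\Lambda B\cap \partial\Omega^+$ and $\Lambda_0 r(B)\le r_0/4$, Lemma~\ref{lemfacil} applied twice (between $B(x,r(B))$, $B(x,(2\Lambda+1)r(B))$, and $B$) gives $\omega(B(x,r(B)))\approx \omega(B)$ up to a constant depending on $\Lambda$. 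The scales $r(B)/(2C)<s\le r(B)$ are absorbed using monotonicity of $\omega^{p_B}(B(x,s))$ in $s$ and the doubling-type bounds of Lemma~\ref{lemfacil}. Once the pointwise comparison is established, the small-scale sum is controlled by
\[\sum_{j:\,2^j r\le r(B)}2^{-j}\Theta_\omega(B(x,2^j r))\lesssim \Theta_\omega(B(x,r))\sum_{j\ge 0}2^{-j(1-n\gamma_0)}\lesssim \Theta_\omega(B(x,r)),\]
where convergence uses $n\gamma_0<1$, which Lemma~\ref{lemfacil} allows us to arrange provided $\tau_0$ is small enough.

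The main obstacle I expect is verifying the change-of-pole comparison with constants that are controlled uniformly in all the relevant parameters; this is exactly what forces the hypothesis ``$\Lambda_0/\Lambda$ big enough and $\tau_0$ small enough.'' The largeness of $\Lambda_0/\Lambda$ ensures that all balls $B(x,s)$ with $s\le r(B)$ are safely contained in the region where the Reifenberg flatness (and hence Lemma~\ref{lemfacil}) applies with a small $\gamma_0$, and the smallness of $\tau_0$ guarantees that the implicit Reifenberg constant of $\Omega^+$ is small enough for the NTA/Harnack estimates behind the change-of-pole lemma to be applied with constants independent of the scale.
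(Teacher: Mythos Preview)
Your proposal is correct and follows essentially the same route as the paper: split the dyadic sum for $P_r\omega^{p_B}(x)$ at scale $r(B)$, use the trivial bound $\omega^{p_B}\le 1$ for the large-scale tail, and for small scales combine the Jerison--Kenig change-of-pole formula with Lemma~\ref{lemfacil} to reduce to $P_r\omega(x)\lesssim\Theta_\omega(B(x,r))$. One refinement worth noting: the paper asserts (and later needs, in the $J_2$ estimate of Lemma~\ref{lemreverse}) that $\omega(B(x,r(B)))\approx\omega(B)$ with constant \emph{independent} of $\Lambda$, which follows by scaling both balls up to radius $\approx\Lambda r(B)$ via Lemma~\ref{lemfacil} with $\gamma_0$ chosen so that $(3\Lambda)^{n\gamma_0}\le 2$---this is precisely what forces $\tau_0$ to be small depending on $\Lambda$, and sharpens your ``constant depending on $\Lambda$'' to an absolute one.
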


\begin{proof}
Observe that, for any $x\in\partial\Omega^+\cap2\Lambda  B$ and every $r\geq r(B)$,
$$\Theta_{\omega^{p_B}}(B(x,r))\le\frac1{r^n}\le \frac1{r(B)^n}.$$
On the other hand, for $r\leq r(B)$, using the well known change of pole formula for NTA domains of Jerison and Kenig, we have
\begin{equation}\label{eqjk89}
\Theta_{\omega^{p_B}}(B(x,r))\approx \frac{\omega(B(x,r))\,\omega^{p_B}(B(x,r(B)))
}{\omega(B(x,r(B)))\,r^n}\,\approx \frac{\Theta_\omega(B(x,r))\,\omega^{p_B}(B(x,r(B)))
}{\omega(B)},
\end{equation}
where we took into account that $\omega(B(x,r(B)))\approx\omega(B)$, with a constant independent of $\Lambda$, because of the $(\tau_0,\Lambda_0r(B))$-Reifenberg flatness of $\Omega^+$, and we assume $\tau_0$ small enough and $\Lambda_0\gg\Lambda$ big enough (see Lemma \ref{lemfacil}).
Then we deduce
\begin{align*}
P_r\omega^{p_B}(x)& \lesssim \sum_{k\geq 0:2^kr\leq r(B)}2^{-k}\Theta_{\omega^{p_B}}(B(x,2^kr)) + 
\sum_{k\geq 0:2^kr> r(B)}2^{-k}\frac1{(2^kr)^n}\\
& \lesssim
\frac{\omega^{p_B}(B(x,r(B)))}{\omega(B)} P_r\omega(x) + \frac1{r(B)^n}\lesssim \frac{\omega^{p_B}(B(x,r(B)))}{\omega(B)}\,\Theta_\omega(B(x,r)) + \frac1{r(B)^n},
\end{align*}
using again the Reifenberg flatness of $\Omega^+$ and Lemma \ref{lemfacil} for the last inequality.
\end{proof}

\vv

\begin{lemma}\label{lemcreix2}
Under the assumptions of Lemma \ref{keylemma},
for every $x\in\partial\Omega^+$,
\begin{equation}\label{eqcreix10}
 P_r\omega^{p_B}(x)\lesssim_{A,\delta} \frac1{r(B)^n} \quad\mbox{\!for $r\in [h(x),r(B)]$,}
\end{equation}
assuming $\Lambda_0/\Lambda$ big enough and $\tau_0$ small enough.
Further, in the case $x\in \partial\Omega^+\setminus 2\Lambda  B$, the implicit constant in 
\rf{eqcreix10} is independent of $A$ and $\delta$, assuming $\tau_0$ small enough (depending on $\tau_h$ and $\Lambda $).
\end{lemma}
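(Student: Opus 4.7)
\medskip
\noindent\textbf{Proof plan.} I would split the proof into two cases according to whether $x\in 2\Lambda B$ or $x\in\partial\Omega^+\setminus 2\Lambda B$, exploiting the fact that the functional $P_r\omega^{p_B}(x)$ admits the dyadic bound
$$P_r\omega^{p_B}(x)\lesssim \sum_{k\geq 0}2^{-k}\,\Theta_{\omega^{p_B}}(B(x,2^k r)).$$
In the interior case $x\in 2\Lambda B$, I would just quote Lemma \ref{lemcreix22} to get
$$P_r\omega^{p_B}(x)\lesssim \Theta_\omega(B(x,r))\,\frac{\omega^{p_B}(B(x,r(B)))}{\omega(B)}+\frac{1}{r(B)^n}.$$
Since $r\geq h(x)\geq \ve_0 r(B)$ and $r\leq r(B)\leq r(\Lambda B)$, the key estimate \rf{eqhd32*} yields $\Theta_\omega(B(x,r))\approx_{A,\delta}\Theta_\omega(B)=\omega(B)/r(B)^n$, provided $\Lambda_0/\Lambda$ is large and $\tau_0$ small enough. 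Using $\omega^{p_B}(B(x,r(B)))\leq 1$, the first summand becomes $\lesssim_{A,\delta}r(B)^{-n}$, as required.

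\medskip
\noindent The delicate point is the exterior case $x\in\partial\Omega^+\setminus 2\Lambda B$, where we also need the implicit constant to be \emph{independent} of $A$ and $\delta$. Here $\dist(x,G(\Lambda B))\geq r(B)$, so $h(x)=\tau_h r(B)$, and $p_B\in B$ sits at distance $\gtrsim \Lambda r(B)$ from $x$. The plan is to use the Jerison--Kenig change of pole formula for NTA domains: since $p_B\notin B(x,r(B))$, for any $r\leq r(B)$,
$$\omega^{p_B}(B(x,r))\approx \omega^{p_B}(B(x,r(B)))\,\omega^{A_{r(B)}}(B(x,r)),$$
where $A_{r(B)}$ is a corkscrew point for $B(x,r(B))$. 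By Lemma \ref{lemfacil} applied to $\omega^{A_{r(B)}}$ (which depends only on the Reifenberg flatness of $\Omega^+$ and the NTA constants, \emph{not} on $A$ or $\delta$), we have $\omega^{A_{r(B)}}(B(x,r))\lesssim (r/r(B))^{n(1-\gamma_0)}\lesssim (r/r(B))^{n/2}$, so that $\Theta_{\omega^{p_B}}(B(x,r))\lesssim \omega^{p_B}(B(x,r(B)))/r(B)^n\leq r(B)^{-n}$ uniformly for $r\leq r(B)$.

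\medskip
\noindent With this at hand, the dyadic sum splits at the threshold $2^{k_0} r\approx r(B)$:
$$P_r\omega^{p_B}(x)\lesssim \sum_{2^kr\leq r(B)} 2^{-k}\,\frac{\omega^{p_B}(B(x,r(B)))}{r(B)^n}+\sum_{2^kr>r(B)}\frac{2^{-k(n+1)}}{r^n}\lesssim \frac{1}{r(B)^n}+\frac{1}{r^n}\Bigl(\frac{r}{r(B)}\Bigr)^{n+1}\lesssim \frac{1}{r(B)^n},$$
where the tail uses only $\omega^{p_B}(B(x,2^k r))\leq 1$. Since the only ingredients in this exterior estimate are the change of pole formula and the Reifenberg flatness via Lemma \ref{lemfacil}, the constant depends on $\tau_0$ and the NTA parameters but not on $A,\delta$, as claimed.

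\medskip
\noindent The main obstacle I anticipate is verifying the hypotheses for the change of pole formula (in particular, the separation $|x-p_B|\gtrsim r(B)$ with a constant uniform in $\Lambda$) and arranging the Reifenberg parameter $\tau_0$ to be small enough, depending on $\tau_h$ and $\Lambda$, so that Lemma \ref{lemfacil} delivers a true dimensional power for $\omega^{A_{r(B)}}(B(x,r))/\omega^{A_{r(B)}}(B(x,r(B)))$ with a constant that does not absorb any dependence on $A$ or $\delta$.
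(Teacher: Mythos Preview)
Your interior case ($x\in 2\Lambda B$) is exactly the paper's argument.

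In the exterior case there is a genuine arithmetic slip. From $\omega^{A_{r(B)}}(B(x,r))\lesssim (r/r(B))^{n(1-\gamma_0)}$ you conclude
\[
\Theta_{\omega^{p_B}}(B(x,r))\lesssim \frac{\omega^{p_B}(B(x,r(B)))}{r(B)^n}\quad\text{uniformly for }r\leq r(B),
\]
but dividing by $r^n$ actually gives
\[
\Theta_{\omega^{p_B}}(B(x,r))\lesssim \frac{\omega^{p_B}(B(x,r(B)))}{r^{n\gamma_0}\,r(B)^{n(1-\gamma_0)}}
= \frac{\omega^{p_B}(B(x,r(B)))}{r(B)^n}\Bigl(\frac{r(B)}{r}\Bigr)^{n\gamma_0},
\]
which for $r<r(B)$ is \emph{larger}, not smaller, than the claimed bound. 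Your dyadic sum then inherits this extra factor and is bounded only by $\tau_h^{-n\gamma_0}/r(B)^n$ (using $r\geq h(x)=\tau_h r(B)$). This is salvageable---choosing $\tau_0$ small enough depending on $\tau_h$ makes $\gamma_0$ so small that $\tau_h^{-n\gamma_0}\leq 2$---but the argument as written is incorrect.

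The paper avoids this detour: it writes the change of pole formula in terms of the fixed global measure $\omega=\omega^{p^+}$,
\[
\Theta_{\omega^{p_B}}(B(x,r))\approx \frac{\Theta_\omega(B(x,r))\,\omega^{p_B}(B(x,r(B)))}{\omega(B(x,r(B)))},
\]
and then uses Lemma~\ref{lemfacil} (with $A\approx \tau_h^{-1}$, requiring $\tau_0$ small depending on $\tau_h$) to get the \emph{two-sided} comparability $\Theta_\omega(B(x,r))\approx\Theta_\omega(B(x,r(B)))$ for $r\in[\tau_h r(B),r(B)]$, from which $\Theta_{\omega^{p_B}}(B(x,r))\approx \omega^{p_B}(B(x,r(B)))/r(B)^n\leq r(B)^{-n}$ follows cleanly. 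This is the same mechanism as your intended fix, just organized so that the dependence on $\tau_h$ is absorbed before the dyadic summation.
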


Remark that the estimates in this lemma may depend on $A$, unlike in the preceding lemma.

\begin{proof}
First we show \rf{eqcreix10} for $x\in 2\Lambda  B$. In this case, by \rf{eqcreix100}, the trivial estimate $\omega^{p_B}(B(x,r(B)))\leq 1$, and the fact that $\Theta_\omega (B(x,r))\approx_{A,\delta}
\Theta_\omega (B)$ by  \rf{eqhd32*},  we deduce
$$
P_r\omega^{p_B}(x)\lesssim \frac{\Theta_\omega (B(x,r))
}{\omega(B)} + \frac1{r(B)^n} \lesssim_{A,\delta} \frac1{r(B)^n}.
$$

To estimate $P_r\omega^{p_B}(x)$ in the case $x\in \partial\Omega^+\setminus 2\Lambda B$,
we take into account that $d_B(x)\approx r(B)$ for all $x\in\partial\Omega^+\setminus 2\Lambda B$, so that for any $r\in[h(x),r(B)]$, arguing as in \rf{eqjk89},
\begin{align*}
\Theta_{\omega^{p_B}}(B(x,r)) & \approx \frac{\omega(B(x,r))\,\omega^{p_B}(B(x,r(B)))
}{\omega(B(x,r(B)))\,r^n} =  \frac{\Theta_\omega(B(x,r))\,\omega^{p_B}(B(x,r(B)))
}{\omega(B(x,r(B)))}\\
&\approx \frac{\Theta_\omega(B(x,r(B)))\,\omega^{p_B}(B(x,r(B)))}{\omega(B(x,r(B)))} = \frac{\omega^{p_B}(B(x,r(B)))}{r(B)^n}\leq \frac1{r(B)^n}.
\end{align*}
Therefore,
$$P_{r} \omega^{p_B}\lesssim \sum_{k\geq0: 2^kr\leq r(B)} 2^{-k}\Theta_{\omega^{p_B}}(B(x,2^kr)) + \sum_{k\geq0: 2^kr> r(B)} 2^{-k}\frac1{r(B)^n}\lesssim \frac1{r(B)^n}.$$ 
\end{proof}

\vv
\begin{lemma}\label{lemdifer23}
Under the assumptions of Lemma \ref{keylemma},
$$\int \big|\RR_1^+\omega^{p_B} - \RR_2^+\omega^{p_B}\big|\,d\omega^{p_B}
\lesssim_{A,\delta} \frac{\tau_0+\tau_h^{1/2}}{r(B)^n},$$
assuming $\Lambda_0/\Lambda$ big enough and $\tau_0$ small enough.
\end{lemma}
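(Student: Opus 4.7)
The plan is to control the double integral obtained by Fubini--Tonelli,
\[
\int\int |K_1^+(x,y) - K_2^+(x,y)|\,d\omega^{p_B}(y)\,d\omega^{p_B}(x),
\]
via a pointwise estimate on the kernel difference. Applying the mean value theorem to the Riesz kernel $K$ along the segment from $x+H(x)-y$ to $x+H(y)-y$, one obtains
\[
|K_1^+(x,y) - K_2^+(x,y)| \lesssim \frac{|H(x)-H(y)|}{(|x-y|+h(x)+h(y))^{n+1}},
\]
where the lower bound on the denominator along the interpolating segment uses that $\partial\Omega^+$ is $(\tau_0,r_0)$-Reifenberg flat with $\tau_0$ small, so that both $H(x)$ and $H(y)$ essentially point into the inner normal cone.

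Next I would decompose $H(x)-H(y) = (h(x)-h(y))N_0(x) + h(y)(N_0(x)-N_0(y))$, yielding
\[
|H(x)-H(y)| \le \tau_h|x-y| + h(y)\,|N_0(x)-N_0(y)|,
\]
using that $h$ is $\tau_h$-Lipschitz and $N_0$ has unit length. The key geometric input is that when $|x-y|\le \tau_h^{-1/2}h(x)$ the defining balls $B(x,\tau_h^{-1/2}h(x))$ and $B(y,\tau_h^{-1/2}h(y))$ substantially overlap (since $h$ is Lipschitz and $h(x)\approx h(y)$ there), and a standard consequence of $\tau_0$-Reifenberg flatness is that the inner normals of their best approximating $n$-planes are within distance $O(\tau_0)$ of each other, i.e.\ $|N_0(x)-N_0(y)|\lesssim\tau_0$.

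Finally I would integrate in $y$ for each fixed $x$, splitting at the scale $r_x:=\tau_h^{-1/2}h(x)$. In the near region $|x-y|\le r_x$, combining the estimates above, the inner integral is bounded by a constant times $(\tau_0+\tau_h)\,P_{h(x)}\omega^{p_B}(x)\lesssim_{A,\delta}(\tau_0+\tau_h)/r(B)^n$ by Lemma~\ref{lemcreix2}. In the far region $|x-y|>r_x$, the Lipschitz property of $h$ forces $h(x),h(y)\lesssim\tau_h^{1/2}|x-y|$, hence the trivial bound $|H(x)-H(y)|\le h(x)+h(y)\lesssim\tau_h^{1/2}|x-y|$ combined with Lemma~\ref{lemcreix2} gives a contribution $\lesssim_{A,\delta}\tau_h^{1/2}/r(B)^n$. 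Integrating the resulting estimate in $x$ against the probability measure $\omega^{p_B}$, and noting $\tau_h\le\tau_h^{1/2}$, yields the claim.

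I expect the main obstacle to be the Reifenberg-flatness estimate $|N_0(x)-N_0(y)|\lesssim\tau_0$ on substantially overlapping balls, which demands a careful comparison of two best approximating $n$-planes via the fact that both stay within $\tau_0$ (times the radius) of $\partial\Omega^+$ on a common region of comparable size; and the accompanying verification that along the segment from $x+H(x)$ to $x+H(y)$ the distance to $y\in\partial\Omega^+$ remains $\gtrsim|x-y|+h(x)+h(y)$, which likewise leans on the smallness of $\tau_0$ and on $\tau_0\le\tau_h$ so that $N_0$ genuinely approximates the inner normal at the relevant scale.
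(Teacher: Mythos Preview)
Your proposal is correct and follows the same approach as the paper: the same splitting scale $\tau_h^{-1/2}h(x)$, the same use of the Lipschitz bound on $h$ together with the Reifenberg estimate $|N_0(x)-N_0(y)|\lesssim\tau_0$ in the near region, and the same appeal to Lemma~\ref{lemcreix2}. One minor correction: in the near region the bound $\tau_h|x-y|\le\tau_h^{1/2}h(x)$ yields a contribution $(\tau_0+\tau_h^{1/2})P_{h(x)}\omega^{p_B}(x)$ rather than $(\tau_0+\tau_h)P_{h(x)}\omega^{p_B}(x)$, though this does not affect your final estimate.
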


\begin{proof}
We have
\begin{align*}
\int \big|\RR_1^+\omega^{p_B} - \RR_2^+\omega^{p_B}\big|\,d\omega^{p_B} & \leq 
 \iint \big|K(x+H(x)-y)-K(x+H(y)-y)\big|\,d\omega^{p_B}(y)d\omega^{p_B}(x)\\
 & = \iint_{|x-y|\geq \tau_h^{-1/2}h(x)}\cdots + \iint_{|x-y|< \tau_h^{-1/2}h(x)}\cdots =: I_1 + I_2.
\end{align*}
First we estimate $I_1$. Observe that, for $x,y$ in the domain of integration of $I_1$,
$$|h(x) - h(y)|\leq \tau_h |d_B(x) - d_B(y)| \leq \tau_h|x-y|.$$
Thus, since $|x-y|\geq \tau_h^{-1/2}h(x)$,
$$|H(x)-H(y)| \leq |H(x)| + |H(y)| = h(x)+h(y)\leq 2h(x) + |h(x)-h(y)| \leq 3\tau_h^{1/2}|x-y|\ll|x-y|.$$
Hence,
$$\big|K(x+H(x)-y)-K(x+H(y)-y)\big|\lesssim \frac{|H(x)-H(y)|}{|x-y|^{n+1}}\lesssim 
\frac{h(x) + h(y)}{|x-y|^{n+1}}.
$$
Then, using also Lemma \ref{lemcreix2}, we get
\begin{align*}
I_1& \lesssim \iint_{|x-y|\geq \tau_h^{-1/2}h(x)} \frac{h(x) + h(y)}{|x-y|^{n+1}}\,d\omega^{p_B}(y)d\omega^{p_B}(x)\\
& \!\leq\!\iint_{|x-y|\geq \tau_h^{-1/2}h(x)} \frac{h(x)}{|x-y|^{n+1}}\,d\omega^{p_B}(y)d\omega^{p_B}(x)
+ \iint_{|x-y|\geq \tau_h^{-1/2}h(y)/2} \frac{h(y)}{|x-y|^{n+1}}\,d\omega^{p_B}(x)d\omega^{p_B}(y)\\
& \!\leq 2 \tau_h^{1/2}\int P_{ \tau_h^{-1/2}h(x)}\omega^{p_B}(x)\,d\omega^{p_B}(x)\lesssim_{A,\delta}
\frac{ \tau_h^{1/2}}{r(B)^n}.
\end{align*}

Next we consider the integral $I_2$. The points $x,y$ in the domain of integration $I_2$ satisfy $|x-y|<\tau_h^{-1/2}h(x)$ and thus
$$|h(x)-h(y)| \leq \tau_h\, |d_B(x)-d_B(y)|\leq \tau_h\, |x-y|<\tau_h^{1/2}h(x).$$
In particular, this implies 
$$\frac12 h(x)\leq h(y)\leq 2h(x).$$
Then we deduce
\begin{align*}
|H(x)-H(y)| & = |h(x)N_0(x)- h(y)N_0(y)|\leq |h(x)-h(y)| + h(y)\,|N_0(x)-N_0(y)| \\
&\leq \big(\tau_h^{1/2}+
2|N_0(x)-N_0(y)|\big)\, h(x).
\end{align*}
Observe now that,  since $|x-y|<\tau_h^{-1/2}h(x)\approx \tau_h^{-1/2} h(y)$, by the Reifenberg flatness, we have
$$|N_0(x)-N_0(y)|\lesssim \tau_0,$$
and so 
$$|H(x)-H(y)|\lesssim \big(\tau_h^{1/2}+\tau_0
\big)\, h(x) \approx \big(\tau_h^{1/2}+\tau_0
\big)\, h(y) \ll \min\big(|x+H(x)-y|,\ |x+H(y)-y|\big)$$
(to check this, notice that if $|x-y|\leq 2\max(h(x),h(y))$, then $H(x)$ and $H(y)$ are ``almost perpendicular" to $x-y$ and thus $|x+H(x)-y|\geq |H(x)|=h(x)$, and the same happens replacing $H(x)$ by $H(y)$).
Therefore,
\begin{equation}\label{eqana7*}
\big|K(x+H(x)-y)-K(x+H(y)-y)\big|\lesssim \frac{|H(x)-H(y)|}{|x+H(x)-y|^{n+1}}\lesssim 
\frac{\big(\tau_h^{1/2}+\tau_0
\big)\, h(x)}{|x-y|^{n+1} + h(x)^{n+1}}.
\end{equation}
Then, by Lemma \ref{lemcreix2}, we obtain
\begin{align*}
I_2& \lesssim \iint \frac{\big(\tau_h^{1/2}+\tau_0\big)\, h(x)}{|x-y|^{n+1} + h(x)^{n+1}}\,d\omega^{p_B}(y)d\omega^{p_B}(x) \lesssim \big(\tau_h^{1/2}+\tau_0\big) \int P_{h(x)}\omega^{p_B}(x)\,d\omega^{p_B}(x)\\
& \lesssim_{A,\delta}
\frac{\tau_h^{1/2}+\tau_0}{r(B)^n}.
\end{align*}
Together with the estimate we obtained for $I_1$, this proves the lemma.
\end{proof}

\vv

\begin{lemma}\label{lemreverse}
Under the assumptions of Lemma \ref{keylemma} and the above notation,
$$\int_{G(\Lambda B)} \big(\RR_1^+\omega^{p_B} - \RR_1^-\omega^{p_B}\big)\,d\omega^{p_B} = 2\,\RR_2^+(\omega^{p_B})(p_B) + \frac{C_B(\tau_0,\tau_h,\delta,A,\Lambda)}{r(B)^n},$$
with 
$$|C_B(\tau_0,\tau_h,\delta,A,\Lambda)|\leq\ve$$
if $\tau_0,\tau_h,\delta$ are small enough and $\Lambda_0,A,\Lambda$ big enough. 
\end{lemma}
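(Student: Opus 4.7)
\emph{Proof plan.}
My plan is to compute the integral over the full boundary $\partial\Omega^+$ first, extracting the clean main term $2\RR_2^+\omega^{p_B}(p_B)$ from the harmonicity identities, and then to show that the contribution from $(G(\Lambda B))^c$ is an acceptably small error. For the full integral, identity \rf{eqrriesz1} gives $\int \RR_2^+\omega^{p_B}\,d\omega^{p_B} = \RR_2^+\omega^{p_B}(p_B)$ since $\RR_2^+\omega^{p_B}$ is harmonic in $\Omega^+$ and continuous on $\overline{\Omega^+}$, while identity \rf{eqrriesz2} gives $\int \RR_1^-\omega^{p_B}\,d\omega^{p_B} = -\RR_2^+\omega^{p_B}(p_B)$ exactly. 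Combined with Lemma \ref{lemdifer23}, which allows us to replace $\RR_1^+$ by $\RR_2^+$ at the price of an $O_{A,\delta}((\tau_0+\tau_h^{1/2})/r(B)^n)$ correction, this yields
\begin{equation*}
\int \bigl(\RR_1^+\omega^{p_B} - \RR_1^-\omega^{p_B}\bigr)\,d\omega^{p_B} = 2\,\RR_2^+\omega^{p_B}(p_B) + O_{A,\delta}\!\left(\tfrac{\tau_0+\tau_h^{1/2}}{r(B)^n}\right).
\end{equation*}

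For the complement integral the key pointwise estimate is $|\RR_1^+\omega^{p_B}(x)-\RR_1^-\omega^{p_B}(x)| \lesssim P_{h(x)}\omega^{p_B}(x)$, which I would establish by applying the mean-value inequality to $K(x+H(x)-y)-K(x-H(x)-y)$ when $|x-y|\gtrsim h(x)$ and using a trivial size bound in the complementary regime, exactly in the spirit of the estimate \eqref{eqana7*} used in the proof of Lemma \ref{lemdifer23}. I would then split $(G(\Lambda B))^c = (\partial\Omega^+\setminus\tfrac32\Lambda B) \cup \HD^+(\Lambda B) \cup \LD^+(\Lambda B)$. On the far part, Lemma \ref{lemcreix2} gives the uniform-in-$A,\delta$ bound $P_{h(x)}\omega^{p_B}(x)\lesssim 1/r(B)^n$, and combining with the decay $\omega^{p_B}(\partial\Omega^+\setminus\tfrac32\Lambda B)=:\eta(\Lambda)\to 0$ as $\Lambda\to\infty$ (a consequence of the Reifenberg flatness of $\Omega^+$ together with standard NTA harmonic-measure estimates) produces a contribution bounded by $\eta(\Lambda)/r(B)^n$. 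On the bad-cubes part, the $A,\delta$-dependent bound of Lemma \ref{lemcreix2} together with Jerison--Kenig's change-of-pole formula and Lemma \ref{lem4.1} (giving $\omega^{p_B}(\HD^+\cup\LD^+)\lesssim \ve'$) yields a contribution bounded by $C(A,\delta)\,\ve'/r(B)^n$, where $\ve'$ can be made arbitrarily small at the price of suitable choices of $A,\delta$.

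Putting all pieces together gives $|C_B| \lesssim C_1(A,\delta)(\tau_0+\tau_h^{1/2}) + \eta(\Lambda) + C_2(A,\delta)\,\ve'$. Given a target $\ve>0$, the natural order of choices is: first fix $\ve'$ small enough (which via Lemma \ref{lem4.1} determines $A=A(\ve'),\delta=\delta(\ve')$) so that $C_2(A,\delta)\,\ve'<\ve/3$; then take $\Lambda$ large enough that $\eta(\Lambda)<\ve/3$; and finally take $\tau_0,\tau_h$ small enough that $C_1(A,\delta)(\tau_0+\tau_h^{1/2})<\ve/3$. The main obstacle will be managing these parameter dependencies, since $C_1(A,\delta)$ and $C_2(A,\delta)$ blow up as $A\to\infty$ and $\delta\to 0$; one must verify that $\ve'$ can be chosen small enough relative to $C_2(A(\ve'),\delta(\ve'))$ so that the product remains controllable. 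A secondary technical obstacle is rigorously quantifying the decay rate $\eta(\Lambda)$ of the harmonic measure of the far boundary from the corkscrew pole $p_B$, which should follow from Reifenberg flatness together with a Jerison--Kenig--type Carleson estimate but is not spelled out in the results cited above.
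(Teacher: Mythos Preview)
Your overall architecture matches the paper's proof exactly: the decomposition into a full-boundary integral (handled via \rf{eqrriesz1}, \rf{eqrriesz2}, and Lemma \ref{lemdifer23}) plus a complement integral, the pointwise bound $|\RR_1^+\omega^{p_B}-\RR_1^-\omega^{p_B}|\lesssim P_{h(x)}\omega^{p_B}$, and the further split of the complement into a far part (controlled by $\omega^{p_B}(\partial\Omega^+\setminus c\Lambda B)\lesssim\Lambda^{-\eta}$ via H\"older continuity at the boundary) and a bad-cubes part. All of this is correct.

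The genuine gap is the one you yourself flag. On the bad-cubes set $\Lambda B\setminus G(\Lambda B)$ you invoke the $A,\delta$-dependent bound of Lemma \ref{lemcreix2}, producing an error $C_2(A,\delta)\,\ve'/r(B)^n$. Since Lemma \ref{lem4.1} fixes $A=A(\ve')$ and $\delta=\delta(\ve')$ via the big-pieces-of-UR machinery from \cite{AMT-quantcpam}, there is no quantitative control on $A(\ve')\,\ve'$; the UR constants of the approximating set $E^+$ degenerate as $\ve\to0$ in a way that is not tracked. So the circularity you identify is real and cannot be closed this way.

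The paper resolves this by \emph{not} using Lemma \ref{lemcreix2} on the bad set. Instead it uses the sharper Lemma \ref{lemcreix22}, whose constant is independent of $A,\delta$, to get $P_{h(x)}\omega^{p_B}(x)\lesssim \Theta_{\omega}(B(x,h(x)))/\omega(B)+r(B)^{-n}$, and then controls $\int_{\Lambda B\setminus G(\Lambda B)}\Theta_{\omega}(B(x,h(x)))\,d\omega^{p_B}$ directly. The key trick is to write, via the ACF monotonicity formula,
\[
\Theta_{\omega^+}(B(x,h(x)))\lesssim \bigl(\Theta_{\omega^+}(\Lambda B)\,\Theta_{\omega^-}(\Lambda B)\bigr)^{1/2}\,M_{\omega^-}\!\Bigl(\tfrac{d\omega^+}{d\omega^-}\chi_{2\Lambda B}\Bigr)(x)^{1/2},
\]
and then apply H\"older together with the hypothesis $\dfrac{d\omega^+}{d\omega^-}\in B_{3/2}(\omega^-)$ (upgraded slightly by Gehring). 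This yields a bound $C(\Lambda)(\ve')^{1/p'}/r(B)^n$ with no $A,\delta$ dependence multiplying $\ve'$, so the order of choices ($\Lambda$, then $\ve'$ and hence $A,\delta$, then $\tau_h,\tau_0$) closes. Your proposal never invokes the $B_{3/2}$ assumption, and this is precisely the missing ingredient that breaks the circularity.
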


\begin{proof}
Recall that $G(\Lambda B)\subset \frac32 \Lambda B$.
We write
\begin{align}\label{eqali6329}
\int_{G(\Lambda B)} \big(\RR_1^+\omega^{p_B} - \RR_1^-\omega^{p_B}\big)\,d\omega^{p_B}
 & = \int\big(\RR_2^+\omega^{p_B} - \RR_1^-\omega^{p_B}\big)\,d\omega^{p_B} \\
 & \quad+
 \int\big(\RR_1^+\omega^{p_B} - \RR_2^+\omega^{p_B}\big)\,d\omega^{p_B}\nonumber\\
&\quad - \int_{\partial\Omega^+\setminus G(\Lambda B)} \big(\RR_1^+\omega^{p_B} - \RR_1^-\omega^{p_B}\big)\,d\omega^{p_B} \nonumber
\\ &=: I_1 + I_2 - I_3.\nonumber
\end{align}
Observe now that, by \rf{eqrriesz1} and \rf{eqrriesz2},
$$I_1 = 2\,\RR_2^+\omega^{p_B}(p_B).$$
Also, by Lemma \ref{lemdifer23},
\begin{equation}\label{eqi2**}
|I_2|\lesssim_{A,\delta}\frac{\tau_0+\tau_h^{1/2}}{r(B)^n}.
\end{equation}
Finally, to estimate $I_3$ notice that, for any $x\in\partial\Omega^+$,
\begin{align*}
\big|\RR_1^+\omega^{p_B}{(x)} - \RR_1^-\omega^{p_B}{(x)}\big| & \leq \int \big|K(x+H(x)-y)-K(x-H(x)-y)\big|\,
d\omega^{p_B}(y)\lesssim P_{h(x)}\omega^{p_B}(x),
\end{align*}
by arguments analogous to the one in \rf{eqana7*}.
Thus,
$$|I_3|\lesssim \int_{\partial\Omega^+\setminus 2\Lambda B} P_{h(\cdot)}\omega^{p_B}\,d\omega^{p_B} + 
\int_{2\Lambda B\setminus G(\Lambda B)} P_{h(\cdot)}\omega^{p_B}\,d\omega^{p_B} =: I_{3,a} + I_{3,b}.$$
To estimate $I_{3,a}$ recall that $P_{h(x)} {\omega^{p_B}(x)}\lesssim\frac1{r(B)^n},$ by  \rf{eqcreix10} (with 
an implicit constant independent of $A$ and $\delta$).
Hence,
$$I_{3,a}\lesssim \frac1{r(B)^n}\,\omega^{p_B}(\partial\Omega^+\setminus 2\Lambda B).$$

Concerning  $I_{3,b}$ notice that, for any $x\in\partial\Omega^+\cap 2\Lambda B$, by Lemma \ref{lemcreix22},
\begin{align*}
P_{h(x)}\omega^{p_B}(x) \lesssim\frac{\Theta_\omega (B(x,h(x)))}{\omega(B)} + \frac1{r(B)^n}.
\end{align*}
Thus,
\begin{align*}
I_{3,b}&\lesssim \frac1{\omega(B)}\int_{\Lambda B\setminus G(\Lambda B)} \Theta_\omega (B(x,h(x)))\,d\omega^{p_B}(x) +
\frac1{\omega(B)}\int_{2\Lambda B \setminus \Lambda B} \Theta_\omega (B(x,h(x)))\,d\omega^{p_B}(x) 
\\ &
\quad + \frac1{r(B)^n}\,\omega^{p_B}\big(\partial\Omega^+ \setminus(\Lambda B\cap G(\Lambda B))\big) \\
& =: J_1 + J_2+ J_3.
\end{align*}

First we estimate the term $J_1$. By H\"older's inequality, for any $p>1$ we have
\begin{equation}\label{eqhold32}
\int_{\Lambda B\setminus G(\Lambda B)} \Theta_\omega (B(x,h(x)))\,d\omega^{p_B}(x)\leq 
\left(\int_{\Lambda B} \Theta_\omega (B(x,h(x)))^p\,d\omega^{p_B}(x)\right)^{1/p} \omega^{p_B}(\Lambda B\setminus G(\Lambda B))^{1/p'}.
\end{equation}
To show the last term is small, consider a Besicovitch covering of $\Lambda B\setminus G(\Lambda B)$
with balls $B_i$ with radius equal to $r(B)$. Using that $\omega(B)\approx\omega(B_i)$ by Lemma \ref{lemfacil} we get
\begin{align}\label{eqas723}
\omega^{p_B}(\Lambda B\setminus G(\Lambda B)) & \approx \sum_i \omega^{p_B}(B_i\cap \Lambda B\setminus G(\Lambda B))
\approx \sum_i \frac{\omega(B_i\cap \Lambda B\setminus G(\Lambda B))\,\omega^{p_B}(B_i)}{\omega(B_i)}\\
&\lesssim \sum_i \frac{\omega(B_i\cap \Lambda B\setminus G(\Lambda B))}{\omega(B)} \lesssim 
\frac{\omega(\Lambda B\setminus G(\Lambda B))}{\omega(B)}\leq C(\Lambda)\,\ve',\nonumber
\end{align}
with $\ve'=\ve'(A,\delta)$, by Lemma \ref{lem4.1}.
Next we estimate the integral on the right hand side of \rf{eqhold32}. To this end, for
$x\in\Lambda B$ we write
\begin{align*}
\Theta_\omega (B(x,h(x))) & = \big(\Theta_{\omega^+} (B(x,h(x)))\,\Theta_{\omega^-} (B(x,h(x)))\big)^{1/2}
\,\left(\frac{\Theta_{\omega^+} (B(x,h(x)))}{\Theta_{\omega^-} (B(x,h(x)))}\right)^{1/2}\\
& \lesssim 
\big(\Theta_{\omega^+} (\Lambda B)\,\Theta_{\omega^-} (\Lambda B)\big)^{1/2}
\,\left(\frac{\Theta_{\omega^+} (B(x,h(x)))}{\Theta_{\omega^-} (B(x,h(x)))}\right)^{1/2},
\end{align*}
by the ACF monotonicity formula (see the proof of Lemma \ref{lem44}).
Denoting by $M_{\omega^-}$ the centered maximal Hardy-Littlewood operator with respect to $\omega^-$ and 
$f:=\frac{d\omega^+}{d\omega^-}$, we get (since $h(x)\leq r(\Lambda B)$),
$$\Theta_\omega (B(x,h(x)))\lesssim \big(\Theta_{\omega^+} (\Lambda B)\,\Theta_{\omega^-} (\Lambda B)\big)^{1/2}
\,M_{\omega^-}(f\chi_{2\Lambda B})(x)^{1/2}.$$
Also, arguing as in \rf{eqas723}, it follows that
\begin{equation}\label{eq198}
\omega^{p_B}(\Lambda B\cap F)\lesssim \frac{\omega^+(\Lambda B\cap F)}{\omega^+(B)}
\quad \mbox{ for all $F\subset\R^{n+1}$},
\end{equation}
and thus, for any function $g$ supported in $\Lambda B$,
$$\int g\,d\omega^{p_B}\lesssim \frac1{\omega^+(B)}  \int g\,d\omega^+ = \frac1{\omega^+(B)}  \int g\,f\,d\omega^-.$$
Altogether, we get
\begin{align}\label{eqali842}
\int_{\Lambda B} \Theta_\omega (B(x,h(x)))^p\,d\omega^{p_B}(x)& \lesssim
\frac{\big(\Theta_{\omega^+} (\Lambda B)\,\Theta_{\omega^-} (\Lambda B)\big)^{p/2}}
{\omega^+(B)}
\int_{\Lambda B} |M_{\omega^-}(f\chi_{2\Lambda B})|^{p/2}\,f\,d\omega^-\\
& \leq 
\frac{\big(\Theta_{\omega^+} (\Lambda B)\,\Theta_{\omega^-} (\Lambda B)\big)^{p/2}}
{\omega^+(B)}
\int_{\Lambda B} |M_{\omega^-}(f\chi_{2\Lambda B})|^{1+p/2}\,d\omega^-\nonumber\\
& \lesssim
\frac{\big(\Theta_{\omega^+} (\Lambda B)\,\Theta_{\omega^-} (\Lambda B)\big)^{p/2}}
{\omega^+(B)}
\int_{2\Lambda B} |f|^{1+p/2}\,d\omega^-,\nonumber
\end{align}
by the $L^{1+p/2}(\omega^-)$ boundedness of $M_{\omega^-}$. Now, since $f\in B_{3/2}(\omega^-)$,
by Gehring's lemma it follows also that $f\in B_{\ve+3/2}(\omega^-)$, for some $\ve>0$. Hence, for $p$
close enough to $1$, we have
$$\avint_{2\Lambda B} |f|^{1+p/2}\,d\omega^-\lesssim 
\left(\avint_{2\Lambda B} f\,d\omega^-\right)^{1+p/2} = \left(\frac{\omega^+(2\Lambda B)}{\omega^-(2\Lambda B)}\right)^{1+p/2}.$$
So we deduce
\begin{align}\label{eqali843}
\int_{\Lambda B} \Theta_\omega (B(x,h(x)))^p\,d\omega^{p_B}(x)& \lesssim
\frac{\big(\Theta_{\omega^+} (\Lambda B)\,\Theta_{\omega^-} (\Lambda B)\big)^{p/2}}
{\omega^+(B)}\,\left(\frac{\omega^+(\Lambda B)}{\omega^-(\Lambda B)}\right)^{1+p/2}\!\omega^-(\Lambda B)\\
& \lesssim_\Lambda \,\Theta_{\omega^+} (\Lambda B)^p.\nonumber
\end{align}
As a consequence,
$$J_1\lesssim_\Lambda (\ve')^{1/p'}\frac{\Theta_{\omega^+} (\Lambda B)}{\omega^+(B)}\approx_\Lambda (\ve')^{1/p'}\frac1{r(B)^n}.$$

Next we turn our attention to the term $J_2$. To this end we consider a Besicovitch covering of
$\partial\Omega^+\cap2\Lambda B\setminus \Lambda B$ with balls $B_i$ of radius $r(B)$.
We split 
$$J_2 \leq \sum_i \frac1{\omega^+(B)}\int_{B_i} \Theta_{\omega^+} (B(x,h(x)))\,d\omega^{p_B}(x).$$
Now we argue as we did to deal with the term $J_1$. However, instead of \rf{eq198}, we use the
more precise estimate
$$
\omega^{p_B}(B_i\cap F)\approx \frac{\omega^+(B_i\cap F)\,\omega^{p_B}(B_i)}{\omega^+(B_i)} \approx \frac{\omega^+(B_i\cap F)\,\omega^{p_B}(B_i)}{\omega^+(B)} \quad \mbox{ for all $F\subset\R^{n+1}$},$$
where again we used the fact that $\omega^+(B_i)\approx\omega^+(B)$. Then we obtain, for any function $g$ supported on $B_i$,
$$\int g\,d\omega^{p_B}\lesssim \frac{\omega^{p_B}(B_i)}{\omega^+(B)}  \int g\,d\omega^+ = \frac{\omega^{p_B}(B_i)}{\omega^+(B)}  \int g\,f\,d\omega^- .$$
Then, arguing as in \rf{eqali842} and \rf{eqali843}, choosing now $p=1$, for each ball $B_i$ we get
\begin{align*}
\int_{B_i} \Theta_{\omega^+} (B(x,h(x)))\,d\omega^{p_B}(x)& \lesssim \omega^{p_B}(B_i)\,
\frac{\big(\Theta_{\omega^+} (\Lambda B)\,\Theta_{\omega^-} (\Lambda B)\big)^{1/2}}
{\omega^+(B)}
\int_{B_i} |M_{\omega^-}(f\chi_{2B_i})|^{1/2}\,f\,d\omega_-\\
& \lesssim \omega^{p_B}(B_i)\,
\frac{\big(\Theta_{\omega^+} (\Lambda B)\,\Theta_{\omega^-} (\Lambda B)\big)^{1/2}}
{\omega^+(B)}
\int_{2B_i} |f|^{3/2}\,d\omega_-\\
&\lesssim \omega^{p_B}(B_i)\,
\frac{\big(\Theta_{\omega^+} (\Lambda B)\,\Theta_{\omega^-} (\Lambda B)\big)^{1/2}}
{\omega^+(B)}\,\left(\frac{\omega^+(2B_i)}{\omega^-(2 B_i)}\right)^{3/2}\!\omega^-(2B_i).
\end{align*}
Since $\omega^\pm(2B_i)\approx \omega^\pm(B_i)\approx\omega^\pm(B)$ 
and $\Theta_{\omega^\pm} (\Lambda B)\approx \Theta_{\omega^\pm} (B)$
(by the
Reifenberg flatness), we derive
$$\int_{B_i} \Theta_{\omega^+} (B(x,h(x)))\,d\omega^{p_B}(x) \lesssim \omega^{p_B}(B_i)\Theta_{\omega^+} (B).$$
Summing on $i$ and using the finite overlap of the balls $B_i$, we derive
$$J_2\lesssim \frac{\Theta_{\omega^+} (B)}{\omega^+(B)} \sum_i\omega^{p_B}(B_i)
\lesssim \frac1{r(B)^n}\,\omega^{p_B}(\partial \Omega^+\setminus \tfrac12 \Lambda B).$$

Gathering the estimates obtained for $J_1$ and $J_2$, we obtain
\begin{align*}
I_{3,b} &\lesssim C(\Lambda)(\ve')^{1/p'}\frac1{r(B)^n}
+ \frac1{r(B)^n}\,\omega^{p_B}(\partial \Omega^+\setminus \tfrac12 \Lambda B) +
\frac1{r(B)^n}\,\omega^{p_B}\big(\partial\Omega^+ \setminus(\Lambda B\cap G(\Lambda B))\big).
\end{align*}
and combining this with the estimate for $I_{3,a}$, we get
$$|I_3|\lesssim  \frac1{r(B)^n}\,\Big(C(\Lambda)(\ve')^{1/p'} + \omega^{p_B}\big(\partial\Omega^+ \setminus(\tfrac12\Lambda B\cap G(\Lambda B))\big)\Big).$$
Now we write
$$\omega^{p_B}\big(\partial\Omega^+ \setminus(\tfrac12\Lambda B\cap G(\Lambda B))\big)\leq \omega^{p_B}\big(\partial\Omega^+ \setminus\tfrac12\Lambda B\big) +
\omega^{p_B}\big(\Lambda B \setminus G(\Lambda B)).
$$
To deal with the first summand on the right hand side, we use the fact that $\dist(p_B,\partial\Omega^+\setminus \frac12\Lambda B)\approx r(\Lambda B)$, and by the H\"older continuity of $\omega^{(\cdot)}(\partial\Omega^+\setminus \frac12\Lambda B)$ { in 
$\frac14\Lambda B$} (see Lemma 4.1 \cite{Jerison-Kenig}), we have
$$
\omega^{p_B}\big(\partial\Omega^+ \setminus\tfrac12\Lambda B\big)
\lesssim \left(\frac{\dist(p_B,\partial\Omega^+)}{r(\Lambda B)}\right)^\eta
\approx  \Lambda^{-\eta},
$$
for some constant $\eta>0$ just depending on the NTA character of $\Omega^+$. Recall also that, by \rf{eqas723},
$\omega^{p_B}\big(\Lambda B \setminus G(\Lambda B)\big)\lesssim_\Lambda \ve'$.
Therefore,
$$|I_3|\lesssim  \frac1{r(B)^n}\,\big(C(\Lambda)(\ve')^{1/p'} + \Lambda^{-\eta} + \ve'\big) \lesssim \frac1{r(B)^n}\,\big(C(\Lambda)(\ve')^{1/p'} + \Lambda^{-\eta} \big).$$
In combination with \rf{eqali6329} and \rf{eqi2**}, and the fact that $I_1 = 2\,\RR_2^+\omega^{p_B}(p_B)$, this yields the lemma, with
$$C_B(\tau_0,\tau_h,\delta,A,\Lambda) = C(A,\delta)(\tau_0+\tau_h^{1/2}) + C(\Lambda)(\ve')^{1/p'} + C\Lambda^{-\eta},$$
and $\ve'=\ve'(A,\delta)$ as small as wished if $A$ is big enough and $\delta$ small enough.
\end{proof}

\vv

\begin{lemma}\label{lemcompactness}
There is an absolute constant $c_n>0$ such that for any {$\alpha>0$}, if $\tau_0,\tau_h,\ve_0,r(B)/r_0$ are small enough, then 
$$\Bigl|\RR_2^+\omega^{p_B}(p_B) - \frac{c_n}{r(B)^n} \,N_B\Big|\leq \frac{\alpha}{r(B)^n}.$$
\end{lemma}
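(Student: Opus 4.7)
My plan is to prove this by a rescaling-plus-compactness argument, reducing to an explicit Poisson-integral computation on a half-space. After rotation making $N_B = e_{n+1}$ and translation, set $\widehat y := (y - x_B)/r(B)$, with $x_B$ the center of $B$, and write $\widehat\Omega^\pm := (\Omega^\pm - x_B)/r(B)$, $\widehat p := (p_B - x_B)/r(B)$, $\widehat h(\widehat y) := h(x_B + r(B)\widehat y)/r(B)$, $\widehat H(\widehat y) := \widehat h(\widehat y)\,\widehat N_0(\widehat y)$. Then $\widehat B = B(0,1)$, $\widehat p$ lies within $O(\tau_0)$ of $(0,\ldots,0,1/4)$, and $\partial\widehat\Omega^+$ is within $\tau_0$ of $\{y_{n+1}=0\}$ in local Hausdorff distance up to scale $r_0/r(B)$. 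Since $\ve_0 \le \widehat h \le \tau_h$ and $K$ is homogeneous of degree $-n$, setting
\[
I := \int K\bigl(\widehat p + \widehat H(\widehat y) - \widehat y\bigr)\,d\widehat\omega^{\widehat p}(\widehat y),
\]
we have $\RR_2^+\omega^{p_B}(p_B) = I/r(B)^n$, so the claim reduces to $|I - c_n e_{n+1}|\le\alpha$ for suitable small parameters, where $c_n>0$ is an absolute constant to be identified.

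I would then argue by contradiction. Suppose the reduced claim fails; then one produces sequences $\tau_0^{(k)},\tau_h^{(k)},\ve_0^{(k)}\to 0$ and $r(B_k)/r_0^{(k)}\to 0$, together with rescaled data $\widehat\Omega_k^+,\widehat p_k,\widehat H_k$ satisfying $|I_k - c_n e_{n+1}|\ge\alpha$. Passing to a subsequence, the vanishing of the Reifenberg parameter forces $\widehat\Omega_k^\pm$ to converge locally (in Hausdorff distance) to the half-spaces $H^\pm = \{\pm y_{n+1}>0\}$, and $\widehat p_k\to \widehat p_\infty := (0,\ldots,0,1/4)$. By the convergence of harmonic measures on Reifenberg flat NTA domains (see \cite[Theorem 3.1]{Kenig-Toro-duke} together with the continuity arguments of \cite{Jerison-Kenig}), $\widehat\omega_k^{\widehat p_k}$ converges weakly to the Poisson-kernel measure $\omega_\infty^{\widehat p_\infty}$ on $\{y_{n+1}=0\}$.

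To pass $I_k$ to the limit, note that $|\widehat H_k|\le\tau_h^{(k)}\to 0$ and $\widehat N_{0,k}\to e_{n+1}$ uniformly on compacts (by Reifenberg flatness), so the integrand converges locally uniformly to $K(\widehat p_\infty - \widehat y)$. Uniform integrability has two components: (i) near the singularity, $\widehat p_k + \widehat H_k(\widehat y)$ is an interior point at distance $\ge 1/4 - \tau_h^{(k)} - O(\tau_0^{(k)})\ge 1/8$ from $\partial\widehat\Omega_k^+$, so $|\widehat p_k + \widehat H_k(\widehat y) - \widehat y|\ge 1/8$ and the kernel is bounded by $8^n$ there; (ii) for large $|\widehat y|$, the kernel decays as $|\widehat y|^{-n}$, while the H\"older decay of NTA harmonic measure (\cite[Lemma 4.1]{Jerison-Kenig}) gives $\widehat\omega_k^{\widehat p_k}(\partial\widehat\Omega_k^+\setminus B(0,R))\lesssim R^{-\eta}$ with $\eta>0$ uniform in $k$, making the tails negligible. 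These bounds plus weak convergence yield $I_k\to \int K(\widehat p_\infty - \widehat y)\,d\omega_\infty^{\widehat p_\infty}(\widehat y)$.

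The limit is computed explicitly. Writing $\widehat y = (y',0)$ with $y'\in\R^n$,
\[
d\omega_\infty^{\widehat p_\infty}(\widehat y) = \frac{c_n'}{(|y'|^2 + 1/16)^{(n+1)/2}}\,dy',
\]
where $c_n'>0$ is the Poisson normalization constant. Since $\widehat p_\infty - \widehat y = (-y',1/4)$, the horizontal components vanish by the odd symmetry $y'\mapsto -y'$ and the $(n+1)$st component equals the strictly positive absolute constant
\[
c_n := \frac{c_n'}{4}\int_{\R^n}\frac{dy'}{(|y'|^2 + 1/16)^{n+1}}.
\]
Thus the limit equals $c_n e_{n+1} = c_n N_B$, contradicting $|I_k - c_n e_{n+1}|\ge\alpha$. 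The principal obstacle is the uniform integrability in the passage to the limit: at the near-singularity it is secured by Reifenberg flatness (preserving $\widehat p$ as a genuine corkscrew at definite distance) together with the smallness of $\tau_h$; at infinity it is secured by the H\"older decay of NTA harmonic measure. Everything else is standard weak convergence combined with the explicit half-space Poisson computation.
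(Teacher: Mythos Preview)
Your proof is correct and follows essentially the same rescaling-and-compactness route as the paper: both argue by contradiction, rescale so that $B$ becomes the unit ball and $N_B=e_{n+1}$, use vanishing Reifenberg flatness to pass to the half-space limit, invoke weak convergence of the rescaled harmonic measures to the Poisson measure, and identify the limiting integral as a positive multiple of $e_{n+1}$ by symmetry. The main difference is one of emphasis: you spell out the uniform integrability (boundedness of the kernel near $\widehat p$ and the tail decay) needed to pass the integral to the limit, while the paper is terse there but instead gives a detailed argument for the weak convergence $\omega_k^{p_0}\rightharpoonup\omega_H^{p_0}$ via local $W^{1,q}$ convergence of the Green functions.
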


Recall that we assume $\Omega$ to be $(\tau_0,r_0)$-Reifenberg flat.

\begin{proof} 
Suppose this fails for some $\alpha>0$ and some $c_n$ to be fixed in a moment. Consider a sequence of balls $B_k$ with $r(B_k)\leq { r_0}/k$ and suppose that  {$\Omega^+_k$}
is $(\tau_k, {r_0})$-Reifenberg flat, with $\tau_k\to0$. Let $\tau_{h,k}=10^{-k}$ and $\ve_{0,k}=100^{-k}$, say, and
suppose that
\begin{equation}\label{eqkkk*}
\Bigl|\RR_{2,k}^+\omega^{p_{B_k}}(p_{B_k}) - \frac{c_n}{r(B_k)^n} \,N_{B_k}\Big|> \frac{\alpha}{r(B_k)^n},
\end{equation}
where we denoted by $\RR_{2,k}^+$ the singular integral operator associated with the kernel $K_2^+$ defined choosing $\tau_h=\tau_{h,k}$ and $\ve_0=\ve_{0,k}$.

Let
$$\wt \Omega_k = \frac1{r(B_k)}\,( {\Omega^+_k} - x_{B_k}),$$
where $x_{B_k}$ is the center of $B_k$. Rotate $\wt \Omega_k$ to get a new domain $\Omega_k$ such that the best approximating $n$-plane for $\partial\Omega_k$ at $B(0,1)$ is horizontal.
 Further, 
the harmonic measure $\omega_k^{p_0}$ of $\Omega_k$ with pole at $p_0=(0,\ldots,0,1/4)$ equals the image measure of $\omega^{p_{B_k}}$ by the map $y\mapsto \frac1{r(B_k)}\,(y- x_{B_k})$.
From \rf{eqkkk*} we deduce that
$$\Bigl|\RR_{2,k}^+\omega_k^{p_0}(p_0) - c_n \,e_{n+1}\Big|> \alpha$$
for every $k$.
However, it is easy to check that $\Omega_k$ converges locally to the upper half-space $H=\{x\in\R^{n+1}: x_{n+1}>0\}$ in Hausdorff distance. This implies that, $\omega_k^{p_0}$ converges weakly to $\omega_H^{p_0}$ (where $\omega_H$ is the harmonic measure of $H$). Assuming this for the moment, we deduce that
$$|\RR\omega_H^{p_0}(p_0) - c_n \,e_{n+1}\Big|> \alpha,$$
since $p_0$ is far away from the boundary.
By symmetry, it is clear that $\RR\omega_H^{p_0}(p_0)= c_n' \,e_{n+1}$ for some $c_n'>0$.
So we get a contradiction if we choose precisely $c_n=c_n'$.

\vv
It remains to check that $\omega_k^{p_0}$ converges weakly to $\omega_H^{p_0}$. 
{ This follows by rather standard techiques. In fact, similar arguments have appeared in works such as 
\cite{Kenig-Toro-annals} or \cite{Kenig-Toro-crelle}, for example. However, for the convenience of the reader we will
give some details.} So let $g_k(\cdot,p_0)$ and $g_H(\cdot,p_0)$ be the respective Green functions with pole at $p_0$ of the domains $\Omega_k$ and $H$ (assuming, by definition, that they vanish identically in $(\Omega_k)^c$ and $H^c$, respectively). For any $\varphi\in C_c^\infty(\R^{n+1})$, we have
\begin{equation}\label{eqmhar27}
\vphi(p_0) - \int \vphi\,d\omega^{p_0}_k = \int \nabla g_k(x,p_0)\,\nabla\vphi(x)\,dx.
\end{equation}
The analogous identity holds replacing $\omega^{p_0}_k$ by $\omega^{p_0}_H$ and $g_k(x,p_0)$ by $g_H(x,p_0)$. From these identities it follows that, to show that $\omega_k^{p_0}$ converges weakly weakly to $\omega_H^{p_0}$,
it suffices to check that $g_k(\cdot,p_0)$ converges weakly to $g_H(\cdot,p_0)$ in $W^{1,q}_{loc}(\R^{n+1})$ for some $q>1$.

Choosing $q>1$ small enough, standard uniform bounds for the Green function, and the weak compactness of the unit ball of $W^{1,q}(\R^{n+1})$, any subsequence of $\{g_k(\cdot,p_0)\}_k$ has a subsequence $\{g_{k_j}(\cdot,p_0)\}_j$
converging weakly to some $f\in W^{1,q}_{loc}(\R^{n+1})$. It is easy to check that $f$ vanishes in the Sobolev sense in $H^c$ and by \rf{eqmhar27} it also satisfies 
$$\vphi(p_0) = \int \nabla f(x)\,\nabla\vphi(x)\,dx   \quad\mbox{ for all $\vphi\in C_c^\infty(H)$.}$$
From this identity and the analogous one for $g_H(\cdot,p_0)$, we deduce that
$$\int \nabla \big(f(x)-g_H(x,p_0)\big)\,\nabla\vphi(x)\,dx   \quad\mbox{ for all $\vphi\in C_c^\infty(H)$.}$$
So, by Weyl's lemma the function $F=f-g(\cdot,p_0)$ is harmonic in $H$. One easily checks that the function obtained by extending $F|_H$ antisymmetrically with respect to $\partial H$ 
is harmonic in $\R^{n+1}$ and vanishes at $\infty$, and thus it is identically zero\footnote{Instead of using this reflection argument, one can also prove that $f$ 
vanishes continuously at $\partial H$, by the uniform H\"older continuity of $g_k(\cdot,p_0)$ far away from $p_0$. This implies that $F$ is continuous in $\overline H$ and then, recalling that $F$ vanishes at $\infty$, one can appeal to a standard maximum principle to deduce that $F\equiv0$.}. So $f=g_H(\cdot,p_0)$ in $H$. This implies  
that $g_k(\cdot,p_0)$ converges weakly to $g_H(\cdot,p_0)$ in $W^{1,q}_{loc}(\R^{n+1})$, as wished.
\end{proof}

\vv

\begin{lemma}
{ Let $\ve>0$.}
Under the assumptions of Lemma \ref{keylemma} and the above notation, 
\begin{equation}\label{eqcl926}
\int_{G(\Lambda B)} \Theta^n(x,\omega^{p_B}) N_{\Omega^+}(x)\,d\omega^{p_B}(x) = \frac{C_n}{r(B)^n} N_B + \frac{C_B'(\tau_0,\tau_h,\delta,A,\Lambda)}{r(B)^n},
\end{equation}
for some absolute constant $C_n>0$,
with 
$$|C_B'(\tau_0,\tau_h,\delta,A,\Lambda)|\leq\ve$$
if $\tau_0,\delta,r(B)/r_0,\tau_h$ are small enough and $A,\Lambda,\Lambda_0$ big enough.
\end{lemma}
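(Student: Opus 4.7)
The plan is to combine Lemmas \ref{lemreverse} and \ref{lemcompactness} with the jump identity for the Riesz transform established in Section \ref{secjump}. Since $\omega^{p_B}$ and $\omega^+$ share null sets, Theorem \ref{t:AMT} guarantees that $\omega^{p_B}$-a.e.\ $x\in G(\Lambda B)$ is a tangent point of $\partial\Omega^+$, so the jump formula \rf{eqbriesz'} gives
$$
\RR^+\omega^{p_B}(x) - \RR^-\omega^{p_B}(x) = \frac{\omega_n}{c_n}\,\Theta^n(x,\omega^{p_B})\,N_{\Omega^+}(x) \qquad \omega^{p_B}\text{-a.e.}
$$
Integrating over $G(\Lambda B)$ reduces the claim to a comparison with
$$
\int_{G(\Lambda B)}\bigl[\RR_1^+-\RR_1^-\bigr]\omega^{p_B}\,d\omega^{p_B} = 2\RR_2^+\omega^{p_B}(p_B) + \tfrac{\mathrm{small}}{r(B)^n} = \frac{2c_n}{r(B)^n}\,N_B + \tfrac{\mathrm{small}}{r(B)^n},
$$
which follows by concatenating Lemmas \ref{lemreverse} and \ref{lemcompactness}. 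Therefore it suffices to show that replacing the non-tangential limits $\RR^\pm$ by the fixed-height values $\RR_1^\pm$ inside the integral over $G(\Lambda B)$ introduces an error of at most $\mathrm{small}/r(B)^n$; matching constants then yields $C_n = 2c_n^2/\omega_n$.

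For the comparison, note that on $G(\Lambda B)$ we have $h(x)=\ve_0 r(B)$, while Reifenberg flatness forces $|N_0(x)-N_{\Omega^+}(x)|\lesssim \tau_0$ at every tangent point. Hence $x\pm H(x)$ approaches $x$ non-tangentially in $\Omega^\pm$ as $\ve_0\to 0$, and since $\supp\omega^{p_B}\subset\partial\Omega^+$ the truncation parameter in \rf{eqdtt} is inactive near $x$, so
$$
\RR_1^\pm\omega^{p_B}(x) = \RR\omega^{p_B}(x\pm H(x)) \xrightarrow[\ve_0\to 0]{} \RR^\pm\omega^{p_B}(x) \qquad \omega^{p_B}\text{-a.e.\ on } G(\Lambda B).
$$
Both $|[\RR_1^+-\RR_1^-]\omega^{p_B}(x)|$ (controlled by the Poisson-type estimates of Lemma \ref{lemcreix2}) and $|[\RR^+-\RR^-]\omega^{p_B}(x)|=\tfrac{\omega_n}{c_n}\Theta^n(x,\omega^{p_B})$ (bounded using $\Theta^n(x,\omega^{p_B})\lesssim_{A,\delta}1/r(B)^n$ obtained from the stopping conditions and the change of pole formula) are at most $C_{A,\delta}/r(B)^n$ on $G(\Lambda B)$; since $\omega^{p_B}(G(\Lambda B))\le 1$, bounded convergence yields
$$
\int_{G(\Lambda B)}\bigl|[\RR_1^+-\RR_1^-]\omega^{p_B} - [\RR^+-\RR^-]\omega^{p_B}\bigr|\,d\omega^{p_B} \xrightarrow[\ve_0\to 0]{} 0.
$$

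The main obstacle is that the rate of this a.e.\ convergence a priori depends on the particular ball $B$, so one cannot immediately choose a single threshold $\ve_0$ that works for all admissible balls. To upgrade to uniform smallness I would mimic the compactness argument of Lemma \ref{lemcompactness}: if the desired bound failed along a sequence of balls $B_k$ with $r(B_k)/r_0\to 0$ and Reifenberg flatness constant $\tau_k\to 0$, rescale each $B_k$ to the unit ball and extract a limit configuration. The rescaled domains converge in Hausdorff distance to a half-space $H_0$; the rescaled harmonic measures converge weakly to a constant multiple of Lebesgue measure on $\partial H_0$ (by the argument sketched in Lemma \ref{lemcompactness}); and a direct calculation on the half-space shows that $\RR_1^\pm$ and $\RR^\pm$ of the limiting boundary measure coincide. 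Weak convergence of the rescaled measures then forces the error integrals to vanish in the limit, contradicting the supposed failure and supplying the needed uniform estimate. Combining this with Lemmas \ref{lemreverse} and \ref{lemcompactness} completes the proof, with $C_n=2c_n^2/\omega_n$ and $C_B'$ as small as wished under the stated smallness of parameters.
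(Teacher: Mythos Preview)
Your first two paragraphs reproduce the paper's argument almost verbatim: pointwise convergence of $\RR_1^\pm\omega^{p_B}$ to $\RR^\pm\omega^{p_B}$ on $G(\Lambda B)$ as $\ve_0\to 0$, a uniform (in $\ve_0$) bound $|\RR_1^+\omega^{p_B}-\RR_1^-\omega^{p_B}|\lesssim P_{h(x)}\omega^{p_B}(x)\lesssim_{A,\delta} r(B)^{-n}$ from Lemma~\ref{lemcreix2}, and then dominated convergence. That part is fine.

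The ``main obstacle'' you raise, however, is not an obstacle. You seem to treat $\ve_0$ as one of the parameters in the lemma's conclusion that must be chosen uniformly in $B$, but $\ve_0$ does not appear in the statement at all: it is an auxiliary regularisation that is \emph{eliminated} by passing to the limit, ball by ball. The crucial observation you are missing is that the error term $C_B(\tau_0,\tau_h,\delta,A,\Lambda)$ in Lemma~\ref{lemreverse} is bounded \emph{uniformly in $\ve_0$} (inspect its final expression in that proof). Hence, for a fixed ball $B$, letting $\ve_0\to 0$ in the identity of Lemma~\ref{lemreverse} gives: the left side converges to $c\int_{G(\Lambda B)}\Theta^n N_{\Omega^+}\,d\omega^{p_B}$ by your dominated convergence; the term $2\RR_{2,(\ve_0)}^+\omega^{p_B}(p_B)$ converges (trivially, since $p_B$ is a fixed interior point) and by Lemma~\ref{lemcompactness} its limit lies within $\alpha/r(B)^n$ of $\frac{2c_n}{r(B)^n}N_B$; and the error stays bounded by $\ve/r(B)^n$. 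The resulting identity therefore has $C_B'$ depending only on $\tau_0,\tau_h,\delta,A,\Lambda$, exactly as claimed. No uniformity of the $\ve_0$-limit across balls is needed, and the additional compactness argument you sketch is unnecessary (and, as written, would not work: weak convergence of the rescaled harmonic measures does not by itself control the pointwise discrepancy $[\RR_1^\pm-\RR^\pm]\omega^{p_B}$ on the boundary).
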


\begin{proof}
This is an easy consequence of Lemmas \ref{lemreverse} and \ref{lemcompactness} and the application of the jump formulas for the Riesz transforms, letting $\ve_0\to0$.
Indeed, momentarily write $\RR_{1,(\ve_0)}^\pm$ instead of $\RR_1^\pm$ to reflect the dependence of $\RR_1^\pm$ on the parameter $\ve_0$ in \rf{eqdefhh1}. Then
by the jump formulas for the Riesz transform, we know that
$$\lim_{\ve_0 \to 0}
\big(\RR^+_{1,(\ve_0)}\omega^{p_B}(x) - \RR_{1,(\ve_0)}^-\omega^{p_B}(x)\big) = C \,\Theta^n(x,\omega^{p_B})N_{\Omega^+}(x)\quad\mbox{ for $\omega$-a.e.\ $x\in G(\Lambda B)$}$$
{(see Sections \ref{sec2cdc} and \ref{secjump}).}
Also notice that for all $x\in  G(\Lambda B)$,
$$\big|\RR_{1,(\ve_0)}^+\omega^{p_B}(x) - \RR_{1,(\ve_0)}^-\omega^{p_B}(x)\big| \lesssim P_{h(x)}\omega^{p_B}(x)\lesssim_{A,\delta} \frac1{r(B)^n},$$
by Lemma \ref{lemcreix2}. Thus by the dominated convergence theorem, 
$$\lim_{\ve_0 \to 0}\int_{ G(\Lambda B)} \big(\RR_{1,(\ve_0)}^+\omega^{p_B} - \RR_{1,(\ve_0)}^-\omega^{p_B}\big)\,d\omega^{p_B} =
C\int_{ G(\Lambda B)} \Theta^n(x,\omega^{p_B})N_{\Omega^+}(x)\,d\omega^{p_B}(x).$$
\end{proof}

\vv

\begin{proof}[\bf Proof of Lemma \ref{keylemma}]
Denote
$$g(x) = \omega(2\Lambda B)\,r(B)^n\,\Theta^n(x,\omega^{p_B}) \frac{d\omega^{p_B}}{d\omega}(x) \,\chi_{G(\Lambda B)}(x).$$
By the change of pole formula, the doubling property of $\omega$ and Harnack's inequality, we have
$$\frac{d\omega^{p_B}}{d\omega}\approx_\Lambda \frac1{\omega(B)} \quad \mbox{ in $2\Lambda B$,}$$
and thus 
$$\Theta^n(x,\omega^{p_B})\approx_\Lambda \frac{\Theta^n(x,\omega)}{\omega(B)}\approx_{\Lambda,A,\delta}\frac1{r(B)^n}\quad \mbox{ in $ G(\Lambda B)$,}$$
by the stopping conditions. Thus
$$g(x)\approx_{\Lambda,A,\delta}1 \quad \quad \mbox{ in $G(\Lambda B)$,}$$
Observe now that the identity \rf{eqcl926} can be rewritten as follows:
$$\frac1{\omega(2\Lambda B)}\int_{2\Lambda B} g(x)\, N_{\Omega^+}(x)\,d\omega = C_n N_B + \wt C(\tau_0,\tau_h,\delta,A,\Lambda),$$
with $|\wt C(\tau_0,\tau_h,\delta,A,\Lambda)|\leq\ve$
if $\tau_0,\tau_h,\delta,r(B)/r_0$ are small enough and $A,\Lambda,\Lambda_0$ big enough. 
So we deduce that
$$\frac1{\omega(2\Lambda B)}\int_{2\Lambda B} g(x)\, \big(N_{\Omega^+}(x) - m_{2\Lambda B,\omega} N_{\Omega^+}\big)\,d\omega = \big(C_n N_B  -  c_{2\Lambda B} m_{2\Lambda B,\omega} N_{\Omega^+}\big)+ \wt C(\tau_0,\tau_h,\delta,A,\Lambda),$$
where
$$c_{2\Lambda B} = \frac1{\omega(2\Lambda B)}\int_{2\Lambda B} g\,d\omega\approx_{A,\delta}1.$$
As a consequence,
\begin{align*}
\big|C_n N_B  -  c_{2\Lambda B} m_{2\Lambda B,\omega} N_{\Omega^+}\big| & \leq \frac1{\omega(2\Lambda B)}\int_{2\Lambda B} g(x)\, \big|N_{\Omega^+}(x) - m_{2\Lambda B,\omega} N_{\Omega^+}\big|\,d\omega +  \wt C(\tau_0,\tau_h,\delta,A,\Lambda)\\
& \lesssim C(A,\delta,\Lambda)\,\|N_{\Omega^+}\|_{*,r(2\Lambda B) ,\omega^+} +   \wt C(\tau_0,\tau_h,\delta,A,\Lambda).
\end{align*}

From the previous estimate and the inequality \rf{equv0} (with $u=C_n N_B$, $v=c_{2\Lambda B} m_{2\Lambda B,\omega} N_{\Omega^+}$), we infer that
\begin{equation}\label{equv1}
\Big|N_B  -   \frac{m_{2\Lambda B,\omega} N_{\Omega^+}}{|m_{2\Lambda B,\omega} N_{\Omega^+}|}\Big|\lesssim C(A,\delta,\Lambda)\,\|N_{\Omega^+}\|_{*,r(2\Lambda B) ,\omega^+} +   \wt C(\tau_0,\tau_h,\delta,A,\Lambda).
\end{equation}
Now we claim that
\begin{equation}\label{equv2}
\Big|m_{B,\omega} N_{\Omega^+}  -   \frac{m_{2\Lambda B,\omega} N_{\Omega^+}}{|m_{2\Lambda B,\omega} N_{\Omega^+}|}\Big| \lesssim_\Lambda \|N_{\Omega^+}\|_{*,r(2\Lambda B) ,\omega^+}.
\end{equation}
{ In fact, by the triangle inequality,
$$\Big|m_{B,\omega} N_{\Omega^+}  -   \frac{m_{2\Lambda B,\omega} N_{\Omega^+}}{|m_{2\Lambda B,\omega} N_{\Omega^+}|}\Big|
 \leq \Big|m_{B,\omega} N_{\Omega^+}  - m_{2\Lambda B,\omega} N_{\Omega^+}\Big| + 
\Big|m_{2\Lambda B,\omega} N_{\Omega^+}  -   \frac{m_{2\Lambda B,\omega} N_{\Omega^+}}{|m_{2\Lambda B,\omega} N_{\Omega^+}|}\Big|.$$
By standard estimates, the first term on the right hand side does not exceed 
$C(\Lambda)\|N_{\Omega^+}\|_{*,r(2\Lambda B) ,\omega^+}$, while the second one equals
$$\Big|m_{2\Lambda B,\omega} \Big(N_{\Omega^+}  -   \frac{m_{2\Lambda B,\omega} N_{\Omega^+}}{|m_{2\Lambda B,\omega} N_{\Omega^+}|}\Big)\Big| \leq \avint_{2\Lambda B}\Big| N_{\Omega^+}(x)- \frac{m_{2\Lambda B,\omega} N_{\Omega^+}}{|m_{2\Lambda B,\omega} N_{\Omega^+}|}\Big| \,d\omega(x).$$
Using again \rf{equv0} with $u= N_{\Omega^+}(x)$ and $v= \frac{m_{2\Lambda B,\omega} N_{\Omega^+}}{|m_{2\Lambda B,\omega} N_{\Omega^+}|}$, we get
$$\avint_{2\Lambda B}\Big| N_{\Omega^+}(x)- \frac{m_{2\Lambda B,\omega} N_{\Omega^+}}{|m_{2\Lambda B,\omega} N_{\Omega^+}|}\Big| \,d\omega(x)\leq
 2 \avint_{2\Lambda B}|N_{\Omega^+}(x)- m_{2\Lambda B,\omega} N_{\Omega^+} |\,d\omega(x)
\lesssim \|N_{\Omega^+}\|_{*,r(2\Lambda B) ,\omega^+},$$
and so our claim follows.
}

By combining \rf{equv1} and \rf{equv2}, and taking into account that $\|N_{\Omega^+}\|_{*,r(2\Lambda B) ,\omega^+}\to0$ as $r(2\Lambda B)\to0$, the lemma follows.
\end{proof}

\vv

\subsection{The end of the proof of (c) $\Rightarrow$ (a) in the case  $\dfrac{d\omega^+}{d\omega^-}\in B_{3/2}(\omega^-)$} \label{sec4.5}

As explained in \cite{Kenig-Toro-duke}, it is enough to show that there exists some $\beta>0$ such that
for all $\ve>0$, for any ball $B$ centered { in} $\partial\Omega^+$ with radius $r(B)$ small enough,
the following reverse H\"older inequality holds:
$$\left(\avint_B \left(\frac{d\omega^-}{d\omega^+}\right)^{1+\beta}d\omega^+\right)^{1/(1+\beta)}\leq 
(1+\ve)\,\frac{\omega^-(B)}{\omega^+(B)}.$$

For some big $\Lambda>1$ to be chosen below, we consider the ball $B_0=\Lambda B$, we define 
$G_0=G(B_0)$ as in Section \ref{secstop}, and we construct the domains $\Omega_b^\pm$ and $\Omega_s^\pm$ as above (whose construction depends on $B_0$ and thus on $B$).
 Observe that by the maximum principle, for all $E\subset G_0$,
$$\omega^-(E) \leq \omega_b^-(E)\qquad \mbox{and}\qquad \omega^+(E) \geq \omega_s^+(E).$$
As a consequence,
$$\frac{d\omega^-}{d\omega^+}\leq \frac{d\omega_b^-}{d\omega_s^+}\quad\mbox{ in $G_0$}.$$
So we have
$$\int_{B\cap G_0} \left(\frac{d\omega^-}{d\omega^+}\right)^{1+\beta}d\omega^+ 
= \int_{B\cap G_0} \left(\frac{d\omega^-}{d\omega^+}\right)^{\beta}d\omega^-
\leq \int_{B\cap G_0} \left(\frac{d\omega_b^-}{d\omega_s^+}\right)^{\beta}d\omega_b^- = 
\int_{B\cap G_0} \left(\frac{d\omega_b^-}{d\omega_s^+}\right)^{1+\beta}d\omega_s^+.
$$
Then we write
\begin{equation}\label{eqspl99}
\avint_B \left(\frac{d\omega^-}{d\omega^+}\right)^{1+\beta}d\omega^+ \leq
\frac1{\omega^+(B)} \int_{B\cap G_0} \left(\frac{d\omega_b^-}{d\omega_s^+}\right)^{1+\beta}d\omega_s^+
+ \frac1{\omega^+(B)} \int_{B\setminus G_0} \left(\frac{d\omega^-}{d\omega^+}\right)^{1+\beta}d\omega^+.
\end{equation}
To estimate the last term, we just apply H\"older's inequality and we take into account that
$\frac{d\omega^-}{d\omega^+}$ satisfies some reverse H\"older's inequality with exponent $1+2\beta$
(because $\omega^-\in A_\infty(\omega^+)$ and we choose $\beta$ in this way).
Then we get, by Lemma \ref{lem4.1},
\begin{align}\label{eqali741}
\frac1{\omega^+(B)} \int_{B\setminus G_0} \left(\frac{d\omega^-}{d\omega^+}\right)^{1+\beta}d\omega^+
& \leq 
\frac1{\omega^+(B)} \left(\int_{B} \left(\frac{d\omega^-}{d\omega^+}\right)^{1+2\beta}d\omega^+\right)^{\frac{1+\beta}{1+2\beta}} \omega^+(B\setminus G_0)^{\frac{\beta}{1+2\beta}}\\
& \leq C
\left(\frac{\omega^+(B\setminus G_0)}{\omega^+(B)}\right)^{\frac{\beta}{1+2\beta}}\,
\left(\frac{\omega^-(B)}{\omega^+(B)}\right)^{1+\beta}\nonumber \\
& \leq C(\Lambda)\,(\ve')^{\frac{\beta}{1+2\beta}}
\left(\frac{\omega^-(B)}{\omega^+(B)}\right)^{1+\beta}.\nonumber
\end{align}

Next we deal with the first term on the right hand side of \rf{eqspl99}. { The following lemma
is a straightforward consequence of the arguments above and the techniques in the work of Kenig and Toro \cite{Kenig-Toro-duke}.}

\begin{lemma}\label{lemguai1}
For any $\ve_4>0$, if $\tau_0$ is small enough, $\Lambda$ is big enough and $r(\Lambda B)$ is small enough,
we have
$$\avint_{B} \left(\frac{d\omega_b^-}{d\omega_s^+}\right)^{2}d\omega_s^+
\leq (1+\ve_4)\,\left(\frac{\omega_b^-(B)}{\omega_s^+(B)}\right)^{2}.$$
\end{lemma}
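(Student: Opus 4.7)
The plan is to exploit that $\Omega_s^+$ and $\Omega_b^-$ are complementary approximating chord-arc Reifenberg flat domains sharing a common boundary $\Gamma := \partial\Omega_s^+ = \partial\Omega_b^-$, and to run the Kenig–Toro one-phase theorem on each side. Set $\sigma = \mathcal{H}^n|_{\Gamma}$. By Lemma \ref{lemreif1} both $\Omega_s^+$ and $\Omega_b^-$ are $(c\tau_0^{1/2}, r(B_0)/2)$-Reifenberg flat, and Lemma \ref{lemcreix} together with the Reifenberg flatness shows that $\sigma$ is $n$-AD regular on scales $\le r(B_0)$, so both are chord-arc. Moreover, by Lemma \ref{lemnorm} the inner unit normals $N_{\Omega_s^+}$ and $N_{\Omega_b^-}=-N_{\Omega_s^+}$ satisfy, for every ball $B'$ centered on $\Gamma$ with $r(B') \leq r(B_0)=r(\Lambda B)$,
\[
\avint_{B'\cap\Gamma}\!|N_{\Omega_s^+}-m_{\sigma,B'}N_{\Omega_s^+}|\,d\sigma \;\leq\; C(A,\delta)\,\|N_{\Omega^+}\|_{*,r(10\Lambda_0 B_0),\omega^+} + \tau_0^{1/2} + \ve_1(\tau_0).
\]
Since $N_{\Omega^+}\in\vmo(\omega^+)$ (hypothesis of (c)), for any prescribed $\eta>0$ we can guarantee that $\|N_{\Omega_s^+}\|_{*,r(B_0),\sigma}\leq\eta$ (and the same for $N_{\Omega_b^-}$) by choosing $\tau_0$ small enough and then $r(\Lambda B)$ small enough, after fixing $A$ and $\delta$ appropriately.

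Next I would invoke the quantitative counterpart of Theorem A in the direction (c)$\Rightarrow$(a) due to Kenig and Toro \cite{Kenig-Toro-duke,Kenig-Toro-annals}: for a $(\delta,R)$-Reifenberg flat chord-arc domain $\Omega$, smallness of $\|N_\Omega\|_{*,R,\sigma}$ forces smallness of $\|\log(d\omega/d\sigma)\|_{*,R',\sigma}$ on some smaller scale $R'$. Applying this to both $\Omega_s^+$ and $\Omega_b^-$ (with their respective poles and the common surface measure $\sigma$), we obtain that given any $\eta'>0$, provided $\eta$ above was taken small enough and $r(B)$ is small enough,
\[
\Bigl\|\log\tfrac{d\omega_s^+}{d\sigma}\Bigr\|_{*,r(B),\sigma} + \Bigl\|\log\tfrac{d\omega_b^-}{d\sigma}\Bigr\|_{*,r(B),\sigma} \;\leq\; \eta'.
\]
By the triangle inequality, writing $h=d\omega_b^-/d\omega_s^+$, this yields $\|\log h\|_{*,r(B),\sigma}\leq\eta'$. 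Because $\log(d\omega_s^+/d\sigma)\in\bmo(\sigma)$ with small norm forces $d\omega_s^+/d\sigma\in A_\infty(\sigma)$ with constant close to $1$, the $\bmo$ norms with respect to $\sigma$ and to $\omega_s^+$ are comparable on balls of radius $\le r(B)$, so $\|\log h\|_{*,r(B),\omega_s^+}\lesssim\eta'$.

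Finally, I would close the proof with the John–Nirenberg inequality applied to $\log h$ in the doubling measure $\omega_s^+$: for $\eta'$ small enough, $h$ is an $A_2(\omega_s^+)$ weight with constant arbitrarily close to $1$, and hence satisfies a reverse Hölder inequality of exponent $2$ with constant $(1+\ve_4)$. Unwinding, this is exactly
\[
\avint_B h^2\,d\omega_s^+ \;\leq\; (1+\ve_4)\Bigl(\avint_B h\,d\omega_s^+\Bigr)^2 \;=\; (1+\ve_4)\Bigl(\tfrac{\omega_b^-(B)}{\omega_s^+(B)}\Bigr)^2.
\]

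The main delicate point is Step 2: one needs a genuinely quantitative version of the implication (c)$\Rightarrow$(a) of Theorem A, tracking that the BMO norm of $\log(d\omega/d\sigma)$ is controlled by (and tends to zero with) the BMO norm of the normal, on balls strictly inside a fixed larger ball. Such a statement is implicit in the Kenig–Toro blow-up/compactness machinery from \cite{Kenig-Toro-duke,Kenig-Toro-annals}, and is the reason the lemma is described in the paper as a straightforward consequence of "the techniques in the work of Kenig and Toro". The secondary technical points—tracking the parameter cascade $\Lambda\to\tau_0\to r(B)$ so that every auxiliary smallness can be arranged without circularity, and passing between BMO norms in $\sigma$ and $\omega_s^+$—are routine once the quantitative Kenig–Toro estimate is in hand.
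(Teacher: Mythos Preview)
Your approach is correct and tracks the paper's proof closely through the key steps: both set $\sigma=\HH^n|_{\partial\Omega_s^+}=\HH^n|_{\partial\Omega_b^-}$, invoke Lemmas~\ref{lemreif1}, \ref{lemcreix}, \ref{lemnorm} to see that the approximating domains are chord-arc with small $\bmo(\sigma)$ norm of the unit normal, and then apply the quantitative Kenig--Toro implication to get small $\|\log(d\omega_s^+/d\sigma)\|_{*,\sigma}$ and $\|\log(d\omega_b^-/d\sigma)\|_{*,\sigma}$.

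The only difference is in how the final reverse H\"older inequality is assembled. The paper applies Korey's theorem \emph{separately} to $d\omega_b^-/d\sigma$ and $d\sigma/d\omega_s^+$ to obtain reverse H\"older inequalities with exponents $4$ and $3$ and constants $(1+\ve_4)$, and then combines them via Cauchy--Schwarz, working entirely with respect to $\sigma$. You instead add the two $\log$-$\bmo$ bounds to control $\|\log h\|_{*,\sigma}$ directly, transfer this to $\|\log h\|_{*,\omega_s^+}$ using that $d\omega_s^+/d\sigma\in A_\infty(\sigma)$ with constant close to $1$, and then apply John--Nirenberg/Korey once to $h$ in $\omega_s^+$. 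Your route is a bit more direct but needs the extra (routine) $\bmo$-transfer lemma between $\sigma$ and $\omega_s^+$; the paper's route sidesteps that transfer at the price of a short H\"older computation. Either way works.
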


\begin{proof}
Denote $\sigma= \HH^n|_{\partial \Omega_s^+} =\HH^n|_{\partial \Omega_b^-}$.
Recall that, by Lemma \ref{lemcreix}, $\Omega^+_s$ is a chord arc domain, and
by Lemma \ref{lemnorm}, the unit normal to $\partial \Omega_s^+$ satisfies
$$\|N_{\Omega_s^+}\|_{*,r(\Lambda B),\sigma} \lesssim C(A,\delta)\|N_{\Omega^+}\|_{*,r(10\Lambda B),\omega^+} + \tau_0^{1/2} + \ve_1,$$
with $\ve_1(\tau_0)$ as in Lemma \ref{lemnorm}. So given any $\ve_2>0$, if $r(\Lambda B)$ is small enough, and $\tau_0$ small enough, we will have
$$\|N_{\Omega_s^+}\|_{*,\Lambda r(B),\sigma}\leq \ve_2.$$
Then by a suitable quantitative version of (c) $\Rightarrow$ (a) in Theorem A from Kenig and Toro (more precisely, by \cite[Theorem 4.2]{Kenig-Toro-annals} and \cite[Corollary 5.2]{Kenig-Toro-duke}),
given any $\ve_3>0$, 
assuming $\Lambda$ big enough, $\tau_0$ small enough, and $\ve_2$ small enough, we have that 
$$\bigg\|\log \frac{d\omega_s^+}{d\sigma} \bigg\|_{*,r(\Lambda^{1/2}B),\sigma}\leq \ve_3
\qquad \mbox{and}\qquad \bigg\|\log \frac{d\omega_b^-}{d\sigma} \bigg\|_{*,r(\Lambda^{1/2}B),\sigma}\leq \ve_3.
$$
From this fact and Korey's work \cite{Korey} it follows that, given any $\ve_4>0$, if $\ve_3$ is small enough and $\Lambda$ big enough, then $\frac{d\sigma}{d\omega_s^+}$
and $\frac{d\omega_b^-}{d\sigma}$ satisfy the following reverse H\"older inequalities:
$$\avint_{B} \left(\frac{d\omega_b^-}{d\sigma}\right)^{4}d\sigma
\leq (1+\ve_4)\,\left(\frac{\omega_b^-(B)}{\sigma(B)}\right)^{4}$$
and 
$$\avint_{B} \left(\frac{d\sigma}{d\omega_s^+}\right)^{3}d\omega_s^+
\leq (1+\ve_4)\,\left(\frac{\sigma(B)}{\omega_s^+(B)}\right)^{3}$$
(here we have chosen the exponents $4$ and $3$ because they are useful for our purposes, although Korey's work shows that one can choose arbitrarily large exponents if $\ve_3$ is small enough).
Therefore,
\begin{align*}
\avint_{B} \left(\frac{d\omega_b^-}{d\omega_s^+}\right)^2\,d\omega_s^+ &= \frac1{\omega_s^+(B)}
\int_{B} \left(\frac{d\omega_b^-}{d\sigma}\right)^2 \,\frac{d\sigma}{d\omega_s^+}\,
d\sigma \\
& \leq \frac1{\omega_s^+(B)}\left(\int_{B} \left(\frac{d\omega_b^-}{d\sigma}\right)^4 \,d\sigma\right)^{1/2}
\left(\int_{B} \left(\frac{d\sigma}{d\omega_s^+}\right)^2\,
d\sigma\right)^{1/2}\\
& = \frac{\sigma(B)^{1/2}}{\omega_s^+(B)^{1/2}}
\left(\avint_{B} \left(\frac{d\omega_b^-}{d\sigma}\right)^4 \,d\sigma\right)^{1/2}
\left(\avint_{B} \left(\frac{d\sigma}{d\omega_s^+}\right)^3\,
d\omega_s^+\right)^{1/2}\\
& \leq (1+\ve_4)^{1/2}\,\frac{\sigma(B)^{1/2}}{\omega_s^+(B)^{1/2}}\,\left(\frac{\omega_b^-(B)}{\sigma(B)}\right)^{2}
(1+\ve_4)^{1/2}\,\left(\frac{\sigma(B)}{\omega_s^+(B)}\right)^{3/2}\\
& = (1+\ve_4) \,\left(\frac{\omega_b^-(B)}{\omega_s^+(B)}\right)^2.
\end{align*}
\end{proof}

\vv
From \rf{eqspl99}, \rf{eqali741}, and Lemma \ref{lemguai1} (assuming $\beta\leq1$), we deduce
\begin{align*}
\avint_B \left(\frac{d\omega^-}{d\omega^+}\right)^{1+\beta}d\omega^+ & \leq
\frac{\omega_s^+(B)}{\omega^+(B)} \avint_{B} \left(\frac{d\omega_b^-}{d\omega_s^+}\right)^{1+\beta}d\omega_s^+
+  C(\Lambda)\,(\ve')^{\frac{\beta}{1+2\beta}}
\left(\frac{\omega^-(B)}{\omega^+(B)}\right)^{1+\beta}\\
& \leq
\frac{\omega_s^+(B)}{\omega^+(B)}\left(\avint_{B} \left(\frac{d\omega_b^-}{d\omega_s^+}\right)^2d\omega_s^+\right)^{\frac{1+\beta}2}
+  C(\Lambda)\,(\ve')^{\frac{\beta}{1+2\beta}}
\left(\frac{\omega^-(B)}{\omega^+(B)}\right)^{1+\beta}\\
& \leq (1+\ve_4)^{\frac{1+\beta}2}\,
\frac{\omega_s^+(B)}{\omega^+(B)}
\left(\frac{\omega_b^-(B)}{\omega_s^+(B)}\right)^{1+\beta}
+   C(\Lambda)\,(\ve')^{\frac{\beta}{1+2\beta}}
\left(\frac{\omega^-(B)}{\omega^+(B)}\right)^{1+\beta}\\
& =
\left((1+\ve_4)^{\frac{1+\beta}2}\,\left(\frac{\omega^+(B)}{\omega_s^+(B)}\right)^\beta\left(\frac{\omega_b^-(B)}{\omega^-(B)}\right)^{1+\beta} +
 C(\Lambda)\,(\ve')^{\frac{\beta}{1+2\beta}}\right)
\left(\frac{\omega^-(B)}{\omega^+(B)}\right)^{1+\beta}.
\end{align*}
In view of this estimate, to conclude the proof of Theorem \ref{teo1}, it suffices to prove the following.

\begin{lemma}\label{lemquasigu}
Given $\ve_5>0$, if $r(\Lambda B)$ is small enough, $\Lambda$ big enough, and $\tau_0$ small enough { (possibly depending on $\Lambda$)}, then
\begin{equation}\label{eqffii1}
\omega^+(B)\leq (1+\ve_5)\,\omega_s^+(B)\quad \mbox{ and }\quad
\omega_b^-(B)\leq (1+\ve_5)\,\omega^-(B).
\end{equation}
\end{lemma}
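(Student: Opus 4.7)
The plan is to prove the first inequality $\omega^+(B)\leq (1+\ve_5)\omega^+_s(B)$; the second inequality $\omega_b^-(B)\leq (1+\ve_5)\omega^-(B)$ will follow by an analogous argument with $\pm$ and $s/b$ swapped. Since $\Omega^+_s\subset\Omega^+$, the maximum principle gives $\omega^+_s\leq\omega^+$ on the common boundary, so only the quantitative reverse direction requires work. First I would introduce the harmonic function $u(x):=\omega^{+,x}(B\cap\partial\Omega^+)$ on $\Omega^+\supset\Omega^+_s$ and apply the Poisson representation in $\Omega^+_s$. With $U:=\bigcup_{S\in\WW_0}B_S$, the boundary splits as $\partial\Omega^+_s=(\partial\Omega^+_s\cap\partial\Omega^+)\sqcup(\partial U\cap\Omega^+)$, so
\begin{equation*}
\omega^+(B)=u(p^+)=\omega^+_s(B\cap\partial\Omega^+\cap\partial\Omega^+_s)+\int_{\partial U\cap\Omega^+} u(y)\,d\omega^+_s(y).
\end{equation*}
Subtracting $\omega^+_s(B)=\omega^+_s(B\cap\partial\Omega^+\cap\partial\Omega^+_s)+\omega^+_s(B\cap\partial U\cap\Omega^+)$ and using $0\leq u\leq 1$ will give
\begin{equation*}
\omega^+(B)-\omega^+_s(B)\leq \int_{(\partial U\cap\Omega^+)\setminus B} u(y)\,d\omega^+_s(y)=:\mathcal J,
\end{equation*}
reducing the lemma to showing $\mathcal J\leq\tfrac{\ve_5}{2}\omega^+(B)$, which will yield $\omega^+_s(B)\geq(1-\tfrac{\ve_5}{2})\omega^+(B)$ and hence the desired bound.

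Next I would decompose $\mathcal J=\sum_{S\in\WW_0}I_S$ with $I_S:=\int_{(\partial B_S\cap\Omega^+)\setminus B}u\,d\omega^+_s$, using the basic NTA comparison $\omega^+_s(\partial B_S\cap\Omega^+)\lesssim\omega^+(CB_S)$: Brownian motion from $p^+$ that exits $\Omega^+_s$ through the hole $\partial B_S\cap\Omega^+$ will subsequently exit $\Omega^+$ in $CB_S\cap\partial\Omega^+$ with uniformly positive probability by the corkscrew/NTA structure. I would then separate three regimes. For \emph{near} balls with $B_S\subset 2B$, the trivial bound $u\leq 1$ combined with the Whitney bounded overlap of $\{CB_S\}$ and Lemma \ref{lem4.1} yields
\begin{equation*}
\sum_{S:\,B_S\subset 2B} I_S\lesssim\sum_S \omega^+(CB_S)\lesssim \omega^+(B_0\setminus G_0)\leq\ve'\omega^+(B_0)\leq C(\Lambda)\ve'\omega^+(B),
\end{equation*}
which is at most $\ve_5\omega^+(B)/6$ once $\ve'=\ve'(A,\delta)$ is small (i.e., $A$ large and $\delta$ small). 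For \emph{straddling} balls with $B_S\cap 2B\neq\varnothing$ and $B_S\not\subset 2B$, the center $z_S$ must lie in a thin annulus of width $\sim\tau_0\Lambda\,r(B)$ around $\partial(2B)$, and Lemma \ref{lemfacil} (with $\gamma_0$ small) will bound its $\omega^+$-mass by $\lesssim\tau_0\Lambda\omega^+(B)$, making this contribution $\leq \ve_5\omega^+(B)/6$ for $\tau_0$ chosen small enough after $\Lambda$ is fixed. For \emph{far} balls with $B_S\cap 2B=\varnothing$, the Carleson estimate in $\Omega^+\cap 4B_S$ (where $u$ vanishes on $\partial\Omega^+\cap 4B_S$) will give $u(y)\lesssim u(A_S^*)$ for $y\in\partial B_S\cap\Omega^+$, where $A_S^*$ denotes a corkscrew of $\Omega^+$ at $z_S$ of scale $r(B_S)$.

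The hard part will be controlling the far-ball sum. Writing $A_B^*$ for a corkscrew of $\Omega^+$ at $z_B$ of scale $r(B)$ and using the Jerison-Kenig Green function identity $\omega^{+,x}(B\cap\partial\Omega^+)\approx r(B)^{n-1}G^+(x,A_B^*)$ (applied at $x=A_S^*\in\Omega^+\setminus 2B$) together with the analogous $\omega^{+,A_B^*}(CB_S\cap\partial\Omega^+)\approx r(B_S)^{n-1}G^+(A_B^*,A_S^*)$ and symmetry of the Green function, I obtain
\begin{equation*}
I_S\lesssim u(A_S^*)\,\omega^+(CB_S)\approx\bigl(r(B)/r(B_S)\bigr)^{n-1}\omega^{+,A_B^*}(CB_S)\,\omega^+(CB_S).
\end{equation*}
After reorganizing the resulting sum via reciprocity, bounded overlap of the $CB_S$, and the change-of-pole relation $\omega^{+,A_B^*}(E)\approx \omega^+(E)/\omega^+(B)$ for $E\subset\partial\Omega^+$ suitably far from $B$, the total far contribution will be controlled by a constant times $\omega^+(B_0\setminus G_0)$, and one more application of Lemma \ref{lem4.1} brings it below $\ve_5\omega^+(B)/6$. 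Combining the three regime contributions will produce $\mathcal J\leq\tfrac{\ve_5}{2}\omega^+(B)$ and finish the proof.
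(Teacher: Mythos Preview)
Your overall setup—representing $\omega^+(B)$ via the Poisson formula in $\Omega_s^+$ and reducing to the bound $\mathcal J\leq\tfrac{\ve_5}{2}\omega^+(B)$—is correct, and the near and straddling regimes are fine. The gap is in the far-ball regime. The change-of-pole relation you invoke, $\omega^{+,A_B^*}(E)\approx\omega^+(E)/\omega^+(B)$ for $E$ \emph{far} from $B$, is false: the Jerison--Kenig formula gives this comparison only for $E$ contained in the surface ball whose corkscrew you use, not for distant $E$ (for small $B$ one has $\omega^{+,A_B^*}(\partial\Omega^+\setminus 2B)\leq1$ while $\omega^+(\partial\Omega^+\setminus 2B)/\omega^+(B)\to\infty$). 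Consequently your conclusion that the far contribution is $\lesssim\omega^+(B_0\setminus G_0)$ cannot be right either: the far balls cover all of $\partial\Omega^+\setminus 3B$, including everything outside $B_0=\Lambda B$, so they are not contained in the bad set and Lemma~\ref{lem4.1} does not apply to them.

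The correct mechanism is an annular decomposition at the \emph{distance scale}, not the hole scale. Group the far balls by $z_S\in 2^{k+1}B\setminus 2^kB$; for $y\in\partial B_S\cap\Omega^+$ one has $\dist(y,\partial\Omega^+)\leq r(B_S)\leq\tau_0\Lambda r(B)$ and $u$ vanishes on $\partial\Omega^+\cap B(z_S,2^{k-1}r(B))$, so boundary H\"older continuity at scale $2^kr(B)$ gives $u(y)\lesssim(\tau_0\Lambda 2^{-k})^\eta\,\omega^{+,p_k}(B)$ with $p_k$ a corkscrew for $2^kB$. Now the \emph{correct} change of pole $\omega^{+,p_k}(B)\approx\omega^+(B)/\omega^+(2^kB)$ applies, and summing over $S$ in the $k$-th annulus (bounded overlap gives $\sum_S\omega^+(CB_S)\lesssim\omega^+(2^{k+2}B)$) and then over $k$ using Lemma~\ref{lemfacil} yields $\lesssim(\tau_0\Lambda)^\eta\omega^+(B)$. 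This is precisely the argument in the paper's proof, which runs it on the other side: it writes $\omega_b^-(B)=\int_{\partial\Omega^-}\omega_b^{-,x}(B)\,d\omega^-(x)$, splits $\partial\Omega^-$ into $(1+\ve_5)B$, a thin collar, and dyadic annuli, and exploits $\dist_H(\partial\Omega^-,\partial\Omega_b^-)\lesssim\Lambda\tau_0 r(B)$ together with H\"older continuity of $\omega_b^{-,\cdot}(B)$. That route avoids summing over the individual holes and, notably, needs no appeal to Lemma~\ref{lem4.1} at all.
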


\begin{proof}
We will show first that 
{$\omega_b^-(B)\leq (1+C\ve_5)\,\omega^-(B)$ for some $C$ depending just on $n$, which clearly is equivalent to the second estimate in \rf{eqffii1}.}

Notice that $\Omega^-\subset \Omega_b^-$ and 
\begin{equation}\label{eqdisth1}
\dist_H(\partial\Omega^-,\partial\Omega^-_b)\leq C\Lambda\tau_0r(B),
\end{equation}
{ by the construction of the approximating domains associated with $\Lambda B$.}
 Then, 
  we can write
\begin{align}\label{eqsp953}
\omega_b^-(B) & = \omega_b^{-,p}(B) = \int_{\partial\Omega^-} \omega^{-,x}_b(B)\,d\omega^{-,p}(x)\\
& = \int_{(1+\ve_5)B\cap \partial\Omega^-} \omega^{-,x}_b(B)\,d\omega^{-,p}(x)\nonumber\\
&\quad + \int_{(2B\setminus (1+\ve_5) B)\cap \partial\Omega^-} \omega^{-,x}_b(B)\,d\omega^{-,p}(x) + \sum_{k\ge1} \int_{(2^{k+1}B\setminus 2^kB)\cap \partial\Omega^-} \omega^{-,x}_b(B)\,d\omega^{-,p}(x).
\nonumber
\end{align}
Next we estimate each of the three terms on the right hand side separately. Regarding the first one, from
Theorem 4.1 in \cite{Kenig-Toro-duke}, we deduce that if $\tau_0$ is small enough and $\Lambda$ big enough,
\begin{align*}
\int_{(1+\ve_5)B\cap \partial\Omega^-} \omega^{-,x}_b(B)\,d\omega^{-,p}(x) & \leq 
\omega^-((1+\ve_5)B) \\
&\leq 
{
(1+2\ve_5)}\,
 \omega^-(B) \left(\frac{r((1+\ve_5)B)}{r(B)}\right)^n
\leq \big(1+C\ve_5\big) \omega^-(B).
\end{align*}

Now we turn our attention to the second term on the right hand side of \rf{eqsp953}:
$$ \int_{(2B\setminus (1+\ve_5) B)\cap \partial\Omega^-} \omega^{-,x}_b(B)\,d\omega^{-,p}(x)\leq
\omega^-(2B)\,\sup_{x\in (2B\setminus (1+\ve_5) B)\cap \partial\Omega^-}
\omega^{-,x}_b(B).$$
Assuming that $\Lambda\tau_0\leq\ve_5^{1+1/\eta}$, from \rf{eqdisth1} and the  H\"older continuity of $\omega_b^{-,\cdot}$
in $(2B\setminus (1+\ve_5) B)\cap \Omega_b^-$ (see Lemma 4.1 \cite{Jerison-Kenig})
we obtain
$$\sup_{x\in (2B\setminus (1+\ve_5) B)\cap \partial\Omega^-}
\omega^{-,x}_b(B) \lesssim \left(\frac{\Lambda \tau_0\,r(B)}{\ve_5\,r(B)}\right)^\eta =
(\Lambda \tau_0\ve_5^{-1})^\eta \leq \ve_5 
,$$
for some $\eta>0$ depending only on the NTA character of $\Omega^\pm$. Thus, using also
that $\omega^-(2B)\approx\omega^-(B)$ we derive
$$ \int_{(2B\setminus (1+\ve_5) B)\cap \partial\Omega^-} \omega^{-,x}_b(B)\,d\omega^{-,p}(x)\lesssim 
\ve_5\omega^-(B).$$

Finally we deal with the last term on the right hand side of \rf{eqsp953}. We argue as above:
$$\sum_{k\ge1} \int_{(2^{k+1}B\setminus 2^kB)\cap \partial\Omega^-} \omega^{-,x}_b(B)\,d\omega^{-,p}(x)
\lesssim \sum_{k\ge1} \omega^-(2^{k+1}B) \sup_{x\in (2^{k+1}B\setminus 2^k B)\cap \partial\Omega^-} \omega^{-,x}_b(B).$$
Using the  H\"older continuity of $\omega_b^{-,\cdot}$ and \cite[Lemma 4.4]{Jerison-Kenig}, we get
$$\sup_{x\in(2^{k+1}B\setminus 2^k B)\cap \partial\Omega^-} \omega^{-,x}_b(B)\lesssim
\left(\frac{\Lambda \tau_0\,r(B)}{r(2^kB)}\right)^\eta \omega_b^{-,p_k}(B),$$
where $p_k$ is a corkscrew point for $2^k B$.
Now we take into account that 
$$\omega_b^{-,p_k}(B) \approx \frac{\omega_b^-(B)}{\omega_b^-(2^kB)},$$
and then we deduce that
$$\sum_{k\ge1} \int_{(2^{k+1}B\setminus 2^kB)\cap \partial\Omega^-} \omega^{-,x}_b(B)\,d\omega^{-,p}(x)\lesssim \sum_{k\ge1} \omega^-(2^{k+1}B)
\left(2^{-k}\Lambda \tau_0\right)^\eta  \frac{\omega_b^-(B)}{\omega_b^-(2^kB)}.$$
Next we use that, by Lemma \ref{lemfacil},
$$\omega^{-}(2^{k+1}B)\lesssim \left(\frac{r(2^{k+1}B)}{r(B)}\right)^{n+\gamma}\!\omega^-(B)
\quad \mbox{ and }\quad
\omega_b^{-}(B)\lesssim \left(\frac{r(B)}{r(2^kB)}\right)^{n-\gamma}\!\omega_b^{-}(2^kB),
$$
and then we get
$$\sum_{k\ge1} \int_{2^{k+1}B\setminus 2^kB\cap \partial\Omega^-} \omega^{-,x}_b(B)\,d\omega^{-,p}(x)\lesssim \sum_{k\ge1} \left(2^{-k}\Lambda \tau_0\right)^\eta\,(2^k)^{2\gamma}\,\omega^-(B).$$
Assuming $\tau_0$ small enough, we have $2\gamma\leq \eta/2$, and then
we deduce that the right hand side is at most $C\left(\Lambda \tau_0\right)^\eta$. Gathering the
estimates obtained for the three terms on the right hand side of 
\rf{eqsp953}, the second estimate in \rf{eqffii1} follows.

The proof of the inequality in \rf{eqffii1} is analogous. We just have to replace $\omega_b^-$ by $\omega^+$ and $\omega^-$ by $\omega^+_s$.
\end{proof}

\vv

\subsection{The proof of (c) $\Rightarrow$ (a) in the case  $\dfrac{d\omega^-}{d\omega^+}\in B_{3/2}(\omega^+)$}\label{seclast}

First we claim that the assumptions $N\in\vmo(\omega^+)$ and $\omega^-\in A_\infty(\omega^+)$ imply that
$N\in\vmo(\omega^-)$. The proof of this fact follows in the same way as the proof of Corollary
\ref{coro2} in Section \ref{seccoro2}, just replacing $\sigma$ by $\omega^-$. We leave the details for the reader.

Now it is  easy to see that in all the arguments in Sections \ref{secstop} - \ref{sec4.5} one can interchange the 
roles of $\omega^+$ and $\omega^-$, and then one deduces that 
$\log\frac{d\omega^+}{d\omega^-}
\in\vmo(\omega^-)$. Let us see that this implies that $\log\frac{d\omega^-}{d\omega^+}
\in\vmo(\omega^+)$. Given a ball $B$ centered { in} $\partial\Omega^+$, denote
$$C_B=m_{B,\omega^-}\Big(\log\frac{d\omega^+}{d\omega^-}\Big).$$
Then we have, for any $p>1$,
\begin{align*}
\avint_{B} \left|\log\frac{d\omega^-}{d\omega^+} + C_B\right|\,d\omega^+ &= \frac{\omega^-(B)}{\omega^+(B)} 
\avint_{B} \left|\log\frac{d\omega^+}{d\omega^-} - C_B\right|\,\frac{d\omega^+}{d\omega^-}\,d\omega^-
\\
& \leq
\frac{\omega^-(B)}{\omega^+(B)} 
\left(\avint_{B} \left|\log\frac{d\omega^+}{d\omega^-} - C_B\right|^p\,d\omega^-\right)^{1/p}
\left(\avint_{B} \left(\frac{d\omega^+}{d\omega^-}\right)^{p'}\,d\omega^-\right)^{1/p'}.
\end{align*}
Since $\omega^-$ and $\omega^+$ satisfy an $A_\infty$ relation, $\frac{d\omega^+}{d\omega^-}\in B_{p'}(\omega^-)$ for some $p'\in (1,\infty)$. Together with the John-Nirenberg inequality, this implies
that
$$\avint_{B} \left|\log\frac{d\omega^-}{d\omega^+} + C_B\right|\,d\omega^+\lesssim
\left\|\log\frac{d\omega^+}{d\omega^-}\right\|_{*,Cr(B),\omega^-},$$
which tends to $0$ (uniformly on $B$) as $r(B)\to 0$, because $\log\frac{d\omega^+}{d\omega^-}
\in\vmo(\omega^-)$. Clearly, this implies that $\log\frac{d\omega^-}{d\omega^+}
\in\vmo(\omega^+)$.


\section{Proof of (b) $\Rightarrow$ (d) $\Rightarrow$ (a)}

\subsection{The implication (b) $\Rightarrow$ (d)} This is a direct consequence of Lemma \ref{keylemma}.
In fact, this ensures 
that, under the assumptions in (b),
 $$\lim_{\rho\to0} \sup_{r(B)\leq \rho}\big|N_B - m_{B,\omega^+}N_{\Omega^+}\big| =0,$$
where the supremum is taken over all balls centered in $\partial\Omega^+$ with radius at most $\rho$.
Together with the fact that $N_{\Omega^+}\in \vmo(\omega^+)$, this implies that
\begin{equation}\label{eq*934}
\lim_{\rho\to 0} \sup_{r(B)\leq \rho} \avint_B |N_{\Omega^+} - N_{B}|\,d\omega^+ = 0. 
\end{equation}
Also, the fact that $\frac{d\omega^-}{d\omega^+}\in B_{3/2}(\omega^+)$ tells us, in particular, that
$\omega^-\in A_\infty(\omega^+)$, which is equivalent to the fact that 
$\Omega^+$ and $\Omega^-$ have joint big pieces of chord-arc domains.
\vv

{
\subsection{The implication (d) $\Rightarrow$ (a)} This is proven like (c) $\Rightarrow$ (a)
 in the case $\frac{d\omega^+}{d\omega^-}\in B_{3/2}(\omega^-)$ in Section \ref{secctoa}. Indeed, the reader can check that the only place where
the assumption 
 $\frac{d\omega^+}{d\omega^-}\in B_{3/2}(\omega^-)$ is
used is in the proof of Lemma \ref{keylemma}, which in turn is necessary for the proof of Lemma \ref{lemnorm}.
The remaining arguments for (c) $\Rightarrow$ (a) only
require the condition $\omega^-\in A_\infty(\omega^+)$. 
So the only changes that one has to do for all these arguments to be valid for the implication (d) $\Rightarrow$ (a) consist of eliminating 
Lemma \ref{keylemma} and the whole Section \ref{sec4.4} which is devoted to its proof, and then replacing
Lemma \ref{lemnorm} by the following variant:

\begin{lemma}\label{lemnorm'}
Suppose that $\Omega^+$ is $(\tau_1,r_0)$-Reifenberg flat for some $0<\tau_1\leq\tau_0$,
and that $\Lambda_0 r(B_0)\leq r_0$ for some $\Lambda_0\geq1$. Suppose also that $\omega^+$ and $\omega^-$ are mutually absolutely continuous and denote $\sigma =\HH^n|_{\partial\Omega^+_b}$. If $\tau_1$ is small enough (depending on $\tau_0$ and $\Lambda_0$), then
the oscillation of the inner unit normal to $\partial\Omega^+_b$ 
in any ball $B$ centered { in} $\partial\Omega^+_b$ with radius $0<r(B)\leq r( B_0)$
satisfies
$$ \avint_{B\cap \partial\Omega^+_b} |N_{\Omega^+_b} -m_{B,\sigma} N_{\Omega^+_b}|\,d\sigma\lesssim C(A,\delta)\|N_{\Omega^+}\|_{*,r(10\Lambda_0 B_0),\omega^+} + \tau_0^{1/2} + \ve_1,$$
where
$$\ve_1 \approx \sup_{\substack{x\in\partial\Omega^+\\0<r\leq 5r(B)}}  \avint_{\bar B(x,r)} |N_{\Omega^+} - N_{\bar B(x,r)}|\,d\omega^+$$
and $N_{\bar B(x,r)}$ stands for the inner unit normal to the best 
approximating hyperplane for $\bar B(x,r)\cap \partial\Omega^+$. 
Analogous estimates hold for $\Omega^-_b$ and $\Omega^\pm_s$.
\end{lemma}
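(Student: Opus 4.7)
The plan is to mirror the proof of Lemma \ref{lemnorm} step by step, since the only place in that argument where the hypothesis $\frac{d\omega^+}{d\omega^-}\in B_{3/2}(\omega^-)$ was actually used is in the invocation of Lemma \ref{keylemma} to obtain the bound $|N_S - m_{B_S,\omega^+}N_{\Omega^+}|\lesssim\ve_1$ in \rf{eqclau*92}. Every other estimate in the proof of Lemma \ref{lemnorm}---the decomposition of $\int_{B\cap\partial\Omega^+_b}$ according to the Whitney balls $B_S$, the control of the Lipschitz-graph pieces by $\tau_0^{1/2}$ via Lemma \ref{lemreif1}, the comparability $\sigma\approx_{A,\delta}\Theta_{\omega^+}(B_0)^{-1}\omega^+$ on $\overline{G(B_0)}$, the density comparisons $\Theta_{\omega^+}(B_S)\approx_{A,\delta}\Theta_{\omega^+}(B_0)$, and the disjointness arguments to sum over $S\in\wt I_1$---depends only on the Reifenberg flatness and the NTA character of $\Omega^\pm$, and not on any reverse H\"older condition relating $\omega^+$ and $\omega^-$.

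First I would split $\int_{B\cap\partial\Omega^+_b}|N_{\Omega^+_b}-C_B|\,d\sigma$ exactly as in \rf{eqspl63}, distinguishing the cases $I_2=\varnothing$ and $I_2\neq\varnothing$ and choosing $C_B=N_{S_0}$ or $C_B=m_{4B,\omega^+}N_{\Omega^+}$ accordingly. The case $I_2\neq\varnothing$ (where $B$ is essentially contained in a single $5\bar B_{S_0}$) and the contribution of $B\cap\overline{G(B_0)}$ (which yields the term $C(A,\delta)\|N_{\Omega^+}\|_{*,r(10\Lambda_0 B_0),\omega^+}\,\sigma(B)$) are handled verbatim as in Lemma \ref{lemnorm}, producing respectively the summands $\tau_0^{1/2}\sigma(B)$ and $C(A,\delta)\|N_{\Omega^+}\|_{*,r(10\Lambda_0 B_0),\omega^+}\sigma(B)$.

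The substantive change occurs in the estimate of $\sum_{S\in\wt I_1}\int_{5\bar B_S\cap B}|N_{\Omega^+_b}-C_B|\,d\sigma$, where one needs to control $|N_S-m_{B_S,\omega^+}N_{\Omega^+}|$. By the very definition of $N_S$ in the proof of Lemma \ref{lemnorm}, it is the inner unit normal to the best approximating $n$-plane for $5\bar B_S\cap\partial\Omega^+$; with the notation in the present statement, this is $N_S=N_{5\bar B_S}$. Since $z_S\in\partial\Omega^+$ and $r(5\bar B_S)=5r(B_S)\leq 5r(B)$ for $S\in\wt I_1$, the new definition of $\ve_1$ applies directly and yields
$$\avint_{5\bar B_S}|N_{\Omega^+}-N_S|\,d\omega^+\leq\ve_1.$$
Combined with the doubling property of $\omega^+$ and the inclusion $B_S\subset 5\bar B_S$, this gives
$$|N_S-m_{B_S,\omega^+}N_{\Omega^+}|\leq\avint_{B_S}|N_{\Omega^+}-N_S|\,d\omega^+\lesssim\avint_{5\bar B_S}|N_{\Omega^+}-N_S|\,d\omega^+\leq C\,\ve_1,$$
which replaces \rf{eqclau*92} without any appeal to Lemma \ref{keylemma} and hence without invoking the $B_{3/2}$ hypothesis.

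With this substitution in place, the remainder of the argument in Lemma \ref{lemnorm}---the breakdown of the $\sum_{S\in\wt I_1}$ sum into three terms bounded respectively by $\tau_0^{1/2}+\ve_1$, by $C(A,\delta)\|N_{\Omega^+}\|_{*,r(10\Lambda_0 B_0),\omega^+}$ and by a telescoping argument involving $|m_{B_S,\omega^+}N_{\Omega^+}-m_{4B,\omega^+}N_{\Omega^+}|$, together with the finite-overlap estimate for the balls $B_S$---proceeds unchanged and produces the stated inequality. The extension to balls $B$ centered in $\partial\Omega^+_b\setminus 10B_0$ is identical to the final paragraph of the proof of Lemma \ref{lemnorm}, since it only uses the Lipschitz-graph property guaranteed by Lemma \ref{lemreif1}. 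The only point requiring attention is the scale matching $r(5\bar B_S)\leq 5r(B)$ that makes the new definition of $\ve_1$ applicable; this is immediate from the defining property of $\wt I_1$, so there is no real obstacle in the argument.
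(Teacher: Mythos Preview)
Your proposal is correct and follows essentially the same approach as the paper: you mirror the proof of Lemma \ref{lemnorm} and replace the appeal to Lemma \ref{keylemma} in \rf{eqclau*92} by the direct estimate
\[
|N_S - m_{B_S,\omega^+}N_{\Omega^+}| \leq \avint_{B_S}|N_{\Omega^+}-N_S|\,d\omega^+ \lesssim \avint_{5\bar B_S}|N_{\Omega^+}-N_S|\,d\omega^+ \leq \ve_1,
\]
which is exactly what the paper does.
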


Observe that, by the assumption in (d), $\ve_1\to0$ as $r(B)\to0$.

\begin{proof}
The proof is almost the same as the one of Lemma \ref{lemnorm}. The only difference is that now we do not have to appeal to Lemma \ref{keylemma} to prove
\rf{eqclau*92}. Indeed, this estimate just has to be replaced by
\begin{align*}
|N_S - m_{B_S,\omega^+}N_{\Omega^+}| &\leq \avint_{B_S} |N_{\Omega^+} - N_S|\,d\omega^+ \lesssim \avint_{5\bar B_S} |N_{\Omega^+} - N_S|\,d\omega^+\\
&\leq
\sup_{\substack{x\in\partial\Omega^+\\0<r\leq 5r(B)}}  \avint_{\bar B(x,r)} |N_{\Omega^+} - N_{\bar B(x,r)}|\,d\omega^+.
\end{align*}
\end{proof}
}

 \enlargethispage{0.5cm}


\begin{thebibliography}{NTWNV1}

\bibitem[ACF]{ACF84}
H.~W. Alt,  L.~A. Caffarelli, and A. Friedman. {\em Variational problems with two
  phases and their free boundaries.} Trans. Amer. Math. Soc. {282}
  (1984), no.~2, 431--461.
  
\bibitem[AHM$^3$TV]{AHM3TV}
J. Azzam, S. Hofmann, J.M. Martell, S. Mayboroda, M. Mourgoglou, X. Tolsa, and A. Volberg. {\em Rectifiability of harmonic measure.}  Geom. Funct. Anal. (GAFA) 26 (2016), no.~3, 703--728. 

\bibitem[AM]{AM-blow} J. Azzam and M. Mourgoglou. {\em Tangent measures of elliptic measure and
applications.}  Anal. PDE 12 (2019), no. 8, 1891--1941.

\bibitem[AMT1]{AMT-singular} J. Azzam, M. Mourgoglou and X. Tolsa. {\em
Singular sets for harmonic measure on locally flat domains with locally finite surface measure.} Int Math Res Notices (2017) 2017 (12): 3751--3773.
  
\bibitem[AMT2]{AMT-cpam} J. Azzam, M. Mourgoglou and X. Tolsa. {\em Mutual absolute continuity of interior and exterior harmonic measure implies
rectifiability.} Comm. Pure Appl. Math. 71 (2017), no.~11, 2121--2163.

\bibitem[AMT3]{AMT-apde} J. Azzam, M. Mourgoglou and X. Tolsa. {\em The one-phase problem for harmonic measure in two-sided NTA domains.} Anal. PDE 10 (2017), no. 3, 559--588. 

\bibitem[AMT4]{AMT-quantcpam} J. Azzam, M. Mourgoglou and X. Tolsa. {\em A two-phase free boundary problem for harmonic measure and uniform rectifiability.} Preprint arXiv:1710.10111 (2017). To appear in Trans. Amer. Math. Soc.

\bibitem[AMTV]{AMTV} J. Azzam, M. Mourgoglou, X. Tolsa, and A. Volberg. {\em On a two-phase problem for harmonic measure in general domains.} 
 Amer. J. Math. 141 (2019), no. 5, 1259--1279. 

\bibitem[BET1]{BET1} M. Badger, M. Engelstein, and T. Toro. {\em
Structure of sets which are well approximated by zero sets of harmonic polynomials.}
Anal. PDE 10 (2017), no. 6, 1455--1495. 

\bibitem[BET2]{BET2} M. Badger, M. Engelstein, and T. Toro. {\em Regularity of the singular set in a two-phase problem for harmonic measure with H\"older data.} Preprint arXiv:1807.08002 (2018).

\bibitem[BH]{BH}
S.~Bortz and S.~Hofmann. \emph{A singular integral approach to a two phase free
  boundary problem}.  Proc. Amer. Math. Soc. 144 (2016), no. 9, 3959--3973.

  \bibitem[Bi]{Bishop-questions}
C.~J. Bishop. \emph{Some questions concerning harmonic measure}, Partial
  differential equations with minimal smoothness and applications ({C}hicago,
  {IL}, 1990), IMA Vol. Math. Appl., vol.~42, Springer, New York, 1992,
  pp.~89--97. 

\bibitem[Bu]{Buckley}
S.~M. Buckley. \emph{Inequalities of {J}ohn-{N}irenberg type in doubling spaces}. J. Anal. Math. 79 (1999), 215--240.

\bibitem[Ch]{Christ}
M.~Christ. \emph{A {$T(b)$} theorem with remarks on analytic capacity and the {C}auchy integral}. Colloq. Math. 60/61(2) (1990), 601--628.

\bibitem[DJ]{DJ}
G.~David and D.~Jerison. \emph{Lipschitz approximation to hypersurfaces,  harmonic measure, and singular integrals}. Indiana Univ. Math. J. {39}  (1990), no.~3, 831--845. 

\bibitem[DS]{DS1} G. David and S. Semmes. {\em Singular integrals and
rectifiable sets in $\R^n$: Beyond Lipschitz graphs,} Ast\'{e}risque,
No. 193 (1991).

\bibitem[En]{Engelstein} M. Engelstein. {\em A two-phase free boundary problem for harmonic measure.} Ann. Sci. \'Ec. Norm. Sup\'er. (4) 49 (2016), no. 4, 859--905. 

\bibitem[ENV]{ENV} V. Eiderman, F. Nazarov, and A. Volberg. \emph{The {$s$}-{R}iesz transform of an {$s$}-dimensional measure in {$\mathbb{R}^2$} is unbounded for {$1<s<2$}.} J. Anal. Math. 122 (2014), 1--23.

\bibitem[HMT]{HMT} S. Hofmann, M. Mitrea and M. Taylor. {\em Singular integrals and elliptic boundary problems on regular Semmes-Kenig-Toro domains,} Int. Math. Res. Not. 2010, no. 14 2567--2865.

\bibitem[GMT]{GMT} J. Garnett, M. Mourgoglou, and X. Tolsa. {\em Uniform rectifiability in terms of Carleson measure estimates and $\ve$-approximability of bounded harmonic functions.} Duke Math. J. Vol. 167 (2018), No. 8, 1473-1524. 

\bibitem[GT]{GT}
D. Girela-Sarri{\'o}n and X. Tolsa. {\em The {R}iesz transform and quantitative
  rectifiability for general {R}adon measures.} Calc. Var. Partial Differential Equations 57 (2018), no. 1, Art. 16, 63 pp.
    
\bibitem[JK]{Jerison-Kenig}
D.~S. Jerison and C.~E. Kenig. \emph{Boundary behavior of harmonic functions in
  nontangentially accessible domains}, Adv. in Math. {46} (1982), no.~1,
  80--147.
  
\bibitem[KPT]{KPT09}
C. Kenig, D. Preiss, and T. Toro. {\em Boundary structure and size in terms of
  interior and exterior harmonic measures in higher dimensions.} J. Amer.
  Math. Soc. {22} (2009), no.~3, 771--796.
    

\bibitem[KT1]{Kenig-Toro-duke} C.E. Kenig and T.~Toro. \emph{Harmonic measure on locally flat domains}. Duke
  Math. J. {87} (1997), no.~3, 509--551. 

\bibitem[KT2]{Kenig-Toro-annals} C. Kenig and T. Toro. {\em Free boundary regularity for harmonic measures and Poisson kernels,} Ann. of Math. 150 (1999) 369--454.

\bibitem[KT3]{Kenig-Toro-crelle} C. Kenig and T. Toro. {\em Free boundary regularity below the continuous threshold: 2 phase
problems}. J. Reine Angew Math. 596 (2006), 1--44.

\bibitem[KT4]{Kenig-Toro-AENS}
C. Kenig and T. Toro. {\em Poisson kernel characterization of Reifenberg flat chord arc domains}, Ann. Sci. Ecole Norm. Sup. (4) 36 (2003), no.3, 323--401.


\bibitem[Ko]{Korey} M. Korey. {\em Ideal weights: Asymptotically optimal versions of doubling, absolute continuity, and bounded mean oscillation.} J. Fourier Anal. Appl., 4(45) (1998), 491--519.


\bibitem[Ma]{Mas} A. Mas. {\em Variation for singular integrals on Lipschitz graphs: $L^p$ and endpoint
estimates.} Trans. Amer. Math. Soc.  365 (2013), 5759--5781. 


\bibitem[MMV]{MMV} D. Mitrea, M. Mitrea, and J. Verdera. {\em Characterizing regularity of domains via the Riesz transforms on their boundaries.} Anal. PDE 9(4) (2016), 955--1018.

\bibitem[MT]{MT}
 M. Mourgoglou and X. Tolsa. {\em  Harmonic measure and Riesz transform in uniform and general domains.} 
 J. Reine Angew. Math. 758 (2020), 183--221.

\bibitem[NToV]{NToV}
F. Nazarov, X. Tolsa, and A. Volberg. {\em On the uniform rectifiability of
  {AD}-regular measures with bounded {R}iesz transform operator: the case of
  codimension 1.} Acta Math. {213} (2014), no.~2, 237--321.
  
\bibitem[NTV]{NTV-prep}
F. Nazarov, F., S. Treil, and A. Volberg. {\em The $ Tb $-theorem on non-homogeneous
  spaces that proves a conjecture of {V}itushkin.} {Preprint
  arXiv:1401.2479}  (2014).
  

  \enlargethispage{0.9cm}
  
\bibitem[Se]{Semmes} S. Semmes. {\em Analysis vs.\ Geometry on a class of
rectifiable hypersurfaces in $\R^n$.} Indiana Univ. Math. J. 39 (1990), no.~4, 1005--1035.


\bibitem[To1]{Tolsa-jump} X. Tolsa. {\em Jump formulas for singular integrals and layer potentials on rectifiable sets.} Preprint arXiv:1811.07837 (2018).

\bibitem[To2]{Tolsa-llibre}
X. Tolsa. \emph{Analytic capacity, the {C}auchy transform, and non-homogeneous
  {C}alder\'on-{Z}ygmund theory}. {Progress in Mathematics}, vol. 307,
  Birkh\"auser/Springer, Cham, 2014.


\end{thebibliography}
\end{document}